\theoremstyle{thmstyleone}%
\newtheorem{theorem}{Theorem}[section]
\newtheorem{lemma}[theorem]{Lemma}
\newtheorem{corollary}[theorem]{Corollary}
\theoremstyle{thmstylethree}%
\newtheorem{definition}{Definition}%
\newcommand{\V}[1]{{\bf #1}} 
\newcommand{\del}{\mathrm{d}}
\newcommand{\ellipse}{\mathbb{E}}
\newcommand{\inum}{\mathrm{i}}
\newcommand{\vt}{\varphi} 
\newcommand{\rhoana}{\rho_\mathrm{A}}
\newcommand{\minfre}[1]{\check{#1}} 
\newcommand{\intge}[1]{\mathbb{Z}_{\ge #1}} 
\newcommand{\bd}[1]{\mathrm{bd}(#1)}
\newcommand{\rhp}{\mathbb{C}_{> 0}} 
\newcommand{\trans}{\top}
\newcommand{\basis}[1]{\tilde{#1}} 
\newcommand{\eset}{\mathcal{E}} 
\newcommand{\pset}{\mathcal{P}} 
\newcommand{\rset}{\mathcal{R}} 
\newcommand{\vtset}{\mathcal{F}} 
\newcommand{\aset}{\mathbb{A}} 
\newcommand{\imset}{\mathbb{I}} 
\DeclareMathOperator{\RE}{Re}
\DeclareMathOperator{\IM}{Im}
\DeclareMathOperator{\sn}{sn}
\DeclareMathOperator{\cn}{cn}
\DeclareMathOperator{\dn}{dn}
\DeclareMathOperator{\cs}{cs}
\DeclareMathOperator{\cd}{cd}
\DeclareMathOperator{\dc}{dc}
\DeclareMathOperator{\sd}{sd}
\DeclareMathOperator{\am}{am}
\DeclareMathOperator{\arcdn}{arcdn}
\DeclareMathOperator{\enum}{e}
\DeclareMathOperator{\dist}{dist}
\DeclareMathOperator{\EK}{K}
\DeclareMathOperator{\EE}{E}
\DeclareMathOperator{\TV}{TV}
\DeclareMathOperator*{\mint}{\int \cdots \int}
\DeclareMathOperator{\expm1}{expm1}
\begin{document}

\begin{frontmatter}


\title{Exponential sum approximations of finite completely monotonic functions}
\author{Yohei M. Koyama}
\ead{ym.koyama@riken.jp}
\address{RIKEN, 6-7-1 Minatojima-minamimachi, Chuo-ku, Kobe, Hyogo 650-0047, Japan}

\begin{abstract}
Bernstein's theorem (also called Hausdorff--Bernstein--Widder theorem) enables the integral representation of a completely monotonic function. We introduce a finite completely monotonic function, which is a completely monotonic function with a finite positive integral interval of the integral representation. We consider the exponential sum approximation of a finite completely monotonic function based on the Gaussian quadrature with a variable transformation. If the variable transformation is analytic on an open Bernstein ellipse, the maximum absolute error decreases at least geometrically with respect to the number of exponential functions. The maximization of the decreasing rate of the error bound can be achieved by using a variable transformation represented by Jacobi's delta amplitude function (also called dn function). The error curve is expanded by introducing basis functions, which are eigenfunctions of a fourth order differential operator, satisfy orthogonality conditions, and have the interlacing property of zeros by Kellogg's theorem.
\end{abstract}

\begin{keyword}
Completely monotonic function \sep Gaussian quadrature \sep Delta amplitude function \sep Fourth order differential operator

\MSC[2020] 65D15 \sep 65D32 \sep 41A25 \sep 41A50
\end{keyword}

\end{frontmatter}


\section{Introduction}

For a given $X \in \mathbb{R}$, let $\intge{X} \coloneqq \{ n \in \mathbb{Z} \mid n \ge X \}$. Let $n \in \intge{0}$. If a function $f(x)$ is infinitely differentiable and satisfies $(-1)^n f^{(n)}(x) \ge 0$ for $0 < x < \infty$, $f(x)$ is called completely monotonic in $0 < x < \infty$. If $f(x)$ is completely monotonic in $0 < x < \infty$ and satisfies $\lim_{x \downarrow 0} f(x) = f(0) < \infty$, $f(x)$ is called completely monotonic in $0 \le x < \infty$. For example, $f(x) = x^{-\eta}$ with $0 < \eta < \infty$ is not completely monotonic in $0 \le x < \infty$ but  completely monotonic in $0 < x < \infty$. $f(x) = \enum^{-ax}$ with $0 \le a < \infty$ and $f(x) = 1 / (x + 1)$ are completely monotonic in $0 \le x < \infty$. Bernstein's theorem (also called Hausdorff--Bernstein--Widder theorem \cite{Hausdorff1921,Hausdorff1921a,Bernstein1929,Widder1931}) states that a necessary and sufficient condition that $f(x)$ should be completely monotonic in $0 < x < \infty$ is that $f(x) = \int_0^\infty \enum^{-xt} \del W(t)$, where $W(t)$ is nondecreasing and the integral converges for $0 < x < \infty$ \cite[Theorem IV.12b]{Widder1941}. If $f(x)$ is completely monotonic in $0 \le x < \infty$, then $f(x) = \int_0^\infty \enum^{-xt} \del W(t)$, where $W(t)$ is bounded and nondecreasing for $0 \le t < \infty$ \cite[Theorem IV.19b]{Widder1941}. $W(t)$ is called the determining function. For example, by the Euler integral formula, the integral representation of $x^{-\eta}$ is given as \cite[Anhang. 2.]{Doetsch1937}
\begin{equation}
\label{eq:f_inv}
x^{-\eta} = \frac{1}{\Gamma(\eta)} \int_0^\infty \enum^{-x t} t^{\eta - 1} \del t, 0 < x < \infty, 0 < \eta < \infty,
\end{equation}
whose determining function is given as $W(t) = \int_0^t \frac{ s^{\eta - 1} }{\Gamma(\eta)} \del s = \frac{ t^\eta }{ \Gamma(\eta + 1) }$, $0 \le t < \infty$.

The integral representation by Bernstein's theorem implies a completely monotonic function can be approximated by the sum of exponential functions. We describe some known results of the approximation. We denote a set of exponential sum at most $M \in \intge{1}$ terms as
\begin{equation}
\eset_M \coloneqq \left\{ f \middle\vert f(x) = \sum_{\nu = 1}^M c_\nu \enum^{- t_\nu x}, t_\nu \in \mathbb{R}, c_\nu \in \mathbb{R}, \nu = 1, \ldots, M, x \in \mathbb{R} \right\},
\end{equation}
and the best uniform approximation error of a function $F(x)$ on $x \in \mathbb{X}$ from a set of functions $\mathcal{F}$ as $\dist(F, \mathcal{F}, \mathbb{X}) \coloneqq \inf\limits_{G \in \mathcal{F}} \sup\limits_{x \in \mathbb{X}} \lvert F(x) - G(x) \rvert$. Kammler \cite{Kammler1976} obtained the conditions for the existence of the unique best exponential sum approximation of a completely monotonic function in $0 \le x < \infty$. We extracted the results relevant to the current study in our convenient form as follows:
\begin{lemma}[Kammler {\cite[a part of Lemma 4]{Kammler1976}}]
\label{lem:kammler_zero}
Let $f(x)$ be a completely monotonic function in $0 \le x < \infty$. Let $M \in \intge{1}$ and $f_M \in \eset_M$. If $f(x) - f_M(x)$ has more than $2M$ nonnegative zeros, then $f_M = f$. If $f(x) - f_M(x)$ has exactly $2M$ nonnegative zeros $0 \le x_1 < \cdots < x_{2M} < \infty$, then $f_M(x) = \sum_{\nu = 1}^M c_\nu \enum^{-t_\nu x}$ satisfies $0 < t_1 < \cdots < t_M < \infty$, $c_\nu > 0$, $\nu = 1, \ldots, M$, and $f(x) - f_M(x) > 0$ for $x_{2M} < x < \infty$.
\end{lemma}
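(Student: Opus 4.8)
\emph{Proof proposal.} The plan is to combine Bernstein's integral representation of $f$ with the variation-diminishing property of the Laplace transform. Write $f(x)=\int_0^\infty\enum^{-xt}\,\del W(t)$ with $W$ bounded and nondecreasing, and, after discarding any vanishing coefficients, $f_M(x)=\sum_{\nu=1}^m c_\nu\enum^{-t_\nu x}$ with distinct $t_\nu\in\mathbb{R}$, $c_\nu\ne 0$, and $m\le M$. Then $g\coloneqq f-f_M$ is the bilateral Laplace transform $g(x)=\int_{\mathbb{R}}\enum^{-xt}\,\del\mu(t)$ of the signed measure $\mu\coloneqq\del W-\sum_{\nu=1}^m c_\nu\delta_{t_\nu}$, for which $\int\enum^{-xt}\,\del|\mu|(t)<\infty$ on $[0,\infty)$; note $g$ is real-analytic on $(0,\infty)$ and, since $W$ is bounded, continuous up to $x=0$. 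The tool I would invoke is that, when $\mu\not\equiv 0$, the number of nonnegative zeros of $g$ counted with multiplicity is at most the number of sign changes of $\mu$ --- this is the variation-diminishing property of the totally positive kernel $\enum^{-xt}$ (Pólya's theorem on the real zeros of Laplace transforms), which can alternatively be obtained from scratch by induction, multiplying by an exponential and applying Rolle's theorem.

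The next step is to bound the sign changes of $\mu$. Since $\del W\ge 0$ is supported on $[0,\infty)$, subtracting an atom $c_\nu\delta_{t_\nu}$ with $c_\nu>0$ introduces at most two sign changes when $t_\nu>0$ and at most one when $t_\nu\le 0$, whereas an atom with $c_\nu<0$ introduces none. Hence $\mu$ has at most $2m\le 2M$ sign changes, so a nonzero $g$ has at most $2M$ nonnegative zeros. Contrapositively, if $g$ has more than $2M$ nonnegative zeros then $g\equiv 0$, i.e.\ $f_M=f$; this is the first assertion.

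For the second assertion, assume $g$ has exactly $2M$ distinct nonnegative zeros. Then $g\not\equiv 0$, and the chain $2M\le(\text{zeros of }g\text{ with multiplicity})\le(\text{sign changes of }\mu)\le 2M$ is a chain of equalities: every zero of $g$ is simple, and $\mu$ has exactly $2M$ sign changes. Tracing back the bound on sign changes, attaining $2M$ forces $m=M$, every $c_\nu>0$, every $t_\nu>0$, and $W$ nonconstant on each of $[0,t_1)$, $(t_\nu,t_{\nu+1})$, and $(t_M,\infty)$ (with no atom of $W$ at $t_\nu$ of mass $\ge c_\nu$). After relabelling, $0<t_1<\cdots<t_M<\infty$ and $c_\nu>0$, as claimed. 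For $g>0$ on $(x_{2M},\infty)$: because $W$ is nonconstant on $[0,t_1)$, the left endpoint $a\coloneqq\inf(\operatorname{supp}\del W)$ satisfies $a<t_1$; a routine estimate then gives $x^{-1}\log f(x)\to -a$ while $x^{-1}\log f_M(x)\to -t_1$, so $f(x)/f_M(x)\to\infty$ and hence $g(x)>0$ for all large $x$; since $g$ has no zero beyond $x_{2M}$ and is continuous, $g>0$ on $(x_{2M},\infty)$.

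I expect the main obstacle to be the variation-diminishing inequality with multiplicities, together with the bookkeeping of its equality case: one must check not only that a positive atom or an atom at a nonpositive node strictly lowers the sign-change count, but also that attaining exactly $2M$ sign changes pins down where $W$ increases relative to the nodes $t_\nu$ --- in particular that $W$ already increases below $t_1$ --- which is precisely what powers the asymptotic comparison of $f$ and $f_M$ as $x\to\infty$. That asymptotic estimate is elementary but does genuinely use that $\del W$ has mass arbitrarily close to its left endpoint $a$ from the right.
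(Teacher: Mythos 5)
The paper does not prove this lemma at all --- it is imported verbatim as a cited result from Kammler's 1976 paper --- so there is no in-paper argument to compare against; what you have written is a self-contained proof, and I believe it is correct. Your route (Bernstein representation, then the variation-diminishing property of the strictly totally positive kernel $\enum^{-xt}$ applied to the signed measure $\del W-\sum_\nu c_\nu\delta_{t_\nu}$) is essentially the classical Descartes/P\'olya argument, and it is in the same spirit as Kammler's original proof; the strict total positivity you need is exactly the determinant inequality the paper itself later uses in \eqref{eq:det_exp_ineq}, and the measure-level variation-diminishing statement is in Karlin's book, which the paper already cites, so the one external tool you lean on is available. Two small points of care. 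First, your per-atom accounting (``at most one sign change when $t_\nu\le 0$'') is not literally true if some other term has $c_{\nu'}<0$ and $t_{\nu'}<t_\nu$, since that contributes positive mass to the left of $t_\nu$; the clean statement is the global one --- the negative part of $\mu$ is carried by at most $\#\{\nu:c_\nu>0\}\le m$ points, so $S^-(\mu)\le 2m\le 2M$ --- and your equality analysis is unaffected because attaining $2M$ first forces all $m=M$ coefficients positive, after which the only positive mass is $\del W$ on $[0,\infty)$ and the leftmost atom must have $t_1>0$. Second, since the hypotheses only speak of distinct zeros, you never actually need the multiplicity-refined version of the variation-diminishing bound; the elementary Rolle induction you sketch (multiply by $\enum^{t_0x}$ at a sign-change point $t_0$ chosen off the atoms, differentiate under the integral, which is legitimate for $x>0$ because $W$ is bounded) already suffices, which makes the argument fully elementary. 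The tail estimate at the end is right and does use, as you say, that equality in the sign-change count forces $W$ to increase somewhere below $t_1$, so that $\inf(\operatorname{supp}\del W)<t_1$ and $f$ eventually dominates $f_M$.
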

\begin{theorem}[Kammler {\cite[a part of Lemma 5 and Theorem1]{Kammler1976}}]
\label{the:kammler_best}
Let $0 \le a < b \le  \infty$ and $M \in \intge{1}$. Let $W(t)$ be a bounded nondecreasing function with at least $M + 1$ points of increase for $a \le t \le b$. Let $f(x) \coloneqq \int_a^b \enum^{-xt} \del W(t)$ for $0 \le x < \infty$. Let $f(x)$  satisfy $\lim\limits_{x \to \infty} f(x) = 0$, i.e., $a > 0$ or $a = 0$ and $\lim\limits_{t \downarrow 0} W(t) = W(0)$. Then there exists a unique best exponential sum $f_M(x) = \sum_{\nu = 1}^M c_\nu \enum^{-t_\nu x}$, which satisfies
\begin{subequations}
\begin{align}
&a < t_1 < \cdots < t_M < b, c_\nu > 0, \nu = 1, \ldots, M, \\
\label{eq:equioscillation}
&f(x_i) - f_M(x_i) = (-1)^i \dist(f, \eset_M, [0, \infty)), i = 0, \ldots, 2M, 0 \eqqcolon x_0 < \cdots < x_{2M} < \infty.
\end{align}
\end{subequations}
\end{theorem}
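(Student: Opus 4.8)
The plan is to combine the classical equioscillation theory for exponential sums (a Chebyshev, in fact varisolvent, family) with the zero count already supplied by Lemma~\ref{lem:kammler_zero}. Write $E_M \coloneqq \dist(f,\eset_M,[0,\infty))$ and, for $M=1$, $E_0\coloneqq f(0)$, the error of the zero approximant $0\in\eset_M$. Two preliminary facts: $E_M<\infty$ since $0\le f(x)\le f(0)$, and $E_M>0$ since $f\notin\eset_M$ --- if $f$ agreed on $[0,\infty)$ with some $\sum_{\nu=1}^M c_\nu\enum^{-t_\nu x}$, uniqueness of the determining function would force $W$ to have at most $M$ points of increase, against the hypothesis.

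\emph{Existence.} I would take a minimizing sequence $f_M^{(k)}\in\eset_M$ with $\sup_{x\ge0}|f-f_M^{(k)}|\to E_M$. Uniform boundedness of near-best members on $[0,\infty)$ forces every exponent of $f_M^{(k)}$ into $[0,\infty)$ (a negative exponent yields an unbounded term that the other exponentials, being linearly independent, cannot cancel). Passing to a subsequence, normal families and a diagonal argument give convergence uniformly on compact subsets of $[0,\infty)$ to a generalized exponential sum $g^*=\sum_j p_j(x)\enum^{-t_j x}$ with $t_j\ge0$ and total order $\sum_j(1+\deg p_j)\le M$, coalescing exponents producing polynomial factors whose coefficients stay controlled by the uniform bound; a tail estimate (every function involved being bounded, and decaying away from a possible constant term, at $+\infty$) then upgrades this to $\sup_{x\ge0}|f-g^*|=E_M$. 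The substantive point here is controlling these degeneracies on the noncompact half-line, where the completely monotonic structure and the decay hypothesis are indispensable; this is Kammler's Lemma~5.

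\emph{Characterization.} I would then show $g^*$ is a genuine $M$-term sum and that $f-g^*$ equioscillates as claimed. First, $E_M<E_{M-1}$: starting from a genuine best $(M-1)$-term approximant (by induction on $M$; $0$ if $M=1$), which equioscillates at $2M-1$ points, the $(2M-1)$-dimensional span of its exponentials, their $x$-multiples, and one new exponential $\enum^{-tx}$ (for $t$ distinct from the existing exponents) is a Chebyshev system on $[0,\infty)$, so the alternating sign of the error at those $2M-1$ nodes can be interpolated and the maximum error strictly reduced by a small move inside $\eset_M$. Hence $g^*$ has order exactly $M$. Ruling out confluence --- order $M$ but fewer than $M$ distinct exponents, or a constant term --- is the step I expect to be the main obstacle; following Kammler I would argue it from the variation-diminishing property of the totally positive kernel $\enum^{-xt}$: writing $f-g=\int\enum^{-xt}\,\del\mu(t)$ with the signed measure $\mu=\del W-\sum_\nu c_\nu\delta_{t_\nu}$, the number of sign changes of $f-g$ is at most that of $\mu$, and a nonnegative $\del W$ with at least $M+1$ points of increase minus $M$ point masses of total order $M$ can produce $2M$ sign changes only if those masses are simple and lie strictly interior to $[a,b]$ --- which excludes confluence and a constant term and simultaneously gives $a<t_1<\cdots<t_M<b$. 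With $g^*$ genuine, its tangent directions $\enum^{-t_\nu x},\,x\enum^{-t_\nu x}$ ($\nu=1,\dots,M$) form a $2M$-dimensional Chebyshev system on $[0,\infty)$, and the standard perturbation argument shows a best approximant equioscillates at $\ge2M+1$ points; were there more than $2M+1$ alternations, or more than $2M$ nonnegative zeros, Lemma~\ref{lem:kammler_zero} would give $f=g^*$, impossible. So $f-g^*$ has exactly $2M$ nonnegative zeros and equioscillates at exactly $2M+1$ points; Lemma~\ref{lem:kammler_zero} gives $c_\nu>0$ and $f-g^*>0$ past the last zero, so the alternation pattern is $+,-,\dots,+$ with first node the endpoint $x_0=0$. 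This is the assertion with $f_M\coloneqq g^*$.

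\emph{Uniqueness.} If $g_1,g_2\in\eset_M$ are both best, $h\coloneqq g_1-g_2$ is a generalized exponential sum of order $\le2M$, so $h\equiv0$ or $h$ has at most $2M-1$ zeros. But at the $2M+1$ equioscillation nodes of $g_1$, where $f-g_1=\pm E_M$ while $|f-g_2|\le E_M$, $h$ alternates weakly in sign, which forces at least $2M$ zeros; therefore $h\equiv0$ and $g_1=g_2$.
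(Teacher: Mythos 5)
The paper does not actually prove this statement: it is imported from Kammler's 1976 paper (his Lemma 5 and Theorem 1) with only a citation, so there is no in-paper argument to compare yours against. Your sketch reconstructs what is essentially Kammler's own route --- existence by compactness in the closure of $\eset_M$ under generalized (confluent) limits, characterization by equioscillation for a varisolvent family, uniqueness by zero counting against an extended Chebyshev system --- and the architecture is the right one. The uniqueness paragraph is the standard Haar-system argument and is fine as stated.

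Two steps, however, are genuinely open as written. First, your exclusion of degeneracy is circular in the order you present it: you obtain the $(2M+1)$-point alternation from a perturbation argument in the $2M$-dimensional tangent space spanned by $\enum^{-t_\nu x}$ and $x\enum^{-t_\nu x}$, which already presupposes that $g^*$ is a genuine $M$-term sum with distinct interior exponents, yet you propose to rule out confluence by the variation-diminishing count applied to the $2M$ sign changes of $f-g^*$, which you only obtain from that same alternation. Breaking this circle requires an alternation lower bound valid for \emph{degenerate} members of the closure (Rice's property Z / local solvency of the varisolvent family, i.e. precisely the content of Kammler's Lemma 5 that you defer); this is the substantive part of the theorem rather than a technicality, and as the sketch stands it is assumed rather than proved. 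Second, the claim that the first alternation node sits exactly at $x_0=0$ does not follow from ``$c_\nu>0$ and $f-g^*>0$ past the last zero'' alone: one must also exclude an interior extremum of $f-g^*$ on $[0,z_1)$, where $z_1$ is the first zero. This can be repaired by applying Lemma~\ref{lem:kammler_zero} to $-f'(x)=\int_a^b t\,\enum^{-xt}\,\del W(t)$ and $\sum_\nu c_\nu t_\nu \enum^{-t_\nu x}$: then $(f-g^*)'$ has at most $2M$ nonnegative zeros, Rolle plus the positive decaying tail already places $2M$ of them in $(z_1,\infty)$, so $f-g^*$ is strictly monotone on $[0,z_1]$ and its maximum there, hence the first equioscillation value $+\dist(f,\eset_M,[0,\infty))$, is attained at the endpoint $0$. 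With those two points closed (and the existence step actually carried out), the proposal becomes a complete proof.
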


Kammler showed $L^2$ approximation error of a completely monotonic function $f(x) = \int_a^b \enum^{-xt} \del W(t)$, $0 < a < b < \infty$, decreases at least geometrically as follows:
\begin{theorem}[Kammler \cite{Kammler1981}]
\label{the:KammlerL2}
Let $0 < a < b < \infty$, $M \in \intge{1}$, and $\lVert F \rVert_2 \coloneqq \sqrt{\int_0^\infty \lvert F(x) \rvert^2 \del x}$. Let $W(t)$ be a bounded nondecreasing function for $a \le t \le b$. For given $a \le t_1 < \cdots < t_M \le b$, one has
\begin{equation}
\min_{(c_1, \ldots, c_M) \in \mathbb{R}^M} \left\lVert \int_a^b \enum^{-x t} \del W(t) - \sum_{\nu = 1}^M c_\nu \enum^{- t_\nu x} \right\rVert_2 \le \int_a^b \frac{1}{\sqrt{2t}} \prod_{\nu = 1}^M \left\lvert \frac{ t - t_\nu }{ t + t_\nu } \right\rvert \del W(t) \le \frac{\int_a^b \del W(t)}{\sqrt{2 a}} \left( \frac{b - a}{b + a} \right)^M.
\end{equation}
\end{theorem}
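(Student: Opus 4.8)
The plan is to reduce the theorem to one exact computation---the $L^2([0,\infty))$ distance from a single exponential $\enum^{-tx}$ to the fixed subspace $V \coloneqq \operatorname{span}\{\enum^{-t_1 x},\dots,\enum^{-t_M x}\}$---and then integrate that pointwise estimate against $\del W$. I would start from the identity $\int_0^\infty \enum^{-px}\enum^{-qx}\,\del x = 1/(p+q)$ for $p,q>0$, which makes every Gram matrix of exponentials a Cauchy matrix. Fix $t>0$: if $t\in\{t_1,\dots,t_M\}$ then $\enum^{-tx}\in V$ and both sides of the pointwise claim vanish, so assume $t\notin\{t_1,\dots,t_M\}$ and let $\pi_t\in V$ be the orthogonal projection of $\enum^{-tx}$ onto $V$. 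The Gram--determinant formula for the distance to a subspace gives
\begin{equation*}
\lVert \enum^{-tx}-\pi_t\rVert_2^2 = \frac{\det G(\enum^{-tx},\enum^{-t_1 x},\dots,\enum^{-t_M x})}{\det G(\enum^{-t_1 x},\dots,\enum^{-t_M x})},
\end{equation*}
and both determinants are Cauchy determinants in the numbers $\{t,t_1,\dots,t_M\}$ (resp.\ $\{t_1,\dots,t_M\}$). Evaluating them by Cauchy's closed form and cancelling, every factor involving only $t_1,\dots,t_M$ drops out and I expect to be left with $\lVert \enum^{-tx}-\pi_t\rVert_2^2 = \prod_{\nu=1}^M(t-t_\nu)^2\big/\bigl(2t\prod_{\nu=1}^M(t+t_\nu)^2\bigr)$, i.e.\ $\lVert \enum^{-tx}-\pi_t\rVert_2 = \tfrac{1}{\sqrt{2t}}\prod_{\nu=1}^M|(t-t_\nu)/(t+t_\nu)|$. (The same value also drops out of the Paley--Wiener identification of $L^2([0,\infty))$ with the Hardy space of the right half-plane: there $\pi_t$ is the best approximation of the Cauchy kernel $(s+t)^{-1}$ by $\operatorname{span}\{(s+t_\nu)^{-1}\}$, whose residual is $(s+t)^{-1}$ times a finite Blaschke product with unit modulus on the imaginary axis. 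I would use whichever derivation reads more smoothly.)

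Next I would write $\pi_t(x)=\sum_{\nu=1}^M c_\nu(t)\enum^{-t_\nu x}$, with $c(t)$ the solution of the (constant, invertible) Cauchy normal-equation system $G(\enum^{-t_1 x},\dots,\enum^{-t_M x})\,c(t)=\bigl(1/(t+t_\nu)\bigr)_{\nu=1}^M$; by Cramer's rule each $c_\nu$ is a rational, hence continuous and $W$-integrable, function of $t$ on $[a,b]$. Then $v_0(x)\coloneqq\int_a^b\pi_t(x)\,\del W(t)=\sum_{\nu=1}^M\bigl(\int_a^b c_\nu(t)\,\del W(t)\bigr)\enum^{-t_\nu x}$ lies in $V$, so it is an admissible competitor. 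Since $f(x)=\int_a^b\enum^{-tx}\,\del W(t)$, Minkowski's integral inequality for the finite positive measure $\del W$ (whose right-hand side below is finite) gives
\begin{equation*}
\min_{(c_1,\dots,c_M)\in\mathbb{R}^M}\Bigl\lVert f-\sum_{\nu=1}^M c_\nu\enum^{-t_\nu x}\Bigr\rVert_2 \le \lVert f-v_0\rVert_2 = \Bigl\lVert\int_a^b\bigl(\enum^{-tx}-\pi_t(x)\bigr)\,\del W(t)\Bigr\rVert_2 \le \int_a^b\lVert\enum^{-tx}-\pi_t\rVert_2\,\del W(t),
\end{equation*}
which by the first paragraph equals $\int_a^b\tfrac{1}{\sqrt{2t}}\prod_{\nu=1}^M|(t-t_\nu)/(t+t_\nu)|\,\del W(t)$; this is the first asserted inequality.

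For the second inequality I would just bound the integrand on $[a,b]$: obviously $1/\sqrt{2t}\le 1/\sqrt{2a}$, and for each $\nu$ the function $t\mapsto(t-t_\nu)/(t+t_\nu)$ is increasing on $[a,b]$, so $|(t-t_\nu)/(t+t_\nu)|$ is maximized at an endpoint; since $t_\nu\in[a,b]$, both $(t_\nu-a)/(t_\nu+a)$ and $(b-t_\nu)/(b+t_\nu)$ are $\le(b-a)/(b+a)$, hence $|(t-t_\nu)/(t+t_\nu)|\le(b-a)/(b+a)$. Multiplying the $M$ factors and integrating against $\del W$ yields the bound $\frac{\int_a^b\del W(t)}{\sqrt{2a}}\bigl(\frac{b-a}{b+a}\bigr)^M$.

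The one genuinely delicate point will be the exact pointwise value in the first paragraph: confirming that the Cauchy/Gram-determinant bookkeeping collapses precisely to $\prod_\nu\bigl((t-t_\nu)/(t+t_\nu)\bigr)^2/(2t)$ and cleanly handling the degenerate case $t=t_\nu$, together with checking continuity of $c(t)$ so that $v_0\in V$ and the passage from $\lVert f-v_0\rVert_2$ to the minimum over $c$ is legitimate. Everything downstream is the triangle inequality and elementary one-variable calculus.
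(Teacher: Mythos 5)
The paper states this result without proof, citing Kammler's 1981 note, so there is no internal argument to compare against; your proposal is a correct, self-contained proof, and it follows what is essentially Kammler's original route. The key computation checks out: with $\langle \enum^{-px},\enum^{-qx}\rangle = 1/(p+q)$ the two Gram determinants are Cauchy determinants, and the ratio collapses to $\prod_{\nu}(t-t_\nu)^2\big/\bigl(2t\prod_{\nu}(t+t_\nu)^2\bigr)$ exactly as you claim (equivalently, the Hardy-space residual is $\prod_\nu\frac{t-t_\nu}{t+t_\nu}\cdot\frac{1}{s+t}\prod_\nu\frac{s-t_\nu}{s+t_\nu}$, whose Blaschke factor is unimodular on the imaginary axis). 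The remaining steps — continuity and hence $\del W$-integrability of the projection coefficients so that $v_0\in V$, Minkowski's integral inequality for the finite positive measure $\del W$, and the endpoint bounds $1/\sqrt{2t}\le 1/\sqrt{2a}$ and $|(t-t_\nu)/(t+t_\nu)|\le (b-a)/(b+a)$ for $t,t_\nu\in[a,b]$ — are all handled correctly.
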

Braess and Saff \cite[Theorem VI.3.4]{Braess1986} and Braess and Hackbusch \cite{Braess2009} showed the best uniform approximation error for $0 < a \le x \le b < \infty$ decreases at least geometrically as follows:
\begin{theorem}[Braess and Hackbusch {\cite[Lemma 2.1, Theorem 3.1, and Theorem 3.3]{Braess2009}}]
\label{the:braess}
Let $f(x)$ be completely monotonic in $0 \le x < \infty$. Let $\EK(k)$ be the complete elliptic integral of the first kind with the modulus $k$. For $M \in \intge{1}$, one has 
\begin{equation}
\dist(f, \eset_M, [a, b]) \le 8 f(0) \rho^{-2M}, 0 < a < b < \infty, \rho \coloneqq \enum^{\pi \frac{ \EK(a/b) }{ \EK(\sqrt{1 - (a/b)^2}) } }.
\end{equation}
\end{theorem}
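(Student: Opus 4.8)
The plan is to exhibit one explicit member of $\eset_M$ already within the claimed distance of $f$, so the existence and equioscillation results quoted above are not needed for this one‑sided bound. Since $\eset_M$ is a positive cone we may normalize $f(0)=1$, and since $x\mapsto x/a$ maps $\eset_M$ onto itself and fixes both $f(0)$ and $\rho$ (which depends only on $a/b$), we may take $[a,b]=[1,b/a]$. By Bernstein's theorem $f(x)=\int_{0}^{\infty}\enum^{-xt}\,\del W(t)$ with $W$ bounded nondecreasing and $\int_{0}^{\infty}\del W(t)=f(0)$. The construction is a Gaussian quadrature of this integral after an analytic change of variable: choose a transformation $t=\vt(s)$ that is analytic on an open Bernstein ellipse $\ellipse_\rho$ and carries $[-1,1]$ into the relevant range of $t$, rewrite $f(x)=\int_{-1}^{1}\enum^{-x\vt(s)}\,\del\Omega(s)$ with $\Omega=W\circ\vt$ bounded nondecreasing of total mass $f(0)$, and take the $M$‑node Gaussian quadrature for $\del\Omega$. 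This produces $g_M(x)=\sum_{\nu=1}^{M}w_\nu\,\enum^{-\vt(s_\nu)x}$ with $s_\nu\in(-1,1)$, hence frequencies $\vt(s_\nu)>0$, and with positive weights of total mass $f(0)$; thus $g_M\in\eset_M$.

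The error is the quadrature remainder, and I would bound it as follows. Exactness of Gaussian quadrature on $\pset_{2M-1}$ together with positivity of the weights give, for every $x\in[a,b]$ and every $q\in\pset_{2M-1}$, the pointwise estimate $|f(x)-g_M(x)|\le 2f(0)\lVert\enum^{-x\vt(\cdot)}-q\rVert_{\infty,[-1,1]}$, hence $|f(x)-g_M(x)|\le 2f(0)\,\dist(\enum^{-x\vt(\cdot)},\pset_{2M-1},[-1,1])$. If $\vt$ is chosen so that $\re\vt\ge 0$ on all of $\ellipse_\rho$, then $|\enum^{-x\vt(s)}|=\enum^{-x\re\vt(s)}\le 1$ there for every $x>0$, and the classical bound $\dist(g,\pset_{n},[-1,1])\le 2\lVert g\rVert_{\ellipse_\rho}\rho^{-n}/(\rho-1)$ for functions analytic on $\ellipse_\rho$ yields $|f(x)-g_M(x)|\le 4f(0)\rho^{-2M+1}/(\rho-1)$, uniformly in $x\in[a,b]$; the exponent $2M$ appears because $\eset_M$ carries $2M$ free parameters and so behaves like $\pset_{2M-1}$. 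It then remains to maximize $\rho$ over admissible $\vt$: the extremal $\vt$ must use up all the room in the half‑plane, i.e.\ map $\ellipse_\rho$ one‑to‑one onto the right half‑plane $\rhp$ with $[-1,1]$ going onto the (normalized) interval recording the aspect ratio of the part of $\del W$ felt on $[a,b]$. This conformal pair has a single modulus, and evaluating it gives $\rho=\enum^{\pi\EK(a/b)/\EK(\sqrt{1-(a/b)^2})}$, the uniformizing map being the composition of Jacobi's $\dn$ function with the multivalued inverse cosine, its elliptic modulus tied to $a/b$ through $\EK$ and its complementary integral. Tracking the remaining constants — the factor $2$ from exactness and positivity, the factor $\rho/(\rho-1)$, and, for long intervals where $\rho$ is near $1$, the trivial bound $\dist\le f(0)$ — yields the stated $8f(0)\rho^{-2M}$.

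The step I expect to be the real obstacle is this last identification: constructing the conformal map $\dn\circ\arccos$ of $\ellipse_\rho$ onto $\rhp$ and proving that its modulus is exactly $\enum^{\pi\EK(a/b)/\EK(\sqrt{1-(a/b)^2})}$ with $\re\vt\ge 0$ throughout $\ellipse_\rho$ — precisely the elliptic‑function analysis that occupies the bulk of this paper and that amounts to solving the underlying extremal problem. A secondary difficulty is that Bernstein's theorem furnishes only a general bounded determining function $W$ on $[0,\infty)$: since $\vt$ must be analytic on $\ellipse_\rho$ it cannot send $[-1,1]$ onto all of $[0,\infty)$, so one first truncates the tail ($\int_{T}^{\infty}\enum^{-xt}\,\del W(t)\le f(0)\enum^{-aT}$, exponentially small in $T$) and absorbs the near‑constant contribution of the mass of $\del W$ near $t=0$, then applies the above to the remaining finitely supported part, whose effective aspect ratio on $[a,b]$ is $\asymp b/a$; collecting all of these error terms into the single constant $8$ is where the bookkeeping has to be done with care.
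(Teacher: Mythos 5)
The paper does not actually prove this theorem: it is quoted from Braess--Hackbusch, and the machinery you describe is what the paper develops to prove a \emph{different} statement (the Corollary following Theorem \ref{the:Phi}), namely $\dist(f,\eset_M,[0,\infty))<\tfrac{16}{\pi}\rho^{-2M}f(0)$ for a \emph{finite} completely monotonic function $f(x)=\int_a^b \enum^{-xt}\,\del W(t)$. In that statement the elliptic modulus is the ratio of the endpoints of the $t$-support of $W$, and the approximation is uniform over all $x\ge 0$. Theorem \ref{the:braess} is the dual situation: $W$ may be supported on all of $[0,\infty)$ and the approximation is only over $x\in[a,b]$, with the modulus determined by $a/b$. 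Your construction --- Gaussian quadrature in the $t$-variable after a change of variables $t=\vt(s)$ analytic on $\ellipse_\rho$ with $\re\vt\ge 0$ --- produces a rate governed by the aspect ratio of the $t$-interval onto which $\vt$ maps $[-1,1]$, not by $a/b$, and this is where the argument breaks.

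Concretely, the truncation step cannot deliver the claimed rate. To make the tail error $\int_T^\infty \enum^{-xt}\,\del W(t)\le f(0)\enum^{-aT}$ of size $\rho^{-2M}$ you need $T\gtrsim 2M\log(\rho)/a$, and to make the error of replacing $\int_0^\epsilon \enum^{-xt}\,\del W(t)$ by a constant (which itself consumes one of your $M$ terms) of size $\rho^{-2M}f(0)$ you need $\epsilon\lesssim \rho^{-2M}/b$. Hence the truncated support $[\epsilon,T]$ has ratio $r_M\coloneqq\epsilon/T\to 0$ exponentially fast in $M$, and the geometric factor of your quadrature is $\exp(-2\pi M\,\EK(r_M)/\EK(\sqrt{1-r_M^2}))$. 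Since $\EK(r_M)\to\pi/2$ while $\EK(\sqrt{1-r_M^2})\sim\log(4/r_M)\sim 2M\log\rho$, this factor tends to the \emph{constant} $\exp(-\pi^2/(2\log\rho))$ and does not decay in $M$ at all. Your assertion that the ``effective aspect ratio on $[a,b]$ is $\asymp b/a$'' is exactly the unsupported step: already for $f(x)=1/(1+x)=\int_0^\infty\enum^{-xt}\enum^{-t}\,\del t$ the determining measure admits no compactly supported surrogate of bounded aspect ratio to accuracy $\rho^{-2M}$. A correct proof must exploit the finiteness of the $x$-interval rather than any compactness in $t$ (Braess and Hackbusch do this via the alternation characterization and the Zolotarev-type extremal problem for $1/x$); the conformal map $\dn\circ\arccos$ and the constant bookkeeping are the comparatively easy part, and in the paper they are carried out only for the dual problem.
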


In this study, we consider exponential sum approximations of a completely monotonic function $f(x)$ in $0 \le x < \infty$, whose integral representation is given as $f(x) = \int_a^b \enum^{-xt} \del W(t)$ with $0 < a < b < \infty$. If the integral interval of the Laplace transform is finite, the transform is called the finite Laplace transform \cite{Bertero1985}\cite[Sec.~4.8]{Villiers2016}. In a similar way, we call $f(x)$ the finite completely monotonic function. By Bernstein's theorem, $W(t)$ is a bounded nondecreasing function for $a \le t \le b$. By Lemma \ref{lem:fl_entire}, $f(z) = \int_a^b \enum^{-zt} \del W(t)$, $z \in \mathbb{C}$, is an entire function and satisfies $f^{(n)}(z) = (-1)^n \int_a^b \enum^{-zt} t^n \del W(t)$ for $n \in \intge{1}$. Thus a finite completely monotonic function can be analytically continuable to $\mathbb{C}$ (i.e., entire function) \cite{Varga1968} and satisfies $(-1)^n f^{(n)}(x) = \int_a^b \enum^{-xt} t^n \del W(t) \ge 0$ for $-\infty < x < \infty$, $n \in \intge{0}$ (i.e., completely monotonic in $-\infty < x < \infty$).

In Section \ref{sec:gauss}, we investigate the exponential sum approximation of a finite completely monotonic function based on the Gaussian quadrature with a variable transformation. For a variable transformation analytic on an open Bernstein ellipse, we obtain the maximum absolute error bound (Theorem \ref{the:Evt}) and the expansion of error by introducing basis functions associated with the variable transformation (Theorem \ref{the:f_gauss_basis}). In Section \ref{sec:max_hatrho}, we construct a variable transformation that maximizes the decreasing rate of the obtained error bound (Theorem \ref{the:Phi}). We show the corresponding basis functions are eigenfunctions of a fourth order differential operator (Theorem \ref{the:L}), satisfy orthogonality conditions (Theorem \ref{the:basis_Phi_orthogonal}), and have interlacing property of zeros (Theorem \ref{the:basis_Phi_interlacing}). In Section \ref{sec:numerical}, we describe the numerical implementation and conducts the numerical experiments by using finite completely monotonic functions associated with the inverse power function \eqref{eq:f_inv}. We obtain a bound (Theorem \ref{the:Eh_ineq}), which can be used to select a spacing parameter required for the initialization of the Remez algorithm based on a Gaussian quadrature to compute the best exponential sum approximation of a finite completely monotonic function.

\section{Exponential sum approximation based on the Gaussian quadrature}
\label{sec:gauss}

\subsection{Variable transformation}
\label{sec:vt}
Let $0 < a < b < \infty$ and $0 \le x < \infty$. We consider the exponential sum approximation of a finite completely monotonic function $f(x) = \int_a^b \enum^{-xt} \del W(t)$ based on the Gaussian quadrature. In \ref{app:gauss}, we summarized properties and nomenclature of Gaussian quadrature used in the current study. In general, the approximation of an integral by the Gaussian quadrature is changed by applying a nonlinear change of variable of the integral and it affects the error of the approximation. By the change of variable $t = b \tau$, we have $f(x) = \int_{a/b}^1 \enum^{-x b \tau} \del W(b \tau)$. We consider the variable transformations to incorporate the nonlinear effect and  to convert the integral interval $[a/b, 1]$ into the standard integral interval $[-1, 1]$ as follows:
\begin{definition}
For a given $0 < r < 1$, $\vtset_r$ represents a function space, whose element $\vt_r(u)$ defined on $[-1, 1]$ satisfies
\begin{subequations}
\label{eq:vt_increase_eq}
\begin{align}
\label{eq:vt_eq}
&\text{$\vt_r(-1) = r$ and $\vt_r(1) = 1$}, \\
\label{eq:vt_increase}
&\text{$\vt_r(u)$ is continuous and strictly increasing on $[-1, 1]$}.
\end{align}
\end{subequations}
\end{definition}
Then a finite Laplace--Stieltjes transform can be expressed as follows:
\begin{lemma}
Let $0 < a < b < \infty$. Let $W(t)$ be of bounded variation in $a \le t \le b$. Let $\vt_{a/b} \in \vtset_{a/b}$. Then one has
\begin{equation}
\label{eq:f_m1_1}
f(z) \coloneqq \int_a^b \enum^{ - z t } \del W(t) = \int_{-1}^1 \enum^{- b z \vt_{a / b}(u)} \del W(b \vt_{a / b}(u)), z \in \mathbb{C}.
\end{equation}
\end{lemma}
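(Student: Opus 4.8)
The plan is to reduce \eqref{eq:f_m1_1} to the classical change-of-variable theorem for Riemann--Stieltjes integrals, applied twice. First I would record that the left-hand side exists as a Riemann--Stieltjes integral: for each fixed $z \in \mathbb{C}$ the integrand $t \mapsto \enum^{-zt}$ is continuous on $[a,b]$ and $W$ is of bounded variation on $[a,b]$ by hypothesis; if one prefers to stay within the real theory, apply it to $\re \enum^{-zt}$ and $\im \enum^{-zt}$ separately, or simply observe that the Riemann--Stieltjes integral against a real-valued integrator of bounded variation extends verbatim to continuous complex-valued integrands.

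Next I would split the substitution $t = b\vt_{a/b}(u)$ into the affine step $t = b\tau$ and the step $\tau = \vt_{a/b}(u)$. For the affine step, $m : \tau \mapsto b\tau$ is a continuous strictly increasing bijection of $[a/b,1]$ onto $[a,b]$, so the change-of-variable theorem gives $\int_a^b \enum^{-zt}\,\del W(t) = \int_{a/b}^1 \enum^{-bz\tau}\,\del W(b\tau)$, where $W(b\tau)$ denotes the composition $\tau \mapsto W(b\tau)$, which is again of bounded variation (a strictly monotone continuous reparametrization preserves total variation, since partitions of the two intervals correspond bijectively). For the second step, \eqref{eq:vt_increase} makes $\vt_{a/b}$ continuous and strictly increasing and \eqref{eq:vt_eq} fixes its endpoint values, so by the intermediate value theorem $\vt_{a/b}$ is a homeomorphism of $[-1,1]$ onto $[a/b,1]$; applying the same two facts once more yields $\int_{a/b}^1 \enum^{-bz\tau}\,\del W(b\tau) = \int_{-1}^1 \enum^{-bz\vt_{a/b}(u)}\,\del W(b\vt_{a/b}(u))$, the integrator on the right being exactly $u \mapsto W(b\vt_{a/b}(u))$, which is of bounded variation, so the right-hand side of \eqref{eq:f_m1_1} makes sense. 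Chaining the two identities gives \eqref{eq:f_m1_1}.

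The only delicate point — and the one I would state explicitly — is the hypothesis under which the Riemann--Stieltjes change-of-variable theorem is invoked: the substitution is required only to be a strictly monotone continuous bijection of the relevant intervals, with no differentiability assumed. That is precisely the generality built into $\vtset_{a/b}$, so no smoothness of $\vt_{a/b}$ enters. A secondary, purely bookkeeping point is the reduction from complex to real integrands mentioned above; nothing else in the argument goes beyond routine verification.
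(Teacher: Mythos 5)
Your proposal is correct and follows essentially the same route as the paper, which proves the identity by a single appeal to the Riemann--Stieltjes change-of-variable theorem (Theorem \ref{the:stieltjes_change}, applied with the continuous strictly increasing substitution $u \mapsto b\vt_{a/b}(u)$ guaranteed by \eqref{eq:vt_increase_eq}). Your splitting of the substitution into the affine step and the $\vt_{a/b}$ step, and your remarks on complex integrands and preservation of bounded variation, are just a more explicit spelling-out of the same argument.
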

\begin{proof}
The statement follows from Theorem \ref{the:stieltjes_change} and \eqref{eq:vt_increase_eq}.
\end{proof}
The existence of the Gaussian quadrature with the variable transformation is guaranteed under the following conditions:
\begin{lemma}
\label{lem:vt_gauss}
Let $0 < a < b < \infty$ and $M \in \intge{1}$. Let $W(t)$ be a bounded nondecreasing function with at least $M + 1$ points of increase for $a \le t \le b$. Let $f(z) \coloneqq \int_a^b \enum^{ - z t } \del W(t)$ for $z \in \mathbb{C}$. Let $\vt_{a/b} \in \vtset_{a/b}$. Then there exists $u_\nu$ and $c_\nu$, $\nu = 1, \ldots, M$, satisfying
\begin{align}
\label{eq:vt_gauss_uc_ineq}
&\text{$-1 < u_1 < \cdots < u_M < 1$ and $c_\nu > 0, \nu = 1, \ldots, M$}, \\
\label{eq:vt_gauss_exact}
&\int_{-1}^1 p(u) \del W(b \vt_{a / b}(u)) = \sum_{\nu = 1}^M c_\nu p(u_\nu), p \in \pset_{2M-1}, \\
\label{eq:vt_gauss_t_ineq}
&a < b \vt_{a / b}(u_1) < \cdots < b \vt_{a / b}(u_M) < b, \\
\label{eq:f0_c}
&0 < f(0) = \int_a^b \del W(t) = \int_{-1}^1\del W(b \vt_{a / b}(u)) = \sum_{\nu = 1}^M c_\nu < \infty, \\
\label{eq:Evtx_exp}
&\left\lvert f(x) - \sum_{\nu = 1}^M c_\nu \enum^{- b \vt_{a / b}(u_\nu) x} \right\rvert \le \lvert \enum^{-ax} - \enum^{-bx} \rvert f(0), x \in \mathbb{R}.
\end{align}
\end{lemma}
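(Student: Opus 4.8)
I would construct the Gaussian quadrature for the measure $\del W(b\vt_{a/b}(u))$ on $[-1,1]$ and then read off the five conclusions in order, the first four being essentially bookkeeping and the last one being the real point.

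First, I would set $\mu(u) \coloneqq W(b\vt_{a/b}(u))$ on $[-1,1]$. Since $W$ is bounded and nondecreasing on $[a,b]$ and $\vt_{a/b}$ is continuous and strictly increasing from $[-1,1]$ onto $[a/b,1]$ by \eqref{eq:vt_increase_eq}, the function $\mu$ is bounded and nondecreasing on $[-1,1]$; moreover $\vt_{a/b}$ being strictly increasing means it does not collapse any of the points of increase of $W$, so $\mu$ inherits at least $M+1$ points of increase. By the standard existence theorem for Gaussian quadrature with respect to a measure having at least $M+1$ points of increase (Appendix \ref{app:gauss}), there exist nodes $-1 < u_1 < \cdots < u_M < 1$ and weights $c_\nu > 0$ with the exactness property \eqref{eq:vt_gauss_exact} on $\pset_{2M-1}$; this gives \eqref{eq:vt_gauss_uc_ineq} and \eqref{eq:vt_gauss_exact} at once. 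Inequality \eqref{eq:vt_gauss_t_ineq} is then immediate: apply the strictly increasing map $t = b\vt_{a/b}(u)$ to the chain $-1 < u_1 < \cdots < u_M < 1$ and use $\vt_{a/b}(-1) = a/b$, $\vt_{a/b}(1) = 1$. For \eqref{eq:f0_c}, take $p \equiv 1 \in \pset_{2M-1}$ in \eqref{eq:vt_gauss_exact} and combine with the change-of-variable identity \eqref{eq:f_m1_1} evaluated at $z = 0$; positivity and finiteness of $f(0) = \int_a^b \del W(t)$ follow from $W$ having points of increase and being bounded.

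The substantive step is the error bound \eqref{eq:Evtx_exp}. Fix $x \in \mathbb{R}$. Using \eqref{eq:f_m1_1} with $z = x$ and then \eqref{eq:vt_gauss_exact}, write
\begin{align}
\label{eq:plan_err}
f(x) - \sum_{\nu=1}^M c_\nu \enum^{-b\vt_{a/b}(u_\nu)x}
&= \int_{-1}^1 \enum^{-bx\vt_{a/b}(u)} \del\mu(u) - \sum_{\nu=1}^M c_\nu \enum^{-bx\vt_{a/b}(u_\nu)} \\ \nonumber
&= \int_a^b \enum^{-xt}\,\del W(t) - \int_a^b q(t)\,\del W(t),
\end{align}
where I would like $q$ to be, roughly, the image under $t = b\vt_{a/b}(u)$ of a polynomial in $\pset_{2M-1}$ that interpolates $u \mapsto \enum^{-bx\vt_{a/b}(u)}$ at the nodes $u_\nu$; the point of passing through the quadrature is that the quadrature sum equals $\int_a^b q\,\del W$ for any such interpolant, by exactness. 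The cleanest route is instead the classical estimate of Gaussian quadrature error by best polynomial approximation: for any $p \in \pset_{2M-1}$,
\[
\left| \int_{-1}^1 g(u)\,\del\mu(u) - \sum_{\nu=1}^M c_\nu g(u_\nu) \right| \le 2 \bigl( \mu(1) - \mu(-1) \bigr)\, \inf_{p \in \pset_{2M-1}} \sup_{u \in [-1,1]} |g(u) - p(u)|,
\]
applied to $g(u) = \enum^{-bx\vt_{a/b}(u)}$. Since $u \mapsto \enum^{-bx\vt_{a/b}(u)}$ takes values between $\enum^{-bx}$ and $\enum^{-ax}$ (the endpoints of $\vt_{a/b}$, using monotonicity of $t \mapsto \enum^{-xt}$ for each fixed $x$), one can already bound the sup-norm crudely; but to land exactly on $|\enum^{-ax} - \enum^{-bx}| f(0)$ I would avoid polynomial approximation entirely and argue directly from \eqref{eq:plan_err}: the left side is $\int_a^b \enum^{-xt}\,\del W(t)$ minus a convex combination (weights $c_\nu$ summing to $f(0) = \int_a^b \del W$, after rescaling) of values $\enum^{-xt_\nu}$ with $t_\nu = b\vt_{a/b}(u_\nu) \in (a,b)$, so both terms lie in the interval $[\,\enum^{-bx} f(0),\, \enum^{-ax} f(0)\,]$ when $x \ge 0$ (and the reversed interval when $x < 0$), whence the absolute difference is at most the length $|\enum^{-ax} - \enum^{-bx}| f(0)$.

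The main obstacle, and the one place requiring care, is justifying that $\mu$ has at least $M+1$ points of increase so that the Gaussian rule exists with strictly positive weights and interior nodes — this is where strict monotonicity of $\vt_{a/b}$ (rather than mere monotonicity) is used, since a strictly increasing continuous bijection carries points of increase of $W$ bijectively to points of increase of $\mu$. Everything else is either a direct substitution (the change of variable \eqref{eq:f_m1_1}), an order-preserving relabelling (\eqref{eq:vt_gauss_t_ineq}), or the elementary observation that a quantity lying in an interval of length $L$ differs from another such quantity by at most $L$.
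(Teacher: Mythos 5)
Your proposal is correct, and for conclusions \eqref{eq:vt_gauss_uc_ineq}--\eqref{eq:f0_c} it coincides with the paper's proof: push the measure forward to $[-1,1]$, note that the strictly increasing homeomorphism $u \mapsto b\vt_{a/b}(u)$ preserves the $M+1$ points of increase, invoke the existence theorem for Gaussian quadrature (Theorem \ref{the:gauss}), and take $p \equiv 1$ together with \eqref{eq:f_m1_1}. The only divergence is in \eqref{eq:Evtx_exp}: the paper applies the quadrature error bound \eqref{eq:gauss_error_dist} together with the identity $\dist(\enum^{-b\vt_{a/b}(u)x}, \pset_0, [-1,1]) = |\enum^{-ax}-\enum^{-bx}|/2$ (best constant approximation is the midpoint of the range, so the factor $2$ times half the range recovers the range), whereas you argue directly that $f(x)$ and $\sum_\nu c_\nu \enum^{-b\vt_{a/b}(u_\nu)x}$ are both integrals of $\enum^{-xt}$ against nonnegative measures of total mass $f(0)$ supported in $[a,b]$, hence both lie in an interval of length $|\enum^{-ax}-\enum^{-bx}|f(0)$. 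The two arguments are equivalent — unwinding the paper's use of \eqref{eq:gauss_error_dist} with the best constant yields exactly your two-sided estimate — but yours is self-contained and avoids the appendix machinery, at the cost of not foreshadowing the $\pset_{2M-1}$ refinement of the same bound that drives Lemma \ref{lem:Evtx} and Theorem \ref{the:Evt} later.
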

\begin{proof}
Under the conditions, $W(b \vt_{a / b}(u))$ is a bounded nondecreasing function with at least $M + 1$ points of increase for $-1 \le u \le 1$. Then by Theorem \ref{the:gauss}, there exists $u_\nu$ and $c_\nu$, $\nu = 1, \ldots, M$, satisfying \eqref{eq:vt_gauss_uc_ineq} and \eqref{eq:vt_gauss_exact}. \eqref{eq:vt_gauss_t_ineq} follows from \eqref{eq:vt_increase_eq} and \eqref{eq:vt_gauss_uc_ineq}. \eqref{eq:f0_c} follows from \eqref{eq:f_m1_1}, \eqref{eq:vt_gauss_exact} with $p(u) = 1$, Theorem \ref{the:stieltjes_pos}, and $\int_a^b \del W(t) = W(b) - W(a) < \infty$. By using \eqref{eq:gauss_error_dist}, $\dist(\enum^{- b \vt_{a / b}(u) x}, \mathcal{P}_0, [-1, 1]) = \lvert \enum^{-ax} - \enum^{-bx} \rvert / 2$ for a given $x \in \mathbb{R}$, and \eqref{eq:f0_c}, we have \eqref{eq:Evtx_exp}.
\end{proof}

\subsection{Maximum absolute error bound}
\label{sec:error}
We consider error bounds of Gaussian quadrature for \eqref{eq:f_m1_1}, whose integrand is $\enum^{-bx \vt_{a/b}(u)}$ for $-1 \le u \le 1$. In \ref{app:gauss}, we summarized error bounds of Gaussian quadrature used in the current study. The Achieser--Stenger bound \eqref{eq:achieser_stenger} requires the analyticity of the integrand on an interior of an ellipse
\begin{equation}
\ellipse_\rho \coloneqq \left\{ z \in \mathbb{C} \middle\vert \frac{ (\RE z)^2 }{((\rho + \rho^{-1}) / 2)^2} + \frac{ (\IM z)^2 }{((\rho - \rho^{-1}) / 2)^2} < 1 \right\}
\end{equation}
for some $1 < \rho < \infty$, which is called the open Bernstein ellipse. The boundary, denoted as $\bd{\ellipse_\rho}$, is called the Bernstein ellipse, whose foci are $(\pm1, 0)$ and $\rho$ corresponds to the sum of the semimajor axis $(\rho + \rho^{-1}) / 2$ and the semiminor axis $(\rho - \rho^{-1}) / 2$ \cite[p.~56]{Trefethen2013}. For convenience, we set $\ellipse_\infty \coloneqq \mathbb{C}$.
We summarize the basic properties of an analytic function $F(u)$ on $\ellipse_R$ for some $1 < R \le \infty$ as follows:
\begin{theorem}[Laurent series, Fourier series, and Chebyshev series; {\cite[Theorem 8.1]{Trefethen2013}}]
\label{the:laurent}
Let $F(u)$ be an analytic function on $\ellipse_R$ for a given $1 < R \le \infty$. Let us introduce
\begin{align}
&\text{$\aset_R \coloneqq \{ z \in \mathbb{C} \mid R^{-1} < \lvert z \rvert < R \}$, where $\infty^{-1} = 0$}, \\
&\imset_Y \coloneqq \{ z \in \mathbb{C} \mid \lvert \IM z \rvert < Y \}, 0 < Y \le \infty.
\end{align}
Let $a_n \coloneqq \frac{1}{\pi} \int_0^\pi F(\cos\theta) \cos(n \theta) \del \theta$ for $n \in \intge{0}$. Let $T_n(z)$ be the $n$-degree Chebyshev polynomial of the first kind for $z \in \mathbb{C}$ and $n \in \intge{0}$. Then one has
\begin{align}
&\text{$F((s + s^{-1}) / 2)$ is analytic on $\aset_R$ and $F(\cos\theta)$ is analytic on $\imset_{\log(R)}$}, \\
&F((s + s^{-1}) / 2) = a_0 + \sum_{n = 1}^\infty a_n (s^n + s^{-n}), s \in \aset_R, \\
&F(\cos\theta) = a_0 + 2 \sum_{n = 1}^\infty a_n \cos(n \theta), \theta \in \imset_{\log(R)}, \\
&F(u) = a_0 + 2 \sum_{n = 1}^\infty a_n T_n(u), u \in \ellipse_R, \\
&\lvert a_n \rvert \le \rho^{-n} \sup\limits_{u \in \ellipse_\rho} \lvert F(u) \rvert < \infty, 1 < \rho < R, n \in \intge{0}, \\
&\lvert a_n \rvert \le R^{-n} \sup\limits_{u \in \ellipse_R} \lvert F(u) \rvert < \infty, n \in \intge{0}, \text{ if $R \ne \infty$ and $\sup\limits_{u \in \ellipse_R} \lvert F(u) \rvert < \infty$}.
\end{align}
\end{theorem}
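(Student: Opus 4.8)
The plan is to pull $F$ back to an annulus through the Joukowski map $J(s) \coloneqq (s + s^{-1})/2$ and obtain all three expansions from a single Laurent series. First I would record the elementary facts about $J$: it is holomorphic on $\mathbb{C} \setminus \{0\}$, it satisfies $J(s) = J(s^{-1})$ and $J(\enum^{\inum\theta}) = \cos\theta$, and it maps the circle $\{|s| = \rho\}$ onto $\bd{\ellipse_\rho}$ for $\rho > 1$ and onto $[-1, 1]$ for $\rho = 1$; letting $\rho$ range over $[1, R)$ these curves sweep out exactly $\ellipse_R$, so $J(\aset_R) = \ellipse_R$ and every $u \in \ellipse_R$ has (two) preimages under $J$ lying in $\aset_R$. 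In particular $G(s) \coloneqq F(J(s))$ is holomorphic on $\aset_R$ as a composition of holomorphic maps, which is the asserted analyticity of $F((s + s^{-1})/2)$ on $\aset_R$; since $\theta \mapsto \enum^{\inum\theta}$ maps $\imset_{\log R}$ into $\aset_R$ (because $|\enum^{\inum\theta}| = \enum^{-\im\theta} \in (R^{-1}, R)$ there), $F(\cos\theta) = G(\enum^{\inum\theta})$ is analytic on $\imset_{\log R}$.

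Next I would take the Laurent expansion $G(s) = \sum_{n \in \mathbb{Z}} b_n s^n$, valid on $\aset_R$ and uniformly convergent on compact subsets; the symmetry $G(s) = G(s^{-1})$ forces $b_{-n} = b_n$, so $G(s) = b_0 + \sum_{n = 1}^\infty b_n (s^n + s^{-n})$. To identify the coefficients, the Cauchy formula on $\{|s| = 1\}$ with $s = \enum^{\inum\theta}$ gives $b_n = \frac{1}{2\pi} \int_{-\pi}^\pi G(\enum^{\inum\theta}) \enum^{-\inum n\theta}\, \del\theta = \frac{1}{2\pi} \int_{-\pi}^\pi F(\cos\theta) \enum^{-\inum n\theta}\, \del\theta$; since $\theta \mapsto F(\cos\theta)$ is even, the $\sin(n\theta)$ contribution vanishes by oddness and the $\cos(n\theta)$ contribution folds $[-\pi, \pi]$ onto $[0, \pi]$, yielding $b_n = \frac{1}{\pi} \int_0^\pi F(\cos\theta) \cos(n\theta)\, \del\theta = a_n$.

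The remaining two series are then read off. Substituting $s = \enum^{\inum\theta}$ for $\theta \in \imset_{\log R}$ into the series for $G$ gives the Fourier series $F(\cos\theta) = a_0 + 2 \sum_{n = 1}^\infty a_n \cos(n\theta)$. Using the identity $T_n(J(s)) = (s^n + s^{-n})/2$, and picking for each $u \in \ellipse_R$ a preimage $s \in \aset_R$ with $J(s) = u$, the same series becomes the Chebyshev series $F(u) = a_0 + 2 \sum_{n = 1}^\infty a_n T_n(u)$; its uniform convergence on compact subsets of $\ellipse_R$ is inherited from that of the Laurent series on $\aset_R$.

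Finally, for the coefficient estimates I would apply the Cauchy bound on $\{|s| = \rho\}$ for $1 < \rho < R$, namely $|a_n| = |b_n| \le \rho^{-n} \sup_{|s| = \rho} |G(s)| = \rho^{-n} \sup_{u \in \bd{\ellipse_\rho}} |F(u)|$, and then note that $\overline{\ellipse_\rho}$ is compact and contained in $\ellipse_R$, so $F$ is continuous there and the maximum modulus principle gives $\sup_{u \in \bd{\ellipse_\rho}} |F(u)| = \sup_{u \in \ellipse_\rho} |F(u)| < \infty$. When $R \ne \infty$ and $\sup_{u \in \ellipse_R} |F(u)| < \infty$, letting $\rho \uparrow R$ in $|a_n| \le \rho^{-n} \sup_{u \in \ellipse_\rho} |F(u)| \le \rho^{-n} \sup_{u \in \ellipse_R} |F(u)|$ produces $|a_n| \le R^{-n} \sup_{u \in \ellipse_R} |F(u)|$. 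The only steps I expect to need real care are the geometric claim that $J$ sends each circle $\{|s| = \rho\}$ exactly onto $\bd{\ellipse_\rho}$ (hence $\aset_R$ onto $\ellipse_R$) and the bookkeeping identifying the Laurent coefficient with the stated formula for $a_n$; the rest is routine term-by-term manipulation justified by locally uniform convergence.
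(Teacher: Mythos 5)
Your proof is correct and follows essentially the same route as the source the paper cites for this result (Trefethen, Theorem 8.1): transplant $F$ to the annulus via the Joukowski map, take the Laurent expansion, use the symmetry $G(s)=G(s^{-1})$ and the Cauchy integral/estimate on circles, and read off the Fourier and Chebyshev forms. The paper itself gives no proof beyond the citation, and your reconstruction, including the coefficient bounds via the maximum modulus principle and the limit $\rho\uparrow R$, is sound.
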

To represent the largest open Bernstein ellipse to satisfy the analyticity of a function $F(u)$, we introduce an extended real number $\rhoana[F]$ as follows:
\begin{definition}
Let $F: [-1, 1] \to \mathbb{R}$. Then $\rhoana[F]$ is defined as
\begin{equation}
\rhoana[F] \coloneqq
\begin{cases}
1, & \text{if there exists $v \in [-1, 1]$ such that $F$ is not analytic at $v$}, \\
\rho, & \text{if there exists $\rho \in (1, \infty)$ and $v \in \bd{\ellipse_\rho}$ such that $F$ is} \\
& \text{analytically continuable to $\ellipse_\rho$ and is not analytic at $v$}, \\
\infty, & \text{if $F$ is analytically continuable to $\mathbb{C}$ (i.e., entire function)}.
\end{cases}
\end{equation}
\end{definition}
Let $0 < x < \infty$. To apply the Achieser--Stenger bound, the integrand $\enum^{-bx \vt_{a/b}(u)}$ in \eqref{eq:f_m1_1} is necessary to satisfy the inequality $\sup\limits_{u \in \ellipse_\rho} \lvert \RE \enum^{-bx \vt_{a/b}(u)} \rvert < \infty$. By using $\sup\limits_{u \in \ellipse_\rho} \lvert \RE \enum^{-bx \vt_{a/b}(u)} \rvert \le \enum^{-bx \inf\limits_{u \in \ellipse_\rho} \RE \vt_{a/b}(u)}$, the inequality is satisfied if $\inf\limits_{u \in \ellipse_\rho} \RE \vt_{a/b}(u) > -\infty$. Then we investigate the properties of $\inf\limits_{u \in \ellipse_\rho} \RE \vt_{a/b}(u)$ as follows:
\begin{lemma}
\label{lem:infRe}
Let $0 < r < 1$, $\vt_r \in \vtset_r$, and $\rhoana[\vt_r] > 1$. Then one has
\begin{align}
\label{eq:infRe_minRe}
&\inf_{u \in \ellipse_\rho} \RE \vt_r(u) = \min_{u \in \bd{\ellipse_\rho}} \RE \vt_r(u), 1 < \rho < \rhoana[\vt_r], \\
\label{eq:infRe_decrease}
&\text{$\inf_{u \in \ellipse_\rho} \RE \vt_r(u)$ is continuous and strictly decreasing for $1 < \rho < \rhoana[\vt_r]$}, \\
\label{eq:lim_infRe}
&\lim_{\rho \downarrow 1} \inf_{u \in \ellipse_\rho} \RE \vt_r(u) = r, \lim_{\rho \uparrow \rhoana[\vt_r]} \inf_{u \in \ellipse_\rho} \RE \vt_r(u) = \inf_{u \in \ellipse_{\rhoana[\vt_r]}} \RE \vt_r(u), \\
\label{eq:infRe_rhoana_r}
&\inf_{u \in \ellipse_{\rhoana[\vt_r]}} \RE \vt_r(u) < \inf_{u \in \ellipse_\rho} \RE \vt_r(u) < r, 1 < \rho < \rhoana[\vt_r], \\
\label{eq:infRe_minf}
&\text{$\inf_{u \in \ellipse_{\rhoana[\vt_r]}} \RE \vt_r(u) = \inf_{u \in \mathbb{C}} \RE \vt_r(u) = -\infty$ if $\rhoana[\vt_r] = \infty$}, \\
\label{eq:infRe_gt_minf}
&\text{$1 < \rhoana[\vt_r] < \infty$ if $\inf_{u \in \ellipse_{\rhoana[\vt_r]}} \RE \vt_r(u) > -\infty$}.
\end{align}
\end{lemma}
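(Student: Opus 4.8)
The plan is to reduce everything to the strong minimum principle for harmonic functions, applied to $\re\vt_r$ on the compact sub-ellipses $\overline{\ellipse_\rho}$ with $1 < \rho < \rhoana[\vt_r]$, combined with the exhaustion $\ellipse_{\rhoana[\vt_r]} = \bigcup_{1 < \rho < \rhoana[\vt_r]} \ellipse_\rho$. Write $g(\rho) \coloneqq \inf_{u \in \ellipse_\rho} \re \vt_r(u)$. Since $\vt_r$ is strictly increasing on $[-1,1]$ and $[-1,1] \subset \ellipse_\rho$ for every $\rho > 1$, the analytic function $\vt_r$ is non-constant, so $\re\vt_r$ is a non-constant harmonic function on the domain $\ellipse_{\rhoana[\vt_r]}$. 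For $1 < \rho < \rhoana[\vt_r]$ one has $\overline{\ellipse_\rho} \subset \ellipse_{\rhoana[\vt_r]}$, so $\re\vt_r$ is continuous on the compact set $\overline{\ellipse_\rho}$ and harmonic in its interior; the strong minimum principle then gives that $\re\vt_r$ attains its minimum over $\overline{\ellipse_\rho}$ only on $\bd{\ellipse_\rho}$, and combined with continuity this yields $\inf_{u \in \ellipse_\rho}\re\vt_r(u) = \min_{v \in \bd{\ellipse_\rho}}\re\vt_r(v)$, i.e.\ \eqref{eq:infRe_minRe}, with moreover $\re\vt_r(u) > \min_{v\in\bd{\ellipse_\rho}}\re\vt_r(v)$ for every $u \in \ellipse_\rho$. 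Taking $u = -1 \in \ellipse_\rho$, with $\re\vt_r(-1) = \vt_r(-1) = r$, already gives the upper bound $g(\rho) < r$ in \eqref{eq:infRe_rhoana_r}.

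For \eqref{eq:infRe_decrease} I would argue monotonicity and continuity separately. If $1 < \rho_1 < \rho_2 < \rhoana[\vt_r]$ then the Bernstein ellipses are strictly nested, $\overline{\ellipse_{\rho_1}} \subset \ellipse_{\rho_2}$; in particular $\bd{\ellipse_{\rho_1}} \subset \ellipse_{\rho_2}$, so by the previous step applied to $\rho_2$ every point of $\bd{\ellipse_{\rho_1}}$ satisfies $\re\vt_r > g(\rho_2)$, and minimizing over the compact set $\bd{\ellipse_{\rho_1}}$ yields $g(\rho_1) > g(\rho_2)$: thus $g$ is strictly decreasing. For continuity, parametrize $\bd{\ellipse_\rho}$ by the Joukowski map $u_\rho(\theta) = (\rho \enum^{\inum\theta} + \rho^{-1}\enum^{-\inum\theta})/2$; the map $(\rho,\theta) \mapsto \re\vt_r(u_\rho(\theta))$ is continuous on $(1,\rhoana[\vt_r]) \times [0,2\pi]$, and by \eqref{eq:infRe_minRe}, $g(\rho) = \min_{\theta \in [0,2\pi]} \re\vt_r(u_\rho(\theta))$ is a continuous function of $\rho$, being the minimum over a compact set of a jointly continuous function.

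Next, the limits in \eqref{eq:lim_infRe}. Since $g$ is monotone, $\lim_{\rho \uparrow \rhoana[\vt_r]} g(\rho) = \inf_{1 < \rho < \rhoana[\vt_r]} g(\rho) = \inf_{1 < \rho < \rhoana[\vt_r]} \inf_{u \in \ellipse_\rho} \re\vt_r(u) = \inf_{u \in \ellipse_{\rhoana[\vt_r]}} \re\vt_r(u)$ by the exhaustion identity (which also covers $\rhoana[\vt_r] = \infty$, where $\ellipse_\infty = \mathbb{C} = \bigcup_{\rho > 1}\ellipse_\rho$). For the limit as $\rho \downarrow 1$, the boundary $\bd{\ellipse_\rho}$ collapses to $[-1,1]$, so if $u_\rho \in \bd{\ellipse_\rho}$ realizes $g(\rho)$, then along any sequence $\rho_k \downarrow 1$ a subsequential limit $u^\ast$ lies in $[-1,1]$; by continuity $\re\vt_r(u^\ast) = \lim g(\rho_k) \le r$ (using $g < r$), while $\re\vt_r(u^\ast) = \vt_r(u^\ast) \ge \vt_r(-1) = r$ since $\vt_r$ is real and increasing on $[-1,1]$, forcing $\lim_{\rho \downarrow 1} g(\rho) = r$. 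The lower bound in \eqref{eq:infRe_rhoana_r} now follows from strict monotonicity: for $1 < \rho < \rhoana[\vt_r]$ pick $\rho'' \in (\rho, \rhoana[\vt_r])$; then $g(\rho) > g(\rho'') = \inf_{\ellipse_{\rho''}}\re\vt_r \ge \inf_{\ellipse_{\rhoana[\vt_r]}}\re\vt_r$.

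Finally, \eqref{eq:infRe_minf} and \eqref{eq:infRe_gt_minf}. If $\rhoana[\vt_r] = \infty$ then $\vt_r$ is entire and non-constant, hence so is $\enum^{-\vt_r}$; if $\re\vt_r$ were bounded below on $\mathbb{C}$ then $|\enum^{-\vt_r(z)}| = \enum^{-\re\vt_r(z)}$ would be bounded, so $\enum^{-\vt_r}$, and therefore $\vt_r$, would be constant by Liouville's theorem --- a contradiction; hence $\inf_{\mathbb{C}} \re\vt_r = -\infty$, which is \eqref{eq:infRe_minf} since $\ellipse_\infty = \mathbb{C}$. Statement \eqref{eq:infRe_gt_minf} is the contrapositive of \eqref{eq:infRe_minf} together with the standing hypothesis $\rhoana[\vt_r] > 1$. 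I expect the only genuinely delicate point to be the behavior at $\rho = \rhoana[\vt_r]$ when $\rhoana[\vt_r] < \infty$, where $\vt_r$ need not extend continuously to $\bd{\ellipse_{\rhoana[\vt_r]}}$ and $\inf_{\ellipse_{\rhoana[\vt_r]}}\re\vt_r$ may equal $-\infty$; this is precisely why the argument is built on the exhaustion of $\ellipse_{\rhoana[\vt_r]}$ by the compact sub-ellipses $\overline{\ellipse_\rho}$ rather than working on $\ellipse_{\rhoana[\vt_r]}$ itself.
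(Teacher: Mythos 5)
Your proof is correct and follows essentially the same route as the paper's: the minimum principle for the nonconstant harmonic function $\re \vt_r$ on the compact sub-ellipses $\overline{\ellipse_\rho}$, continuity, and the exhaustion of $\ellipse_{\rhoana[\vt_r]}$ by these sub-ellipses, with your treatment of the $\rho \downarrow 1$ limit and of the strict inequalities simply spelling out steps the paper leaves terse. The one point where you genuinely diverge is \eqref{eq:infRe_minf}: the paper invokes the little Picard theorem to conclude that $\{\re \vt_r(u) \mid u \in \mathbb{C}\} = \mathbb{R}$, whereas you apply Liouville's theorem to the bounded entire function $\enum^{-\vt_r}$. Your argument is more elementary and proves exactly what the lemma needs ($\inf_{\mathbb{C}} \re \vt_r = -\infty$), while the paper's yields the slightly stronger surjectivity statement; both are valid, and nothing downstream in the paper requires more than unboundedness from below.
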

\begin{proof}
Under the conditions, $\vt_r(u)$ is a nonconstant analytic function on $\ellipse_{\rhoana[\vt_r]}$. Then $\RE \vt_r(u)$ is a nonconstant harmonic function on $\ellipse_{\rhoana[\vt_r]}$. By the minimum principle of the nonconstant harmonic function and the continuity of the harmonic function on $\ellipse_{\rhoana[\vt_r]}$, we have \eqref{eq:infRe_minRe} and \eqref{eq:infRe_decrease}. By using \eqref{eq:infRe_minRe}, the continuity on $\ellipse_{\rhoana[\vt_r]}$, and \eqref{eq:vt_increase_eq}, we have
\begin{equation}
\lim_{\rho \downarrow 1} \inf_{u \in \ellipse_\rho} \RE \vt_r(u) = \lim_{\rho \downarrow 1} \min_{u \in \bd{\ellipse_\rho}} \RE \vt_r(u) = \min_{u \in [-1, 1]} \RE \vt_r(u) = r,
\end{equation}
which leads to the first equality in \eqref{eq:lim_infRe}. The second equality in \eqref{eq:lim_infRe} is obvious. \eqref{eq:infRe_rhoana_r} follows from \eqref{eq:infRe_decrease} and \eqref{eq:lim_infRe}.

If $\rhoana[\vt_r] = \infty$, $\vt_r(u)$ is a nonconstant entire function. Then by little Picard theorem, we have $\{ \RE \vt_r(u) \mid u \in \mathbb{C} \} = \mathbb{R}$, which leads to \eqref{eq:infRe_minf}. \eqref{eq:infRe_gt_minf} follows from the contraposition of \eqref{eq:infRe_minf} and the condition $\rhoana[\vt_r] > 1$.
\end{proof}
Then we introduce an interval $I_{\vt_{a/b}}$ to satisfy $\inf\limits_{u \in \ellipse_\rho} \RE \vt_{a/b}(u) > -\infty$ for $\rho \in I_{\vt_{a/b}}$ as follows:
\begin{lemma}
\label{lem:Ivt}
Let $0 < r < 1$, $\vt_r \in \vtset_r$, and $\rhoana[\vt_r] > 1$. Let $I_{\vt_r}$ be an interval
\begin{equation}
\label{eq:Ivt}
I_{\vt_r} \coloneqq
\begin{cases}
(1, \rhoana[\vt_r]) & \text{if $\inf\limits_{u \in \ellipse_{\rhoana[\vt_r]}} \RE \vt_r(u) = - \infty$}, \\
(1, \rhoana[\vt_r]] & \text{if $\inf\limits_{u \in \ellipse_{\rhoana[\vt_r]}} \RE \vt_r(u) > - \infty$}.
\end{cases}
\end{equation}
Then one has
\begin{align}
\label{eq:Ivt_subset}
&(1, \rhoana[\vt_r]) \subseteq I_{\vt_r} \subseteq (1, \infty), \\
\label{eq:infRe_ineq}
&-\infty < \inf_{u \in \ellipse_\rho} \RE \vt_r(u) < r, \rho \in I_{\vt_r}.
\end{align}
\end{lemma}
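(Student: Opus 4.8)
The plan is to read off both assertions directly from Lemma~\ref{lem:infRe}, since the interval $I_{\vt_r}$ in \eqref{eq:Ivt} is constructed precisely as the largest sub-interval of $(1,\infty)$ on which $\inf_{u \in \ellipse_\rho} \re \vt_r(u)$ stays finite. Throughout I would use the hypotheses $0 < r < 1$, $\vt_r \in \vtset_r$, $\rhoana[\vt_r] > 1$, which are exactly those under which all conclusions of Lemma~\ref{lem:infRe} are available.

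First I would verify the inclusions \eqref{eq:Ivt_subset}. The inclusion $(1, \rhoana[\vt_r]) \subseteq I_{\vt_r}$ is immediate from both branches of the definition \eqref{eq:Ivt}. For $I_{\vt_r} \subseteq (1, \infty)$ I split according to \eqref{eq:Ivt}: in the first branch $I_{\vt_r} = (1, \rhoana[\vt_r])$, which lies in $(1, \infty)$ since $\rhoana[\vt_r] \in (1, \infty]$; in the second branch the defining hypothesis $\inf_{u \in \ellipse_{\rhoana[\vt_r]}} \re \vt_r(u) > -\infty$ together with the implication \eqref{eq:infRe_gt_minf} forces $1 < \rhoana[\vt_r] < \infty$, so $I_{\vt_r} = (1, \rhoana[\vt_r]]$ is a bounded interval inside $(1, \infty)$. (Here \eqref{eq:infRe_minf} also confirms that the second branch cannot coexist with $\rhoana[\vt_r] = \infty$, so the two branches of \eqref{eq:Ivt} are genuinely exhaustive.)

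Next I would establish the two-sided estimate \eqref{eq:infRe_ineq} by fixing $\rho \in I_{\vt_r}$ and distinguishing two cases. If $1 < \rho < \rhoana[\vt_r]$, then \eqref{eq:infRe_rhoana_r} gives $\inf_{u \in \ellipse_{\rhoana[\vt_r]}} \re \vt_r(u) < \inf_{u \in \ellipse_\rho} \re \vt_r(u) < r$: the right inequality is the desired upper bound, and the left inequality shows $\inf_{u \in \ellipse_\rho} \re \vt_r(u) > -\infty$ (equivalently, by \eqref{eq:infRe_minRe} this infimum is attained as $\min_{u \in \bd{\ellipse_\rho}} \re \vt_r(u)$, a finite minimum over a compact curve). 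The only remaining value is $\rho = \rhoana[\vt_r]$, which by \eqref{eq:Ivt} occurs solely in the second branch; there $\inf_{u \in \ellipse_{\rhoana[\vt_r]}} \re \vt_r(u) > -\infty$ is the defining hypothesis, while $\inf_{u \in \ellipse_{\rhoana[\vt_r]}} \re \vt_r(u) < r$ is part of \eqref{eq:infRe_rhoana_r}. Combining the two cases yields \eqref{eq:infRe_ineq}.

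I do not anticipate any real difficulty: the proof is essentially case-bookkeeping over the definition \eqref{eq:Ivt}, feeding in the facts already isolated in Lemma~\ref{lem:infRe}. The one point requiring a little care is the endpoint $\rho = \rhoana[\vt_r]$: one must check that it lies in $I_{\vt_r}$ exactly when $\inf_{u \in \ellipse_{\rhoana[\vt_r]}} \re \vt_r(u)$ is finite — which is precisely how \eqref{eq:Ivt} is arranged — and that in that situation \eqref{eq:infRe_gt_minf} keeps $\rhoana[\vt_r]$ finite, so $I_{\vt_r}$ does not escape $(1, \infty)$.
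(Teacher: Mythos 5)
Your proposal is correct and follows essentially the same route as the paper: both parts are read off from Lemma \ref{lem:infRe}, with \eqref{eq:infRe_gt_minf} forcing $\rhoana[\vt_r] < \infty$ in the second branch of \eqref{eq:Ivt} and the monotonicity/limit facts (\eqref{eq:infRe_decrease}, \eqref{eq:lim_infRe}, \eqref{eq:infRe_rhoana_r}) giving the two-sided bound \eqref{eq:infRe_ineq}, including at the endpoint $\rho = \rhoana[\vt_r]$. The only cosmetic difference is that you split $\rho < \rhoana[\vt_r]$ from $\rho = \rhoana[\vt_r]$ explicitly, whereas the paper writes the chain $-\infty < \inf_{u \in \ellipse_{\rhoana[\vt_r]}} \re \vt_r(u) \le \inf_{u \in \ellipse_\rho} \re \vt_r(u) < r$ uniformly for $1 < \rho \le \rhoana[\vt_r]$.
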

\begin{proof}
Let $\inf\limits_{u \in \ellipse_{\rhoana[\vt_r]}} \RE \vt_r(u) = - \infty$. Then $I_{\vt_r} = (1, \rhoana[\vt_r])$ satisfies \eqref{eq:Ivt_subset} by $1 < \rhoana[\vt_r] \le \infty$. \eqref{eq:infRe_ineq} follows from \eqref{eq:infRe_rhoana_r}.

Let $\inf\limits_{u \in \ellipse_{\rhoana[\vt_r]}} \RE \vt_r(u) > - \infty$. $I_{\vt_r} = (1, \rhoana[\vt_r]]$ satisfies \eqref{eq:Ivt_subset} by $\rhoana[\vt_r] > 1$ and \eqref{eq:infRe_gt_minf}. By \eqref{eq:infRe_decrease} and \eqref{eq:lim_infRe}, we have $-\infty < \inf\limits_{u \in \ellipse_{\rhoana[\vt_r]}} \RE \vt_r(u) \le \inf\limits_{u \in \ellipse_\rho} \RE \vt_r(u) < r$ for $1 < \rho \le \rhoana[\vt_r]$, which leads to \eqref{eq:infRe_ineq}.
\end{proof}
Then we have $x$ and $\rho$ dependent error bound as follows:
\begin{lemma}
\label{lem:Evtx}
Let the conditions of Lemma \ref{lem:vt_gauss} be satisfied. Let $\rhoana[\vt_{a/b}] > 1$. Then one has
\begin{align}
\label{eq:Evtx_stenger}
\left\lvert f(x) - \sum_{ \nu = 1 }^M c_\nu \enum^{ - b \vt_{a/b}(u_\nu) x } \right\rvert < \frac{16}{\pi} \rho^{- 2M} \enum^{ - b x \inf\limits_{u \in \ellipse_\rho} \RE \vt_{a / b}(u) } f(0), \rho \in I_{\vt_{a/b}}, 0 < x < \infty.
\end{align}
\end{lemma}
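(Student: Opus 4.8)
The plan is to apply the Achieser--Stenger bound \eqref{eq:achieser_stenger} to the Gaussian quadrature \eqref{eq:vt_gauss_exact}, using the variable transformation $\vt_{a/b}$ to rewrite $f$ via \eqref{eq:f_m1_1}. For a fixed $x$ with $0 < x < \infty$, set $F_x(u) \coloneqq \enum^{-bx\vt_{a/b}(u)}$. First I would fix $\rho \in I_{\vt_{a/b}}$ and check that $F_x$ satisfies the hypotheses of the Achieser--Stenger bound on $\ellipse_\rho$: since $\rho \le \rhoana[\vt_{a/b}]$ (using \eqref{eq:Ivt_subset}), $\vt_{a/b}$ is analytic on $\ellipse_\rho$ — or, in the closed-ellipse case of \eqref{eq:Ivt}, analytically continuable to $\ellipse_\rho$ with $\inf_{u\in\ellipse_\rho}\re\vt_{a/b}(u) > -\infty$ — so $F_x$ is analytic there and bounded, because $|F_x(u)| = \enum^{-bx\re\vt_{a/b}(u)} \le \enum^{-bx\inf_{u\in\ellipse_\rho}\re\vt_{a/b}(u)} < \infty$ by \eqref{eq:infRe_ineq}. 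The Gaussian quadrature in \eqref{eq:vt_gauss_exact} is exact on $\pset_{2M-1}$ with respect to the measure $\del W(b\vt_{a/b}(u))$, which is the relevant setting for the Achieser--Stenger estimate.

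Next I would combine the quadrature error bound with the supremum estimate just obtained. The Achieser--Stenger bound gives a constant of the form (something)$\cdot\rho^{-2M}\cdot\sup_{u\in\bd{\ellipse_\rho}}|F_x(u)|$ times the total mass of the measure; that total mass is $\int_{-1}^1 \del W(b\vt_{a/b}(u)) = f(0)$ by \eqref{eq:f0_c}. For the supremum over the boundary, I would use that $|F_x|$ is the exponential of a harmonic function, so $\sup_{u\in\bd{\ellipse_\rho}}|F_x(u)| = \enum^{-bx\min_{u\in\bd{\ellipse_\rho}}\re\vt_{a/b}(u)}$, and then invoke \eqref{eq:infRe_minRe} to replace $\min_{u\in\bd{\ellipse_\rho}}\re\vt_{a/b}(u)$ by $\inf_{u\in\ellipse_\rho}\re\vt_{a/b}(u)$. (For $\rho = \rhoana[\vt_{a/b}]$ in the closed case, $\inf$ is attained on the closed ellipse and \eqref{eq:infRe_minRe} extends by a limiting argument via \eqref{eq:lim_infRe}, or one uses $\ellipse_{\rho'}$ for $\rho' \uparrow \rho$ together with \eqref{eq:infRe_decrease} and takes the supremum of the resulting bounds — since $\rho^{-2M}$ and the exponential factor vary continuously, the bound for $\rho = \rhoana[\vt_{a/b}]$ follows.) Assembling the numerical constant from the Achieser--Stenger bound — which is $\frac{16}{\pi}\frac{1}{\rho^2-1}$ or similar in the form recorded in Appendix~\ref{app:gauss} — and absorbing the $(\rho^2-1)^{-1}$-type factor appropriately to reach the clean constant $\tfrac{16}{\pi}$ (noting the strict inequality leaves room for this), yields \eqref{eq:Evtx_stenger}.

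The main obstacle I anticipate is bookkeeping at the endpoint $\rho = \rhoana[\vt_{a/b}]$ in the closed-ellipse case of \eqref{eq:Ivt}: the Achieser--Stenger bound is naturally stated for analyticity on the open ellipse interior, so one must either verify it persists when $F_x$ is analytic on the closed region (which it is, by analyticity slightly beyond or by continuity of the relevant integrals), or pass to the limit from $\ellipse_{\rho'}$ with $\rho' \uparrow \rhoana[\vt_{a/b}]$ using the monotonicity \eqref{eq:infRe_decrease} and the limit \eqref{eq:lim_infRe}. A secondary point is matching the precise form of the constant in the Achieser--Stenger inequality as recorded in the appendix to the stated $\tfrac{16}{\pi}\rho^{-2M}$: I expect this requires bounding $1/(1-\rho^{-2}) \le \text{const}$ only on a suitable range, or that the appendix's version already has the constant in the desired shape, so this should be routine once the appendix statement is in hand. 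Everything else — analyticity, boundedness, the identification of the total mass with $f(0)$, and the harmonic-function reduction of the boundary supremum — is immediate from the lemmas already established.
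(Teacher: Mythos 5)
Your proposal is correct and follows essentially the same route as the paper: apply the Achieser--Stenger bound to the integrand $\enum^{-bx\vt_{a/b}(u)}$ with the measure $\del W(b\vt_{a/b}(u))$, bound the supremum by $\enum^{-bx\inf_{u\in\ellipse_\rho}\re\vt_{a/b}(u)}$ via \eqref{eq:infRe_ineq}, and identify the total mass with $f(0)$ via \eqref{eq:f0_c}. Both of your anticipated obstacles evaporate once Theorem \ref{the:achieser_stenger} is in hand: it requires analyticity only on the \emph{open} ellipse $\ellipse_\rho$ together with $\sup_{u\in\ellipse_\rho}|\re F(u)|<\infty$ (which is exactly what membership of $\rho$ in $I_{\vt_{a/b}}$ guarantees, even at the endpoint $\rho=\rhoana[\vt_{a/b}]$), and its constant is already the clean $\tfrac{16}{\pi}\rho^{-2M}$ with no $(\rho^2-1)^{-1}$ factor to absorb; the only hypothesis you omit checking is that $F_x$ is real on $[-1,1]$, which is immediate from \eqref{eq:vt_increase_eq}.
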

\begin{proof}
Since the composition of analytic functions is analytic, $\enum^{ -bx \vt_{a/b}(u) }$ is analytic on $\ellipse_{\rhoana[\vt_{a/b}]}$ for a given $0 < x < \infty$. By \eqref{eq:vt_increase_eq}, $\enum^{ -bx \vt_{a/b}(u) }$ is real on $[-1, 1]$ for a given $0 < x < \infty$. By using \eqref{eq:vt_increase_eq} and \eqref{eq:infRe_ineq}, we have
\begin{equation}
0 < \enum^{ -ax } = \max_{u \in [-1, 1]} \enum^{ -bx \vt_{a/b}(u) } \le \sup_{u \in \ellipse_\rho} \lvert \RE \enum^{ -bx \vt_{a/b}(u) } \rvert \le \enum^{ - bx \inf\limits_{u \in \ellipse_\rho} \RE \vt_{a/b}(u) } < \infty, \rho \in I_{\vt_{a/b}}, 0 < x < \infty.
\end{equation}
Then by applying Theorem \ref{the:achieser_stenger} to \eqref{eq:f_m1_1} and using \eqref{eq:f0_c}, we have \eqref{eq:Evtx_stenger}.
\end{proof}

To obtain the best possible bound of the maximum absolute error based on Lemma \ref{lem:Evtx}, we need to evaluate
\begin{equation}
\label{eq:sup_inf_E}
\sup_{0 < x < \infty} \inf_{ \rho \in I_{\vt_{a/b}} } \rho^{- 2M} \enum^{ - b x \inf\limits_{u \in \ellipse_\rho} \RE \vt_{a / b}(u) } = \sup_{0 < \lambda < \infty} \inf_{ \rho \in I_{\vt_{a/b}} } \rho^{- 2M} \enum^{ - \lambda \inf\limits_{u \in \ellipse_\rho} \RE \vt_{a / b}(u) }.
\end{equation}
If $\inf\limits_{u \in \ellipse_{\rhoana[\vt_{a/b}]}} \RE \vt_{a/b}(u) \ge 0$, we can evaluate \eqref{eq:sup_inf_E} as follows:
\begin{lemma}
\label{lem:sup_inf_E_rhoana}
Let $0 < r < 1$, $\vt_r \in \vtset_r$, $\rhoana[\vt_r] > 1$, and $\inf\limits_{u \in \ellipse_{\rhoana[\vt_r]}} \RE \vt_r(u) \ge 0$. Then one has
\begin{equation}
\label{eq:sup_inf_E_rhoana}
\sup_{ 0 < \lambda < \infty } \inf_{ \rho \in I_{\vt_r} } \rho^{- \alpha} \enum^{ - \lambda \inf\limits_{u \in \ellipse_\rho} \RE \vt_r(u) } = \rhoana[\vt_r]^{- \alpha}, 0 \le \alpha < \infty.
\end{equation}
\end{lemma}
\begin{proof}
Let $0 \le \alpha < \infty$. By using Lemma \ref{lem:Ivt} and $\ellipse_\rho \subseteq \ellipse_{\rhoana[\vt_r]}$, we have $0 \le \inf\limits_{u \in \ellipse_{\rhoana[\vt_r]}} \RE \vt_r(u) \le \inf\limits_{u \in \ellipse_\rho} \RE \vt_r(u) < r$ for $\rho \in I_{\vt_r} = (1, \rhoana[\vt_r]] \subseteq (1, \infty)$, which indicates $\inf\limits_{ \rho \in I_{\vt_r} } \rho^{- \alpha} \enum^{ - \lambda \inf\limits_{u \in \ellipse_\rho} \RE \vt_r(u) }$ is nonincreasing for $0 < \lambda < \infty$. Then we have
\begin{equation}
\sup_{ 0 < \lambda < \infty } \inf_{ \rho \in I_{\vt_r} } \rho^{- \alpha} \enum^{ - \lambda \inf\limits_{u \in \ellipse_\rho} \RE \vt_r(u) } = \lim_{ \lambda \downarrow 0 } \inf_{ \rho \in I_{\vt_r} } \rho^{- \alpha} \enum^{ - \lambda \inf\limits_{u \in \ellipse_\rho} \RE \vt_r(u) } = \inf_{ \rho \in I_{\vt_r} } \rho^{- \alpha} = \inf_{ \rho \in (1, \rhoana[\vt_r]] } \rho^{- \alpha} = \rhoana[\vt_r]^{- \alpha}, 0 \le \alpha < \infty.
\end{equation}
\end{proof}

Next, we consider the case for $\inf\limits_{u \in \ellipse_{\rhoana[\vt_{a/b}]}} \RE \vt_{a/b}(u) < 0$. For this purpose, we introduce a function $\minfre{\vt}_r(X)$, the strong duality of optimization problems, and a real number $\rho_0[\vt_r]$ as follows:
\begin{lemma}
\label{lem:minfre}
Let $0 < r < 1$, $\vt_r \in \vtset_r$, and $\rhoana[\vt_r] > 1$. Let $\minfre{\vt}_r(X)$ be a function
\begin{equation}
\label{eq:minfre}
\minfre{\vt}_r(X) \coloneqq - \inf_{u \in \ellipse_{\enum^X}} \RE \vt_r(u) = - \min_{u \in \bd{\ellipse_{\enum^X}}} \RE \vt_r(u), 0 < X < \log(\rhoana[\vt_r]).
\end{equation}
Then $\minfre{\vt}_r(X)$ is continuous, strictly increasing, and convex for $0 < X < \log(\rhoana[\vt_r])$, and satisfies $\lim\limits_{X \downarrow 0} \minfre{\vt}_r(X) = - r$ and $\lim\limits_{X \uparrow \log(\rhoana[\vt_r])} \minfre{\vt}_r(X) = - \inf\limits_{u \in \ellipse_{\rhoana[\vt_r]}} \RE \vt_r(u)$.
\end{lemma}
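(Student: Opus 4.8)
The plan is to read off continuity, strict monotonicity, and the two limiting values directly from Lemma~\ref{lem:infRe} under the substitution $\rho=\enum^X$, and to establish convexity separately by transporting the problem to an annulus via the Joukowski map. First, for $0<X<\log(\rhoana[\vt_r])$ we have $1<\enum^X<\rhoana[\vt_r]$, so \eqref{eq:infRe_minRe} shows that the infimum over $\ellipse_{\enum^X}$ in \eqref{eq:minfre} equals the minimum over $\bd{\ellipse_{\enum^X}}$; in particular $\minfre{\vt}_r$ is well defined and real valued on $(0,\log(\rhoana[\vt_r]))$. Since $X\mapsto\enum^X$ is a strictly increasing homeomorphism of $(0,\log(\rhoana[\vt_r]))$ onto $(1,\rhoana[\vt_r])$, composing it with the continuous and strictly decreasing function $\rho\mapsto\inf_{u\in\ellipse_\rho}\re\vt_r(u)$ of \eqref{eq:infRe_decrease} and changing sign shows that $\minfre{\vt}_r$ is continuous and strictly increasing on $(0,\log(\rhoana[\vt_r]))$; the two limits are exactly the two statements in \eqref{eq:lim_infRe} rewritten as $\enum^X\downarrow1$ and $\enum^X\uparrow\rhoana[\vt_r]$.

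For convexity I would set $\basis{\vt}_r(s)\coloneqq\vt_r((s+s^{-1})/2)$, which by Theorem~\ref{the:laurent} applied with $F=\vt_r$ and $R=\rhoana[\vt_r]>1$ is analytic on the annulus $\aset_{\rhoana[\vt_r]}$. Because the Joukowski map $s\mapsto(s+s^{-1})/2$ sends the circle $\{|s|=\rho\}$ onto $\bd{\ellipse_\rho}$, we have
\[
\minfre{\vt}_r(X)=-\min_{|s|=\enum^X}\re\basis{\vt}_r(s)=\max_{|s|=\enum^X}\bigl(-\re\basis{\vt}_r(s)\bigr)=\log\max_{|s|=\enum^X}\bigl|\enum^{-\basis{\vt}_r(s)}\bigr|.
\]
Now $\enum^{-\basis{\vt}_r}$ is analytic on $\aset_{\rhoana[\vt_r]}$, so by Hadamard's three-circles theorem $\log\rho\mapsto\log\max_{|s|=\rho}|\enum^{-\basis{\vt}_r(s)}|$ is convex on $(-\log(\rhoana[\vt_r]),\log(\rhoana[\vt_r]))$; equivalently, $-\re\basis{\vt}_r$ is harmonic, hence subharmonic, and the maximum of a subharmonic function over concentric circles is a convex function of the logarithm of the radius, which one proves by comparison with harmonic functions of the form $\alpha\log|s|+\beta$ together with the maximum principle. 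Either way, $X\mapsto\minfre{\vt}_r(X)$ is convex on $(-\log(\rhoana[\vt_r]),\log(\rhoana[\vt_r]))$, hence in particular on $(0,\log(\rhoana[\vt_r]))$.

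I expect convexity to be the only real obstacle. The key move is to recognize that, after the Joukowski substitution, $\minfre{\vt}_r(X)$ is precisely the maximum of a subharmonic (indeed harmonic) function on the circle of radius $\enum^X$, which turns the convexity assertion into a three-circles statement; one must only check carefully that the Joukowski map identifies $\max_{|s|=\enum^X}(-\re\basis{\vt}_r(s))$ with $-\min_{u\in\bd{\ellipse_{\enum^X}}}\re\vt_r(u)$, which is immediate from Theorem~\ref{the:laurent}. The remaining assertions are routine rereadings of Lemma~\ref{lem:infRe}.
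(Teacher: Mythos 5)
Your proposal is correct and follows essentially the same route as the paper: the continuity, strict monotonicity, and the two limits are read off from Lemma \ref{lem:infRe} via the substitution $\rho = \enum^X$, and convexity is obtained by applying Hadamard's three-circles theorem to the function $\enum^{-\vt_r((s+s^{-1})/2)}$, which is analytic on $\aset_{\rhoana[\vt_r]}$ by Theorem \ref{the:laurent}, after identifying $\minfre{\vt}_r(X)$ with $\log\max_{|s|=\enum^X}|\enum^{-\vt_r((s+s^{-1})/2)}|$. The subharmonicity remark is just an equivalent phrasing of the same three-circles argument, so no substantive difference remains.
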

\begin{proof}
The statements except for the convexity follow from Lemma \ref{lem:infRe}. Let $G(\rho) \coloneqq \max\limits_{ \lvert s \rvert = \rho } \left\lvert \enum^{ - \vt_r( (s + s^{-1}) / 2 ) } \right\rvert = \enum^{ \minfre{\vt}_r(\log(\rho)) }$, $1 < \rho < \rhoana[\vt_r]$. By Theorem \ref{the:laurent}, $\enum^{ - \vt_r((s + s^{-1}) / 2) }$ is analytic on $\aset_{\rhoana[\vt_r]}$. Then by Hadamard's three-circles theorem, $\log( G(\rho) )$ is convex with respect to $\log(\rho)$ for $1 < \rho < \rhoana[\vt_r]$, which indicates $\minfre{\vt}_r(X)$ is convex for $0 < X < \log(\rhoana[\vt_r])$.
\end{proof}
\begin{theorem}[{\cite[Sec.~5.2.3 Strong duality and Slater's constraint qualification]{Boyd2004}}]
\label{the:strong_duality}
Let $f_0(x)$ and $f_1(x)$ be convex functions for $a < x < b$. If there exists $c \in (a, b)$ satisfying $f_1(c) < 0$ (called Slater's condition), one has
\begin{equation}
\sup_{ 0 \le \lambda < \infty } \inf_{ a < x < b } \left( f_0(x) + \lambda f_1(x) \right) = \inf_{ a < x < b, f_1(x) \le 0 } f_0(x).
\end{equation}
\end{theorem}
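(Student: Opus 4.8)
The plan is to run the standard separating-hyperplane proof of strong duality (as in \cite{Boyd2004}), specialized to this one-variable, one-constraint setting. Write $d^\star \coloneqq \sup_{0 \le \lambda < \infty} g(\lambda)$, where $g(\lambda) \coloneqq \inf_{a < x < b}(f_0(x) + \lambda f_1(x))$, for the left-hand side, and $p^\star \coloneqq \inf\{ f_0(x) \mid a < x < b,\ f_1(x) \le 0 \}$ for the right-hand side. Weak duality $d^\star \le p^\star$ is immediate: for any $x$ with $f_1(x) \le 0$ and any $\lambda \ge 0$ one has $f_0(x) + \lambda f_1(x) \le f_0(x)$, hence $g(\lambda) \le p^\star$, and taking the supremum over $\lambda$ gives the inequality. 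Slater's condition makes the feasible set nonempty (it contains $c$), so $p^\star < \infty$; and if $p^\star = -\infty$ then weak duality already gives equality. Hence it suffices to prove $d^\star \ge p^\star$ under the assumption $p^\star \in \mathbb{R}$.

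For this I would introduce the perturbation set
\[
\mathcal{S} \coloneqq \{ (u, s) \in \mathbb{R}^2 \mid \text{there is } x \in (a, b) \text{ with } f_1(x) \le u \text{ and } f_0(x) \le s \}.
\]
Convexity of $f_0$ and $f_1$ makes $\mathcal{S}$ convex (take convex combinations of the witnessing points), it is nonempty by Slater, and since it is closed under increasing either coordinate it has nonempty interior. The definition of $p^\star$ gives $(0, s) \notin \mathcal{S}$ for every $s < p^\star$, so $(0, p^\star) \notin \mathrm{int}\,\mathcal{S}$; the supporting hyperplane theorem then produces $(\mu, \lambda) \ne (0, 0)$ with $\mu u + \lambda s \ge \lambda p^\star$ for all $(u, s) \in \mathcal{S}$. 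Letting $u$ or $s$ grow to $+\infty$ (which keeps the point in $\mathcal{S}$) forces $\mu \ge 0$ and $\lambda \ge 0$.

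The decisive step --- and the one place where Slater's condition does essential work --- is ruling out $\lambda = 0$: if $\lambda = 0$ then $\mu > 0$ and $\mu u \ge 0$ throughout $\mathcal{S}$, i.e.\ $u \ge 0$ on all of $\mathcal{S}$, contradicting $(f_1(c), f_0(c)) \in \mathcal{S}$ with $f_1(c) < 0$. Hence $\lambda > 0$, and dividing the inequality by $\lambda$ we may assume $\lambda = 1$. Evaluating at $(u, s) = (f_1(x), f_0(x)) \in \mathcal{S}$ for each $x \in (a, b)$ gives $f_0(x) + \mu f_1(x) \ge p^\star$; taking the infimum over $x$ yields $g(\mu) \ge p^\star$, and since $\mu \ge 0$ this gives $d^\star \ge g(\mu) \ge p^\star$. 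Together with weak duality this is the claim. I expect the only routine care to be in verifying that $\mathcal{S}$ is convex with nonempty interior and in the sign bookkeeping for the separating functional; the argument that $\lambda \ne 0$ is short but is the conceptual core of the proof.
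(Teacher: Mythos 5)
Your proposal is correct: it is the standard separating-hyperplane proof of strong duality under Slater's condition, specialized to one variable and one inequality constraint, and it is exactly the argument in the cited source \cite[Sec.~5.2.3]{Boyd2004}. The paper itself states this result as an imported theorem and gives no proof of its own, so there is nothing to contrast; all the steps you outline (weak duality, convexity of the perturbation set $\mathcal{S}$, exclusion of $(0,p^\star)$ from its interior, the sign argument for the separating functional, and the use of Slater's point $c$ to rule out $\lambda=0$) are sound.
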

\begin{lemma}
\label{lem:rho0}
Let $0 < r < 1$, $\vt_r \in \vtset_r$, $\rhoana[\vt_r] > 1$, and $\inf\limits_{u \in \ellipse_{\rhoana[\vt_r]}} \RE \vt_r(u) < 0$. Then there exists a unique real number $\rho_0[\vt_r]$ satisfying
\begin{align}
&\text{$1 < \rho_0[\vt_r] < \rhoana[\vt_r]$ and $\inf_{u \in \ellipse_{\rho_0[\vt_r]}} \RE \vt_r(u) = 0$ }, \\
&\begin{cases}
\inf\limits_{u \in \ellipse_\rho} \RE \vt_r(u) > 0, & 1 < \rho < \rho_0[\vt_r], \\
\inf\limits_{u \in \ellipse_\rho} \RE \vt_r(u) < 0, & \rho_0[\vt_r] < \rho \le \rhoana[\vt_r].
\end{cases}
\end{align}
\end{lemma}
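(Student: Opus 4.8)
The plan is to reduce the statement to the one-variable behaviour of $g(\rho) \coloneqq \inf_{u \in \ellipse_\rho} \re \vt_r(u)$, which is already pinned down by Lemma \ref{lem:infRe}. First I would record the three facts I need: by \eqref{eq:infRe_decrease}, $g$ is continuous and strictly decreasing on $(1, \rhoana[\vt_r])$; by \eqref{eq:lim_infRe}, $\lim_{\rho \downarrow 1} g(\rho) = r$ and $\lim_{\rho \uparrow \rhoana[\vt_r]} g(\rho) = \inf_{u \in \ellipse_{\rhoana[\vt_r]}} \re \vt_r(u)$. The hypothesis $0 < r < 1$ then makes $g$ positive near $\rho = 1$, and the hypothesis $\inf_{u \in \ellipse_{\rhoana[\vt_r]}} \re \vt_r(u) < 0$ makes its limit at the upper end negative. (If $\rhoana[\vt_r] = \infty$ this last hypothesis is automatic, since $\inf_{u \in \ellipse_\infty} \re \vt_r(u) = -\infty$ by \eqref{eq:infRe_minf}; but one does not even need to distinguish this case.)

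Next I would invoke the intermediate value theorem. Since $g$ is continuous on the interval $(1, \rhoana[\vt_r])$ with one-sided limits $r > 0$ at the lower end and a value strictly below $0$ at the upper end, there is some $\rho_0[\vt_r] \in (1, \rhoana[\vt_r])$ with $g(\rho_0[\vt_r]) = 0$; because the limiting values at both endpoints are nonzero, $\rho_0[\vt_r]$ lies strictly inside $(1, \rhoana[\vt_r])$, giving $1 < \rho_0[\vt_r] < \rhoana[\vt_r]$ and $\inf_{u \in \ellipse_{\rho_0[\vt_r]}} \re \vt_r(u) = 0$. Uniqueness is immediate: a strictly decreasing function attains each value at most once, so $\rho_0[\vt_r]$ is the only solution of $g(\rho) = 0$.

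Finally, the sign table follows once more from strict monotonicity of $g$: for $1 < \rho < \rho_0[\vt_r]$ one has $g(\rho) > g(\rho_0[\vt_r]) = 0$, and for $\rho_0[\vt_r] < \rho < \rhoana[\vt_r]$ one has $g(\rho) < g(\rho_0[\vt_r]) = 0$; at the closed endpoint $\rho = \rhoana[\vt_r]$ (relevant only when $\rhoana[\vt_r] < \infty$) one has $g(\rhoana[\vt_r]) = \inf_{u \in \ellipse_{\rhoana[\vt_r]}} \re \vt_r(u) < 0$ directly by hypothesis, which is consistent with the second equality in \eqref{eq:lim_infRe}. There is essentially no obstacle in this argument; the only point deserving a moment's care is matching the value of $g$ at the closed endpoint $\rhoana[\vt_r]$ with the one-sided limit, which is exactly what \eqref{eq:lim_infRe} provides, so the sign statement on the half-open piece $(\rho_0[\vt_r], \rhoana[\vt_r]]$ is coherent.
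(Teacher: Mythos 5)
Your proof is correct and follows essentially the same route as the paper's: both arguments rest on the continuity, strict monotonicity, and endpoint limits of $\inf_{u \in \ellipse_\rho} \re \vt_r(u)$ established in Lemma \ref{lem:infRe}, followed by the intermediate value theorem for existence and strict monotonicity for uniqueness and the sign table. Your extra care at the closed endpoint $\rho = \rhoana[\vt_r]$ is a sound and welcome elaboration of a point the paper leaves implicit.
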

\begin{proof}
By Lemma \ref{lem:infRe} and $\inf\limits_{u \in \ellipse_{\rhoana[\vt_r]}} \RE \vt_r(u) < 0$, $\inf\limits_{u \in \ellipse_\rho} \RE \vt_r(u)$ is continuous and strictly decreasing for $1 < \rho < \rhoana[\vt_r]$ and satisfies $\lim\limits_{\rho \downarrow 1} \inf\limits_{u \in \ellipse_\rho} \RE \vt_r(u) > 0$ and $\lim\limits_{\rho \uparrow \rhoana[\vt_r]} \inf\limits_{u \in \ellipse_\rho} \RE \vt_r(u) < 0$. Thus $\inf\limits_{u \in \ellipse_\rho} \RE \vt_r(u)$ has a unique zero in $1 < \rho < \rhoana[\vt_r]$. Then the statements follow by denoting the zero as $\rho_0[\vt_r]$.
\end{proof}
Then if $\inf\limits_{u \in \ellipse_{\rhoana[\vt_r]}} \RE \vt_r(u) < 0$, we can evaluate \eqref{eq:sup_inf_E} as follows:
\begin{lemma}
\label{lem:sup_inf_E_rho0}
Let $0 < r < 1$, $\vt_r \in \vtset_r$, $\rhoana[\vt_r] > 1$, and $\inf\limits_{u \in \ellipse_{\rhoana[\vt_r]}} \RE \vt_r(u) < 0$. Then one has
\begin{equation}
\label{eq:sup_inf_E_rho0}
\sup_{ 0 < \lambda < \infty } \inf_{ \rho \in I_{\vt_r} } \rho^{-\alpha} \enum^{ - \lambda \inf\limits_{u \in \ellipse_\rho} \RE \vt_r(u) } = \rho_0[\vt_r]^{-\alpha}, 0 \le \alpha < \infty.
\end{equation}
\end{lemma}
\begin{proof}
By using Lemma \ref{lem:Ivt}, Lemma \ref{lem:minfre}, Theorem \ref{the:strong_duality}, and Lemma \ref{lem:rho0}, we have
\begin{align}
&\sup_{0 < \lambda < \infty} \inf_{\rho \in I_{\vt_r}} \left( -\alpha \log(\rho) - \lambda \inf_{u \in \ellipse_\rho} \RE \vt_r(u) \right) = \sup_{0 \le \lambda < \infty} \inf_{ 0 < X < \log(\rhoana[\vt_r]) } \left( -\alpha X + \lambda \minfre{\vt}_r(X) \right) \\ \nonumber
&= \inf_{ 0 < X < \log(\rhoana[\vt_r]), \minfre{\vt}_r(X) \le 0 } (-\alpha X) = \inf_{ 0 < X \le \log(\rho_0[\vt_r]) } (-\alpha X) = -\alpha \log(\rho_0[\vt_r]), 0 \le \alpha < \infty,
\end{align}
which leads to \eqref{eq:sup_inf_E_rho0}.
\end{proof}
To unify the description, we introduce a real number $\hat{\rho}[\vt_r]$ as follows:
\begin{lemma}
\label{lem:hatrho}
Let $0 < r < 1$, $\vt_r \in \vtset_r$, and $\rhoana[\vt_r] > 1$. Let us introduce
\begin{equation}
\label{eq:hatrho}
\hat{\rho}[\vt_r] \coloneqq
\begin{cases}
\rhoana[\vt_r] & \text{if $\inf\limits_{u \in \ellipse_{\rhoana[\vt_r]}} \RE \vt_r(u) \ge 0$}, \\
\rho_0[\vt_r] & \text{if $\inf\limits_{u \in \ellipse_{\rhoana[\vt_r]}} \RE \vt_r(u) < 0$}.
\end{cases}
\end{equation}
Then one has
\begin{align}
\label{eq:hatrho_in}
&\hat{\rho}[\vt_r] \in I_{\vt_r} \subseteq (1, \infty), \\
\label{eq:infRe_hatrho_ineq}
&0 \le \inf_{u \in \ellipse_{\hat{\rho}[\vt_r]}} \RE \vt_r(u) < r.
\end{align}
\end{lemma}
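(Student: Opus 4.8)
The plan is a straightforward case split on the sign of $\inf\limits_{u \in \ellipse_{\rhoana[\vt_r]}} \re \vt_r(u)$, exactly as in the definition \eqref{eq:hatrho}, assembling the two inequalities \eqref{eq:hatrho_in} and \eqref{eq:infRe_hatrho_ineq} from facts already proved in Lemmas \ref{lem:infRe}, \ref{lem:Ivt}, and \ref{lem:rho0}.

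First I would treat the case $\inf\limits_{u \in \ellipse_{\rhoana[\vt_r]}} \re \vt_r(u) \ge 0$, where $\hat{\rho}[\vt_r] = \rhoana[\vt_r]$. Here one first observes that $\rhoana[\vt_r] < \infty$: if $\rhoana[\vt_r] = \infty$, then \eqref{eq:infRe_minf} forces $\inf\limits_{u \in \ellipse_{\rhoana[\vt_r]}} \re \vt_r(u) = -\infty$, contradicting the case hypothesis. Since that hypothesis also gives $\inf\limits_{u \in \ellipse_{\rhoana[\vt_r]}} \re \vt_r(u) > -\infty$, the second branch of \eqref{eq:Ivt} applies, so $I_{\vt_r} = (1, \rhoana[\vt_r]]$; hence $\hat{\rho}[\vt_r] = \rhoana[\vt_r] \in I_{\vt_r}$, and $I_{\vt_r} \subseteq (1,\infty)$ by \eqref{eq:Ivt_subset}, which is \eqref{eq:hatrho_in}. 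For \eqref{eq:infRe_hatrho_ineq}, the lower bound $0 \le \inf\limits_{u \in \ellipse_{\hat{\rho}[\vt_r]}} \re \vt_r(u)$ is just the case hypothesis, and instantiating \eqref{eq:infRe_rhoana_r} at any $\rho$ with $1 < \rho < \rhoana[\vt_r]$ (such $\rho$ exists since $\rhoana[\vt_r] > 1$) gives $\inf\limits_{u \in \ellipse_{\rhoana[\vt_r]}} \re \vt_r(u) < r$.

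Next I would treat the case $\inf\limits_{u \in \ellipse_{\rhoana[\vt_r]}} \re \vt_r(u) < 0$, where $\hat{\rho}[\vt_r] = \rho_0[\vt_r]$, a quantity that is well defined under precisely this hypothesis by Lemma \ref{lem:rho0}. That lemma gives $1 < \rho_0[\vt_r] < \rhoana[\vt_r]$, so $\hat{\rho}[\vt_r] = \rho_0[\vt_r] \in (1, \rhoana[\vt_r]) \subseteq I_{\vt_r} \subseteq (1,\infty)$ by \eqref{eq:Ivt_subset}, establishing \eqref{eq:hatrho_in}; and it gives $\inf\limits_{u \in \ellipse_{\rho_0[\vt_r]}} \re \vt_r(u) = 0$, which lies in $[0, r)$ because $r > 0$, establishing \eqref{eq:infRe_hatrho_ineq}.

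I do not anticipate any genuine obstacle: the lemma is a bookkeeping step that unifies the two regimes separately handled in Lemmas \ref{lem:sup_inf_E_rhoana} and \ref{lem:sup_inf_E_rho0}, and every bound needed is already on the table. The only point that warrants a moment's care is excluding $\rhoana[\vt_r] = \infty$ in the first case, which is exactly the little Picard argument packaged in \eqref{eq:infRe_minf}.
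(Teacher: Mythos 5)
Your proof is correct and follows essentially the same route as the paper, which simply cites Lemma \ref{lem:Ivt} and Lemma \ref{lem:rho0}; you have merely unpacked the case split in full detail. The only cosmetic difference is that in the first case you derive the bound $\inf_{u \in \ellipse_{\hat{\rho}[\vt_r]}} \re \vt_r(u) < r$ from \eqref{eq:infRe_rhoana_r} rather than directly from \eqref{eq:infRe_ineq} with $\rho = \rhoana[\vt_r] \in I_{\vt_r}$, which is equivalent.
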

\begin{proof}
The statements follow from Lemma \ref{lem:Ivt} and Lemma \ref{lem:rho0}.
\end{proof}
By using $\hat{\rho}[\vt_r]$, we can evaluate \eqref{eq:sup_inf_E} as follows:
\begin{lemma}
\label{lem:sup_inf_E_hatrho}
Let $0 < r < 1$, $\vt_r \in \vtset_r$, and $\rhoana[\vt_r] > 1$. Then one has
\begin{equation}
\sup_{ 0 < \lambda < \infty } \inf_{ \rho \in I_{\vt_r} } \rho^{- \alpha} \enum^{ - \lambda \inf\limits_{u \in \ellipse_\rho} \RE \vt_r(u) } = \hat{\rho}[\vt_r]^{- \alpha}, 0 \le \alpha < \infty. 
\end{equation}
\end{lemma}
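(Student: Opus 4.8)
The plan is a direct case analysis according to the sign of $\inf_{u \in \ellipse_{\rhoana[\vt_r]}} \re \vt_r(u)$, which is exactly the dichotomy used to define $\hat{\rho}[\vt_r]$ in \eqref{eq:hatrho}. In both cases the work has already been done in Lemma \ref{lem:sup_inf_E_rhoana} and Lemma \ref{lem:sup_inf_E_rho0}; all that remains is to match the conclusions with the definition of $\hat{\rho}[\vt_r]$. Fix $0 \le \alpha < \infty$ throughout.

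First I would treat the case $\inf_{u \in \ellipse_{\rhoana[\vt_r]}} \re \vt_r(u) \ge 0$. Here the hypotheses of Lemma \ref{lem:sup_inf_E_rhoana} hold ($0 < r < 1$, $\vt_r \in \vtset_r$, $\rhoana[\vt_r] > 1$, and the nonnegativity assumption), so
\[
\sup_{ 0 < \lambda < \infty } \inf_{ \rho \in I_{\vt_r} } \rho^{- \alpha} \enum^{ - \lambda \inf_{u \in \ellipse_\rho} \re \vt_r(u) } = \rhoana[\vt_r]^{- \alpha}.
\]
By the first branch of \eqref{eq:hatrho}, $\hat{\rho}[\vt_r] = \rhoana[\vt_r]$ in this case, so the right-hand side equals $\hat{\rho}[\vt_r]^{-\alpha}$, as claimed.

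Next I would treat the case $\inf_{u \in \ellipse_{\rhoana[\vt_r]}} \re \vt_r(u) < 0$. Then $\rho_0[\vt_r]$ is well defined by Lemma \ref{lem:rho0}, and the hypotheses of Lemma \ref{lem:sup_inf_E_rho0} are met, giving
\[
\sup_{ 0 < \lambda < \infty } \inf_{ \rho \in I_{\vt_r} } \rho^{- \alpha} \enum^{ - \lambda \inf_{u \in \ellipse_\rho} \re \vt_r(u) } = \rho_0[\vt_r]^{- \alpha}.
\]
By the second branch of \eqref{eq:hatrho}, $\hat{\rho}[\vt_r] = \rho_0[\vt_r]$ here, so again the right-hand side is $\hat{\rho}[\vt_r]^{-\alpha}$. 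Since every admissible $\vt_r$ satisfying the hypotheses falls into exactly one of these two cases, the identity holds in general, which completes the proof.

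I do not expect any genuine obstacle: the lemma is purely a bookkeeping statement that unifies the two previously established evaluations of \eqref{eq:sup_inf_E} under the single symbol $\hat{\rho}[\vt_r]$. The only point requiring a modicum of care is confirming that the case hypotheses invoked for Lemma \ref{lem:sup_inf_E_rhoana} and Lemma \ref{lem:sup_inf_E_rho0} coincide verbatim with the two branches of the definition \eqref{eq:hatrho}, which they do.
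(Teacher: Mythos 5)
Your proposal is correct and follows essentially the same route as the paper, which simply combines Lemma \ref{lem:sup_inf_E_rhoana}, Lemma \ref{lem:sup_inf_E_rho0}, and the definition of $\hat{\rho}[\vt_r]$ from Lemma \ref{lem:hatrho} via the same two-case dichotomy. Nothing is missing.
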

\begin{proof}
The statement follows from Lemma \ref{lem:sup_inf_E_rhoana}, Lemma \ref{lem:sup_inf_E_rho0}, and Lemma \ref{lem:hatrho}.
\end{proof}
With these preparations, we have the maximum absolute error bound as follows:
\begin{theorem}
\label{the:Evt}
Let the conditions of Lemma \ref{lem:vt_gauss} be satisfied. Let $\rhoana[\vt_{a/b}] > 1$. Then one has
\begin{equation}
\label{eq:Evt_stenger}
0 < \max_{0 \le x < \infty} \left\lvert f(x) - \sum_{\nu = 1}^M c_\nu \enum^{ - b \vt_{a/b}(u_\nu) x } \right\rvert < \frac{16}{\pi} \hat{\rho}[\vt_{a/b}]^{- 2M} f(0),
\end{equation}
whose upper bound gives the best possible maximum absolute error bound obtained from \eqref{eq:Evtx_stenger}.
\end{theorem}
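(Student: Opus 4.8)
The plan is to combine the pointwise bound from Lemma~\ref{lem:Evtx} with the evaluation of the worst-case quantity in Lemma~\ref{lem:sup_inf_E_hatrho}, treating the lower and upper bounds in \eqref{eq:Evt_stenger} separately. For the strict positivity on the left, I would argue that the approximant $\sum_{\nu=1}^M c_\nu \enum^{-b\vt_{a/b}(u_\nu)x}$ lies in $\eset_M$ (using \eqref{eq:vt_gauss_t_ineq} and $c_\nu>0$ from \eqref{eq:vt_gauss_uc_ineq}), while $f(x)=\int_a^b\enum^{-xt}\del W(t)$ with $W$ having at least $M+1$ points of increase; if the maximum absolute error were $0$, then $f$ itself would be an exponential sum of at most $M$ terms, contradicting the assumption that $W$ has at least $M+1$ points of increase (a finite Laplace--Stieltjes transform with a determining function having $k$ points of increase cannot equal one with fewer). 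Alternatively, and more directly, I would invoke the continuity of $f - \sum_\nu c_\nu \enum^{-b\vt_{a/b}(u_\nu)x}$ together with Lemma~\ref{lem:kammler_zero}: the Gaussian-quadrature approximant interpolates the finite Laplace transform in the sense that $f-f_M$ has at most $2M$ sign changes and cannot vanish identically, so its supremum norm is strictly positive.

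Next, for the strict upper bound, I would start from \eqref{eq:Evtx_stenger}, which holds for every fixed $0<x<\infty$ and every $\rho\in I_{\vt_{a/b}}$. Taking the infimum over $\rho\in I_{\vt_{a/b}}$ for each fixed $x$ gives
\begin{equation*}
\left| f(x) - \sum_{\nu=1}^M c_\nu \enum^{-b\vt_{a/b}(u_\nu)x} \right| \le \frac{16}{\pi} f(0)\, \inf_{\rho\in I_{\vt_{a/b}}} \rho^{-2M}\enum^{-bx\inf_{u\in\ellipse_\rho}\re\vt_{a/b}(u)},
\end{equation*}
where the strict inequality in \eqref{eq:Evtx_stenger} becomes a non-strict one after passing to the infimum, so I would keep a strict inequality by noting that \eqref{eq:Evtx_stenger} is strict for each admissible $\rho$ and the infimum over $\rho$ is attained in the limit; the cleanest route is to pick any fixed $\rho^\star \in I_{\vt_{a/b}}$ with $\rho^\star$ slightly less than $\hat\rho[\vt_{a/b}]$, giving a strict bound, and then observe that the supremum over $x$ of the right-hand side is controlled by Lemma~\ref{lem:sup_inf_E_hatrho}. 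More carefully: taking $\sup_{0<x<\infty}$ and using the substitution $\lambda = bx$ (as in \eqref{eq:sup_inf_E}), Lemma~\ref{lem:sup_inf_E_hatrho} with $\alpha = 2M$ yields
\begin{equation*}
\sup_{0<x<\infty}\ \inf_{\rho\in I_{\vt_{a/b}}} \rho^{-2M}\enum^{-bx\inf_{u\in\ellipse_\rho}\re\vt_{a/b}(u)} = \hat\rho[\vt_{a/b}]^{-2M}.
\end{equation*}
The value $x=0$ is handled by \eqref{eq:Evtx_exp}, which gives error $0$ at $x=0$ and error $\le |\enum^{-ax}-\enum^{-bx}|f(0)\to 0$ as $x\downarrow 0$, so the supremum over $0\le x<\infty$ equals the supremum over $0<x<\infty$, and combining the displays gives the claimed upper bound $\frac{16}{\pi}\hat\rho[\vt_{a/b}]^{-2M}f(0)$.

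To obtain a \emph{strict} inequality in the upper bound rather than $\le$, I would argue that the supremum $\sup_{0<x<\infty}$ of the pointwise bound is \emph{not attained} at any finite $x>0$ when $\inf_{u\in\ellipse_{\hat\rho[\vt_{a/b}]}}\re\vt_{a/b}(u)>0$ (the supremum is approached as $x\downarrow 0$, where the pointwise bound $\frac{16}{\pi}\rho^{-2M}f(0)$ with $\rho\to\hat\rho$ is only an upper limit and the actual error is $0$), and when $\inf_{u\in\ellipse_{\hat\rho[\vt_{a/b}]}}\re\vt_{a/b}(u)=0$ one compares against a slightly smaller $\rho$ keeping strictness in \eqref{eq:Evtx_stenger}. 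The final sentence of the theorem --- that this is the best possible bound obtainable from \eqref{eq:Evtx_stenger} --- follows immediately because $\hat\rho[\vt_{a/b}]^{-2M}$ is precisely the value of the sup-inf in \eqref{eq:sup_inf_E} computed in Lemma~\ref{lem:sup_inf_E_hatrho}, so no smaller constant multiple of $f(0)$ can be extracted from the family of bounds \eqref{eq:Evtx_stenger}. The main obstacle I anticipate is bookkeeping the strictness: the Achieser--Stenger bound in \eqref{eq:Evtx_stenger} is strict for each fixed $\rho$, but the optimization (infimum over $\rho$, then supremum over $x$) could in principle degrade strictness to non-strictness, so I would need to be careful to exhibit, for a fixed admissible $\rho$ bounded away from the optimum, a strict bound, and separately confirm that the left-hand maximum is genuinely positive --- both of which are more delicate than the routine estimate itself.
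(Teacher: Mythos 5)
Your overall route is the paper's: Lemma \ref{lem:Evtx} for the pointwise bound, Lemma \ref{lem:sup_inf_E_hatrho} for the optimality claim, and Lemma \ref{lem:kammler_zero} together with \eqref{eq:Evtx_exp} for the behaviour at $x=0$ and $x\to\infty$; your positivity argument is essentially the paper's. The gap is in how you obtain the \emph{strict} upper bound. The device you propose --- fix $\rho^\star\in I_{\vt_{a/b}}$ slightly below $\hat{\rho}[\vt_{a/b}]$ so that \eqref{eq:Evtx_stenger} stays strict --- cannot deliver the stated constant: it produces $(\rho^\star)^{-2M}>\hat{\rho}[\vt_{a/b}]^{-2M}$, and you only recover $\hat{\rho}[\vt_{a/b}]^{-2M}$ in the limit $\rho^\star\uparrow\hat{\rho}[\vt_{a/b}]$, which degrades the inequality to $\le$. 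Similarly, in your case $\inf_{u\in\ellipse_{\hat{\rho}[\vt_{a/b}]}}\re\vt_{a/b}(u)=0$, the comparison ``against a slightly smaller $\rho$'' fails precisely near $x=0$, where the smaller-$\rho$ bound exceeds $\frac{16}{\pi}\hat{\rho}[\vt_{a/b}]^{-2M}f(0)$; and your case split is unnecessary in any event.

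The two facts that close the gap are both available to you. First, $\hat{\rho}[\vt_{a/b}]\in I_{\vt_{a/b}}$ by \eqref{eq:hatrho_in}, so you may substitute $\rho=\hat{\rho}[\vt_{a/b}]$ \emph{directly} into the strict inequality \eqref{eq:Evtx_stenger} and then drop the exponential factor using $\inf_{u\in\ellipse_{\hat{\rho}[\vt_{a/b}]}}\re\vt_{a/b}(u)\ge 0$ from \eqref{eq:infRe_hatrho_ineq}; this gives, with $E_M(x)\coloneqq f(x)-\sum_{\nu=1}^M c_\nu\enum^{-b\vt_{a/b}(u_\nu)x}$, the strict bound $|E_M(x)|<\frac{16}{\pi}\hat{\rho}[\vt_{a/b}]^{-2M}f(0)$ for every fixed $0<x<\infty$, with no infimum over $\rho$ and no limiting argument. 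Second, the supremum over $x$ is \emph{attained} at some finite $x^*\in(0,\infty)$: since $W$ has at least $M+1$ points of increase, $f\notin\eset_M$, so by Lemma \ref{lem:kammler_zero} $E_M$ has at most $2M$ nonnegative zeros and in particular is not identically zero, while $E_M(0)=0$ and $E_M(x)\to 0$ as $x\to\infty$ by \eqref{eq:Evtx_exp}; continuity then confines the maximum of $|E_M|$ to a compact subinterval of $(0,\infty)$. You have all of these ingredients scattered through your write-up (the Kammler zero count, the vanishing at $0$ and $\infty$), but you never assemble them into the attainment statement, which is what converts the family of strict pointwise bounds into a strict bound on the maximum. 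Your final sentence, deducing optimality from Lemma \ref{lem:sup_inf_E_hatrho}, is correct as stated.
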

\begin{proof}
Let $E_M(x) \coloneqq f(x) - \sum_{\nu = 1}^M c_\nu \enum^{ - b \vt_{a/b}(u_\nu) x }$. Since $W(t)$ has at least $M+1$ points of increase, we have $f \notin \eset_M$. By Lemma \ref{lem:kammler_zero}, (i) $E_M(x)$ has at most $2M$ zeros in $0 \le x < \infty$. By \eqref{eq:Evtx_exp}, we have (ii) $E_M(0) = \lim\limits_{x \to \infty} E_M(x) = 0$. Since $E_M(x)$ is continuous for $0 \le x < \infty$, by (i) and (ii), the maximum value of $\lvert E_M(x) \rvert$ is attained in $0 < x < \infty$ and satisfies $\max\limits_{0 \le x < \infty} \lvert E_M(x) \rvert = \max\limits_{0 < x < \infty} \lvert E_M(x) \rvert > 0$. Since $\hat{\rho}[\vt_{a/b}] \in I_{\vt_{a/b}}$ by \eqref{eq:hatrho_in}, we can substitute $\rho = \hat{\rho}[\vt_{a/b}]$ for \eqref{eq:Evtx_stenger}. Then by using \eqref{eq:infRe_hatrho_ineq}, we have the upper bound in \eqref{eq:Evt_stenger}. By Lemma \ref{lem:sup_inf_E_hatrho}, the upper bound gives the best possible maximum absolute error bound obtained from \eqref{eq:Evtx_stenger}.
\end{proof}
Theorem \ref{the:Evt} indicates the maximum absolute error decreases at least geometrically with respect to $M$ and the decreasing rate is determined by $\hat{\rho}[\vt_{a/b}]^2$. In \ref{app:hatrho}, we derived expressions of $\hat{\rho}[\vt_{a/b}]$ for some variable transformations. Since the larger value of $\hat{\rho}[\vt_{a/b}]$ gives the better bound, we find a variable transformation $\vt_{a/b}(u)$ maximizing $\hat{\rho}[\vt_{a/b}]$ in Section \ref{sec:dn}.

\subsection{Basis functions associated with the variable transformation}
\label{sec:basis}
By using the analyticity of the integrand $\enum^{- b x \vt_{a / b}(u)}$ in \eqref{eq:f_m1_1} on $\ellipse_{\rhoana[\vt_r]}$, we applied the Achieser--Stenger bound of the Gaussian quadrature in Section \ref{sec:error}. Hunter obtained the error expansion of a Gaussian quadrature by expanding an analytic integrand on an open Bernstein ellipse based on Chebyshev polynomials \cite[Theorem 3 and Lemma 6]{Hunter1995}. If we expand the integrand $\enum^{- b x \vt_{a / b}(u)}$ by Chebyshev polynomials with respect to $u$, the expansion coefficients depend on $bx$. For the expansion, we introduce functions corresponding to the expansion coefficients as follows:
\begin{lemma}
\label{lem:basis}
Let $0 < r < 1$, $\vt_r \in \vtset_r$, and $\rhoana[\vt_r] > 1$. Let us introduce
\begin{align}
\label{eq:basis}
&\basis{\vt}_{r, n}(z) \coloneqq \frac{1}{\pi} \int_0^\pi \enum^{- z \vt_r(\cos\theta)} \cos(n \theta) \del \theta, z \in \mathbb{C}, n \in \intge{0}, \\
&I_{\vt_r, \theta} \coloneqq
\begin{cases}
(1, \rhoana[\vt_r]) & \text{if $\inf\limits_{u \in \ellipse_{\rhoana[\vt_r]}} \RE (\enum^{\inum \theta} \vt_r(u)) = - \infty$}, \\
(1, \rhoana[\vt_r]] & \text{if $\inf\limits_{u \in \ellipse_{\rhoana[\vt_r]}} \RE (\enum^{\inum \theta}  \vt_r(u)) > - \infty$},
\end{cases}
\theta \in \mathbb{R}.
\end{align}
Let $z \in \mathbb{C}$ and $w \in \mathbb{C}$. If not  specified, we assume $n \in \intge{0}$. Then one has
\begin{align}
\label{eq:exp_vt_laurent}
&\enum^{- z \vt_r\left( \frac{ s + s^{-1} }{ 2 } \right)} = \basis{\vt}_{r, 0}(z) + \sum_{n = 1}^\infty \basis{\vt}_{r, n}(z) (s^n + s^{-n}), s \in \aset_{\rhoana[\vt_r]}, \\
\label{eq:exp_vt_fourier}
&\enum^{- z \vt_r(\cos\theta)} = \basis{\vt}_{r, 0}(z) + 2 \sum_{n = 1}^\infty \basis{\vt}_{r, n}(z) \cos(n \theta), \theta \in \imset_{\log(\rhoana[\vt_r])}, \\
\label{eq:exp_vt_chebyshev}
&\enum^{- z \vt_r(u)} = \basis{\vt}_{r, 0}(z) + 2 \sum_{n = 1}^\infty \basis{\vt}_{r, n}(z) T_n(u), u \in \ellipse_{\rhoana[\vt_r]}, \\
\label{eq:basis_0}
&\basis{\vt}_{r, 0}(0) = 1, \basis{\vt}_{r, n}(0) = 0, n \in \intge{1}, \\
\label{eq:basisz_ineq_rho}
&\lvert \basis{\vt}_{r, n}(R \enum^{\inum \theta}) \rvert \le \rho^{-n} \enum^{ - R \inf\limits_{ u \in \ellipse_\rho } \RE(\enum^{\inum \theta} \vt_r(u)) } < \infty, \rho \in I_{\vt_r, \theta}, 0 < R < \infty, \theta \in \mathbb{R}, \\
\label{eq:basis_ineq_rho}
&\lvert \basis{\vt}_{r, n}(x) \rvert \le \rho^{-n} \enum^{ - x \inf\limits_{ u \in \ellipse_\rho } \RE \vt_r(u) } < \infty, \rho \in I_{\vt_r}, 0 < x < \infty, \\
\label{eq:basis_ineq_hatrho}
&\lvert \basis{\vt}_{r, n}(x) \rvert \le \hat{\rho}[\vt_r]^{-n}, 0 < x < \infty, \\
\label{eq:basis_add}
&\basis{\vt}_{r, n}(z + w) = \basis{\vt}_{r, 0}(z) \basis{\vt}_{r, n}(w) + \sum_{m = 1}^\infty \basis{\vt}_{r, m}(z) (\basis{\vt}_{r, n + m}(w) + \basis{\vt}_{r, \lvert n - m \rvert}(w)).
\end{align}
\end{lemma}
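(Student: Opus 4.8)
The plan is to obtain every item by applying Theorem~\ref{the:laurent} to $F(u) \coloneqq \enum^{-z\vt_r(u)}$ with $z$ regarded as a fixed parameter, and then combining its two coefficient estimates with the minimum-principle facts in Lemmas~\ref{lem:infRe}, \ref{lem:Ivt}, and \ref{lem:hatrho}. Since $\vt_r$ is analytic on $\ellipse_{\rhoana[\vt_r]}$ by definition of $\rhoana[\vt_r]$ and $\exp$ is entire, $F$ is analytic on $\ellipse_{\rhoana[\vt_r]}$ for every $z \in \mathbb{C}$, and its $n$-th Chebyshev coefficient $\frac1\pi\int_0^\pi F(\cos\theta)\cos(n\theta)\,\del\theta$ equals $\basis{\vt}_{r,n}(z)$ by \eqref{eq:basis}. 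Hence the Laurent, Fourier, and Chebyshev expansions of Theorem~\ref{the:laurent} are precisely \eqref{eq:exp_vt_laurent}, \eqref{eq:exp_vt_fourier}, and \eqref{eq:exp_vt_chebyshev}; and evaluating \eqref{eq:basis} at $z = 0$ gives $\basis{\vt}_{r,0}(0) = \frac1\pi\int_0^\pi\del\theta = 1$ and $\basis{\vt}_{r,n}(0) = \frac1\pi\int_0^\pi\cos(n\theta)\,\del\theta = 0$ for $n\ge1$, which is \eqref{eq:basis_0}.

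For \eqref{eq:basisz_ineq_rho}, fix $R\in(0,\infty)$ and $\theta\in\mathbb{R}$ and apply Theorem~\ref{the:laurent} to $F$ with $z = R\enum^{\inum\theta}$: its $n$-th coefficient is $\basis{\vt}_{r,n}(R\enum^{\inum\theta})$, and $\sup_{u\in\ellipse_\rho}|F(u)| = \enum^{-R\inf_{u\in\ellipse_\rho}\re(\enum^{\inum\theta}\vt_r(u))}$. Because $\vt_r$ is nonconstant, $\re(\enum^{\inum\theta}\vt_r(u))$ is a nonconstant harmonic function on $\ellipse_{\rhoana[\vt_r]}$, so by the minimum principle $\inf_{u\in\ellipse_\rho}\re(\enum^{\inum\theta}\vt_r(u)) = \min_{u\in\bd{\ellipse_\rho}}\re(\enum^{\inum\theta}\vt_r(u)) > -\infty$ for every $1<\rho<\rhoana[\vt_r]$; on this range $F$ is bounded on $\overline{\ellipse_\rho}$ and the estimate of Theorem~\ref{the:laurent} valid for $1 < \rho < \rhoana[\vt_r]$ gives the claimed bound, covering $(1,\rhoana[\vt_r])\subseteq I_{\vt_r,\theta}$. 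When moreover $\rhoana[\vt_r]\in I_{\vt_r,\theta}$, i.e.\ $\inf_{u\in\ellipse_{\rhoana[\vt_r]}}\re(\enum^{\inum\theta}\vt_r(u)) > -\infty$, then $F$ is bounded on $\ellipse_{\rhoana[\vt_r]}$ and the remaining estimate of Theorem~\ref{the:laurent} (the one valid at the outer ellipse when $F$ is bounded there) supplies the endpoint $\rho = \rhoana[\vt_r]$. Setting $\theta = 0$ (so that $I_{\vt_r,0} = I_{\vt_r}$) and $R = x$, and using \eqref{eq:infRe_ineq} for finiteness, gives \eqref{eq:basis_ineq_rho}; then taking $\rho = \hat{\rho}[\vt_r]\in I_{\vt_r}$ (legitimate by \eqref{eq:hatrho_in}) and bounding the exponential factor by $1$ via $\inf_{u\in\ellipse_{\hat{\rho}[\vt_r]}}\re\vt_r(u)\ge0$ from \eqref{eq:infRe_hatrho_ineq} gives \eqref{eq:basis_ineq_hatrho}.

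For the addition formula \eqref{eq:basis_add}, write \eqref{eq:exp_vt_laurent} for $z$ and for $w$ as bilateral series $\sum_{j\in\mathbb{Z}}\basis{\vt}_{r,|j|}(z)s^j$ and $\sum_{k\in\mathbb{Z}}\basis{\vt}_{r,|k|}(w)s^k$ on $s\in\aset_{\rhoana[\vt_r]}$; by \eqref{eq:basisz_ineq_rho} (with any $1<\rho<\rhoana[\vt_r]$) their coefficients decay geometrically in the index, so both series converge absolutely on $\aset_{\rhoana[\vt_r]}$ and their Cauchy product is the Laurent series of $\enum^{-(z+w)\vt_r((s+s^{-1})/2)}$. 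Matching the coefficient of $s^n$ for $n\ge0$ gives $\basis{\vt}_{r,n}(z+w) = \sum_{j\in\mathbb{Z}}\basis{\vt}_{r,|j|}(z)\basis{\vt}_{r,|n-j|}(w)$, and separating the terms $j = 0$, $j\ge1$, and $j\le-1$ (noting $|n-(-m)| = n+m$) produces \eqref{eq:basis_add}.

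I expect the only genuine care to be the bookkeeping in \eqref{eq:basisz_ineq_rho}: one must track whether the relevant interval ($I_{\vt_r,\theta}$, or $I_{\vt_r}$ after $\theta = 0$) contains its right endpoint $\rhoana[\vt_r]$, which is exactly the dichotomy between the two coefficient estimates of Theorem~\ref{the:laurent}, and one should observe that the endpoint can occur only when $\rhoana[\vt_r] < \infty$ (by little Picard, as in the proof of Lemma~\ref{lem:infRe}), so that the outer-ellipse estimate of Theorem~\ref{the:laurent} is indeed applicable there. All remaining steps are direct substitutions.
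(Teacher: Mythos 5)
Your proposal is correct and follows essentially the same route as the paper: everything except the addition formula is read off from Theorem~\ref{the:laurent} applied to $F(u)=\enum^{-z\vt_r(u)}$, using $\sup_{u\in\ellipse_\rho}|F(u)|=\enum^{-R\inf_{u\in\ellipse_\rho}\re(\enum^{\inum\theta}\vt_r(u))}$ and the two coefficient estimates to cover the interior of $I_{\vt_r,\theta}$ and (when the infimum at $\rhoana[\vt_r]$ is finite) its right endpoint; your extra care about which estimate applies where, and the little-Picard remark excluding the endpoint when $\rhoana[\vt_r]=\infty$, is exactly the bookkeeping the paper leaves implicit. The only point where you diverge is \eqref{eq:basis_add}: the paper writes $\basis{\vt}_{r,n}(z+w)=\frac{1}{\pi}\int_0^\pi \enum^{-z\vt_r(\cos\theta)}\enum^{-w\vt_r(\cos\theta)}\cos(n\theta)\,\del\theta$, substitutes the Fourier expansion \eqref{eq:exp_vt_fourier} for one factor, and uses $2\cos(m\theta)\cos(n\theta)=\cos((m+n)\theta)+\cos(|m-n|\theta)$, whereas you take the Cauchy product of the two bilateral Laurent series and match the coefficient of $s^n$. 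These are the same convolution identity in two guises; your version needs the absolute convergence you correctly supply via the geometric decay from \eqref{eq:basisz_ineq_rho}, while the paper's needs uniform convergence of \eqref{eq:exp_vt_fourier} on $[0,\pi]$ to interchange sum and integral. Either way the argument is sound.
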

\begin{proof}
Since $F(u) \coloneqq \enum^{- z \vt_r(u) }$ is analytic on $\ellipse_{\rhoana[\vt_r]}$ for a given $z \in \mathbb{C}$, \eqref{eq:exp_vt_laurent}, \eqref{eq:exp_vt_fourier}, \eqref{eq:exp_vt_chebyshev}, and  \eqref{eq:basisz_ineq_rho} follow from Theorem \ref{the:laurent}. \eqref{eq:basis_0} follows from $\int_0^\pi \del \theta = \pi$ and $\int_0^\pi \cos(n \theta) \del \theta = 0$ for $n \in \intge{1}$. By substituting $\theta = 0$ for \eqref{eq:basisz_ineq_rho} and using $I_{\vt_r, 0} = I_{\vt_r}$, we have \eqref{eq:basis_ineq_rho}. By substituting $\rho = \hat{\rho}[\vt_r]$ for \eqref{eq:basis_ineq_rho} and using \eqref{eq:infRe_hatrho_ineq}, we have \eqref{eq:basis_ineq_hatrho}.

By substituting \eqref{eq:exp_vt_fourier} for $\basis{\vt}_{r, n}(z + w) = \frac{1}{\pi} \int_0^\pi \enum^{- z \vt_r(\cos\theta)} \enum^{- w \vt_r(\cos\theta)} \cos(n \theta) \del \theta$ and using $2 \cos(m\theta) \cos(n\theta) = \cos((m+n)\theta) + \cos(\lvert m-n \rvert \theta)$, we have \eqref{eq:basis_add}.
\end{proof}

Then a finite Laplace--Stieltjes transform can be expanded as follows:
\begin{lemma}
\label{lem:f_basis}
Let $0 < a < b < \infty$ and $z \in \mathbb{C}$. Let $W(t)$ be of bounded variation in $a \le t \le b$. Let $f(z) \coloneqq \int_a^b \enum^{ - z t } \del W(t)$. Let $\vt_{a/b} \in \vtset_{a/b}$ and $\rhoana[\vt_{a/b}] > 1$. Let $\TV[F(X)]_{X_1}^{X_2}$ be the total variation of a function $F(X)$ on $[X_1, X_2]$, where $-\infty < X_1 < X_2 < \infty$. Let $\sigma_n \coloneqq \int_{-1}^1 T_n(u) \del W(b \vt_{a/b}(u))$, $n \in \intge{1}$. Then one has
\begin{align}
\label{eq:f_basis}
&f(z) = f(0) \basis{\vt}_{a/b, 0}(bz) + 2 \sum_{n = 1}^\infty \sigma_n \basis{\vt}_{a/b, n}(bz), \\
\label{eq:int_chebyshev_ineq}
&\lvert \sigma_n \rvert \le \TV[W(t)]_a^b, n \in \intge{1}.
\end{align}
\end{lemma}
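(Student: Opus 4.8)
The plan is to obtain \eqref{eq:f_basis} by expanding the integrand of the standardized representation \eqref{eq:f_m1_1} using the Chebyshev series \eqref{eq:exp_vt_chebyshev} from Lemma \ref{lem:basis}, then integrating term by term against $\del W(b\vt_{a/b}(u))$. Concretely, fix $z \in \mathbb{C}$ and write $bz$ in place of $z$ in \eqref{eq:exp_vt_chebyshev}: for $u \in [-1,1] \subseteq \ellipse_{\rhoana[\vt_{a/b}]}$ (using $\rhoana[\vt_{a/b}] > 1$) we have $\enum^{-bz\vt_{a/b}(u)} = \basis{\vt}_{a/b,0}(bz) + 2\sum_{n=1}^\infty \basis{\vt}_{a/b,n}(bz) T_n(u)$. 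Substituting into \eqref{eq:f_m1_1} gives $f(z) = \int_{-1}^1 \left( \basis{\vt}_{a/b,0}(bz) + 2\sum_{n=1}^\infty \basis{\vt}_{a/b,n}(bz) T_n(u) \right) \del W(b\vt_{a/b}(u))$, and if term-by-term integration is justified this is $f(0)\basis{\vt}_{a/b,0}(bz) + 2\sum_{n=1}^\infty \sigma_n \basis{\vt}_{a/b,n}(bz)$, where I have used $\int_{-1}^1 \del W(b\vt_{a/b}(u)) = W(b) - W(a)$; note this equals $f(0)$ only when $W$ is nondecreasing (Lemma \ref{lem:vt_gauss}, \eqref{eq:f0_c}), but for $W$ merely of bounded variation one should interpret $f(0) = \int_a^b \del W(t) = W(b) - W(a)$, which is consistent with the definition $f(z) = \int_a^b \enum^{-zt}\del W(t)$ evaluated at $z = 0$.

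For the term-by-term integration, the cleanest route is to invoke uniform convergence of the Chebyshev series on the compact real interval $[-1,1]$ together with the fact that a Riemann--Stieltjes integral against a function of bounded variation commutes with uniform limits. Uniform convergence on $[-1,1]$ follows from \eqref{eq:basisz_ineq_rho} (or \eqref{eq:basis_ineq_rho}): choosing any fixed $\rho$ with $1 < \rho < \rhoana[\vt_{a/b}]$, we get $|\basis{\vt}_{a/b,n}(bz)| \le \rho^{-n}\enum^{-\re(bz)\inf_{u\in\ellipse_\rho}\re\vt_{a/b}(u)} =: C_z \rho^{-n}$, and since $|T_n(u)| \le 1$ on $[-1,1]$, the tail $\sum_{n=N}^\infty 2|\basis{\vt}_{a/b,n}(bz)||T_n(u)| \le 2C_z \rho^{-N}/(1-\rho^{-1}) \to 0$ uniformly in $u$. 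Hence the partial sums converge uniformly on $[-1,1]$, so $\int_{-1}^1 (\cdot)\,\del W(b\vt_{a/b}(u))$ of the limit equals the limit of the integrals of the partial sums, which is exactly $f(0)\basis{\vt}_{a/b,0}(bz) + 2\sum_{n=1}^\infty \sigma_n\basis{\vt}_{a/b,n}(bz)$. This establishes \eqref{eq:f_basis}.

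The bound \eqref{eq:int_chebyshev_ineq} is then immediate: for each $n \ge 1$, $|\sigma_n| = \left| \int_{-1}^1 T_n(u)\,\del W(b\vt_{a/b}(u)) \right| \le \sup_{u\in[-1,1]}|T_n(u)| \cdot \TV[W(b\vt_{a/b}(u))]_{-1}^{1} \le \TV[W(t)]_a^b$, using $|T_n(u)| \le 1$ on $[-1,1]$, the standard estimate $|\int g\,\del h| \le \sup|g|\cdot \TV[h]$, and the fact that the strictly increasing change of variable $t = b\vt_{a/b}(u)$ (from \eqref{eq:vt_increase_eq}) preserves total variation, so $\TV[W(b\vt_{a/b}(u))]_{-1}^1 = \TV[W(t)]_a^b$.

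The main obstacle is purely bookkeeping around the interchange of sum and Stieltjes integral and around the meaning of $f(0)$ when $W$ is only of bounded variation; once uniform convergence on $[-1,1]$ is in hand (which Lemma \ref{lem:basis} delivers for free), everything else is a routine application of standard Riemann--Stieltjes estimates. No genuinely hard analytic step is expected.
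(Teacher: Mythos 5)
Your proof is correct and follows essentially the same route as the paper: substitute the Chebyshev expansion \eqref{eq:exp_vt_chebyshev} into \eqref{eq:f_m1_1}, justify term-by-term integration via uniform convergence of the series on $[-1,1]$ (the paper cites Theorem \ref{the:stieltjes_series} for this interchange), and obtain \eqref{eq:int_chebyshev_ineq} from Theorem \ref{the:stieltjes_abs_ineq}, $|T_n(u)|\le 1$, and invariance of total variation under the increasing change of variable. Your explicit verification of uniform convergence from \eqref{eq:basis_ineq_rho} and your remark that $f(0)=W(b)-W(a)$ holds by definition for $W$ merely of bounded variation are sound refinements of details the paper leaves implicit.
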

\begin{proof}
\eqref{eq:f_basis} follows from \eqref{eq:f_m1_1}, \eqref{eq:exp_vt_chebyshev}, and Theorem \ref{the:stieltjes_series}. \eqref{eq:int_chebyshev_ineq} follows from Theorem \ref{the:stieltjes_abs_ineq}, $\lvert T_n(u) \rvert \le 1$ on $[-1, 1]$, and $\TV[W(b \vt_{a/b}(u))]_{-1}^1 = \TV[W(t)]_a^b$.
\end{proof}
\begin{corollary}
\label{cor:f_basis_error}
Let the conditions of Lemma \ref{lem:f_basis} be satisfied. Let us introduce
\begin{equation}
E_N(z) \coloneqq
\begin{cases}
f(z) - f(0) \basis{\vt}_{a/b, 0}(bz), & N = 0, \\
f(z) - f(0) \basis{\vt}_{a/b, 0}(bz) - 2 \sum\limits_{n = 1}^N \sigma_n \basis{\vt}_{a/b, n}(bz), & N \in \intge{1},
\end{cases}
z \in \mathbb{C}.
\end{equation}
For $N \in \intge{0}$, $0 < R < \infty$, and $\theta \in \mathbb{R}$, one has
\begin{align}
\label{eq:f0_basis_error}
&E_N(0) = 0,\\
\label{eq:fz_basis_error_ineq}
&\lvert E_N(R \enum^{\inum \theta}) \rvert \le \frac{ 2 \rho^{- N - 1} }{ 1 - \rho^{-1} } \enum^{ - R \inf\limits_{ u \in \ellipse_\rho } \RE(\enum^{\inum \theta} \vt_{a/b}(u)) } \TV[W(t)]_a^b, \rho \in I_{\vt_{a/b}, \theta}, \\
\label{eq:fx_basis_error_ineq}
&\lvert E_N(x) \rvert \le \frac{ 2 \hat{\rho}[\vt_{a/b}]^{- N - 1} }{ 1 - \hat{\rho}[\vt_{a/b}]^{-1} } \TV[W(t)]_a^b, 0 < x < \infty.
\end{align}
\end{corollary}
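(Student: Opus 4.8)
The plan is to read off all four assertions from the convergent Chebyshev‑type expansion \eqref{eq:f_basis} of Lemma \ref{lem:f_basis}. First I would note that, uniting the two cases of the definition, $E_N(z) = 2\sum_{n=N+1}^{\infty}\sigma_n\basis{\vt}_{a/b,n}(bz)$ for every $N\in\intge{0}$ and $z\in\mathbb{C}$. This rearrangement (splitting off the first $N+1$ terms of \eqref{eq:f_basis}) is legitimate because \eqref{eq:int_chebyshev_ineq} gives the uniform bound $|\sigma_n|\le\TV[W(t)]_a^b$, while \eqref{eq:basisz_ineq_rho} (for $z\ne0$, applied with modulus $b|z|$ and $\arg(bz)$) and \eqref{eq:basis_0} (for $z=0$) give geometric decay of $|\basis{\vt}_{a/b,n}(bz)|$ in $n$; hence the series converges absolutely for each $z$. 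Everything then reduces to estimating this single tail series.

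For \eqref{eq:f0_basis_error} I would put $z=0$: by \eqref{eq:basis_0}, $\basis{\vt}_{a/b,n}(0)=0$ for all $n\in\intge{1}$, and since every index of the tail satisfies $n\ge N+1\ge1$, the whole sum vanishes, so $E_N(0)=0$. For \eqref{eq:fz_basis_error_ineq} I would fix $\theta\in\mathbb{R}$, $0<R<\infty$, and $\rho\in I_{\vt_{a/b},\theta}$, apply the triangle inequality to $E_N(R\enum^{\inum\theta})=2\sum_{n=N+1}^{\infty}\sigma_n\basis{\vt}_{a/b,n}(bR\enum^{\inum\theta})$, bound each $|\sigma_n|$ by \eqref{eq:int_chebyshev_ineq} and each $|\basis{\vt}_{a/b,n}(bR\enum^{\inum\theta})|$ by \eqref{eq:basisz_ineq_rho} (modulus $bR$, argument $\theta$), pull the resulting common exponential factor $\enum^{-R\inf_{u\in\ellipse_\rho}\re(\enum^{\inum\theta}\vt_{a/b}(u))}$ out of the sum, and finish by summing the geometric series $\sum_{n=N+1}^{\infty}\rho^{-n}=\rho^{-N-1}/(1-\rho^{-1})$, which is convergent since $\rho>1$ for $\rho\in I_{\vt_{a/b},\theta}$.

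For \eqref{eq:fx_basis_error_ineq} I would run the same geometric estimate on $E_N(x)=2\sum_{n=N+1}^{\infty}\sigma_n\basis{\vt}_{a/b,n}(bx)$ using the uniform bound \eqref{eq:basis_ineq_hatrho}, namely $|\basis{\vt}_{a/b,n}(bx)|\le\hat{\rho}[\vt_{a/b}]^{-n}$ for $0<x<\infty$ (note $bx>0$), together with $\hat{\rho}[\vt_{a/b}]>1$ from \eqref{eq:hatrho_in}; alternatively, one obtains it as the specialization of \eqref{eq:fz_basis_error_ineq} to $\theta=0$ and $\rho=\hat{\rho}[\vt_{a/b}]$, using $\inf_{u\in\ellipse_{\hat{\rho}[\vt_{a/b}]}}\re\vt_{a/b}(u)\ge0$ from \eqref{eq:infRe_hatrho_ineq} to drop the exponential factor. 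I expect no serious obstacle here: the only things requiring care are the justification of the term‑by‑term rearrangement of \eqref{eq:f_basis} (covered by the absolute bounds above) and keeping straight, in each of the three estimates, which open Bernstein ellipse infimum and which decay base ($\rho$ versus $\hat{\rho}[\vt_{a/b}]$) is being used.
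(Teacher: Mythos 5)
Your proof is correct and takes exactly the same route as the paper: the paper's own proof consists of the same three citations (\eqref{eq:basis_0} for $E_N(0)=0$; \eqref{eq:f_basis}, \eqref{eq:int_chebyshev_ineq}, \eqref{eq:basisz_ineq_rho} for the second bound; \eqref{eq:f_basis}, \eqref{eq:int_chebyshev_ineq}, \eqref{eq:basis_ineq_hatrho} for the third), which you have merely spelled out as a tail estimate of a geometric series. No gaps.
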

\begin{proof}
\eqref{eq:f0_basis_error} follows from \eqref{eq:basis_0}. \eqref{eq:fz_basis_error_ineq} follows from \eqref{eq:f_basis}, \eqref{eq:int_chebyshev_ineq}, and \eqref{eq:basisz_ineq_rho}. \eqref{eq:fx_basis_error_ineq} follows from \eqref{eq:f_basis}, \eqref{eq:int_chebyshev_ineq}, and \eqref{eq:basis_ineq_hatrho}.
\end{proof}
By Lemma \ref{lem:f_basis}, a finite Laplace--Stieltjes transform can be expanded based on the functions $\basis{\vt}_{r, n}(x)$. Then we call $\basis{\vt}_{r, n}(x)$ the basis functions associated with $\vt_r(u)$. By using the basis functions, we can expand the error of the Gaussian quadrature as follows:
\begin{theorem}
\label{the:f_gauss_basis}
Let the conditions of Lemma \ref{lem:vt_gauss} be satisfied. Let $\rhoana[\vt_{a/b}] > 1$. Let $\varepsilon_{M, n} \coloneqq \int_{-1}^1 T_n(u) \del W(b \vt_{a/b}(u)) - \sum_{\nu = 1}^M c_\nu T_n(u_\nu)$, $n \in \intge{2M}$. Then one has
\begin{align}
\label{eq:f_gauss_basis}
&f(z) - \sum_{\nu = 1}^M c_\nu \enum^{- b \vt_{a/b}(u_\nu) z} = 2 \sum_{n = 2M}^\infty \varepsilon_{M, n} \basis{\vt}_{a/b, n}(bz), z \in \mathbb{C}, \\
\label{eq:f_gauss_basis_eps_ineq}
&\lvert \varepsilon_{M, n} \rvert \le 2 f(0), n \in \intge{2M}.
\end{align}
\end{theorem}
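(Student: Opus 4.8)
The key observation is that the Gaussian quadrature rule is exact on $\pset_{2M-1}$, so when we apply it to the Chebyshev expansion of the integrand, only the tail terms $T_n$ with $n \ge 2M$ contribute to the error. First I would start from the Chebyshev expansion \eqref{eq:exp_vt_chebyshev} of the integrand $\enum^{-bz\vt_{a/b}(u)}$ in \eqref{eq:f_m1_1}, which is valid on $\ellipse_{\rhoana[\vt_{a/b}]}$ since $\rhoana[\vt_{a/b}] > 1$. Integrating termwise against $\del W(b\vt_{a/b}(u))$ via Theorem \ref{the:stieltjes_series} gives \eqref{eq:f_basis}; applying the quadrature rule termwise to the same expansion and using \eqref{eq:vt_gauss_exact} to kill the terms with $n \le 2M-1$ gives $\sum_{\nu=1}^M c_\nu \enum^{-b\vt_{a/b}(u_\nu) z} = f(0)\basis{\vt}_{a/b,0}(bz) + 2\sum_{n=1}^{2M-1}\sigma_n \basis{\vt}_{a/b,n}(bz) + 2\sum_{n=2M}^\infty\left(\sum_{\nu=1}^M c_\nu T_n(u_\nu)\right)\basis{\vt}_{a/b,n}(bz)$, where $\sigma_n = \int_{-1}^1 T_n(u)\,\del W(b\vt_{a/b}(u))$ as in Lemma \ref{lem:f_basis}. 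Subtracting from \eqref{eq:f_basis} and noting $\varepsilon_{M,n} = \sigma_n - \sum_{\nu=1}^M c_\nu T_n(u_\nu)$ yields \eqref{eq:f_gauss_basis}.

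The point requiring care is justifying the termwise application of the finite quadrature rule to the infinite Chebyshev series. Since the quadrature is a finite sum of $M$ point evaluations with positive weights, and the Chebyshev series converges uniformly on $[-1,1]$ (indeed geometrically, by the bound $|\basis{\vt}_{a/b,n}(bz)| \le \rho^{-n}(\cdots)$ from \eqref{eq:basisz_ineq_rho} together with $|T_n(u_\nu)| \le 1$), the interchange of the finite sum over $\nu$ with the infinite sum over $n$ is immediate. The termwise integration against $\del W$ is exactly Theorem \ref{the:stieltjes_series}, already invoked in Lemma \ref{lem:f_basis}. So the identity \eqref{eq:f_gauss_basis} follows by combining the two expansions, both of which are legitimate.

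For the bound \eqref{eq:f_gauss_basis_eps_ineq} on $\varepsilon_{M,n}$, I would bound the two pieces separately. For $\sigma_n$, use $|T_n(u)| \le 1$ on $[-1,1]$ and Theorem \ref{the:stieltjes_abs_ineq} (as in \eqref{eq:int_chebyshev_ineq}) — but more directly, since $W(b\vt_{a/b}(u))$ is \emph{nondecreasing}, $|\sigma_n| \le \int_{-1}^1 |T_n(u)|\,\del W(b\vt_{a/b}(u)) \le \int_{-1}^1 \del W(b\vt_{a/b}(u)) = f(0)$ by \eqref{eq:f0_c}. For the quadrature piece, $\left|\sum_{\nu=1}^M c_\nu T_n(u_\nu)\right| \le \sum_{\nu=1}^M c_\nu |T_n(u_\nu)| \le \sum_{\nu=1}^M c_\nu = f(0)$, again by \eqref{eq:f0_c} and positivity of the weights $c_\nu$ from \eqref{eq:vt_gauss_uc_ineq}. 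Combining by the triangle inequality gives $|\varepsilon_{M,n}| \le 2f(0)$, which is \eqref{eq:f_gauss_basis_eps_ineq}. The main obstacle, such as it is, is purely bookkeeping: being careful that the Chebyshev series of the integrand converges uniformly enough on $[-1,1]$ to justify exchanging the finite quadrature sum with the infinite series — and this is handed to us by the geometric decay estimate \eqref{eq:basisz_ineq_rho} of Lemma \ref{lem:basis}.
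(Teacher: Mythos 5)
Your proposal is correct and follows essentially the same route as the paper: combine the expansion \eqref{eq:f_basis} of $f$ with the Chebyshev expansion \eqref{eq:exp_vt_chebyshev} evaluated at the nodes, use the exactness \eqref{eq:vt_gauss_exact} to cancel the terms with $n \le 2M-1$, and bound $\varepsilon_{M,n}$ via $|T_n(u)| \le 1$, the positivity of the weights, and \eqref{eq:f0_c}. Your explicit justification of the interchange of the finite quadrature sum with the geometrically convergent Chebyshev series is a detail the paper leaves implicit, but it is the right one.
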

\begin{proof}
\eqref{eq:f_gauss_basis} follows from \eqref{eq:f_basis}, \eqref{eq:exp_vt_chebyshev}, and \eqref{eq:vt_gauss_exact}. \eqref{eq:f_gauss_basis_eps_ineq} follows from $\lvert T_n(u) \rvert \le 1$ on $[-1, 1]$, \eqref{eq:vt_gauss_uc_ineq}, and \eqref{eq:f0_c}.
\end{proof}
By the exactness of the Gaussian quadrature for the polynomial integrand with at most $2M-1$ degree, the basis expansion \eqref{eq:f_gauss_basis} starts from $n = 2M$. Let $0 < x < \infty$. Since the inequality $\lvert \varepsilon_{M, n} \basis{\vt}_{a/b, n}(bx) \rvert \le 2 f(0) \hat{\rho}[\vt_{a/b}]^{-n}$ holds for $n \in \intge{2M}$ by \eqref{eq:basis_ineq_hatrho} and \eqref{eq:f_gauss_basis_eps_ineq}, the leading terms of the right-hand side of \eqref{eq:f_gauss_basis} are expected to have the larger contributions to the error of the Gaussian quadrature $f(x) - \sum_{\nu = 1}^M c_\nu \enum^{- b \vt_{a/b}(u_\nu) x}$ if $\hat{\rho}[\vt_{a/b}] \gg 1$. We can investigate some properties of the basis functions only from the conditions \eqref{eq:vt_increase_eq} as follows:
\begin{lemma}
\label{lem:basis_chebyshev}
Let $0 < r < 1$ and $\vt_r \in \vtset_r$. For $r \le \tau \le 1$, let us introduce
\begin{align}
\label{eq:basis_W0}
&W_{\vt_r, 0}(\tau) \coloneqq \frac{ \arccos(- \vt_r^{-1}(\tau)) }{ \pi } = \frac{ \pi - \arccos(\vt_r^{-1}(\tau)) }{\pi}, \\
\label{eq:basis_Wge1}
&W_{\vt_r, n}(\tau) \coloneqq \frac{ (-1)^n }{ n \pi } \sin(n \arccos(-\vt_r^{-1}(\tau))) = - \frac{ 1 }{n \pi} \sin( n \arccos( \vt_r^{-1}(\tau) ) ) \\ \nonumber
&= \frac{ (-1)^n }{ n \pi } \sin(n \pi W_{\vt_r, 0}(\tau)) = - \frac{ 1 }{n \pi} \sin( n \pi (1 - W_{\vt_r, 0}(\tau)) ), n \in \intge{1},
\end{align}
where $\arccos(u)$ on $[-1, 1]$ is the principal value of the inverse cosine function. Let $\det( a_{i, j} )_{i, j = n_1}^{n_2}$ be the determinant of a matrix whose element is $a_{i, j}$ for $i = n_1, \ldots, n_2$ and $j = n_1, \ldots, n_2$. For $z \in \mathbb{C}$, $m \in \intge 0$, and $0 \le x < \infty$, one has
\begin{align}
\label{eq:basis_fl}
&\basis{\vt}_{r, n}(z) = \int_r^1 \enum^{-z \tau} \del W_{\vt_r, n}(\tau), n \in \intge{0}, \\
\label{eq:basis_W0_increase}
&\text{$W_{\vt_r, 0}(\tau)$ is a continuous strictly increasing function for $r \le \tau \le 1$} \\ \nonumber
&\text{and satisfies $W_{\vt_r, 0}(r) = 0$, $W_{\vt_r, 0}(1) = 1$, and $\TV[W_{\vt_r, 0}(\tau)]_r^1 = 1$}, \\
\label{eq:basis_Wge1_bv}
&\text{$W_{\vt_r, n}(\tau)$ is a real continuous function for $r \le \tau \le 1$ and satisfies} \\ \nonumber
&\text{$W_{\vt_r, n}(r) = W_{\vt_r, n}(1) = 0$ and $\TV[W_{\vt_r, n}(\tau)]_r^1 = 2 / \pi$ for $n \in \intge{1}$}, \\
\label{eq:basis_0_dWn}
&\basis{\vt}_{r, 0}(0) = \int_r^1 \del W_{\vt_r, 0}(\tau) = 1, \basis{\vt}_{r, n}(0) = \int_r^1 \del W_{\vt_r, n}(\tau) = 0, n \in \intge{1}, \\
\label{eq:basis_entire}
&\text{$\basis{\vt}_{r, n}(z)$ is an entire function and real on the real axis, $n \in \intge{0}$,} \\
\label{eq:basis_deriv_W}
&\basis{\vt}_{r, n}^{(m)}(z) = (-1)^m \int_r^1 \tau^m \enum^{-z \tau} \del W_{\vt_r, n}(\tau), n \in \intge{0}, \\
\label{eq:basis_deriv_cos}
&\basis{\vt}_{r, n}^{(m)}(z) = \frac{(-1)^m}{\pi} \int_0^\pi \vt_r(\cos\theta)^m \enum^{- z \vt_r(\cos\theta)} \cos(n \theta) \del \theta, n \in \intge{0}, \\
\label{eq:basis_deriv_ineq}
&r^m \enum^{-x} \le (-1)^m \basis{\vt}_{r, 0}^{(m)}(x) \le \enum^{- r x}, \lvert \basis{\vt}_{r, n}^{(m)}(x) \rvert \le \frac{ 2 }{ \pi } \enum^{- r x}, n \in \intge{1}, \\
\label{eq:basis_sin}
&\basis{\vt}_{r, n}(z) = (-1)^n \frac{z}{n \pi} \int_r^1 \enum^{ -z \tau } \sin( n \pi W_{\vt_r, 0}(\tau) ) \del \tau = - \frac{z}{n \pi} \int_r^1 \enum^{ -z \tau } \sin( n \pi (1 - W_{\vt_r, 0}(\tau)) ) \del \tau, n \in \intge{1}, \\
\label{eq:basis_ineq1}
&\lvert \basis{\vt}_{r, n}(x) \rvert \le \frac{ \enum^{ -r x } - \enum^{ - x } }{ n \pi } \le \frac{ 1 - r }{ n \pi } x \enum^{ -r x }, n \in \intge{1}, \\
\label{eq:basis0_det_ineq}
&\det( (-1)^j \basis{\vt}_{r, j}(x_i) )_{ i, j = 0 }^n > 0, -\infty < x_0 < \cdots < x_n < \infty, n \in \intge{0}, \\
\label{eq:basis_det_ineq}
&\det( (-1)^j \basis{\vt}_{r, j}(x_i) )_{ i, j = 1 }^n > 0, 0 < x_1 < \cdots < x_n < \infty, n \in \intge{1}.
\end{align}
\end{lemma}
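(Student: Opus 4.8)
The plan is to take the items roughly in the order displayed, basing everything on the closed forms \eqref{eq:basis_W0}--\eqref{eq:basis_Wge1} and on the finite Laplace--Stieltjes representation \eqref{eq:basis_fl}. \textbf{Step 1 (the functions $W_{\vt_r,n}$).} By \eqref{eq:vt_increase_eq} and the intermediate value theorem, $\vt_r$ is a homeomorphism of $[-1,1]$ onto $[r,1]$, so $\vt_r^{-1}$ is continuous and strictly increasing with $\vt_r^{-1}(r)=-1$, $\vt_r^{-1}(1)=1$; hence $\theta(\tau):=\arccos(-\vt_r^{-1}(\tau))$ is a continuous strictly increasing bijection of $[r,1]$ onto $[0,\pi]$. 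The equality of the two forms of $W_{\vt_r,0}$ in \eqref{eq:basis_W0} is $\arccos(-v)=\pi-\arccos(v)$, and the equality of the four forms in \eqref{eq:basis_Wge1} follows from that identity together with the elementary expansions of $\sin(n(\pi-\theta))$ and $\sin(n\pi(1-W_{\vt_r,0}))$. Then \eqref{eq:basis_W0_increase} is immediate from the monotonicity of $\theta(\tau)$, and for \eqref{eq:basis_Wge1_bv} I would write $W_{\vt_r,n}=\tfrac{(-1)^n}{n\pi}\sin(n\,\theta(\tau))$ and reparametrize by $\theta\in[0,\pi]$: $\sin(n\theta)$ vanishes at $\theta=0,\pi$ and has total variation $2n$ on $[0,\pi]$, which yields the stated endpoint values and $\TV=2/\pi$.

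\textbf{Step 2 (the representation \eqref{eq:basis_fl} and its consequences).} In \eqref{eq:basis} I substitute $\theta\mapsto\pi-\theta$; using $\vt_r(\cos(\pi-\theta))=\vt_r(-\cos\theta)$ and $\cos(n(\pi-\theta))=(-1)^n\cos(n\theta)$ this gives $\basis{\vt}_{r,n}(z)=\tfrac{(-1)^n}{\pi}\int_0^\pi\enum^{-z\vt_r(-\cos\theta)}\cos(n\theta)\,\del\theta$. The monotone change of variable $\tau=\vt_r(-\cos\theta)$ then converts this to $\int_r^1\enum^{-z\tau}\,\del W_{\vt_r,n}(\tau)$, because in the $\theta$-variable $W_{\vt_r,n}$ equals $\tfrac{(-1)^n}{n\pi}\sin(n\theta)$ for $n\ge1$ (resp.\ $\theta/\pi$ for $n=0$), which is $C^1$ with differential $\tfrac{(-1)^n}{\pi}\cos(n\theta)\,\del\theta$ (resp.\ $\tfrac1\pi\,\del\theta$); this is \eqref{eq:basis_fl}. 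Evaluating \eqref{eq:basis_fl} at $z=0$ and using the endpoint values from \eqref{eq:basis_W0_increase}--\eqref{eq:basis_Wge1_bv} gives \eqref{eq:basis_0_dWn}. Since $W_{\vt_r,n}$ is of bounded variation, \eqref{eq:basis_entire} and \eqref{eq:basis_deriv_W} follow by applying Lemma \ref{lem:fl_entire} to \eqref{eq:basis_fl} (equivalently, by differentiating under the Riemann--Stieltjes integral), and \eqref{eq:basis_deriv_cos} follows by differentiating \eqref{eq:basis} under the integral sign, which is legitimate because the integrand is entire in $z$ and, on compact $z$-sets, bounded uniformly in $\theta\in[0,\pi]$ thanks to $\vt_r(\cos\theta)\in[r,1]$. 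For \eqref{eq:basis_deriv_ineq} I would use \eqref{eq:basis_deriv_W}: for $x\ge0$ and $\tau\in[r,1]$ one has $r^m\enum^{-x}\le\tau^m\enum^{-x\tau}\le\enum^{-rx}$, so the $n=0$ case follows since $W_{\vt_r,0}$ is nondecreasing with $\int_r^1\del W_{\vt_r,0}=1$, and the $n\ge1$ case follows from $\sup_{[r,1]}\tau^m\enum^{-x\tau}\le\enum^{-rx}$ and $\TV[W_{\vt_r,n}]_r^1=2/\pi$. Identity \eqref{eq:basis_sin} is Riemann--Stieltjes integration by parts in \eqref{eq:basis_fl}: the boundary term vanishes because $W_{\vt_r,n}(r)=W_{\vt_r,n}(1)=0$, leaving $z\int_r^1 W_{\vt_r,n}(\tau)\enum^{-z\tau}\,\del\tau$, into which one substitutes the two $\sin$-forms of $W_{\vt_r,n}$. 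Finally \eqref{eq:basis_ineq1} comes from \eqref{eq:basis_sin} via $|\sin|\le1$ and $\int_r^1\enum^{-x\tau}\,\del\tau=(\enum^{-rx}-\enum^{-x})/x$ for $x>0$ (the case $x=0$ being trivial by \eqref{eq:basis_0_dWn}), together with the elementary bound $1-\enum^{-(1-r)x}\le(1-r)x$.

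\textbf{Step 3 (the determinant inequalities, the crux).} For \eqref{eq:basis0_det_ineq} I would use the form $(-1)^j\basis{\vt}_{r,j}(x)=\tfrac1\pi\int_0^\pi\enum^{-x\psi(\theta)}\cos(j\theta)\,\del\theta$ from Step 2, where $\psi(\theta):=\vt_r(-\cos\theta)$ is continuous and strictly increasing on $[0,\pi]$. By Andreief's (Heine's) identity the $(n+1)\times(n+1)$ determinant equals $\tfrac{1}{(n+1)!\,\pi^{n+1}}\int_{[0,\pi]^{n+1}}\det(\enum^{-x_i\psi(\theta_k)})_{i,k=0}^{n}\,\det(\cos(j\theta_k))_{j,k=0}^{n}\,\del\theta_0\cdots\del\theta_n$. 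Row-reducing $T_j$ to a monomial gives $\det(\cos(j\theta_k))_{j,k=0}^{n}=2^{n(n-1)/2}\prod_{0\le k<l\le n}(\cos\theta_l-\cos\theta_k)$; this determinant and $\det(\enum^{-x_i\psi(\theta_k)})_{i,k=0}^{n}$ are both totally antisymmetric in $(\theta_0,\dots,\theta_n)$, and on the chamber $\pi>\theta_0>\cdots>\theta_n>0$ both are strictly positive — the first because $\cos\theta_0<\cdots<\cos\theta_n$ there, the second by the strict total positivity of the kernel $(x,s)\mapsto\enum^{-xs}$ (classical), since $x_0<\cdots<x_n$ while $\psi(\theta_0)>\cdots>\psi(\theta_n)$. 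Hence the integrand is nonnegative on $[0,\pi]^{n+1}$ and positive on a set of positive measure, so the determinant is positive. For \eqref{eq:basis_det_ineq} I would run the same scheme on the $\sin$-representation \eqref{eq:basis_sin}: pulling $\prod_i(x_i/\pi)>0$ out of the rows (this is the only place $x_i>0$ is used) and $\prod_j(1/j)$ out of the columns, Andreief reduces the determinant to a positive multiple of $\int_{[r,1]^n}\det(\enum^{-x_i\tau_k})_{i,k=1}^{n}\,\det(\sin(j\pi W_{\vt_r,0}(\tau_k)))_{j,k=1}^{n}\,\del\tau_1\cdots\del\tau_n$; writing $\omega_k:=\pi W_{\vt_r,0}(\tau_k)\in(0,\pi)$ and using $\sin(j\omega)=\sin\omega\,U_{j-1}(\cos\omega)$ gives $\det(\sin(j\omega_k))_{j,k=1}^{n}=\bigl(\prod_k\sin\omega_k\bigr)2^{n(n-1)/2}\prod_{1\le k<l\le n}(\cos\omega_l-\cos\omega_k)$, whose extra factor $\prod_k\sin\omega_k>0$ makes the argument go through where the bare determinant $\det(\cos(j\theta_k))_{j,k=1}^{n}$ (lacking the $j=0$ row) would have an indefinite sign. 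As before both determinants are antisymmetric in $(\tau_1,\dots,\tau_n)$ and positive on the chamber $1>\tau_1>\cdots>\tau_n>r$ (using that $W_{\vt_r,0}$ is increasing), so the integral, hence the whole expression, is positive.

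\textbf{Main obstacle.} The difficulty is concentrated in Step 3: choosing the integral representation of $(-1)^j\basis{\vt}_{r,j}$ so that Andreief's identity factors each minor into an exponential-kernel determinant (controlled by classical total positivity) and a Chebyshev Vandermonde, and then observing that these two factors have matching sign on every chamber — and, for \eqref{eq:basis_det_ineq}, realizing that one must pass to the $\sin$-form to keep the companion determinant sign-definite. The remaining items are routine changes of variable, differentiation under the integral, and elementary estimates.
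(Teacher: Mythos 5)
Your proposal is correct and follows essentially the same route as the paper: the substitution $\theta\mapsto\pi-\theta$ followed by the monotone change of variable to obtain the Stieltjes representation \eqref{eq:basis_fl}, the analytic consequences via Lemma \ref{lem:fl_entire}, integration by parts for \eqref{eq:basis_sin}, and the basic composition (Andreief) formula combined with the Vandermonde-type factorizations of $\det(\cos(j\theta_k))$ and $\det(\sin(j\omega_k))$ and the strict total positivity of the exponential kernel for the two determinant inequalities. The only deviations (deriving the $n=0$ part of \eqref{eq:basis_deriv_ineq} from \eqref{eq:basis_deriv_W} rather than \eqref{eq:basis_deriv_cos}, using the symmetrized cube form of Andreief instead of the ordered simplex, and handling orientation directly instead of reflecting $\tau\mapsto-\tau$) are cosmetic.
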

\begin{proof}
By the change of variable $\theta$ to $\pi - \theta$, \eqref{eq:basis} is expressed as
\begin{equation}
\label{eq:basis_zm}
\basis{\vt}_{r, n}(z) = \frac{1}{\pi} \int_0^\pi \enum^{- z \vt_r(- \cos\theta)} (-1)^n \cos(n \theta) \del \theta, z \in \mathbb{C}, n \in \intge{0}.
\end{equation}
By introducing
\begin{equation}
\label{eq:basis_beta}
\beta_n(\theta) \coloneqq \int_0^\theta \frac{(-1)^n}{\pi} \cos(n \eta) \del \eta =
\begin{cases}
\frac{\theta}{\pi}, & n = 0, \\
\frac{ (-1)^n \sin(n \theta) }{ n \pi }, & n \in \intge{1},
\end{cases}
0 \le \theta \le \pi,
\end{equation}
and using Theorem \ref{the:stieltjes_riemann}, \eqref{eq:basis_zm} is expressed as
\begin{equation}
\label{eq:basis_int_beta_theta}
\basis{\vt}_{r, n}(z) = \int_0^\pi \enum^{- z \vt_r(-\cos\theta)} \del \beta_n(\theta), z \in \mathbb{C}, n \in \intge{0}.
\end{equation}
If a function defined in an interval has subintervals such that the function is bounded and monotonic for each interval, the total variation in the overall interval is the sum of them for each interval \cite[Theorem 6.1.12 and Corollary 6.1.13]{Kannan1996}. By using the property, we have
\begin{subequations}
\label{eq:TV_beta}
\begin{align}
&\TV[\beta_0(\theta)]_0^\pi = [\theta/\pi]_0^\pi = 1, \\
&\TV[\beta_n(\theta)]_0^\pi = \sum_{m = 1}^{2n} \left\lvert \left[ \frac{ (-1)^n \sin(n \theta) }{ n \pi } \right]_{ \frac{ (m-1) \pi }{ 2n } }^{ \frac{ m \pi }{ 2n } } \right\rvert = \frac{ 2 }{ \pi }, n \in \intge{1}.
\end{align}
\end{subequations}
By \eqref{eq:vt_increase_eq}, we see that
\begin{align}
\label{eq:basis_theta_t_increase}
&\text{$\arccos(-\vt_r^{-1}(\tau))$ is continuous and strictly increasing for $r \le \tau \le 1$} \\ \nonumber
&\text{and satisfies $\arccos(-\vt_r^{-1}(r)) = 0$ and $\arccos(-\vt_r^{-1}(1)) = \pi$.}
\end{align}
Then by the change of variable $\theta = \arccos(-\vt_r^{-1}(\tau))$ with Theorem \ref{the:stieltjes_change}, \eqref{eq:basis_int_beta_theta} is expressed as
\begin{equation}
\label{eq:basis_int_beta_t}
\basis{\vt}_{r, n}(z) = \int_r^1 \enum^{- z \tau} \del \beta_n(\arccos(-\vt_r^{-1}(\tau))), z \in \mathbb{C}, n \in \intge{0}.
\end{equation}

Let $n \in \intge{0}$. By introducing $W_{\vt_r, n}(\tau) \coloneqq \beta_n(\arccos(-\vt_r^{-1}(\tau)))$  and using \eqref{eq:basis_beta} and \eqref{eq:basis_int_beta_t}, we have \eqref{eq:basis_W0}, \eqref{eq:basis_Wge1}, and \eqref{eq:basis_fl}. \eqref{eq:basis_W0_increase} and \eqref{eq:basis_Wge1_bv} follow from \eqref{eq:basis_W0}, \eqref{eq:basis_Wge1}, \eqref{eq:basis_theta_t_increase}, $\TV[W_{\vt_r, n}(\tau)]_r^1 = \TV[\beta_n(\theta)]_0^\pi$, and \eqref{eq:TV_beta}. \eqref{eq:basis_0_dWn} follows from \eqref{eq:basis} and \eqref{eq:basis_fl}. \eqref{eq:basis_entire} and \eqref{eq:basis_deriv_W} follow from \eqref{eq:basis_fl}, \eqref{eq:basis_W0_increase}, \eqref{eq:basis_Wge1_bv}, and Lemma \ref{lem:fl_entire}. By introducing $F(\theta, z) \coloneqq \enum^{- z \vt_r(\cos\theta)} \cos(n \theta)$, \eqref{eq:basis} is expressed as $\basis{\vt}_{r, n}(z) = \frac{1}{\pi} \int_0^\pi F(\theta, z) \del \theta$. For any $m \in \intge{0}$, $\frac{ \partial^m F(\theta, z) }{ \partial z^m }  = (-1)^m \vt_r(\cos\theta)^m \enum^{- z \vt_r(\cos\theta)} \cos(n \theta)$ is continuous on $[0, \pi] \times \mathbb{C}$ by \eqref{eq:vt_increase_eq}. Then by applying Leibniz integral rule $m$ times to $\basis{\vt}_{r, n}(z) = \frac{1}{\pi} \int_0^\pi F(\theta, z) \del \theta$, we have \eqref{eq:basis_deriv_cos}.

Inequalities for $(-1)^m \basis{\vt}_{r, 0}^{(m)}(x)$ in \eqref{eq:basis_deriv_ineq} follow from \eqref{eq:basis_deriv_cos} and \eqref{eq:vt_increase_eq}. The inequality for $n \in \intge{1}$ in \eqref{eq:basis_deriv_ineq} follows from \eqref{eq:basis_Wge1_bv}, \eqref{eq:basis_deriv_W}, and Theorem \ref{the:stieltjes_abs_ineq}. \eqref{eq:basis_sin} follows from \eqref{eq:basis_fl}, \eqref{eq:basis_Wge1_bv}, Theorem \ref{the:stieltjes_parts}, Theorem \ref{the:stieltjes_riemann}, and \eqref{eq:basis_Wge1}. \eqref{eq:basis_ineq1} follows from \eqref{eq:basis_sin} and $\enum^{-r x} - \enum^{-x} = x \int_r^1 \enum^{-x \tau} \del \tau \le (1 - r) x \enum^{-r x}$ for $0 \le x < \infty$.

Let $n \in \intge{0}$. By applying the basic composition formula \cite[Sec.~1.2]{Karlin1968} to $(-1)^n \basis{\vt}_{r, n}(z) = \frac{1}{\pi} \int_0^\pi \enum^{- z \vt_r(\cos\theta)} \cos(n (\pi - \theta)) \del \theta$, we have
\begin{equation}
\label{eq:det_basis0_int}
\det( (-1)^j \basis{\vt}_{r, j}(x_i) )_{ i, j = 0 }^n = \frac{ 1 }{ \pi^{n + 1} } \mint_{0 \le \theta_0 \le \cdots \le \theta_n \le \pi} \det( \enum^{- x_i \vt_r(\cos\theta_j)} )_{ i, j = 0 }^n \det( \cos(i (\pi - \theta_j)) )_{ i, j = 0 }^n \del \theta_0 \cdots \del \theta_n, (x_0, \ldots, x_n) \in \mathbb{R}^{n + 1}.
\end{equation}
By using the equality \cite[Lemma 3]{Goodman1948}
\begin{equation}
\det( \cos( i \theta_j ) )_{ i, j = 0}^n = 2^\frac{ n (n - 1) }{ 2 } \prod_{ 0 \le i < j \le n } ( \cos\theta_j - \cos\theta_i )
\end{equation}
for $(\theta_0, \ldots, \theta_n) \in \mathbb{R}^{n + 1}$, $n \in \intge{1}$, we have
\begin{equation}
\label{eq:det_cos_mtheta_ineq}
\det( \cos(i (\pi - \theta_j)) )_{ i, j = 0 }^n > 0, 0 \le \theta_0 < \cdots < \theta_n \le \pi.
\end{equation}
By using the inequality \cite[Sec.~1.2]{Karlin1968}
\begin{equation}
\label{eq:det_exp_ineq}
\det( \enum^{ x_i t_j } )_{ i, j = 1 }^n > 0, -\infty < x_1 < \cdots < x_n < \infty, -\infty < t_1 < \cdots < t_n < \infty,
\end{equation}
and \eqref{eq:vt_increase_eq}, we have
\begin{equation}
\label{eq:det_exp_xmvt_ineq}
\det( \enum^{x_i (- \vt_r(\cos\theta_j))} )_{ i, j = 0 }^n > 0, -\infty < x_1 < \cdots < x_n < \infty, 0 \le \theta_0 < \cdots < \theta_n \le \pi,
\end{equation}
Then by using \eqref{eq:det_basis0_int}, \eqref{eq:det_cos_mtheta_ineq}, and \eqref{eq:det_exp_xmvt_ineq}, we have \eqref{eq:basis0_det_ineq}.

Let $n \in \intge{1}$. By applying the change of variable $\tau = -t$ to \eqref{eq:basis_sin}, we have $(-1)^n \basis{\vt}_{r, n}(z) = \frac{z}{n \pi} \int_{-1}^{-r} \enum^{ z t } \sin( n \pi W_{\vt_r, 0}(-t) ) \del t$. Then by using the basic composition formula, we have
\begin{equation}
\label{eq:det_basis_int}
\det( (-1)^j \basis{\vt}_{r, j}(x_i) )_{ i, j = 1 }^n = \frac{ \prod_{i = 1}^n x_i }{ n! \pi^n } \mint_{-1 \le t_1 \le \cdots \le t_n \le - r} \det( \enum^{ x_i t_j } )_{ i, j = 1 }^n \det( \sin( i \pi W_{\vt_r, 0}(-t_j) ) )_{ i, j = 1 }^n \del t_1 \cdots \del t_n, (x_1, \ldots, x_n) \in \mathbb{R}^n.
\end{equation}
By using the equality \cite[Lemma 3]{Goodman1948}
\begin{equation}
\det( \sin( i \theta_j ) )_{ i, j = 1}^n = 2^\frac{ n (n - 1) }{ 2 } \prod_{l = 1}^n \sin\theta_l \prod_{ 1 \le i < j \le n } ( \cos\theta_j - \cos\theta_i )
\end{equation}
for $(\theta_1, \ldots, \theta_n) \in \mathbb{R}^n$, $n \in \intge{2}$, and \eqref{eq:basis_W0_increase}, we have
\begin{equation}
\label{eq:det_sin_W0_ineq}
\det( \sin( i \pi W_{\vt_r, 0}(-t_j) ) )_{ i, j = 1 }^n > 0, -1 < t_1 < \cdots < t_n < -r.
\end{equation}
Then by using \eqref{eq:det_basis_int}, \eqref{eq:det_sin_W0_ineq}, and \eqref{eq:det_exp_ineq}, we have \eqref{eq:basis_det_ineq}.
\end{proof}
Examples of $W_{\vt_r, 0}(\tau)$ are found in \eqref{eq:W0_Phi} and \eqref{eq:W0_P1}. $W_{\vt_r, n}(\tau)$ for $n \in \intge{1}$ can be expressed by $W_{\vt_r, 0}(\tau)$ based on the last two equalities in \eqref{eq:basis_Wge1}. $W_{\vt_r, 0}(\tau)$ also appears in a kernel function \eqref{eq:K_vt}. By \eqref{eq:basis_fl} and \eqref{eq:basis_W0_increase}, we see that $\basis{\vt}_{r, 0}(x)$ is a finite completely monotonic function. Let $n \in \intge{1}$. Since $\basis{\vt}_{r, 1}(x), \ldots, \basis{\vt}_{r, n}(x)$ are real continuous functions on $\mathbb{R}$ by \eqref{eq:basis_entire} and satisfy the inequality \eqref{eq:basis_det_ineq}, they form a Chebyshev system on $(0, \infty)$ and $\basis{\vt}_{r, n}(x)$ has at most $n - 1$ zeros on $(0, \infty)$. The basis function $\basis{\vt}_{\pset_1, r, n}(x)$ defined in \eqref{eq:basis_P1} has no zeros on $(0, \infty)$ as shown in \eqref{eq:basis_P1_ineq}. The basis function $\basis{\Phi}_{r, n}(x)$ defined in \eqref{eq:basis_Phi_cos} has the maximum number of zeros, i.e., $n - 1$ zeros on $(0, \infty)$ as shown in Theorem \ref{the:basis_Phi_interlacing}.

We see later in \eqref{eq:basis_Phi_integral_eq} that a bivariate function $\basis{\vt}_{r, 0}(z + y) - \basis{\vt}_{r, 0}(z) \basis{\vt}_{r, 0}(y)$ on $\mathbb{C} \times [0, \infty)$ serves as the kernel function of an integral equation. We can investigate some properties of the function only from the conditions \eqref{eq:vt_increase_eq} as follows:
\begin{definition}[{\cite[Chap.~2, Definition~1.1]{Karlin1968}}]
Let $-\infty \le a < b \le \infty$ and $-\infty \le c < d \le \infty$. Let $K(x, y)$ be a real function on $(a, b) \times (c, d)$. If the inequality $\det(K(x_i, y_j))_{i, j = 1}^n > 0$ holds for any $a < x_1 < \cdots < x_n < b$, $c < y_1 < \cdots < y_n < d$, $n \in \intge{1}$, we say $K(x, y)$ is strictly totally positive on $(a, b) \times (c, d)$.
\end{definition}
\begin{lemma}
Let the conditions of Lemma \ref{lem:basis_chebyshev} be satisfied. Let us introduce
\begin{align}
\label{eq:K_vt}
&K_{\vt_r}(\sigma, \tau) \coloneqq W_{\vt_r, 0}(\min(\sigma, \tau)) (1 - W_{\vt_r, 0}(\max(\sigma, \tau))), r \le \sigma \le 1, r \le \tau \le 1, \\
\label{eq:V_vt}
&V_{\vt_r} \coloneqq \int_r^1 \tau^2 \del W_{\vt_r, 0}(\tau) - \left( \int_r^1 \tau \del W_{\vt_r, 0}(\tau) \right)^2.
\end{align}
For $z \in \mathbb{C}$, $w \in \mathbb{C}$, $0 \le x < \infty$, and $0 \le y < \infty$, one has
\begin{align}
\label{eq:basis_kernel_int}
&\basis{\vt}_{r, 0}(z + w) - \basis{\vt}_{r, 0}(z) \basis{\vt}_{r, 0}(w) = z w \int_r^1 \int_r^1 \enum^{- z \sigma} \enum^{ - w \tau } K_{\vt_r}(\sigma, \tau) \del \sigma \del \tau, \\
\label{eq:K_vt_ineq}
&0 \le K_{\vt_r}(\sigma, \tau) \le 1 / 4, r \le \sigma \le 1, r \le \tau \le 1, \\
\label{eq:V_vt_eq}
&V_{\vt_r} = \basis{\vt}_{r, 0}''(0) - \basis{\vt}_{r, 0}'(0)^2  = \int_r^1 \int_r^1 K_{\vt_r}(\sigma, \tau) \del \sigma \del \tau, \\
\label{eq:V_vt_ineq}
&0 < V_{\vt_r} \le (1 - r)^2 / 4, \\
\label{eq:basis_kernel_ineq}
&V_{\vt_r} x y \enum^{ - ( x + y) } \le \basis{\vt}_{r, 0}(x + y) - \basis{\vt}_{r, 0}(x) \basis{\vt}_{r, 0}(y) \le V_{\vt_r} x y \enum^{ - r ( x + y) }, \\
\label{eq:basis_kernel_ineq2}
&\basis{\vt}_{r, 0}(x + y) - \basis{\vt}_{r, 0}(x) \basis{\vt}_{r, 0}(y) \le (\enum^{-r x} - \enum^{-x}) (\enum^{-r y} - \enum^{-y}) / 4, \\
\label{eq:det_basis_kernel_ineq}
&\det( \basis{\vt}_{r, 0}(x_i + y_j) - \basis{\vt}_{r, 0}(x_i) \basis{\vt}_{r, 0}(y_j) )_{ i, j = 1 }^n > 0, 0 < x_1 < \cdots < x_n < \infty, 0 < y_1 < \cdots < y_n < \infty, n \in \intge{1}.
\end{align}
\end{lemma}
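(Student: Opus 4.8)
The plan is to establish the kernel identity \eqref{eq:basis_kernel_int} by a direct Fubini argument and then read off every other assertion from it. First I would use \eqref{eq:basis_fl}, the normalization $\int_r^1 \del W_{\vt_r, 0} = 1$ (from \eqref{eq:basis_0_dWn}), and Fubini for Riemann--Stieltjes double integrals to rewrite the left-hand side of \eqref{eq:basis_kernel_int} as the symmetric expression $\tfrac12 \int_r^1 \int_r^1 (\enum^{-z \sigma} - \enum^{-z \tau})(\enum^{-w \sigma} - \enum^{-w \tau}) \del W_{\vt_r, 0}(\sigma)\,\del W_{\vt_r, 0}(\tau)$. Writing each difference via the fundamental theorem of calculus as $\enum^{-z \sigma} - \enum^{-z \tau} = -z\,\operatorname{sgn}(\sigma - \tau) \int_{\min(\sigma, \tau)}^{\max(\sigma, \tau)} \enum^{-z s}\,\del s$ (and likewise in $w$), the two sign factors cancel; interchanging the order of integration, the integrand becomes $z w\, \enum^{-z s} \enum^{-w t}$ times the weight $\tfrac12 \int_r^1 \int_r^1 \chi(s;\sigma,\tau)\,\chi(t;\sigma,\tau)\, \del W_{\vt_r, 0}(\sigma)\,\del W_{\vt_r, 0}(\tau)$, where $\chi(u;\sigma,\tau)$ is the indicator of $u \in [\min(\sigma,\tau),\max(\sigma,\tau)]$. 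For $s \le t$ this weight forces one of $\sigma,\tau$ into $[r,s]$ and the other into $[t,1]$, so by continuity of $W_{\vt_r,0}$ with $W_{\vt_r,0}(r)=0$, $W_{\vt_r,0}(1)=1$ it equals $W_{\vt_r,0}(s)(1-W_{\vt_r,0}(t)) = K_{\vt_r}(s,t)$; the case $s \ge t$ is symmetric. This gives \eqref{eq:basis_kernel_int}.

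From \eqref{eq:basis_kernel_int} the scalar statements follow routinely. Inequality \eqref{eq:K_vt_ineq} is just $0 \le W_{\vt_r,0}(\min(\sigma,\tau)) \le W_{\vt_r,0}(\max(\sigma,\tau)) \le 1$ together with $\xi(1-\xi)\le\tfrac14$. In \eqref{eq:V_vt_eq}, the first equality is the definition \eqref{eq:V_vt} combined with \eqref{eq:basis_deriv_W} at $m=1,2$; the second I would obtain by applying $\partial_z\partial_w$ to \eqref{eq:basis_kernel_int} at $z=w=0$ (licit since $\basis{\vt}_{r,0}$ is entire by \eqref{eq:basis_entire}), which collapses the left side to $\basis{\vt}_{r,0}''(0)-\basis{\vt}_{r,0}'(0)^2$ and the right side to $\int_r^1\int_r^1 K_{\vt_r}(\sigma,\tau)\,\del\sigma\,\del\tau$. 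Then \eqref{eq:V_vt_ineq} drops out of \eqref{eq:V_vt_eq}: the upper bound from \eqref{eq:K_vt_ineq}, and $V_{\vt_r}>0$ because $K_{\vt_r}$ is continuous and strictly positive on the open square $(r,1)^2$ (as $W_{\vt_r,0}$ is strictly increasing with $W_{\vt_r,0}(r)=0$, $W_{\vt_r,0}(1)=1$). Specializing \eqref{eq:basis_kernel_int} to real $z=x\ge0$, $w=y\ge0$ and using $\enum^{-(x+y)}\le\enum^{-x\sigma}\enum^{-y\tau}\le\enum^{-r(x+y)}$ on $[r,1]^2$, $K_{\vt_r}\ge0$, and $\int_r^1\int_r^1 K_{\vt_r}(\sigma,\tau)\,\del\sigma\,\del\tau=V_{\vt_r}$ yields \eqref{eq:basis_kernel_ineq}; using instead $K_{\vt_r}\le\tfrac14$ and $\int_r^1\enum^{-x\sigma}\,\del\sigma=(\enum^{-rx}-\enum^{-x})/x$ yields \eqref{eq:basis_kernel_ineq2}.

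The one substantial step is the strict total positivity \eqref{eq:det_basis_kernel_ineq}. By \eqref{eq:basis_kernel_int} the $(i,j)$ entry of the matrix equals $x_iy_jA_{ij}$ with $A_{ij}\coloneqq\int_r^1\int_r^1\enum^{-x_i\sigma}K_{\vt_r}(\sigma,\tau)\enum^{-y_j\tau}\,\del\sigma\,\del\tau$, and since $\prod_ix_i\prod_jy_j>0$ it suffices to show $\det(A_{ij})_{i,j=1}^n>0$. The plan is to substitute $\sigma=1-a$, $\tau=1-b$ and factor $\enum^{-x_i}$, $\enum^{-y_j}$ out of row $i$ and column $j$: this reduces the claim to the positivity of the determinant of $\int_0^{1-r}\int_0^{1-r}\enum^{x_ia}\tilde K(a,b)\enum^{y_jb}\,\del a\,\del b$ with $\tilde K(a,b)\coloneqq K_{\vt_r}(1-a,1-b)$, where the substitution is chosen precisely so that the exponential kernels $\enum^{xa}$ and $\enum^{yb}$ sit in the orientation in which they are strictly totally positive (this is \eqref{eq:det_exp_ineq}). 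The kernel $\tilde K$ is itself strictly totally positive on $(0,1-r)^2$: it has the Green's-function form $\psi_1(a)\psi_2(b)$ for $a\le b$ and $\psi_1(b)\psi_2(a)$ for $a\ge b$, with $\psi_1(a)=1-W_{\vt_r,0}(1-a)$ continuous strictly increasing, $\psi_2(b)=W_{\vt_r,0}(1-b)$ continuous strictly decreasing, and $\psi_1/\psi_2$ strictly increasing, so by a monotone change of variables and a rank-one row/column rescaling it reduces to the kernel $\min(s,t)$, which is classically strictly totally positive. Since a composition of strictly totally positive kernels is strictly totally positive \cite[Sec.~1.2]{Karlin1968}, the kernel $(x,y)\mapsto\int_0^{1-r}\int_0^{1-r}\enum^{xa}\tilde K(a,b)\enum^{yb}\,\del a\,\del b$ is strictly totally positive, so its determinant at $0<x_1<\cdots<x_n$, $0<y_1<\cdots<y_n$ is positive, and \eqref{eq:det_basis_kernel_ineq} follows.

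I expect the only real obstacle to be bookkeeping in this last step: one must track orientations so that every kernel entering the composition is honestly (strictly) totally positive — which is exactly what forces the change of variable $\sigma\mapsto1-\sigma$ — and one must correctly invoke the strict total positivity of the Green's-function kernel $\tilde K$ and of the composition. Everything else is elementary once \eqref{eq:basis_kernel_int} is in hand.
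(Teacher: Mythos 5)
Your treatment of \eqref{eq:basis_kernel_int} through \eqref{eq:basis_kernel_ineq2} is correct and follows essentially the same route as the paper (symmetrization of the double Stieltjes integral, indicator functions, reading off the $zw$ Taylor coefficient for \eqref{eq:V_vt_eq}). The flaw is in your justification of the key step \eqref{eq:det_basis_kernel_ineq}: you assert that the Green's-function kernel $\tilde K$ --- and, after your reduction, the kernel $\min(s,t)$ --- is \emph{strictly} totally positive, and then invoke ``a composition of strictly totally positive kernels is strictly totally positive.'' That premise is false. A kernel of the form $\phi(\min(s,t))\,\psi(\max(s,t))$ is totally positive but never strictly so: for $s_1<s_2<t_1<t_2$ one has $K(s_i,t_j)=\phi(s_i)\,\psi(t_j)$ for all $i,j$, a rank-one matrix with vanishing determinant. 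Concretely, for $\min(s,t)$ with $s_1=0.1$, $s_2=0.15$, $t_1=0.2$, $t_2=0.3$ the two rows are proportional. The precise statement (Karlin, Chap.~3, Corollary~3.1, which the paper records as \eqref{eq:det_K_ineq}) is that the $n\times n$ minors are positive exactly when the interlacing condition $\max(s_i,t_i)<\min(s_{i+1},t_{i+1})$ holds, and vanish otherwise.

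The conclusion of the step is still true, but it has to be reached the way the paper does: apply the basic composition formula, which expresses the determinant of the composed kernel as an integral over the ordered simplex of a product of two determinants; one factor, coming from $\det(\enum^{x_i t_j})$ via \eqref{eq:det_exp_ineq}, is strictly positive everywhere on the simplex, while the factor coming from the Green's-function kernel is nonnegative everywhere and strictly positive on the nonempty open subregion where the interlacing condition holds, so the integral is strictly positive. Your orientation bookkeeping ($\sigma\mapsto 1-\sigma$ with the factors $\enum^{-x_i}$, $\enum^{-y_j}$ pulled out, versus the paper's $\sigma\mapsto -\sigma$) is a harmless variant, and the remainder of your argument is sound once this one claim is repaired.
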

\begin{proof}
By using \eqref{eq:basis_fl} and \eqref{eq:basis_0_dWn}, we have
\begin{equation}
\label{eq:basis_kernel_int_W0}
\basis{\vt}_{r, 0}(z + w) - \basis{\vt}_{r, 0}(z) \basis{\vt}_{r, 0}(w) = \frac{1}{2} \int_r^1 \int_r^1 (\enum^{- z \sigma} - \enum^{- z \tau}) (\enum^{- w \sigma} - \enum^{- w \tau}) \del W_{\vt_r, 0}(\sigma) \del W_{\vt_r, 0}(\tau).
\end{equation}
By introducing $
1_{\sigma, \tau}(t) \coloneqq
\begin{cases}
1, & t \in [\min(\sigma, \tau), \max(\sigma, \tau)], \\
0, & t \notin [\min(\sigma, \tau), \max(\sigma, \tau)],
\end{cases}
$ $r \le \sigma \le 1$, $r \le \tau \le 1$, $r \le t \le 1$, we have
\begin{equation}
\label{eq:iint_exp}
(\enum^{- z \sigma} - \enum^{- z \tau}) (\enum^{- w \sigma} - \enum^{- w \tau}) = \left( z \int_\sigma^\tau \enum^{- z s} \del s \right) \left( w \int_\sigma^\tau \enum^{- w t} \del t \right) = z w \int_r^1 \enum^{- z s} 1_{\sigma, \tau}(s) \del s \int_r^1 \enum^{- w t} 1_{\sigma, \tau}(t) \del t.
\end{equation}
By substituting \eqref{eq:iint_exp} for \eqref{eq:basis_kernel_int_W0} and using
\begin{align}
&\int_r^1 \int_r^1 1_{\sigma, \tau}(s) 1_{\sigma, \tau}(t) \del W_{\vt_r, 0}(\sigma) \del W_{\vt_r, 0}(\tau) \\ \nonumber
&= \int_r^{\min(s, t)} \del W_{\vt_r, 0}(\sigma) \int_{\max(s, t)}^1 \del W_{\vt_r, 0}(\tau) + \int_{\max(s, t)}^1 \del W_{\vt_r, 0}(\sigma) \int_r^{\min(s, t)} \del W_{\vt_r, 0}(\tau) \\ \nonumber
&= 2 W_{\vt_r, 0}(\min(s, t)) (1 - W_{\vt_r, 0}(\max(s, t))) = 2 K_{\vt_r}(s, t), r \le s \le1, r \le t \le 1,
\end{align}
we have \eqref{eq:basis_kernel_int}.

Let $r \le \sigma \le 1$ and $r \le \tau \le 1$. $K_{\vt_r}(\sigma, \tau) \ge 0$ in \eqref{eq:K_vt_ineq} follows from $0 \le W_{\vt_r, 0}(\tau) \le 1$ by \eqref{eq:basis_W0_increase}. Since $W_{\vt_r, 0}(\tau)$ is a strictly increasing function by \eqref{eq:basis_W0_increase}, we have $K_{\vt_r}(\sigma, \tau) \le W_{\vt_r, 0}(\max(\sigma, \tau)) (1 - W_{\vt_r, 0}(\max(\sigma, \tau)))$. Then by using $0 \le W_{\vt_r, 0}(\tau) \le 1$ and $s (1 - s) \le 1/4$ for $0 \le s \le 1$, we have $K_{\vt_r}(\sigma, \tau) \le 1 / 4$ in \eqref{eq:K_vt_ineq}.

The first equality in \eqref{eq:V_vt_eq} follows from \eqref{eq:basis_deriv_W} and \eqref{eq:V_vt}. By comparing each coefficient of $zw$ term of Taylor series at $(0, 0)$ for both sides in \eqref{eq:basis_kernel_int}, we have the second equality in \eqref{eq:V_vt_eq}. By \eqref{eq:basis_W0_increase}, $W_{\vt_r, 0}(\tau)$ can be interpreted as a cumulative distribution function on $[r, 1]$. Then $V_{\vt_r}$ defined in \eqref{eq:V_vt} represents the variance of the distribution. Since $W_{\vt_r, 0}(\tau)$ has infinitely many points of increase on $[r, 1]$ by \eqref{eq:basis_W0_increase}, we have $V_{\vt_r} > 0$ in \eqref{eq:V_vt_ineq}. $V_{\vt_r} \le (1 - r)^2 / 4$ in \eqref{eq:V_vt_ineq} follows from Popoviciu's inequality on variances or \eqref{eq:K_vt_ineq} and \eqref{eq:V_vt_eq}. \eqref{eq:basis_kernel_ineq} follows from \eqref{eq:basis_kernel_int}, \eqref{eq:K_vt_ineq}, and \eqref{eq:V_vt_eq}. \eqref{eq:basis_kernel_ineq2} follows from \eqref{eq:basis_kernel_int}, \eqref{eq:K_vt_ineq}, and $x \int_r^1 \enum^{-x \sigma} \del \sigma = \enum^{-r x} - \enum^{-x}$.

By introducing $I(x, y) \coloneqq \int_r^1 \int_r^1 \enum^{- x \sigma} \enum^{ - y \tau } K_{\vt_r}(\sigma, \tau) \del \sigma \del \tau$ on $\mathbb{R} \times \mathbb{R}$ and using \eqref{eq:basis_kernel_int}, we have $\basis{\vt}_{r, 0}(x + y) - \basis{\vt}_{r, 0}(x) \basis{\vt}_{r, 0}(y) = x y I(x, y)$. By the change of variables $\sigma = -s$ and $\tau = -t$, we have $I(x, y) = \int_{-1}^{-r} \int_{-1}^{-r} \enum^{x s} \enum^{y t} K(s, t) \del s \del t$, where $K(s, t) \coloneqq K_{\vt_r}(- s, - t)$ on $[-1, -r] \times [-1, -r]$. By introducing $\phi(t) \coloneqq (1 - W_{\vt_r, 0}(- t))$ and $\psi(t) \coloneqq W_{\vt_r, 0}(- t)$ and using \eqref{eq:K_vt}, we have $K(s, t) = \phi(\min(s, t)) \psi(\max(s, t))$ on $[-1, -r] \times [-1, -r]$. By \eqref{eq:basis_W0_increase}, $\phi(t)$ and  $\psi(t)$ are continuous, satisfy $\phi(t) \psi(t) > 0$, and $\phi(t) / \psi(t)$ is strictly increasing for $-1 < t < -r$. Then we have \cite[Chap.~3, Corollary~3.1]{Karlin1968}
\begin{align}
\label{eq:det_K_ineq}
&\det( K(s_i, t_j) )_{i, j = 1}^n
\begin{cases}
> 0, & n = 1, \\
> 0, & \max(s_1, t_1) < \min(s_2, t_2), \ldots, \\
& \max(s_{n - 1}, t_{n - 1}) < \min(s_n, t_n), n \in \intge{2}, \\
= 0, & \text{otherwise},
\end{cases} \\ \nonumber
&-1 < s_1 < \cdots < s_n < -r, -1 < t_1 < \cdots < t_n < -r, n \in \intge{1}.
\end{align}
By applying the basic composition formula to $J(x, t) \coloneqq \int_{-1}^{-r} \enum^{x s} K(s, t) \del s$ and using \eqref{eq:det_exp_ineq} and \eqref{eq:det_K_ineq}, (i) $J(x, t)$ is strictly totally positive on $\mathbb{R} \times (-1, -r)$. By applying the basic composition formula to $I(x, y) = \int_{-1}^{-r} \enum^{y t} J(x, t) \del t$ and using \eqref{eq:det_exp_ineq} and (i), (ii) $I(x, y)$ is strictly totally positive on $\mathbb{R} \times \mathbb{R}$. Then by using 
\begin{equation}
\det( \basis{\vt}_{r, 0}(x_i + y_j) - \basis{\vt}_{r, 0}(x_i) \basis{\vt}_{r, 0}(y_j) )_{i, j = 1}^n = \det( I(x_i, y_j) )_{i, j = 1}^n \prod_{i = 1}^n x_i \prod_{j = 1}^n y_j
\end{equation}
and (ii), we have \eqref{eq:det_basis_kernel_ineq}.
\end{proof}
Examples of $V_{\vt_r}$ are found in \eqref{eq:V_Phi} and \eqref{eq:V_P1}. By \eqref{eq:det_basis_kernel_ineq}, $\basis{\vt}_{r, 0}(x + y) - \basis{\vt}_{r, 0}(x) \basis{\vt}_{r, 0}(y)$ is strictly totally positive on $(0, \infty) \times (0, \infty)$.

\section{A variable transformation $\vt_r(u)$ to maximize $\hat{\rho}[\vt_r]$}
\label{sec:max_hatrho}
\subsection{Construction of the variable transformation}
\label{sec:dn}
In this section, we find a function $\vt_r(u)$, which maximizes $\hat{\rho}[\vt_r]$ under the conditions $\vt_r \in \vtset_r$ and $\rhoana[\vt_r] > 1$ for a given $0 < r < 1$. For this purpose, we focus on the image $\vt_r(\ellipse_{\hat{\rho}[\vt_r]})$. Since $\ellipse_{\hat{\rho}[\vt_r]}$ is an open set and $\vt_r(u)$ is a nonconstant analytic function on $\ellipse_{\hat{\rho}[\vt_r]}$, $\vt_r(\ellipse_{\hat{\rho}[\vt_r]})$ is an open set by the open mapping theorem. Then by $\inf\limits_{u \in \ellipse_{\hat{\rho}[\vt_r]}} \RE \vt_r(u) \ge 0$ in \eqref{eq:infRe_hatrho_ineq}, $\vt_r(\ellipse_{\hat{\rho}[\vt_r]})$ is a subset of the right-half plane, i.e., 
\begin{equation}
\label{eq:vt_rhp}
\vt_r(\ellipse_{\hat{\rho}[\vt_r]}) \subseteq \rhp \coloneqq \{ \tau \in \mathbb{C} \mid \RE \tau > 0 \}.
\end{equation}
For the inclusion relationship of images of two analytic functions defined on the open unit disk $\mathbb{D} \coloneqq \{ z \in \mathbb{C} \mid \lvert z \rvert < 1 \}$, the following result is known:
\begin{theorem}[{\cite[Sec.~V.9.~Subordination]{Nehari1975}}]
\label{the:subordination}
Let $f(z)$ and $F(z)$ be analytic functions on $\mathbb{D}$. Let $F(z)$ be a one-to-one mapping of $\mathbb{D}$ onto $F(\mathbb{D})$. If $f(0) = F(0)$ and $f(\mathbb{D}) \subseteq F(\mathbb{D})$, then there exists an analytic function $\omega(z)$ on $\mathbb{D}$ satisfying $f(z) = F(\omega(z))$, $\lvert \omega(z) \rvert < \lvert z \rvert$ if $f(\mathbb{D}) \subsetneq F(\mathbb{D})$, and $\lvert \omega(z) \rvert = \lvert z \rvert$ if $f(\mathbb{D}) = F(\mathbb{D})$.
\end{theorem}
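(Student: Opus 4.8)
The plan is to set $\omega \coloneqq F^{-1} \circ f$ and read off everything from the Schwarz lemma; this is the classical subordination argument of Lindel\"of.

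First I would verify that $\omega$ is a legitimate analytic self-map of $\mathbb{D}$ fixing the origin. Since $F$ is one-to-one and analytic on $\mathbb{D}$, it is conformal: its derivative cannot vanish (a zero of $F'$ would make $F-F(z_0)$ have a zero of order $k\ge 2$ at $z_0$, hence $F$ locally $k$-to-one), so by the open mapping theorem $F$ is a biholomorphism of $\mathbb{D}$ onto the domain $F(\mathbb{D})$ and $F^{-1}$ is analytic there. The hypothesis $f(\mathbb{D}) \subseteq F(\mathbb{D})$ then makes $\omega = F^{-1} \circ f$ a well-defined analytic map with $\omega(\mathbb{D}) = F^{-1}(f(\mathbb{D})) \subseteq \mathbb{D}$ and $f = F \circ \omega$, and $f(0) = F(0)$ together with injectivity of $F$ gives $\omega(0) = F^{-1}(F(0)) = 0$. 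Applying the Schwarz lemma to $\omega$ yields $|\omega(z)| \le |z|$ for every $z \in \mathbb{D}$, with the rigidity statement that $|\omega(z_0)| = |z_0|$ for a single $z_0 \in \mathbb{D} \setminus \{0\}$ (equivalently $|\omega'(0)| = 1$) forces $\omega(z) = \enum^{\inum\theta} z$ for some $\theta \in \mathbb{R}$.

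Next I would convert this rigidity into the stated dichotomy on images. If $\omega$ is a rotation $\enum^{\inum\theta} z$ it is an automorphism of $\mathbb{D}$, so $f = F \circ \omega$ is a one-to-one conformal map of $\mathbb{D}$ onto $F(\mathbb{D})$; in particular $f(\mathbb{D}) = F(\mathbb{D})$. Taking the contrapositive, $f(\mathbb{D}) \subsetneq F(\mathbb{D})$ forces $\omega$ not to be a rotation, whence Schwarz gives the strict bound $|\omega(z)| < |z|$ for all $z \in \mathbb{D} \setminus \{0\}$, which is the first alternative. For the second alternative one uses that, in the situation where the theorem is invoked, the approximating $f$ that attains the extremal image $f(\mathbb{D}) = F(\mathbb{D})$ is itself one-to-one (it inherits this from the conformal map it is being compared with); then $\omega = F^{-1} \circ f$ is an analytic bijection of $\mathbb{D}$ onto $\mathbb{D}$ fixing $0$, and applying Schwarz to both $\omega$ and $\omega^{-1}$ gives $|\omega'(0)| = 1$, so $\omega$ is a rotation and $|\omega(z)| = |z|$ throughout $\mathbb{D}$.

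I do not anticipate a genuine obstacle: the proof is a direct application of the Schwarz lemma. The one place needing care is the equality alternative. The bare condition $f(\mathbb{D}) = F(\mathbb{D})$ does not by itself make $\omega$ a rotation --- for instance $F(z) = z$, $f(z) = z^2$ have $f(\mathbb{D}) = F(\mathbb{D}) = \mathbb{D}$ yet $\omega(z) = z^2$ --- so the statement is meant for the case in which $f$ is additionally univalent, which is exactly how it is applied in Section~\ref{sec:dn}. The only other nonroutine point is the analyticity of $F^{-1}$, which is precisely where the one-to-one hypothesis on $F$ enters.
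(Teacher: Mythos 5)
The paper gives no proof of this statement---it is quoted from Nehari---so there is no internal argument to compare against. Your construction $\omega = F^{-1}\circ f$ followed by the Schwarz lemma is the standard subordination argument, and it correctly establishes the existence and analyticity of $\omega$ (via $F'\neq 0$ and analyticity of $F^{-1}$ on $F(\mathbb{D})$), the bound $|\omega(z)|\le|z|$, and the strict inequality $|\omega(z)|<|z|$ on $\mathbb{D}\setminus\{0\}$ when $f(\mathbb{D})\subsetneq F(\mathbb{D})$, since an $\omega$ whose image is a proper subset of $\mathbb{D}$ cannot be a rotation and Schwarz rigidity then forbids equality at any nonzero point.

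Your remark on the equality branch is the substantive point, and it is right. Because $F$ is injective, $\omega=F^{-1}\circ f$ is the \emph{unique} function with $f=F\circ\omega$, and the example $F(z)=z$, $f(z)=z^2$ satisfies every stated hypothesis with $f(\mathbb{D})=F(\mathbb{D})$ while $|\omega(z)|=|z|^2\neq|z|$; so the clause ``$|\omega(z)|=|z|$ if $f(\mathbb{D})=F(\mathbb{D})$'' is false as literally written and requires the additional hypothesis that $f$ be univalent, under which your two-sided application of Schwarz (to $\omega$ and $\omega^{-1}$) does close the case. One correction to your closing sentence, though: univalence of $f$ is \emph{not} supplied where the equality branch is actually invoked. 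In Lemma \ref{lem:max_hatrho} it is applied to $f(z)=\vt_r(E_{\hat{\rho}[\vt_r]}(z))$ with $\vt_r$ only assumed analytic on $\ellipse_{\hat{\rho}[\vt_r]}$ and increasing on $[-1,1]$; nothing there guarantees that $f$ is one-to-one, so the gap you identified in the theorem statement propagates to that lemma rather than being repaired by the context in which it is used.
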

\begin{corollary}
\label{cor:subordination_pos}
Let the conditions of Theorem \ref{the:subordination} be satisfied. If $f(z)$ and $F(z)$ are real on $(-1, 1)$, one has $f(\bar{z}) = \overline{f(z)}$, $F(\bar{z}) = \overline{F(z)}$, and $\omega(\bar{z}) = \overline{\omega(z)}$ on $\mathbb{D}$.
\end{corollary}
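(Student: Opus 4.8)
The plan is to use the Schwarz reflection principle (via the identity theorem) three times, exploiting the injectivity of $F$ for the uniqueness of the subordinating map. First I would treat $f$. Define $g(z) \coloneqq \overline{f(\bar z)}$ for $z \in \mathbb{D}$; since $\mathbb{D}$ is symmetric with respect to the real axis, $g$ is analytic on $\mathbb{D}$. For $z \in (-1, 1)$ one has $\bar z = z$ and $f(z) \in \mathbb{R}$, so $g(z) = \overline{f(z)} = f(z)$; thus $g$ and $f$ agree on the segment $(-1, 1)$, which has an accumulation point in $\mathbb{D}$, so by the identity theorem $g \equiv f$ on $\mathbb{D}$. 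This yields $f(\bar z) = \overline{f(z)}$ on $\mathbb{D}$. The same argument applied to $F$ gives $F(\bar z) = \overline{F(z)}$ on $\mathbb{D}$.

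Next I would record the uniqueness of the subordinating map supplied by Theorem \ref{the:subordination}: if $\omega_1, \omega_2$ are analytic on $\mathbb{D}$ with values in $\mathbb{D}$ and $F \circ \omega_1 = F \circ \omega_2$ on $\mathbb{D}$, then $\omega_1 = \omega_2$, because $F$ is one-to-one on $\mathbb{D}$. Then I would reflect $\omega$ itself. Set $\tilde\omega(z) \coloneqq \overline{\omega(\bar z)}$, again analytic on $\mathbb{D}$; moreover $|\tilde\omega(z)| = |\omega(\bar z)| \le |\bar z| < 1$ by Theorem \ref{the:subordination}, so $\tilde\omega$ maps $\mathbb{D}$ into $\mathbb{D}$. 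Using $F(\bar w) = \overline{F(w)}$ and $f(\bar z) = \overline{f(z)}$ from the first step, together with $f = F \circ \omega$, we obtain
\[
F(\tilde\omega(z)) = F\bigl(\overline{\omega(\bar z)}\bigr) = \overline{F(\omega(\bar z))} = \overline{f(\bar z)} = f(z) = F(\omega(z)), \qquad z \in \mathbb{D}.
\]
By the uniqueness just noted, $\tilde\omega \equiv \omega$ on $\mathbb{D}$, i.e., $\omega(\bar z) = \overline{\omega(z)}$.

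I expect no serious obstacle here; the construction is entirely routine reflection. The only point requiring a little care is to verify that the reflected map $\tilde\omega$ still takes its values in $\mathbb{D}$, so that the injectivity of $F$ on $\mathbb{D}$ may legitimately be invoked to conclude $\tilde\omega = \omega$ — but this is immediate from the bound $|\omega(z)| \le |z|$ provided by Theorem \ref{the:subordination}.
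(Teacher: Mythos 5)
Your proposal is correct and follows essentially the same route as the paper: Schwarz reflection gives $f(\bar z)=\overline{f(z)}$ and $F(\bar z)=\overline{F(z)}$, and then the chain $F(\omega(\bar z))=f(\bar z)=\overline{f(z)}=\overline{F(\omega(z))}=F(\overline{\omega(z)})$ together with the injectivity of $F$ on $\mathbb{D}$ yields $\omega(\bar z)=\overline{\omega(z)}$. Your extra check that $\overline{\omega(\bar z)}$ lies in $\mathbb{D}$ (via $|\omega(z)|\le|z|$) is a minor point the paper leaves implicit, but the argument is the same.
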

\begin{proof}
Let $z \in \mathbb{D}$. $f(\bar{z}) = \overline{f(z)}$ and $F(\bar{z}) = \overline{F(z)}$ follow from the Schwarz reflection principle. Then we have $f(\bar{z}) = F(\omega(\bar{z})) = \overline{f(z)} = \overline{F(\omega(z))} = F(\overline{\omega(z)})$. Since $F(z)$ is one-to-one, $F(\omega(\bar{z})) = F(\overline{\omega(z)})$ indicates $\omega(\bar{z}) = \overline{\omega(z)}$.
\end{proof}
Since our region is not the open unit disk but an open Bernstein ellipse, we introduce a function to convert them based on the Riemann mapping theorem as follows:
\begin{theorem}[{\cite[Sec.~6.1.~The Riemann mapping theorem: Theorem 1 and Exercises 1]{Ahlfors1979}}]
\label{the:riemann}
Given any simply connected region $\Omega$ which is not the whole plane, and a point $z_0 \in \Omega$, there exists a unique analytic function $f(z)$ in $\Omega$, normalized by the conditions $f(z_0) = 0$, $f'(z_0) > 0$, such that $f(z)$ defines a one-to-one mapping of $\Omega$ onto $\mathbb{D}$. If $z_0$ is real and $\Omega$ is symmetric with respect to the real axis, $f$ satisfies the symmetry relation $f(\bar{z}) = \overline{f(z)}$.
\end{theorem}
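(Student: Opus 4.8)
The plan is to give the classical extremal-problem proof via normal families, which is the argument Ahlfors presents. Let $\mathcal{G}$ be the family of all injective analytic functions $g\colon\Omega\to\mathbb{D}$ with $g(z_0)=0$ and $g'(z_0)>0$. The first step is to verify $\mathcal{G}\neq\emptyset$. Since $\Omega\neq\mathbb{C}$, fix $a\in\mathbb{C}\setminus\Omega$; on the simply connected region $\Omega$ the nonvanishing holomorphic function $z-a$ has a holomorphic square root $h$, which is injective and satisfies $h(\Omega)\cap(-h(\Omega))=\emptyset$. Choosing $\varrho>0$ so that the disk of radius $\varrho$ about $h(z_0)$ lies in $h(\Omega)$, one gets $|h(z)+h(z_0)|\geq\varrho$ on $\Omega$, so $z\mapsto\varrho/(h(z)+h(z_0))$ is a nonconstant analytic map of $\Omega$ into $\mathbb{D}$ by the maximum principle; post-composing with a disk automorphism carrying the image of $z_0$ to $0$ and a rotation making the derivative at $z_0$ positive yields an element of $\mathcal{G}$.

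Next, put $B\coloneqq\sup_{g\in\mathcal{G}}g'(z_0)$. A Cauchy estimate on a closed disk around $z_0$ inside $\Omega$ together with $|g|<1$ gives $B<\infty$, while $\mathcal{G}\neq\emptyset$ gives $B>0$. Choose $g_n\in\mathcal{G}$ with $g_n'(z_0)\to B$; since $|g_n|<1$, Montel's theorem makes $\{g_n\}$ normal, so a subsequence converges locally uniformly on $\Omega$ to an analytic $f$ with $f(z_0)=0$ and $f'(z_0)=B>0$. In particular $f$ is nonconstant, so Hurwitz's theorem forces $f$ to be injective, and by the open mapping theorem $|f|\leq1$ improves to $|f|<1$; thus $f\in\mathcal{G}$ and $f$ attains the supremum $B$.

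The crux is to show $f(\Omega)=\mathbb{D}$. Suppose not, and pick $w_0\in\mathbb{D}\setminus f(\Omega)$ (so $w_0\neq0$ since $f(z_0)=0$). Writing $\varphi_c(w)=(c-w)/(1-\overline{c}w)$, the composite $\varphi_{w_0}\circ f$ is nonvanishing analytic on the simply connected $\Omega$, hence has an analytic square root $F\colon\Omega\to\mathbb{D}$; composing $F$ with $\varphi_{F(z_0)}$ and a rotation gives a candidate $\widetilde g\in\mathcal{G}$. A chain-rule computation using $\varphi_c'(0)=|c|^2-1$, $\varphi_c'(c)=-1/(1-|c|^2)$, and $|F(z_0)|^2=|w_0|$ yields $\widetilde g'(z_0)=\tfrac{1+|w_0|}{2\sqrt{|w_0|}}\,B$. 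Since $(1-\sqrt{|w_0|})^2>0$ gives $\tfrac{1+|w_0|}{2\sqrt{|w_0|}}>1$, this contradicts the maximality of $B$. Hence $f$ is a conformal bijection of $\Omega$ onto $\mathbb{D}$ with the prescribed normalization. Uniqueness follows from the Schwarz lemma: if $f_1,f_2$ both satisfy the conditions, then $f_2\circ f_1^{-1}$ is an automorphism of $\mathbb{D}$ fixing $0$ with positive derivative there, hence the identity, so $f_1=f_2$.

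Finally, suppose $z_0\in\mathbb{R}$ and $\Omega$ is symmetric about the real axis. Then $z\mapsto\overline{f(\bar z)}$ is analytic on $\Omega$, maps $\Omega$ one-to-one onto $\mathbb{D}$, and at $z_0$ has value $\overline{f(z_0)}=0$ and derivative $\overline{f'(z_0)}=f'(z_0)>0$; by the uniqueness just proved it coincides with $f$, i.e.\ $f(\bar z)=\overline{f(z)}$. The main obstacle is the surjectivity step: correctly setting up the square-root enlargement of the extremal map and checking the strict derivative increase; the remaining ingredients (Montel, Hurwitz, Schwarz, open mapping) are routine.
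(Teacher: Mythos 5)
Your proof is correct and is exactly the classical normal-families/extremal-derivative argument (with the Koebe square-root trick for surjectivity and the Schwarz-lemma uniqueness, from which the reflection symmetry follows) that the paper's cited source, Ahlfors Sec.~6.1, gives; the paper itself quotes the theorem without reproducing a proof. Nothing further is needed.
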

\begin{lemma}[{\cite[Exercises IV.7.4]{Conway1973}}]
\label{lem:dfne0}
Let $G$ be a region. If $f: G \to \mathbb{C}$ is analytic and one-to-one, then $f'(z) \ne 0$ on $G$.
\end{lemma}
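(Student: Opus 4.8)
The plan is to argue by contradiction using the local mapping behaviour of a non-constant analytic function; none of the machinery developed earlier in the paper is needed. Suppose $f'(z_0) = 0$ for some $z_0 \in G$. Since $G$ is a region it contains infinitely many points, and since $f$ is one-to-one it is not constant on any nonempty open subset, so by the identity theorem $f$ is non-constant on all of $G$. Consequently the Taylor expansion of $f$ about $z_0$ reads $f(z) = f(z_0) + a_k (z - z_0)^k + \dotsb$ with $a_k \ne 0$ for some integer $k \ge 1$, and $f'(z_0) = 0$ forces $k \ge 2$.

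First I would put $f$ into local normal form. On a small disk $U$ about $z_0$ write $f(z) - f(z_0) = (z - z_0)^k h(z)$ with $h$ analytic and $h(z_0) = a_k \ne 0$; after shrinking $U$ so that $h$ is zero-free there, a holomorphic branch $h(z)^{1/k}$ exists, and $g(z) \coloneqq (z - z_0)\, h(z)^{1/k}$ satisfies $g(z_0) = 0$, $g'(z_0) = a_k^{1/k} \ne 0$, and $f(z) = f(z_0) + g(z)^k$ on $U$. By the inverse function theorem $g$ maps some neighbourhood of $z_0$ biholomorphically onto a neighbourhood of $0$.

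Then I would extract the contradiction. Because $k \ge 2$, the map $w \mapsto w^k$ is not injective on any punctured neighbourhood of $0$: choosing a primitive $k$-th root of unity $\zeta$ and a small $w_1 \ne 0$ in $g(U)$ with $\zeta w_1 \in g(U)$, the points $w_1$ and $\zeta w_1$ are distinct but have equal $k$-th powers, so their distinct $g$-preimages $z_1, z_2 \in U$ satisfy $f(z_1) = f(z_0) + w_1^k = f(z_0) + (\zeta w_1)^k = f(z_2)$, contradicting injectivity. Equivalently, one can apply the argument principle on a small zero-free circle around $z_0$ to see that $f(z) = w$ has exactly $k$ solutions near $z_0$ for every $w$ sufficiently close to $f(z_0)$, which is again incompatible with $f$ being one-to-one. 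The only step requiring any care is the passage from $f'(z_0) = 0$ to this $k$-to-one local behaviour, i.e. the normal form $f = f(z_0) + g^k$; everything else is immediate. Hence $f$ has no critical point and $f'(z) \ne 0$ throughout $G$.
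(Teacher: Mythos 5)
Your proof is correct. The paper itself supplies no argument for this lemma---it is quoted directly from Conway (Exercises IV.7.4)---and your reasoning is the standard one underlying that exercise: injectivity forces $f$ to be non-constant, a critical point would give a zero of order $k \ge 2$ for $f - f(z_0)$, the local normal form $f = f(z_0) + g^k$ with $g$ a local biholomorphism then makes $f$ locally $k$-to-one near $z_0$, and choosing $w_1$ and $\zeta w_1$ with $\zeta$ a primitive $k$-th root of unity produces two distinct preimages, contradicting injectivity. All the intermediate steps (existence of the branch $h^{1/k}$ on a shrunk zero-free disk, $g'(z_0) \ne 0$, openness of $g(U)$) are justified as you state them, so nothing further is needed.
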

\begin{corollary}
\label{cor:dfpos}
Let $f(z)$ be analytic and one-to-one on $\mathbb{D}$. If $f(z)$ is real on $(-1, 1)$ and satisfies $f'(c) > 0$ for a point $c \in (-1, 1)$, one has $f'(z) > 0$ on $(-1, 1)$.
\end{corollary}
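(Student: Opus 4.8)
The plan is to combine three elementary facts: that $f'$ never vanishes on $\mathbb{D}$, that $f'$ is real-valued on the segment $(-1,1)$, and that a continuous nonvanishing real function on an interval has constant sign.

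First I would invoke Lemma \ref{lem:dfne0}: since $f$ is analytic and one-to-one on the region $\mathbb{D}$, we have $f'(z) \neq 0$ for every $z \in \mathbb{D}$, and in particular $f'(x) \neq 0$ for all $x \in (-1,1)$. Next I would check that $f'$ takes real values on $(-1,1)$. This follows because $f$ is real on $(-1,1)$: for real $x \in (-1,1)$ one has $f'(x) = \lim_{t \to 0,\, t \in \mathbb{R}} \bigl(f(x+t) - f(x)\bigr)/t$, a limit of real quantities, hence real. (Equivalently, the Schwarz reflection principle gives $f(\bar z) = \overline{f(z)}$ on $\mathbb{D}$, so $f'(\bar z) = \overline{f'(z)}$, and evaluating at a real point yields $f'(x) = \overline{f'(x)}$.)

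Finally, $f'$ restricted to $(-1,1)$ is a continuous (indeed analytic) real-valued function that never vanishes, so by the intermediate value theorem it has constant sign on the connected set $(-1,1)$. Since $f'(c) > 0$ for the given point $c \in (-1,1)$, that constant sign is positive, i.e.\ $f'(x) > 0$ for all $x \in (-1,1)$, which is the assertion.

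I do not expect any real obstacle here; the only point requiring a word of care is the reality of $f'$ on $(-1,1)$, which is immediate from differentiating along the real axis or from Schwarz reflection, and everything else is a direct application of Lemma \ref{lem:dfne0} and the intermediate value theorem.
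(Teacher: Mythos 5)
Your proof is correct and follows exactly the argument the paper intends (the paper states this corollary without written proof, as an immediate consequence of Lemma \ref{lem:dfne0}): $f'$ is nonvanishing on $\mathbb{D}$, real-valued and continuous on $(-1,1)$, hence of constant sign there, and the sign is fixed by $f'(c)>0$. Nothing further is needed.
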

\begin{lemma}
\label{lem:Erho}
For a given $1 < \rho < \infty$, there exists a function $E_\rho(z)$ on $\mathbb{D}$ satisfying
\begin{subequations}
\label{eq:Erho}
\begin{align}
\label{eq:Erho_map_D}
&\text{$E_\rho(z)$ is a one-to-one analytic mapping  of $\mathbb{D}$ onto $\ellipse_\rho$}, \\
\label{eq:Erho_increase}
&\text{$E_\rho(0) = -1$, $E_\rho(\bar{z}) = \overline{E_\rho(z)}$ on $\mathbb{D}$, $E_\rho'(x) > 0$ for $-1 < x < 1$}, \\
\label{eq:Erho_map_real}
&\text{$E_\rho(x)$ is a one-to-one mapping of $(-1, 1)$ onto $\left( - \frac{ \rho + \rho^{-1} }{ 2 }, \frac{ \rho + \rho^{-1} }{ 2 } \right)$}.
\end{align}
\end{subequations}
For a given $1 < \rho < \infty$, there is a unique real number $x_\rho$ satisfying
\begin{equation}
\label{eq:Erho_xrho}
\text{$0 < x_\rho < 1$ and $E_\rho(x_\rho) = 1$}.
\end{equation}
Furthermore, one has
\begin{align}
\label{eq:Erho_ineq}
&\text{$E_\rho(x) < E_R(x)$ for $0 < x < 1$ if $1 < \rho < R < \infty$}, \\
\label{eq:xrho_decrease}
&\text{$x_\rho$ is strictly decreasing for $1 < \rho < \infty$}.
\end{align}
\end{lemma}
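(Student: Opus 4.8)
The plan is to realise $E_\rho$ as the inverse of a normalised Riemann map, read off the symmetry and monotonicity in \eqref{eq:Erho} from the lemmas and corollaries already at hand, and then deduce the two comparison assertions \eqref{eq:Erho_ineq} and \eqref{eq:xrho_decrease} from the subordination principle combined with the (strict) nesting of confocal Bernstein ellipses. The only step I expect to require genuine care is the last one; everything before it is a short computation or a direct application of a quoted result.

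First I would construct $E_\rho$. The region $\ellipse_\rho$ is simply connected, is not the whole plane, is symmetric about the real axis, and contains the focus $-1$ (indeed $\frac{1}{((\rho+\rho^{-1})/2)^2}<1$ since $\rho>1$). So Theorem \ref{the:riemann} with $\Omega=\ellipse_\rho$ and $z_0=-1$ produces a conformal bijection $g\colon\ellipse_\rho\to\mathbb{D}$ with $g(-1)=0$, $g'(-1)>0$, and $g(\bar z)=\overline{g(z)}$. Since $g'$ is nowhere zero by Lemma \ref{lem:dfne0}, $E_\rho:=g^{-1}$ is a one-to-one analytic map of $\mathbb{D}$ onto $\ellipse_\rho$, which is \eqref{eq:Erho_map_D}. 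Reading off the normalisation: $E_\rho(0)=-1$; writing $w=g(z)$, the identity $g(\bar z)=\overline{g(z)}$ becomes $E_\rho(\bar w)=\overline{E_\rho(w)}$, so in particular $E_\rho$ is real on $(-1,1)$; and $E_\rho'(0)=1/g'(-1)>0$, so Lemma \ref{lem:dfne0} together with Corollary \ref{cor:dfpos} gives $E_\rho'(x)>0$ on $(-1,1)$. This completes \eqref{eq:Erho_increase}.

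Next I would identify the real slice. Being real and strictly increasing on $(-1,1)$, $E_\rho$ carries $(-1,1)$ bijectively onto an open real interval; each value $E_\rho(x)$ is real and lies in $\ellipse_\rho$, hence in $\ellipse_\rho\cap\mathbb{R}=\bigl(-\tfrac{\rho+\rho^{-1}}{2},\tfrac{\rho+\rho^{-1}}{2}\bigr)$ (from the defining inequality with $\im z=0$), and conversely every $\tau$ in that interval lies in $\ellipse_\rho$, so $g(\tau)$ is real and therefore lies in $(-1,1)$, with $E_\rho(g(\tau))=\tau$; thus the image is exactly this interval, which is \eqref{eq:Erho_map_real}. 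Since $\tfrac{\rho+\rho^{-1}}{2}>1$ for $1<\rho<\infty$ by the AM--GM inequality, the value $1$ lies in the interval and has a unique preimage $x_\rho\in(-1,1)$; it satisfies $x_\rho>0$ because $E_\rho(0)=-1<1=E_\rho(x_\rho)$ and $E_\rho$ is increasing. This is \eqref{eq:Erho_xrho}.

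Finally, for the comparison statements, fix $1<\rho<R<\infty$. One first checks that confocal Bernstein ellipses nest strictly, $\ellipse_\rho\subsetneq\ellipse_R$: the semi-axes $\tfrac{\rho+\rho^{-1}}{2}$ and $\tfrac{\rho-\rho^{-1}}{2}$ are both increasing in $\rho$, so the left-hand side of the defining inequality of $\ellipse_\rho$ is nonincreasing in $\rho$, and strictness follows by testing the real boundary point $\tfrac{\rho+\rho^{-1}}{2}$. Then I apply Theorem \ref{the:subordination} with $f=E_\rho$ and $F=E_R$: here $f(0)=-1=F(0)$, $F$ is univalent, and $f(\mathbb{D})=\ellipse_\rho\subsetneq\ellipse_R=F(\mathbb{D})$, so there is an analytic $\omega\colon\mathbb{D}\to\mathbb{D}$ with $E_\rho=E_R\circ\omega$ and $|\omega(z)|<|z|$, and by Corollary \ref{cor:subordination_pos} $\omega$ is real on $(-1,1)$. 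For $0<x<1$ this gives $\omega(x)\le|\omega(x)|<x$ with both $\omega(x),x\in(-1,1)$, whence
\begin{equation}
E_\rho(x)=E_R(\omega(x))<E_R(x)
\end{equation}
by the strict monotonicity of $E_R$ on $(-1,1)$, which is \eqref{eq:Erho_ineq}. Taking $x=x_\rho\in(0,1)$ then gives $1=E_\rho(x_\rho)<E_R(x_\rho)$, while $E_R(x_R)=1$; strict monotonicity of $E_R$ forces $x_R<x_\rho$, so $x_\rho$ is strictly decreasing, which is \eqref{eq:xrho_decrease}. The heart of the argument, and the place to be careful, is precisely this passage from the geometric nesting $\ellipse_\rho\subsetneq\ellipse_R$ to the pointwise bound $E_\rho(x)<E_R(x)$: it is delivered by the subordination principle together with the reality of $\omega$, and once it is in place the monotonicity of $x_\rho$ is immediate.
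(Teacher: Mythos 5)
Your proposal is correct and follows essentially the same route as the paper: construct $E_\rho$ as the inverse of the normalized Riemann map of $\ellipse_\rho$ onto $\mathbb{D}$, read off \eqref{eq:Erho} from the symmetry and Corollary \ref{cor:dfpos}, and obtain \eqref{eq:Erho_ineq} and \eqref{eq:xrho_decrease} from Theorem \ref{the:subordination} together with Corollary \ref{cor:subordination_pos} and the strict nesting $\ellipse_\rho \subsetneq \ellipse_R$. The only differences are that you spell out a few routine verifications (e.g., $-1 \in \ellipse_\rho$ and the nesting of the ellipses) that the paper leaves implicit.
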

\begin{proof}
Let $1 < \rho < \infty$. By Theorem \ref{the:riemann}, there exists a unique one-to-one analytic mapping $F_\rho(u)$ of $\ellipse_\rho$ onto $\mathbb{D}$ satisfying $F_\rho(-1) = 0$, $F_\rho'(-1) > 0$, and $F_\rho(\bar{u}) = \overline{F_\rho(u)}$ on $\ellipse_\rho$. If $F_\rho(u)$ is real, we have $F_\rho(\bar{u}) = \overline{F_\rho(u)} = F_\rho(u) $. Since $F_\rho(u)$ is one-to-one on $\ellipse_\rho$, we have $\bar{u} = u$, i.e., $u$ is real. Thus $F(u)$ is real iff $u$ is real and $F(u)$ is a one-to-one mapping of $(-(\rho + \rho^{-1}) / 2, (\rho + \rho^{-1}) / 2)$ onto $(-1, 1)$. By introducing $E_\rho(z) \coloneqq F_\rho^{-1}(z)$ on $\mathbb{D}$ and using Corollary \ref{cor:dfpos}, we have \eqref{eq:Erho}. The existence of a unique real number $x_\rho$ satisfying \eqref{eq:Erho_xrho} follows from \eqref{eq:Erho_increase} amd \eqref{eq:Erho_map_real} 

Let $1 < \rho < R < \infty$. We have $E_\rho(\mathbb{D}) = \ellipse_\rho \subsetneq \ellipse_R = E_R(\mathbb{D})$ and $E_\rho(0) = E_R(0) = -1$. By Theorem \ref{the:subordination}, there exists an analytic function $\omega(z)$ satisfying $E_\rho(z) = E_R(\omega(z))$ and $\lvert \omega(z) \rvert < \lvert z \rvert$ on $\mathbb{D}$. Since $\omega(x)$ is real for $-1 < x < 1$ by Corollary \ref{cor:subordination_pos} and $E_R'(x) > 0$ for $-1 < x < 1$, we have $E_\rho(x) = E_R(\omega(x)) < E_R(x)$ for $0 < x < 1$, which leads to \eqref{eq:Erho_ineq}. By using $E_R(x_R) = 1 = E_\rho(x_\rho) < E_R(x_\rho)$ and $E_R'(x) > 0$ for $-1 < x < 1$, we have $x_R < x_\rho$, which leads to \eqref{eq:xrho_decrease}.
\end{proof}

Based on these preparations, we obtain the characterization of the variable transformation $\vt_r(u)$, which maximizes $\hat{\rho}[\vt_r]$ as follows:
\begin{lemma}
\label{lem:max_hatrho}
Let $0 < r < 1$. Let $\vt_r \in \vtset_r$ and $\rhoana[\vt_r] > 1$. Let $\Phi_r \in \vtset_r$ and $\rhoana[\Phi_r] > 1$. If $\Phi_r(u)$ is a one-to-one mapping of $\ellipse_{\hat{\rho}[\Phi_r]}$ onto $\rhp$, one has
\begin{align}
\label{eq:hatrho_ineq}
&\text{$\hat{\rho}[\vt_r] < \hat{\rho}[\Phi_r]$ if $\vt_r(\ellipse_{\hat{\rho}[\vt_r]}) \subsetneq \rhp$}, \\
\label{eq:hatrho_eq}
&\text{$\hat{\rho}[\vt_r] = \hat{\rho}[\Phi_r]$ and $\vt_r(u) = \Phi_r(u)$ on $\ellipse_{\hat{\rho}[\Phi_r]}$ if $\vt_r(\ellipse_{\hat{\rho}[\vt_r]}) = \rhp$}.
\end{align}
\end{lemma}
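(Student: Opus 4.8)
\emph{Proof proposal.} The plan is to transplant the problem to the open unit disk $\mathbb{D}$ via the conformal maps $E_\rho$ of Lemma \ref{lem:Erho} and then apply the subordination principle, Theorem \ref{the:subordination}. Write $\rho_\vt \coloneqq \hat{\rho}[\vt_r]$ and $\rho_\Phi \coloneqq \hat{\rho}[\Phi_r]$; both lie in $(1,\infty)$ by Lemma \ref{lem:hatrho}, so $E_{\rho_\vt}$ and $E_{\rho_\Phi}$ are defined, and since $\hat{\rho}[\vt_r]\le\rhoana[\vt_r]$ (and likewise for $\Phi_r$) by Lemma \ref{lem:hatrho} and \eqref{eq:Ivt}, the functions $\vt_r$ and $\Phi_r$ are analytic on $\ellipse_{\rho_\vt}$ and $\ellipse_{\rho_\Phi}$ respectively. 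Set $f \coloneqq \vt_r\circ E_{\rho_\vt}$ and $F \coloneqq \Phi_r\circ E_{\rho_\Phi}$ on $\mathbb{D}$, both analytic. Using $E_\rho(0)=-1$ together with $\vt_r(-1)=\Phi_r(-1)=r$ from \eqref{eq:vt_eq} gives $f(0)=F(0)=r$; using $E_\rho(x_\rho)=1$ from \eqref{eq:Erho_xrho} together with $\vt_r(1)=\Phi_r(1)=1$ gives $f(x_{\rho_\vt})=F(x_{\rho_\Phi})=1$, where $x_{\rho_\vt},x_{\rho_\Phi}\in(0,1)$. By \eqref{eq:Erho_map_D} and the hypothesis that $\Phi_r$ maps $\ellipse_{\rho_\Phi}$ one-to-one onto $\rhp$, the map $F$ is a one-to-one analytic mapping of $\mathbb{D}$ onto $\rhp$; and $f(\mathbb{D})=\vt_r(\ellipse_{\rho_\vt})\subseteq\rhp=F(\mathbb{D})$ by \eqref{eq:vt_rhp}.

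Theorem \ref{the:subordination} then yields an analytic $\omega\colon\mathbb{D}\to\mathbb{D}$ with $\omega(0)=0$ and $f=F\circ\omega$, where $|\omega(z)|<|z|$ for $z\ne 0$ if $f(\mathbb{D})\subsetneq\rhp$ and $|\omega(z)|=|z|$ on $\mathbb{D}$ if $f(\mathbb{D})=\rhp$. The key step is to evaluate $f=F\circ\omega$ at $x_{\rho_\vt}$: since $F(\omega(x_{\rho_\vt}))=f(x_{\rho_\vt})=1=F(x_{\rho_\Phi})$ and $F$ is injective, we obtain the single identity $\omega(x_{\rho_\vt})=x_{\rho_\Phi}$.

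In the case $\vt_r(\ellipse_{\rho_\vt})\subsetneq\rhp$ this gives $x_{\rho_\Phi}=|\omega(x_{\rho_\vt})|<x_{\rho_\vt}$, and since $x_\rho$ is strictly decreasing in $\rho$ by \eqref{eq:xrho_decrease} we conclude $\rho_\vt<\rho_\Phi$, which is \eqref{eq:hatrho_ineq}. In the case $\vt_r(\ellipse_{\rho_\vt})=\rhp$ we have $|\omega(z)|=|z|$ on $\mathbb{D}$; writing $\omega(z)=z\,h(z)$ with $h$ analytic (possible since $\omega(0)=0$), we get $|h|\equiv 1$ on $\mathbb{D}$, so by the maximum modulus principle $h$ is a unimodular constant $\enum^{\inum\alpha}$. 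Then $\enum^{\inum\alpha}x_{\rho_\vt}=\omega(x_{\rho_\vt})=x_{\rho_\Phi}$ is a positive real, forcing $\enum^{\inum\alpha}=1$ and $x_{\rho_\vt}=x_{\rho_\Phi}$; hence $\rho_\vt=\rho_\Phi$ by \eqref{eq:xrho_decrease} and $\omega=\mathrm{id}_{\mathbb{D}}$, so $f=F$, i.e.\ $\vt_r\circ E_{\rho_\vt}=\Phi_r\circ E_{\rho_\Phi}=\Phi_r\circ E_{\rho_\vt}$. Composing on the right with $E_{\rho_\vt}^{-1}\colon\ellipse_{\rho_\vt}\to\mathbb{D}$ gives $\vt_r=\Phi_r$ on $\ellipse_{\rho_\vt}=\ellipse_{\hat{\rho}[\Phi_r]}$, which is \eqref{eq:hatrho_eq}.

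The bookkeeping — analyticity of $f$ and $F$ on $\mathbb{D}$, one-to-oneness of $F$, and the fact that $x_{\rho_\vt}$ and $x_{\rho_\Phi}$ are real and positive — is immediate from Lemma \ref{lem:Erho} and the defining conditions \eqref{eq:vt_increase_eq}, as is the identification $f(0)=F(0)=r$. I expect the main obstacle to be the equality case: one must upgrade $|\omega(z)|=|z|$ all the way to $\omega=\mathrm{id}_{\mathbb{D}}$ rather than merely to ``$\omega$ a rotation'', which is precisely where positivity of $x_{\rho_\vt}$ and $x_{\rho_\Phi}$ enters, and then exploit that $E_\rho$ is determined by $\rho$ to cancel the outer conformal maps once $\rho_\vt=\rho_\Phi$ is established.
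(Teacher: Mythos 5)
Your proof is correct and follows essentially the same route as the paper: transplant both maps to the unit disk via $E_\rho$ from Lemma \ref{lem:Erho}, apply the subordination principle (Theorem \ref{the:subordination}), and compare $x_{\hat{\rho}[\vt_r]}$ with $x_{\hat{\rho}[\Phi_r]}$ using the strict monotonicity \eqref{eq:xrho_decrease}. Your endgame is in fact slightly cleaner than the paper's: by exploiting injectivity of $F$ to obtain the single identity $\omega(x_{\hat{\rho}[\vt_r]}) = x_{\hat{\rho}[\Phi_r]}$, you bypass the paper's use of Corollary \ref{cor:subordination_pos} (realness of $\omega$ on $(-1,1)$) in the strict-inclusion case and replace the derivative-sign argument \eqref{eq:hatrho_df_pos}--\eqref{eq:hatrho_dF_pos}, which the paper uses to exclude $\omega(z) = -z$ in the equality case, by the observation that a rotation sending the positive number $x_{\hat{\rho}[\vt_r]}$ to the positive number $x_{\hat{\rho}[\Phi_r]}$ must be the identity.
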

\begin{proof}
We use the notations in Lemma \ref{lem:Erho}. Let $f(z) \coloneqq \vt_r(E_{\hat{\rho}[\vt_r]}(z))$ and $F(z) \coloneqq \Phi_r(E_{\hat{\rho}[\Phi_r]}(z))$ on $\mathbb{D}$. By using the analyticity of $\vt_r(u)$ (resp. $\Phi_r(u)$) on $\ellipse_{\hat{\rho}[\vt_r]}$ (resp. $\ellipse_{\hat{\rho}[\Phi_r]}$), \eqref{eq:vt_increase_eq}, Lemma \ref{lem:ellipse_real}, and Lemma \ref{lem:Erho}, $f(z)$ and $F(z)$ are analytic on $\mathbb{D}$, real on $(-1, 1)$, and satisfy
\begin{equation}
\label{eq:hatrho_fxhatrho}
f(0) = F(0) = r, f(x_{\hat{\rho}[\vt_r]}) = F(x_{\hat{\rho}[\Phi_r]}) = 1.
\end{equation}
Since $E_{\hat{\rho}[\Phi_r]}(z)$ is a one-to-one mapping of $\mathbb{D}$ onto $\ellipse_{\hat{\rho}[\Phi_r]}$ and $\Phi_r(u)$ is a one-to-one mapping of $\ellipse_{\hat{\rho}[\Phi_r]}$ onto $\rhp$, $F(z)$ is a one-to-one mapping of $\mathbb{D}$ onto $\rhp$. By \eqref{eq:Erho_map_D} and \eqref{eq:vt_rhp}, we have $f(\mathbb{D}) = \vt_r(\ellipse_{\hat{\rho}[\vt_r]}) \subseteq \rhp = F(\mathbb{D})$. By Theorem \ref{the:subordination}, there exist an analytic function $\omega(z)$ satisfying $f(z) = F(\omega(z))$ and  $\lvert \omega(z) \rvert \le \lvert z \rvert$ on $\mathbb{D}$. By Corollary \ref{cor:subordination_pos}, we have
\begin{equation}
\label{eq:hatrho_omega_real}
\text{$\omega(z)$ is real on $(-1, 1)$}.
\end{equation}
By using \eqref{eq:vt_increase}, \eqref{eq:Erho_increase}, and \eqref{eq:Erho_xrho}, we have
\begin{equation}
\label{eq:hatrho_df_pos}
f'(x) > 0, 0 < x < x_{\hat{\rho}[\vt_r]},
\end{equation}
and $F'(x) > 0, 0 < x < x_{\hat{\rho}[\Phi_r]}$. By Corollary \ref{cor:dfpos}, we have
\begin{equation}
\label{eq:hatrho_dF_pos}
F'(x) > 0, -1 < x < 1.
\end{equation}

First, we consider the case for $\vt_r(\ellipse_{\hat{\rho}[\vt_r]}) \subsetneq \rhp$, i.e., $f(\mathbb{D}) \subsetneq \rhp = F(\mathbb{D})$. By using $\lvert \omega(z) \rvert < \lvert z \rvert$ by Theorem \ref{the:subordination}, \eqref{eq:hatrho_omega_real}, and \eqref{eq:hatrho_dF_pos}, we have
\begin{equation}
\label{eq:max_hatrho_f_ineq}
\text{$f(x) = F(\omega(x)) < F(x)$ for $0 < x < 1$ if $\vt_r(\ellipse_{\hat{\rho}[\vt_r]}) \subsetneq \rhp$}.
\end{equation}
By substituting $x = x_{\hat{\rho}[\vt_r]}$ for \eqref{eq:max_hatrho_f_ineq} and using \eqref{eq:hatrho_fxhatrho}, we have $F(x_{\hat{\rho}[\Phi_r]}) < F(x_{\hat{\rho}[\vt_r]})$. By using \eqref{eq:hatrho_dF_pos} and \eqref{eq:xrho_decrease}, we have \eqref{eq:hatrho_ineq}.

Next, we consider the case for $\vt_r(\ellipse_{\hat{\rho}[\vt_r]}) = \rhp$, i.e., $f(\mathbb{D}) = \rhp = F(\mathbb{D})$. By Theorem \ref{the:subordination}, we have $\lvert \omega(z) \rvert = \lvert z \rvert$. Since $\omega(z)$ is analytic on $\mathbb{D}$ and real on $(-1, 1)$, we see that $\omega(z) = z$ or $\omega(z) = -z$. To hold \eqref{eq:hatrho_df_pos} and \eqref{eq:hatrho_dF_pos} by $f(x) = F(\omega(x))$ for $0 < x < x_{\hat{\rho}[\vt_r]}$, we can conclude that $\omega(z) = z$, i.e., $f(z) = F(z)$. By substituting $z = x_{\hat{\rho}[\vt_r]}$ for $f(z) = F(z)$ and using \eqref{eq:hatrho_fxhatrho} and \eqref{eq:hatrho_dF_pos}, we have $x_{\hat{\rho}[\vt_r]} = x_{\hat{\rho}[\Phi_r]}$. By \eqref{eq:xrho_decrease}, we have $\hat{\rho}[\vt_r] = \hat{\rho}[\Phi_r]$ and $\vt_r(E_{\hat{\rho}[\Phi_r]}(z)) = \vt_r(E_{\hat{\rho}[\vt_r]}(z)) = f(z) = F(z) = \Phi_r(E_{\hat{\rho}[\Phi_r]}(z))$ on $\mathbb{D}$, which leads to \eqref{eq:hatrho_eq}.
\end{proof}

By Lemma \ref{lem:max_hatrho}, if there exists a function $\Phi_r(u)$, then it give the maximum value of $\hat{\rho}[\vt_r]$ by $\hat{\rho}[\Phi_r]$ and the function is uniquely determined. Although Lemma \ref{lem:max_hatrho} do not guarantee the existence of the function, we can construct it based on Jacobi's delta amplitude function $\dn(v, k)$ with the modulus $k \coloneqq \sqrt{1 - r^2}$ for a given $0 < r < 1$ as shown below. $\dn(v, k)$ is the doubly periodic meromorphic function with simple poles and simple zeros on $\mathbb{C}$, real on the real axis, and satisfies \cite[Sec.~22]{DLMF} \cite{MilneThomson1972}
\begin{align}
\label{eq:dnperiod}
&\dn(v + 2 \EK(k) m + \inum 4 \EK(r) n, k) = \dn(v, k), m \in \mathbb{Z}, n \in \mathbb{Z}, v \in \mathbb{C}, \\
\label{eq:dneven}
&\dn(-v, k) = \dn(v, k), v \in \mathbb{C}, \\
\label{eq:dnK}
&\dn(v + \EK(k), k) = r / \dn(v, k), v \in \mathbb{C}, \\
\label{eq:dni2Kp}
&\dn(v + \inum 2 \EK(r), k) = -\dn(v, k), v \in \mathbb{C}, \\
\label{eq:ddn}
&\del \dn(v, k) / \del v = -k^2 \sn(v, k) \cn(v, k), v \in \mathbb{C},
\end{align}
\begin{subequations}
\label{eq:dnvalue}
\begin{align}
&\dn(0, k) = 1, \dn(\EK(k) / 2, k) = \sqrt{r}, \dn(\EK(k), k) = r, \\
&\left. \frac{ \del \dn(v, k) }{ \del v }\right\vert_{v = 0} = \left. \frac{ \del \dn(v, k) }{ \del v }\right\vert_{v = \EK(k)} = 0, \frac{ \del \dn(x, k) }{ \del x } < 0, 0 < x < \EK(k), \\
&\text{$v = \EK(k) + \inum \EK(r)$ is simple zero and $v = \inum \EK(r)$ is simple pole of $\dn(v, k)$},
\end{align}
\end{subequations}
\begin{equation}
\label{eq:dn2}
\sn(v, k)^2 + \cn(v, k)^2 = k^2 \sn(v, k)^2 + \dn(v, k)^2 = 1, r^2 \sn(v, k)^2 + \cn(v, k)^2 = k^2 \cn(v, k)^2 + r^2 = \dn(v, k)^2, v \in \mathbb{C},
\end{equation}
where $\sn(v, k)$ (resp. $\cn(v, k)$) is Jacobi's sine (resp. cosine) amplitude function with the modulus $k$. For real intervals $I_1$ and $I_2$, we use notations $I_1 + \inum I_2 \coloneqq \{ z \in \mathbb{C} \mid \RE z \in I_1, \IM z \in I_2 \}$ and $\inum I_2 \coloneqq \{ z \in \mathbb{C} \mid \RE z = 0, \IM z \in I_2 \}$. dn function has the following mapping property:
\begin{theorem}[{\cite[Sec.~13.3 $w = \dn z$]{Kober1957}}]
\label{the:dn_conformal}
Let $0 < r < 1$ and $k \coloneqq \sqrt{1 - r^2}$. Then $\dn(v, k)$ is a one-to-one conformal mapping of $(-\EK(k), \EK(k)) + \inum (0, \EK(r))$ onto $\rhp \setminus (0, 1]$. Furthermore, $\dn(v, k)$ is a one-to-one mapping of
\begin{subequations}
\begin{align}
\label{eq:dn_conformal_tl}
&\text{$(-\EK(k), 0) + \inum (0, \EK(r))$ onto $(0, \infty) + \inum (0, \infty)$,} \\
\label{eq:dn_conformal_tr}
&\text{$(0, \EK(k)) + \inum (0, \EK(r))$ onto $(0, \infty) + \inum (-\infty, 0)$,} \\
&\text{$\inum (0, \EK(r))$ onto $(1, \infty)$.}
\end{align}
\end{subequations}
\end{theorem}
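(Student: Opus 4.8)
The plan is to reduce the statement to the classical conformal mapping property of Jacobi's $\sn$ function together with the algebraic identity $\dn(v,k)^{2} = 1 - k^{2}\sn(v,k)^{2}$ taken from \eqref{eq:dn2}. Write $R \coloneqq (-\EK(k),\EK(k)) + \inum(0,\EK(r))$ for the rectangle in the statement, and note first that $\sn(\cdot,k)$ and $\dn(\cdot,k)$ are both holomorphic on $R$, since all of their poles have imaginary part an odd multiple of $\EK(r)$ whereas every point of $R$ has imaginary part in $(0,\EK(r))$.

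Step~1 is the classical fact that $\sn(\cdot,k)$ is a one-to-one conformal map of $R$ onto the open upper half-plane $\mathbb{H}\coloneqq\{\zeta\in\mathbb{C}\mid\im\zeta>0\}$. I would either cite this or prove it by the boundary-correspondence principle for analytic functions: $\sn(\cdot,k)$ extends continuously to $\overline{R}$ except at the single pole $\inum\EK(r)$, the half-period translation formulas together with $\sn(\pm\EK(k),k)=\pm1$ and $\sn(\EK(k)+\inum\EK(r),k)=1/k$ show that $\partial R$ is mapped injectively onto $\mathbb{R}$, and near $\inum\EK(r)$ the function behaves like (residue)$/(v-\inum\EK(r))$, so indenting the boundary there produces a large arc closing $\mathbb{R}$ into $\mathbb{R}\cup\{\infty\}=\partial\mathbb{H}$ on the Riemann sphere; checking that one interior value, e.g. $\sn(\inum\EK(r)/2,k)$, lies in $\mathbb{H}$ then yields the conformal bijection.

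Step~2 is algebra. The squaring map $q\colon w\mapsto w^{2}$ restricts to a biholomorphism of $\rhp\setminus(0,1]$ onto $\mathbb{C}\setminus(-\infty,1]$ (it maps $\rhp$ biholomorphically onto $\mathbb{C}\setminus(-\infty,0]$ and $q((0,1])=(0,1]$), and it is also a biholomorphism of $\mathbb{H}$ onto $\mathbb{C}\setminus[0,\infty)$; composing this last one with the affine bijection $\zeta\mapsto 1-k^{2}\zeta$ of $\mathbb{C}\setminus[0,\infty)$ onto $\mathbb{C}\setminus(-\infty,1]$ and with Step~1 shows that $\dn(\cdot,k)^{2}=1-k^{2}\sn(\cdot,k)^{2}$ is a biholomorphism of $R$ onto $\mathbb{C}\setminus(-\infty,1]$. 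Hence $g\coloneqq q^{-1}\circ\dn(\cdot,k)^{2}$ is a biholomorphism of $R$ onto $\rhp\setminus(0,1]$, and since $g^{2}=\dn(\cdot,k)^{2}$ on the connected set $R$ we have $g=\pm\dn(\cdot,k)$ with a single sign; evaluating at $\inum\EK(r)/2\in R$, where $\dn(\inum\EK(r)/2,k)$ is real (being both $\dn(-\inum\EK(r)/2,k)$ by \eqref{eq:dneven} and $\overline{\dn(\inum\EK(r)/2,k)}$ by Schwarz reflection) and lies in the image $\rhp\setminus(0,1]\subset\rhp$ whereas $-\dn(\inum\EK(r)/2,k)\notin\rhp$, forces $g=\dn(\cdot,k)$. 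Thus $\dn(\cdot,k)$ is a one-to-one analytic, hence (by Lemma~\ref{lem:dfne0}) conformal, map of $R$ onto $\rhp\setminus(0,1]$, which is the first assertion.

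For the remaining assertions, the same real/symmetry bookkeeping does it. For $0<y<\EK(r)$ the value $\dn(\inum y,k)$ is real by the argument just used, and the real points of $\rhp\setminus(0,1]$ are exactly $(1,\infty)$, so $\dn(\cdot,k)$ sends $\inum(0,\EK(r))$ into $(1,\infty)$; conversely if $v\in R$ with $\dn(v,k)\in(1,\infty)\subset\mathbb{R}$ then $\dn(-\bar v,k)=\dn(\bar v,k)=\overline{\dn(v,k)}=\dn(v,k)$, so injectivity on $R$ gives $\re v=0$. Hence $\dn(\cdot,k)$ restricts to a bijection of $\inum(0,\EK(r))$ onto $(1,\infty)$, and removing this slit from $R$ and from $\rhp\setminus(0,1]$ respectively shows that $\dn(\cdot,k)$ restricts to a conformal bijection of the two half-rectangles onto the open first and fourth quadrants; a single test value fixes the pairing — since $\del\dn(x,k)/\del x>0$ on $(-\EK(k),0)$ by \eqref{eq:dnvalue} while $\re\dn(x,k)=\dn(x,k)\in(r,1)$ there, points just above $(-\EK(k),0)$ map into $(0,\infty)+\inum(0,\infty)$, so the left half goes to the first quadrant and hence (also by $\dn(\bar v,k)=\overline{\dn(v,k)}$) the right half onto the fourth, giving \eqref{eq:dn_conformal_tl} and \eqref{eq:dn_conformal_tr}. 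The main obstacle is Step~1: making the boundary-correspondence argument for $\sn(\cdot,k)$ fully rigorous at the pole $\inum\EK(r)$; once that classical input is in hand, everything that follows is short and essentially algebraic.
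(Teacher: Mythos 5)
The paper offers no proof of this theorem at all---it is quoted directly from Kober's dictionary of conformal representations---so any derivation you supply is necessarily a different route. Your reduction is sound: granting the classical fact that $\sn(\cdot,k)$ maps the rectangle $R=(-\EK(k),\EK(k))+\inum(0,\EK(r))$ conformally onto the upper half-plane $\mathbb{H}$ (the standard boundary-correspondence result, and a fair substitute for citing Kober), the chain $R\xrightarrow{\sn}\mathbb{H}\xrightarrow{\zeta\mapsto\zeta^2}\mathbb{C}\setminus[0,\infty)\xrightarrow{\zeta\mapsto1-k^2\zeta}\mathbb{C}\setminus(-\infty,1]$ correctly exhibits $\dn(\cdot,k)^2$ as a biholomorphism of $R$ onto $\mathbb{C}\setminus(-\infty,1]$, and pulling back through the principal square root gives the first assertion up to a global sign. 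The symmetry bookkeeping identifying $\inum(0,\EK(r))\to(1,\infty)$ and the two half-rectangles with the first and fourth quadrants is also correct, including the tangent computation (positive real derivative on $(-\EK(k),0)$ together with $\dn(x,k)\in(r,1)$ there) that pins down the pairing. What this buys over the bare citation is a self-contained argument resting only on the single most standard elliptic mapping theorem plus elementary algebra of square maps.

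One step is asserted rather than proved: to exclude $g=-\dn(\cdot,k)$ you need $\dn(\inum\EK(r)/2,k)>0$, but your evenness-plus-reflection argument only shows this value is \emph{real}; were it negative, $-\dn(\cdot,k)$ would be the branch landing in $\rhp\setminus(0,1]$, so the sign is not yet forced. The repair is one line. Either note that $\dn(\cdot,k)$ is continuous, real, and zero-free on the segment $\inum[0,\EK(r)/2]$ (its zeros lie at $\EK(k)+\inum\EK(r)$ modulo periods and its only nearby pole is at $\inum\EK(r)$, by \eqref{eq:dnvalue}), so from $\dn(0,k)=1$ it stays positive; or invoke Jacobi's imaginary transformation $\dn(\inum y,k)=\dc(y,r)$, already used in the proof of Lemma \ref{lem:Phi_eq}, which gives $\dn(\inum\EK(r)/2,k)=\dn(\EK(r)/2,r)/\cn(\EK(r)/2,r)>1$ directly. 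With that line added, the argument is complete modulo the quoted $\sn$ mapping theorem, which you rightly flag as the one genuinely nontrivial input.
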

The rectangular region $(-\EK(k), \EK(k)) + \inum (0, \EK(r))$ in Theorem \ref{the:dn_conformal} do not include the real axis. To fit our purpose, we use the following rectangular region, which is symmetric with respect to the real axis ($v$-plane in Figure \ref{fig:map}):
\begin{corollary}
\label{cor:dn_conformal}
Let $0 < r < 1$ and $k \coloneqq \sqrt{1 - r^2}$. Then $\dn(v, k)$ is a one-to-one conformal mapping of $(0, \EK(k)) + \inum (- \EK(r), \EK(r))$ onto $\rhp \setminus ( (0, r] \cup [1, \infty) )$ ($v$-plane to $\tau$-plane in Figure \ref{fig:map}).
\end{corollary}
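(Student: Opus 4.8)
The plan is to decompose the rectangle $\Omega \coloneqq (0, \EK(k)) + \inum(-\EK(r), \EK(r))$ into three pieces according to the sign of the imaginary part and to identify the image of each piece under $\dn(\cdot, k)$ using Theorem \ref{the:dn_conformal}, the evenness \eqref{eq:dneven}, and the special values \eqref{eq:dnvalue}. Write $R_+ \coloneqq (0, \EK(k)) + \inum(0, \EK(r))$, $R_- \coloneqq (0, \EK(k)) + \inum(-\EK(r), 0)$, and $S \coloneqq (0, \EK(k))$, so that $\Omega = R_+ \cup S \cup R_-$ is a disjoint union.

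First I would apply \eqref{eq:dn_conformal_tr} directly: $\dn(\cdot, k)$ is a one-to-one mapping of $R_+$ onto the open fourth quadrant $(0, \infty) + \inum(-\infty, 0)$. Next, since $v \in R_-$ if and only if $-v \in (-\EK(k), 0) + \inum(0, \EK(r))$, and $\dn(-v, k) = \dn(v, k)$ by \eqref{eq:dneven}, composing the reflection $v \mapsto -v$ with \eqref{eq:dn_conformal_tl} shows that $\dn(\cdot, k)$ is a one-to-one mapping of $R_-$ onto the open first quadrant $(0, \infty) + \inum(0, \infty)$. Finally, by \eqref{eq:dnvalue}, $\dn(x, k)$ is continuous and strictly decreasing on $[0, \EK(k)]$ with $\dn(0, k) = 1$ and $\dn(\EK(k), k) = r$, so it maps $S$ bijectively onto $(r, 1)$.

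The three image sets --- the open first quadrant, the open fourth quadrant, and the open segment $(r, 1)$ --- are pairwise disjoint, and their union equals $\rhp \setminus ((0, r] \cup [1, \infty))$, because $\rhp$ splits as (the two open quadrants) $\cup$ (the positive real axis) and the positive real axis is $(0, r] \cup (r, 1) \cup [1, \infty)$. Hence $\dn(\cdot, k)$ restricted to $\Omega$ is injective (it is injective on each piece and the images are disjoint) and surjective onto $\rhp \setminus ((0, r] \cup [1, \infty))$. Moreover $\dn(\cdot, k)$ is holomorphic on $\Omega$: analyticity on $R_\pm$ is already part of the conformality statements in Theorem \ref{the:dn_conformal}, $\dn$ is real-analytic on $S$, and its only poles are at $\inum \EK(r)$ and its lattice translates, none of which lie in $\Omega$. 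An injective holomorphic function on an open set is a conformal mapping onto its (open) image --- by Lemma \ref{lem:dfne0} together with the inverse/open mapping theorem --- which completes the argument.

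The step I expect to require the most care is the bookkeeping in the second paragraph: one must feed the two halves \eqref{eq:dn_conformal_tl} and \eqref{eq:dn_conformal_tr} of Theorem \ref{the:dn_conformal} into the correct sub-rectangles (the reflection $v \mapsto -v$ interchanges which quadrant appears), and then verify in the third paragraph that the three images tile $\rhp \setminus ((0, r] \cup [1, \infty))$ exactly once, with no pole of $\dn$ intruding into $\Omega$.
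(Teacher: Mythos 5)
Your proposal is correct and follows essentially the same route as the paper: the paper likewise combines the evenness relation \eqref{eq:dneven} with the two halves \eqref{eq:dn_conformal_tl} and \eqref{eq:dn_conformal_tr} of Theorem \ref{the:dn_conformal} to map the off-axis part of the rectangle onto $\rhp \setminus (0,\infty)$, uses the strict monotonicity and endpoint values in \eqref{eq:dnvalue} to send $(0,\EK(k))$ onto $(r,1)$, and then invokes the absence of poles together with Lemma \ref{lem:dfne0} to get conformality. Your version merely makes the quadrant bookkeeping more explicit; the substance is identical.
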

\begin{proof}
By \eqref{eq:dneven}, \eqref{eq:dn_conformal_tl}, and \eqref{eq:dn_conformal_tr}, $\dn(v, k)$ is a one-to-one mapping of $( (0, \EK(k)) + \inum (-\EK(r), \EK(r)) ) \setminus (0, \EK(k))$ onto $\rhp \setminus (0, \infty)$. Since $\dn(v, k)$ is strictly decreasing for $0 \le v \le \EK(k)$ and satisfies $\dn(0, k) = 1$ and $\dn(\EK(k), k) = r$, $\dn(v, k)$ is a one-to-one mapping of $(0, \EK(k))$ onto $(r, 1)$. Then $\dn(v, k)$ is a one-to-one mapping of $(0, \EK(k)) + \inum (-\EK(r), \EK(r))$ onto $\rhp \setminus ( (0, r] \cup [1, \infty) )$. Since $\dn(v, k)$ is the meromorphic function and there is no pole on $(0, \EK(k)) + \inum (-\EK(r), \EK(r) )$, $\dn(v, k)$ is analytic on there. Then by Lemma \ref{lem:dfne0}, $\frac{ \del \dn(v, k) }{ \del v } \ne 0$ on $(0, \EK(k)) + \inum (-\EK(r), \EK(r) )$. Thus $\dn(v, k)$ is a one-to-one conformal mapping of $(0, \EK(k)) + \inum (-\EK(r), \EK(r) )$ onto $\rhp \setminus ( (0, r] \cup [1, \infty) )$.
\end{proof}

\begin{figure}[htbp]
\centering
\includegraphics{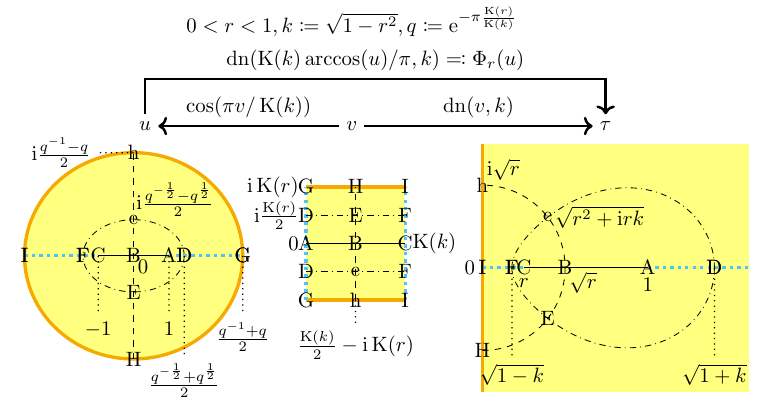}
\caption{Construction of the mapping of an open Bernstein ellipse onto the right half-plane such that $-1$ and $1$ are mapped onto $0 < r < 1$ and $1$, respectively. Since $v = \pm \inum \EK(r)$ are poles of $\dn(v, k)$, G is not displayed on $\tau$-plane. The dashed curve on $\tau$-plane corresponds to the half-circle satisfying $\lvert \tau \rvert = \sqrt{r}$ by \eqref{eq:dn_K_2_iKrl} or \eqref{eq:Phi_iy}. The dash-dotted curve on $\tau$-plane corresponds to the Cassinian oval satisfying $\lvert \tau + 1 \rvert \lvert \tau - 1 \rvert = k$ \cite[Sec.~13.3 $w = \dn z$]{Kober1957} by \eqref{eq:dnxmiKr_2} or \eqref{eq:Phi_ellipse1_2}. In this figure, we set $r = 1/4$.}
\label{fig:map}
\end{figure}

Based on the mapping in Corollary \ref{cor:dn_conformal}, we can construct the mapping of an open Bernstein ellipse with slits onto the right half-plane with slits ($u$-plane to $\tau$-plane in Figure \ref{fig:map}; slits are represented by horizontal bold dotted lines) as follows:
\begin{lemma}
\label{lem:utotau}
Let $\arccos(u)$ be the principal value of the inverse cosine function on $\mathbb{C} \setminus ( (-\infty, -1] \cup [1, \infty) )$, i.e., $\mathbb{C} \setminus ( (-\infty, -1] \cup [1, \infty) )$ is mapped onto $(0, \pi) + \inum (-\infty, \infty)$. Let $0 < r < 1$, $k \coloneqq \sqrt{ 1 - r^2 }$, and $q \coloneqq \enum^{ - \pi \frac{\EK(r)}{\EK(k)} }$. Then $\tau = \dn(\EK(k) \arccos(u) / \pi, k)$ is a one-to-one conformal mapping of $\ellipse_{q^{-1}} \setminus ( (- (q^{-1} + q) / 2, -1] \cup [1,  (q^{-1} + q) / 2) )$ onto $\rhp \setminus ( (0, r] \cup [1, \infty) )$.
\end{lemma}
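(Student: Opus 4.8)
\emph{Proof proposal.} The plan is to realize $\tau(u) = \dn(\EK(k)\arccos(u)/\pi, k)$ as a composition of three conformal bijections and then chase the domains through. Write $g_1(u) \coloneqq \arccos(u)$, $g_2(\theta) \coloneqq \EK(k)\theta/\pi$, and $g_3(v) \coloneqq \dn(v, k)$, so that $\tau = g_3 \circ g_2 \circ g_1$. The middle map $g_2$ is affine, hence a one-to-one conformal mapping of $\mathbb{C}$ onto itself; since $q = \enum^{-\pi \EK(r)/\EK(k)}$ gives $-\log q = \pi \EK(r)/\EK(k)$, it carries the rectangle $(0,\pi) + \inum(\log q, -\log q)$ onto $(0, \EK(k)) + \inum(-\EK(r), \EK(r))$, which by Corollary \ref{cor:dn_conformal} is mapped by $g_3$ one-to-one and conformally onto $\rhp \setminus ((0,r] \cup [1,\infty))$. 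Thus the lemma reduces to the claim that $g_1 = \arccos$ restricts to a one-to-one conformal mapping of $\ellipse_{q^{-1}} \setminus ( (-(q^{-1}+q)/2, -1] \cup [1, (q^{-1}+q)/2) )$ onto the rectangle $R \coloneqq (0,\pi) + \inum(\log q, -\log q)$.

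That last assertion is where the real work lies, and I would prove it through the inverse function $\cos$. By hypothesis $\arccos$ is a conformal bijection of $\mathbb{C} \setminus ( (-\infty, -1] \cup [1, \infty) )$ onto the strip $(0,\pi) + \inum(-\infty,\infty)$, with inverse $\cos$; it therefore suffices to show $\cos(R)$ equals the slit ellipse. The relevant identity is the confocal description of the cosine: writing $z = \rho \enum^{\inum\phi}$ one checks $\cos(\phi - \inum\log\rho) = (z + z^{-1})/2$, so for a fixed $c$ with $0 < |c| < -\log q$ the horizontal segment $\{ s + \inum c : 0 < s < \pi \}$ is mapped by $\cos$ bijectively onto the open half of the Bernstein ellipse $\bd{\ellipse_{\enum^{-c}}}$ lying in $\{ \im z > 0 \}$ if $c < 0$ and in $\{ \im z < 0 \}$ if $c > 0$ (the two real vertices excluded), while the segment $(0,\pi)$ on the real axis maps onto $(-1,1)$. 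Splitting $R$ into its lower open half, the real segment $(0,\pi)$, and its upper open half, and letting $\enum^{-c}$ run over $(1, q^{-1})$, the half-arcs sweep out $\ellipse_{q^{-1}} \cap \{ \im z > 0 \}$ and $\ellipse_{q^{-1}} \cap \{ \im z < 0 \}$, respectively, whence $\cos(R) = \ellipse_{q^{-1}} \setminus ( (-(q^{-1}+q)/2, -1] \cup [1, (q^{-1}+q)/2) )$. Since $R$ is contained in the strip on which $\cos$ is injective and $\cos'$ does not vanish there, $\arccos = \cos^{-1}$ is a one-to-one conformal mapping of the slit ellipse onto $R$.

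Composing the three conformal bijections then finishes the proof: $g_1$ maps $\ellipse_{q^{-1}} \setminus ( (-(q^{-1}+q)/2, -1] \cup [1, (q^{-1}+q)/2) )$ onto $R$, $g_2$ maps $R$ onto $(0, \EK(k)) + \inum(-\EK(r), \EK(r))$, and $g_3$ maps that rectangle onto $\rhp \setminus ((0,r] \cup [1,\infty))$ by Corollary \ref{cor:dn_conformal}, so $\tau = g_3 \circ g_2 \circ g_1$ is a one-to-one conformal mapping of $\ellipse_{q^{-1}} \setminus ( (-(q^{-1}+q)/2, -1] \cup [1, (q^{-1}+q)/2) )$ onto $\rhp \setminus ((0,r] \cup [1,\infty))$. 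I expect the main obstacle to be purely the bookkeeping in the middle paragraph: checking that the confocal half-arcs exactly exhaust the punctured ellipse and keeping the orientations straight (that the slit tips $\pm 1$ and the ellipse vertices $\pm(q^{-1}+q)/2$ go to the four corners of $R$ is a convenient consistency check, though not logically needed for the open-set statement).
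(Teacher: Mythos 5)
Your proposal is correct and follows essentially the same route as the paper: factor the map through the rectangle $(0,\EK(k))+\inum(-\EK(r),\EK(r))$ via $\arccos$ and the affine rescaling, then compose with Corollary \ref{cor:dn_conformal}. The only difference is that where the paper simply cites Davis [Sec.~1.13.B] for the fact that the cosine maps this rectangle one-to-one and conformally onto the slit ellipse, you prove that fact directly via the confocal-ellipse parametrization $\cos(\phi-\inum\log\rho)=(z+z^{-1})/2$, and your sweep argument for exhausting the slit ellipse is sound.
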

\begin{proof}
Let $u_1 \coloneqq (q^{-1} + q) / 2$. The cosine function $u = \cos(\pi v / \EK(k))$ is a one-to-one conformal mapping of $(0, \EK(k)) + \inum (-\EK(r), \EK(r))$ onto $\ellipse_{q^{-1}} \setminus ( (- u_1, -1] \cup [1, u_1) )$ \cite[Sec.~1.13.B]{Davis1975} ($v$-plane to $u$-plane in Figure \ref{fig:map}). Then $v = \EK(k) \arccos(u) / \pi$ is a one-to-one conformal mapping of $\ellipse_{q^{-1}} \setminus ( (- u_1, -1] \cup [1, u_1) )$ onto $(0, \EK(k)) + \inum (-\EK(r), \EK(r))$. The statement follows from the composition of this mapping and the mapping in Corollary \ref{cor:dn_conformal}.
\end{proof}

We considered the open Bernstein with slits $\ellipse_{q^{-1}} \setminus ( (- (q^{-1} + q) / 2, -1] \cup [1,  (q^{-1} + q) / 2) )$ to keep the single-valuedness of the mapping from $u$ to $v$. However, the slits are unnecessary for the mapping from $u$ to $\tau$ as follows. The multivaluedness of the mapping from $u$ to $v$ is caused by the invariance of $u = \cos(\pi v / \EK(k))$ by changing $v$ to $\pm v + 2 \EK(k) n$ for $n \in \mathbb{Z}$. However, since the equality $\dn(\pm v + 2 \EK(k) n, k) = \dn(v, k)$ for $n \in \mathbb{Z}$ holds by \eqref{eq:dnperiod} and \eqref{eq:dneven}, the mapping from $u$ to $\tau$ becomes single-valued. This point can be clarified with the following infinite product representation:
\begin{lemma}
\label{lem:Phi}
Let $0 < r < 1$, $k \coloneqq \sqrt{ 1 - r^2 }$, and $q \coloneqq \enum^{- \pi \frac{ \EK(r) }{ \EK(k) } }$. Let us introduce \begin{equation}
\label{eq:Phi}
\Phi_r(u) \coloneqq \dn(\EK(k) \arccos(u) / \pi, k), u \in \mathbb{C},
\end{equation}
where $\arccos(u)$ is the multivalued inverse cosine function on $\mathbb{C}$.
Then one has
\begin{equation}
\label{eq:Phi_prod}
\Phi_r(u) = \sqrt{r} \frac{\prod_{n = 1}^\infty (1 + 2 q^{2n-1} u + q^{4n-2})} {\prod_{n = 1}^\infty (1 - 2 q^{2n-1} u + q^{4n- 2})}, u \in \mathbb{C},
\end{equation}
which indicates $\Phi_r(u)$ is the single-valued meromorphic function on $\mathbb{C}$ with simple zeros $u = - (q^{-2n+1} + q^{2n-1}) / 2$ and simple poles $u = (q^{-2n+1} + q^{2n-1}) / 2$ for $n \in \intge{1}$.
\end{lemma}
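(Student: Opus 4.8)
The plan is to derive the infinite product \eqref{eq:Phi_prod} from the theta-quotient representation of $\dn$, and then to read off single-valuedness, meromorphy, and the location of the zeros and poles directly from that product. As a preliminary observation, the multivaluedness of $\arccos$ is harmless on its own: every value of $\arccos(u)$ has the form $\pm\theta_0 + 2\pi m$ with $\cos\theta_0 = u$ and $m \in \mathbb{Z}$, so $\EK(k)\arccos(u)/\pi$ ranges over $\{\pm w_0 + 2\EK(k)m\}$ with $w_0 \coloneqq \EK(k)\theta_0/\pi$, and by the period $2\EK(k)$ in \eqref{eq:dnperiod} together with the evenness \eqref{eq:dneven} one has $\dn(\pm w_0 + 2\EK(k)m, k) = \dn(w_0, k)$; hence $\Phi_r(u)$ in \eqref{eq:Phi} is a well-defined function of $u$ alone.

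For the product formula I would invoke the classical theta-function description of $\dn$ (e.g.\ \cite[Chap.~22]{DLMF}, \cite{MilneThomson1972}): with nome $q = \enum^{-\pi \EK(k')/\EK(k)}$, where $k' \coloneqq \sqrt{1-k^2} = r$, so that $q = \enum^{-\pi \EK(r)/\EK(k)}$ as in the statement, one has
\[
\dn(v, k) = \frac{\vartheta_4(0, q)}{\vartheta_3(0, q)}\,\frac{\vartheta_3(\pi v/(2\EK(k)), q)}{\vartheta_4(\pi v/(2\EK(k)), q)}, \qquad r = k' = \frac{\vartheta_4(0,q)^2}{\vartheta_3(0,q)^2},
\]
together with the Jacobi triple-product expansions
\[
\vartheta_3(z, q) = \prod_{n=1}^\infty (1 - q^{2n})(1 + 2 q^{2n-1}\cos 2z + q^{4n-2}), \qquad \vartheta_4(z, q) = \prod_{n=1}^\infty (1 - q^{2n})(1 - 2 q^{2n-1}\cos 2z + q^{4n-2}).
\]
Substituting $z = \pi v/(2\EK(k))$, so that $\cos 2z = \cos(\pi v/\EK(k)) = u$ when $v = \EK(k)\arccos(u)/\pi$, the factors $(1 - q^{2n})$ cancel between numerator and denominator, the constant $\vartheta_4(0,q)/\vartheta_3(0,q)$ equals $\sqrt r$ (the positive square root, which is the right sign since $0 < q < 1$ makes every factor positive), and \eqref{eq:Phi_prod} drops out. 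Since the resulting right-hand side depends on $v$ only through $u$, this re-confirms single-valuedness.

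To extract the analytic structure, note that $0 < q < 1$ gives $\sum_{n\ge1}\sup_{|u|\le R}|\pm 2q^{2n-1}u + q^{4n-2}| < \infty$ for every $R$, so each of the two products in \eqref{eq:Phi_prod} converges locally uniformly on $\mathbb{C}$ to an entire function, whence $\Phi_r$ is meromorphic. Its zeros are exactly the zeros of the numerator factors, $u = -(q^{-2n+1}+q^{2n-1})/2$, and its poles exactly the zeros of the denominator factors, $u = (q^{-2n+1}+q^{2n-1})/2$, for $n \in \intge{1}$; each is simple because the factors are affine in $u$ and the numbers $q^{-(2n-1)}+q^{2n-1}$ are pairwise distinct (the map $m\mapsto q^{-m}+q^{m}$ is strictly increasing for $m>0$), while $q^{-(2n-1)}+q^{2n-1} > 2$ puts the numerator zeros in $(-\infty,-1)$ and the denominator zeros in $(1,\infty)$, so numerator and denominator share no zero and the nonzero prefactor $\sqrt r$ contributes nothing.

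The only real obstacle is bookkeeping with theta-function conventions: one must quote the $\dn$-quotient and the triple-product expansions in a mutually consistent normalization — in particular checking that the argument of $\vartheta_3,\vartheta_4$ is $\pi v/(2\EK(k))$, so that one gets $\cos 2z = u$ rather than $\cos z = u$ — since a slip there would corrupt both the prefactor $\sqrt r$ and the exponents $q^{2n-1}, q^{4n-2}$ in \eqref{eq:Phi_prod}. (One can guard against this by verifying \eqref{eq:Phi_prod} at $u = 0, \pm 1$ against the known values $\dn(\EK(k)/2,k)=\sqrt r$, $\dn(0,k)=1$, $\dn(\EK(k),k)=r$ from \eqref{eq:dnvalue}.) Once the references are cited correctly, the remaining steps are routine.
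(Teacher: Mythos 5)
Your proposal is correct and follows essentially the same route as the paper: substituting $z=\arccos(u)/2$ into Jacobi's infinite product representation of $\dn$ (which the paper cites directly as \eqref{eq:dn_ip} rather than re-deriving from the theta quotients) to obtain \eqref{eq:Phi_prod}, then reading off meromorphy and the simple zeros and poles from the locally uniformly convergent products. Your extra checks (single-valuedness via \eqref{eq:dnperiod} and \eqref{eq:dneven}, distinctness of the $u_n$, and non-cancellation between numerator and denominator) are sound and only make explicit what the paper leaves implicit.
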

\begin{proof}
By substituting $z = \arccos(u) / 2$ for Jacobi's infinite product representation of $\dn$ function \eqref{eq:dn_ip}, we have \eqref{eq:Phi_prod}. By introducing $u_n \coloneqq (q^{-2n+1} + q^{2n-1}) / 2, n \in \intge{1}$, and $G(u) \coloneqq \prod_{n = 1}^\infty (1 - u / u_n)$, \eqref{eq:Phi_prod} can be expressed as $\sqrt{r} G(-u) / G(u)$. Since inequalities $\sum_{n = 1}^\infty 1 / \lvert u_n \rvert < 2 q / (1 - q^2) < \infty$ hold by noting $0 < q < 1$ for $0 < r < 1$, $G(u)$ is an entire function \cite[Sec.~5.2.3]{Ahlfors1979} with simple zeros $u = u_n, n \in \intge{1}$. Thus $\sqrt{r} G(-u) / G(u)$ is the meromorphic function with simple zeros $u = - u_n$ and simple poles $u = u_n$ for $n \in \intge{1}$.
\end{proof}
Based on the properties of $\dn$ function and Lemma \ref{lem:Phi}, $\Phi_r(u)$ satisfies the following properties:
\begin{definition}
Let $0 < k < 1$, $z \in \mathbb{C}$, and $x \in \mathbb{R}$. Let $\cs(z, k) \coloneqq \frac{\cn(z, k)}{\sn(z, k)}$, $\dc(z, k) \coloneqq \frac{\dn(z, k)}{\cn(z, k)}$, and $\cd(z, k) \coloneqq \frac{\cn(z, k)}{\dn(z, k)}$. $\am(x, k)$ is Jacobi's amplitude function with the modulus $k$. For $n \in \intge{0}$, $V_n(z)$ is the $n$-degree Chebyshev polynomial of the third kind \cite{Mason2003}.
\end{definition}
\begin{lemma}
\label{lem:Phi_eq}
Let us use the notation of Lemma \ref{lem:Phi}. Let $\sqrt{z}$ be the principal value of the square root function on $\mathbb{C} \setminus (-\infty, 0]$, i.e., $\sqrt{z} \in \rhp$. Then one has
\begin{align}
\label{eq:Phi_cos}
&\Phi_r(\cos z) = \dn(\EK(k) z / \pi , k), z \in \mathbb{C}, \\
\label{eq:Phi_ellipse}
&\Phi_r\left( \frac{ q^{-\lambda} \enum^{\inum \theta} + q^\lambda \enum^{-\inum \theta} }{ 2 } \right) = \dn\left( \frac{ \EK(k) }{ \pi } \theta - \inum \EK(r) \lambda, k\right), \lambda \in \mathbb{R}, \theta \in \mathbb{R}, \\
\label{eq:Phi_symmetry}
&\Phi_r(-u) = r / \Phi_r(u), u \in \mathbb{C}, \\
\label{eq:Phi_symmetry_ellipse}
&\Phi_r\left( \frac{ q^{-2n} z + q^{2n} z^{-1} }{ 2 } \right) = (-1)^n \Phi_r\left( \frac{z + z^{-1}}{2} \right), n \in \mathbb{Z}, z \in \mathbb{C} \setminus \{ 0 \}, \\
\label{eq:Phi_ellipse1}
&\Phi_r((q^{-1} \enum^{\inum \theta} + q \enum^{-\inum \theta}) / 2) = \inum \cs(\EK(k) \theta / \pi, k), \theta \in \mathbb{R}, \\
\label{eq:Phi_p1}
&\Phi_r((q^{-\lambda} + q^\lambda) / 2) = \dc(\EK(r) \lambda, r), \lambda \in \mathbb{R}, \\
\label{eq:Phi_m1}
&\Phi_r(- (q^{-\lambda} + q^\lambda) / 2) = r \cd(\EK(r) \lambda, r), \lambda \in \mathbb{R}, \\
\label{eq:Phi_iy}
&\Phi_r\left( \inum (q^{-\lambda} - q^\lambda) / 2 \right) =  \sqrt{r} \enum^{ \inum \left( \am\left(\EK\left( \frac{ 2 \sqrt{r} }{ 1 + r } \right) (\lambda - 1), \frac{ 2 \sqrt{r} }{ 1 + r } \right) + \frac{\pi}{2} \right) }, \lambda \in \mathbb{R}, \\
\label{eq:Phi_ellipse1_2}
&\Phi_r\left( ( q^{-1/2} \enum^{\inum \theta} + q^{1/2} \enum^{-\inum \theta} ) / 2 \right) = \sqrt{ 1 + k \enum^{ \inum 2 \am\left( \EK\left( \frac{ 2 \sqrt{k} }{ 1 + k } \right) \frac{ \theta }{ \pi }, \frac{ 2 \sqrt{k} }{ 1 + k } \right) } }, \theta \in \mathbb{R}, \\
\label{eq:Phi_value}
&\Phi_r(-1) = r, \Phi_r(0) = \sqrt{r}, \Phi_r(1) = 1, \\
\label{eq:Phi_ineq}
&\Phi_r^{(n)}(u) > 0, n \in \intge{0}, - (q^{-1} + q) / 2 < u < ( q^{-1} + q ) / 2, \\
\label{eq:Phi_ana}
&1 < \rhoana[\Phi_r] = q^{-1} = \enum^{ \pi \frac{ \EK(r) }{ \EK(k) } } < \infty, \\
\label{eq:Phi_chebyshev}
&\Phi_r(u) = \frac{\pi}{2 \EK(k)} + \frac{\pi}{\EK(k)} \sum_{n = 1}^\infty \frac{2}{q^{-n} + q^n} T_n(u), u \in \ellipse_{q^{-1}}, \\
\label{eq:Phi_qn2}
&\Phi_r(u) = \sqrt{r} \frac{ 1 + 2 \sum_{n = 1}^\infty q^{n^2} T_n(u) }{ 1 + 2 \sum_{n = 1}^\infty q^{n^2} T_n(-u) }, u \in \mathbb{C}, \\
\label{eq:dPhi2}
&(1 - u^2) \Phi_r'(u)^2 = \frac{ \EK(k)^2 }{ \pi^2 } (1 - \Phi_r(u)^2) (\Phi_r(u)^2 - r^2), u \in \mathbb{C}, \\
\label{eq:dPhi_prod}
&\Phi_r'(u) = k \sqrt{r q} \frac{ 2 \EK(k) }{ \pi } \frac{ \prod_{n = 1}^\infty (1 - 2 q^{2n} u + q^{4n}) (1 + 2 q^{2n} u + q^{4n}) }{ \prod_{n = 1}^\infty (1 - 2 q^{2n - 1} u + q^{4n - 2})^2 }, u \in \mathbb{C}, \\
\label{eq:dPhi_V}
&\Phi_r'(u) = k r^\frac{3}{4} \sqrt{q} \left( \frac{ 2 \EK(k) }{ \pi } \right)^\frac{3}{2} \frac{ \sum_{n = 0}^\infty q^{2 n^2 + 2n} V_n(1 - 2 u^2) }{( 1 + 2 \sum_{n = 1}^\infty q^{n^2} T_n(-u) )^2 }, u \in \mathbb{C}, \\
\label{eq:dPhi_value}
&\Phi_r'(-1) = \frac{ r k^2 \EK(k)^2 }{ \pi^2 }, \Phi_r'(0) = \frac{ \sqrt{r} (1 - r) \EK(k)}{\pi}, \Phi_r'(1) = \frac{ k^2 \EK(k)^2 }{ \pi^2 }.
\end{align}
\end{lemma}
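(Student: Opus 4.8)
The plan is to derive the long list of identities from three ingredients: the defining formula $\Phi_r(u)=\dn(\EK(k)\arccos(u)/\pi,k)$ together with the multivaluedness of $\arccos$; the infinite product \eqref{eq:Phi_prod} (equivalently, in the notation of the proof of Lemma \ref{lem:Phi}, $\Phi_r(u)=\sqrt r\prod_{n\ge1}(u_n+u)/(u_n-u)$ with $u_n=(q^{-2n+1}+q^{2n-1})/2$); and classical identities for Jacobi elliptic and theta functions (DLMF \S22). The backbone is \eqref{eq:Phi_cos}: any value $w$ of $\arccos(\cos z)$ has the form $w=\pm z+2\pi m$, so by evenness \eqref{eq:dneven} and $2\EK(k)$-periodicity \eqref{eq:dnperiod}, $\dn(\EK(k)w/\pi,k)=\dn(\EK(k)z/\pi,k)$; this both proves \eqref{eq:Phi_cos} and re-confirms single-valuedness. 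Substituting $z=\theta+\inum\lambda\log q$ into \eqref{eq:Phi_cos}, so that $\cos z=(q^{-\lambda}\enum^{\inum\theta}+q^{\lambda}\enum^{-\inum\theta})/2$ and $\EK(k)z/\pi=\EK(k)\theta/\pi-\inum\EK(r)\lambda$ (using $\log q=-\pi\EK(r)/\EK(k)$), gives \eqref{eq:Phi_ellipse}. The values \eqref{eq:Phi_value} are the cases $z=0,\pi/2,\pi$ of \eqref{eq:Phi_cos} with \eqref{eq:dnvalue}; the reflection \eqref{eq:Phi_symmetry} follows by swapping numerator and denominator in \eqref{eq:Phi_prod} (or from $\arccos(-u)=\pi-\arccos(u)$ and \eqref{eq:dnK}); and \eqref{eq:Phi_symmetry_ellipse} is checked on $|z|=1$ via \eqref{eq:Phi_cos} and the iterated imaginary-half-period rule \eqref{eq:dni2Kp}, then extended to $\mathbb{C}\setminus\{0\}$ by the identity theorem for meromorphic functions.

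Next come the elliptic evaluations. Identities \eqref{eq:Phi_ellipse1}, \eqref{eq:Phi_p1}, \eqref{eq:Phi_m1} come from \eqref{eq:Phi_ellipse} at $\lambda=1$ (resp. $\theta=0$) followed by standard quarter-period/imaginary transformations: the shift $\dn(v+\inum\EK(r),k)=-\inum\cs(v,k)$ (noting $\EK(r)$ is the complementary complete integral for modulus $k$, and $\cs$ is odd) for \eqref{eq:Phi_ellipse1}; Jacobi's imaginary transformation $\dn(\inum v,k)=\dc(v,r)$ for \eqref{eq:Phi_p1}; and \eqref{eq:Phi_symmetry} with $\cd=1/\dc$ for \eqref{eq:Phi_m1}. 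Identities \eqref{eq:Phi_iy} and \eqref{eq:Phi_ellipse1_2} are the cases $\theta=\pi/2$ and $\lambda=1/2$ of \eqref{eq:Phi_ellipse}, which require evaluating $\dn$ along the midlines $\re v=\EK(k)/2$ and $\im v=\pm\EK(r)/2$. \emph{I expect this to be the main obstacle.} One must run an ascending Landen (Gauss) transformation to express $\dn$ of modulus $k$ along these midlines through Jacobi functions of modulus $2\sqrt r/(1+r)$ (resp. $2\sqrt k/(1+k)$), carefully tracking how the quarter-periods transform, then read off the constant modulus ($|\Phi_r|=\sqrt r$ on the half-circle, $|\Phi_r+1|\,|\Phi_r-1|=k$ on the Cassinian oval, matching Figure \ref{fig:map}) and the amplitude phase, getting the $+\pi/2$ offset and the $\lambda-1$ (resp.\ $\theta/\pi$) argument right; the consistency check at $\lambda=0$ (or $\theta=0$), where \eqref{eq:Phi_value} forces $\Phi_r=\sqrt r$ and $\am(\pm\EK(m),m)=\pm\pi/2$, fixes the branch.

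For the analyticity and series identities: by Lemma \ref{lem:Phi} all singularities of $\Phi_r$ are the real simple poles $\pm u_n$, the smallest being $u_1=(q^{-1}+q)/2$, which is the rightmost point of $\bd{\ellipse_{q^{-1}}}$, while $-u_1$ is a zero; hence $\Phi_r$ continues analytically to $\ellipse_{q^{-1}}$ and is not analytic at $u_1\in\bd{\ellipse_{q^{-1}}}$, giving $\rhoana[\Phi_r]=q^{-1}=\enum^{\pi\EK(r)/\EK(k)}\in(1,\infty)$, i.e.\ \eqref{eq:Phi_ana}. For \eqref{eq:Phi_chebyshev} apply Theorem \ref{the:laurent} with $R=\rhoana[\Phi_r]=q^{-1}$ and compute the coefficients from the classical Fourier expansion of $\dn$, obtaining $a_n=\pi/(\EK(k)(q^{-n}+q^{n}))$. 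For \eqref{eq:Phi_qn2} use the theta-quotient form $\dn(v,k)=(\vartheta_4(0)/\vartheta_3(0))\,\vartheta_3/\vartheta_4$ with argument $\arccos(u)/2$, the series for $\vartheta_3,\vartheta_4$ together with $\vartheta_4(\cdot)=\vartheta_3(\cdot+\pi/2)$ and $\cos(n\arccos u)=T_n(u)$, and $\vartheta_4(0)/\vartheta_3(0)=\sqrt r$. Identity \eqref{eq:dPhi2} is the chain rule: differentiate the definition, use \eqref{eq:ddn}, rewrite $k^4\sn^2\cn^2=(1-\dn^2)(\dn^2-r^2)$ via \eqref{eq:dn2}, and extend from $(-1,1)$ to $\mathbb{C}$ by the identity theorem. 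For \eqref{eq:dPhi_prod} write $\Phi_r'(u)=(\EK(k)k^2/\pi)\,\sn(v)\cn(v)/\sin(\pi v/\EK(k))$ with $v=\EK(k)\arccos(u)/\pi$, insert Jacobi's infinite products for $\sn,\cn,\dn$ (with $\zeta=\arccos(u)/2$, so $\cos2\zeta=u$) and simplify; the sign is fixed by $\Phi_r'>0$ on $(-u_1,u_1)$. Then \eqref{eq:dPhi_V} is \eqref{eq:dPhi_prod} recast through theta product/series identities (equivalently, $\sum q^{2n^2+2n}V_n(1-2u^2)$ is a $\vartheta_2$-series with nome $q^2$, using $V_n(1-2u^2)=(-1)^nU_{2n}(u)$). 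Finally \eqref{eq:dPhi_value}: substitute $u=0$ into \eqref{eq:dPhi2} directly; at $u=\pm1$ both sides of \eqref{eq:dPhi2} vanish to first order, so compare leading coefficients (or use \eqref{eq:dPhi_prod} with a $q$-product formula for $\EK(k)$).

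It remains to prove the absolute monotonicity \eqref{eq:Phi_ineq}. Rewrite \eqref{eq:Phi_prod} as $\Phi_r(u)=\sqrt r\prod_{n\ge1}(u_n+u)/(u_n-u)$ with $u_1<u_2<\cdots$ and $u_1=(q^{-1}+q)/2$, and expand each factor as a power series in $t=u+u_1$ about $u=-u_1$: the $n=1$ factor is $t/(2u_1-t)$ and the $n\ge2$ factors are $((u_n-u_1)+t)/((u_n+u_1)-t)$, each a product of a polynomial with nonnegative coefficients and a geometric series of positive ratio; hence every factor, and therefore the product $\Phi_r(u)=\sum_{m\ge0}c_m(u+u_1)^m$, has $c_m\ge0$. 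This series converges for $|u+u_1|<2u_1$, since the singularity of $\Phi_r$ nearest $-u_1$ is the pole $u_1$ at distance $2u_1$ (the $-u_n$ are zeros, not singularities). Thus on $(-u_1,u_1)$, where $t=u+u_1\in(0,2u_1)$, termwise differentiation gives $\Phi_r^{(N)}(u)=\sum_{m\ge N}c_m\,\frac{m!}{(m-N)!}\,t^{\,m-N}\ge0$ for every $N\in\intge{0}$, and the inequality is strict because infinitely many $c_m$ are positive — otherwise $\Phi_r$ would be a polynomial, contradicting the presence of poles — which proves \eqref{eq:Phi_ineq}.
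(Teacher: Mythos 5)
Your proposal is correct and follows essentially the same route as the paper: \eqref{eq:Phi_cos} as the backbone, substitution $z=\theta+\inum\lambda\log q$ for \eqref{eq:Phi_ellipse}, quarter-period/imaginary transformations for the special evaluations, the infinite product for \eqref{eq:Phi_ineq}, \eqref{eq:Phi_qn2}, and \eqref{eq:dPhi_prod}, and the $\dn$ differential equation for \eqref{eq:dPhi2}. The only substantive piece you leave as a sketch — the midline evaluations \eqref{eq:Phi_iy} and \eqref{eq:Phi_ellipse1_2} via Landen transformations to modulus $2\sqrt{r}/(1+r)$ resp.\ $2\sqrt{k}/(1+k)$ — is exactly the computation the paper carries out in its appendix lemmas \eqref{eq:dn_K_2_iKrl} and \eqref{eq:dnxmiKr_2}, and your nonnegative-Taylor-coefficient argument for \eqref{eq:Phi_ineq} is a sound minor variant of the paper's factorwise differentiation.
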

\begin{proof}
Although $\arccos(u)$ in \eqref{eq:Phi} is defined as the multivalued inverse cosine function, $\Phi_r(u)$ is the single-valued function by Lemma \ref{lem:Phi}. Therefore, it is sufficient to select a value of $\arccos(u)$ to evaluate $\Phi_r(u)$. By setting $u = \cos z$ and selecting the value of $\arccos(u)$ as $z$, we have \eqref{eq:Phi_cos}. By substituting $z = \theta + \inum \log(q^\lambda)$ for \eqref{eq:Phi_cos} and using $q = \enum^{ - \pi \frac{ \EK(r) }{ \EK(k) } }$ and $\cos(\theta + \inum \log(q^\lambda)) = ( q^{-\lambda} \enum^{ \inum \theta } + q^\lambda \enum^{ - \inum \theta } ) / 2$, we have \eqref{eq:Phi_ellipse}. By \eqref{eq:Phi_cos} and \eqref{eq:dnK}, we have $\Phi_r(- \cos z) = \Phi_r(\cos(z + \pi)) = \dn(\EK(k) z / \pi + \EK(k) , k) = r / \Phi_r(\cos z)$, which leads to \eqref{eq:Phi_symmetry}. \eqref{eq:Phi_symmetry} also follows from \eqref{eq:Phi_prod}. By substituting $\lambda = X + 2n, X \in \mathbb{R}, n \in \mathbb{Z}$, and $z = q^{-X} \enum^{\inum \theta}$ for \eqref{eq:Phi_ellipse} and using \eqref{eq:dni2Kp}, we have \eqref{eq:Phi_symmetry_ellipse}. By substituting $\lambda = 1$ for \eqref{eq:Phi_ellipse} and using $\dn(-z, k) = \dn(z, k)$, $\dn(z + \inum \EK(r), k) = - \inum \cs(z, k)$ \cite[Table 22.4.3]{DLMF}, and $\cs(-z, k) = -\cs(z, k)$, we have \eqref{eq:Phi_ellipse1}. By substituting $\theta = 0$ for \eqref{eq:Phi_ellipse} and using $\dn(-z, k) = \dn(z, k)$ and Jacobi's imaginary transformation $\dn(\inum z, k) = \dc(z, r)$ \cite[Table 22.6.1]{DLMF}, we have \eqref{eq:Phi_p1}. \eqref{eq:Phi_m1} follows from \eqref{eq:Phi_symmetry} and \eqref{eq:Phi_p1}. By substituting $\theta = \pi/2$ for \eqref{eq:Phi_ellipse} and using \eqref{eq:dn_K_2_iKrl} and $-\am(x, \frac{2 \sqrt{r}}{1 + r}) = \am(-x, \frac{2 \sqrt{r}}{1 + r})$ for $x \in \mathbb{R}$, we have \eqref{eq:Phi_iy}. By substituting $\lambda = 1/2$ for \eqref{eq:Phi_ellipse} and using \eqref{eq:dnxmiKr_2}, we have \eqref{eq:Phi_ellipse1_2}. By substituting $z = \pi, \pi/2, 0$ for \eqref{eq:Phi_cos} and using \eqref{eq:dnvalue}, we have \eqref{eq:Phi_value}.

Let $m \in \intge{1}$, $n \in \intge{0}$, and $u_m \coloneqq ( q^{ - 2 m + 1 } + q^{ 2 m - 1 } ) / 2$. By Lemma \ref{lem:Phi}, $\Phi_r(u)$ is real and infinitely differentiable for $-\infty < u < u_1$. By introducing $F_m(u) \coloneqq \frac{ 1 + u / u_m }{ 1 - u / u_m }$, \eqref{eq:Phi_prod} can be expressed as $\Phi_r(u) = \sqrt{r} \prod_{m = 1}^\infty F_m(u), u \in \mathbb{C}$. By noting $0 < F_m^{(n)}(u) < \infty$ for $- u_1 < u < u_1$, we have \eqref{eq:Phi_ineq}.

By Lemma \ref{lem:Phi}, $\Phi_r(u)$ is analytic on $\ellipse_{q^{-1}}$ and is not analytic at $u = (q^{-1} + q) / 2 \in \bd{\ellipse_{q^{-1}}}$, which leads to $\rhoana[\Phi_r] = q^{-1}$ in \eqref{eq:Phi_ana}. The inequalities in \eqref{eq:Phi_ana} follow from $0 < q < 1$ for $0 < r < 1$.

\eqref{eq:Phi_chebyshev} follows from Jacobi's Fourier series representation of dn function \cite[Eq. 39.25]{Jacobi1829} \cite[Eq. 22.11.3]{DLMF}
\begin{equation}
\label{eq:dn_fourier}
\dn(\EK(k) z / \pi, k) = \frac{\pi}{2 \EK(k)} + \frac{\pi}{\EK(k)} \sum_{n = 1}^\infty \frac{2 \cos(n z)}{q^{-n} + q^n}, z \in \imset_{\log(q^{-1})},
\end{equation}
\eqref{eq:Phi_cos}, and $\cos(n z) = T_n(\cos z)$ for $z \in \mathbb{C}$ and $n \in \intge{0}$. \eqref{eq:Phi_qn2} follows from \eqref{eq:Phi_prod} and \eqref{eq:prod_Tn}.

Let $z \in \mathbb{C}$. By differentiating \eqref{eq:Phi_cos} with respect to $z$ and using \eqref{eq:ddn}, we have
\begin{equation}
\label{eq:dPhicos}
\sin(z) \Phi_r'(\cos z) = \frac{ \EK(k) }{ \pi } k^2 \sn(\EK(k) z / \pi, k) \cn(\EK(k) z / \pi, k).
\end{equation}
By squaring \eqref{eq:dPhicos}, using \eqref{eq:dn2} and \eqref{eq:Phi_cos}, and substituting $u = \cos z$, we have \eqref{eq:dPhi2}.
If $\sin z \ne 0$ (i.e., $z \ne m \pi$, $m \in \mathbb{Z}$), by using \eqref{eq:sncn_prod} and \eqref{eq:dPhicos} and substituting $u = \cos z \in \mathbb{C} \setminus \{-1, 1\}$, we have \eqref{eq:dPhi_prod} for $u \in \mathbb{C} \setminus \{-1, 1\}$. Since $\Phi_r(u)$ is analytic at $u = \pm 1$ by Lemma \ref{lem:Phi}, \eqref{eq:dPhi_prod} also holds at $u = \pm 1$ by the continuity of $\Phi_r'(u)$ at $u = \pm 1$. \eqref{eq:dPhi_V} follows from \eqref{eq:dPhi_prod}, \eqref{eq:prod_Tn}, and \eqref{eq:prod_Vn2}. By substituting $u = \pm 1$ for \eqref{eq:dPhi_prod} and using \eqref{eq:prod_q2nm1} and \eqref{eq:prod_q2n}, we have the values of $\Phi_r'(\pm 1)$ in \eqref{eq:dPhi_value}. By substituting $u = 0$ for \eqref{eq:dPhi2} and using \eqref{eq:Phi_value} and \eqref{eq:Phi_ineq}, we have the value of $\Phi_r'(0)$ in \eqref{eq:dPhi_value}.
\end{proof}
In Section \ref{sec:numerical}, we need $\vt_r'(u)$ as well as $\vt_r(u)$ on $(-1, 1)$ as shown in \eqref{eq:ydDS} and \eqref{eq:ucDS}. Therefore, we described the expressions of $\Phi_r'(u)$ as well as $\Phi_r(u)$ in Lemma \ref{lem:Phi_eq}. $\Phi_r(u)$ and $\Phi_r'(u)$ on $[-1, 1]$ are shown in Figure \ref{fig:Phi}. If an infinitely differentiable function $F(x)$ satisfies $F^{(n)}(x) > 0$ for $x_0 < x < x_1$ and $n \in \intge{0}$, $F(x)$ is called strictly absolutely monotonic in $x_0 < x < x_1$. By \eqref{eq:Phi_ineq}, $\Phi_r(u)$ is strictly absolutely monotonic in $- (q^{-1} + q) / 2 < u < (q^{-1} + q) / 2$. Especially, $\Phi_r(u)$ is positive, strictly increasing, and strictly convex for $-(q^{-1} + q) / 2 < u < (q^{-1} + q) / 2$.
\begin{figure}[htbp]
\centering
\includegraphics{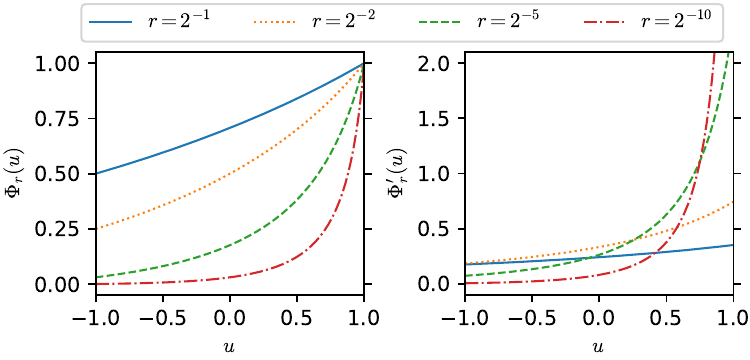}
\caption{$\Phi_r(u)$ (left) and $\Phi_r'(u)$ (right) on $[-1, 1]$. The computational methods are given in Section \ref{sec:Phi_numerical}.
}
\label{fig:Phi}
\end{figure}

Based on the above results, $\Phi_r(u)$ has the following mapping property.
\begin{lemma}
\label{lem:Phi_conformal}
Let us use the notation of Lemma \ref{lem:Phi}. Then $\Phi_r(u)$ is a one-to-one conformal mapping of $\ellipse_{q^{-1}}$ onto $\rhp$ ($u$-plane to $\tau$-plane in Figure \ref{fig:map}).
\end{lemma}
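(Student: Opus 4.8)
The plan is to glue the slit-domain statement of Lemma~\ref{lem:utotau} to the behaviour of $\Phi_r$ on the real segment $(-(q^{-1}+q)/2,(q^{-1}+q)/2)$. Write $u_1\coloneqq(q^{-1}+q)/2$ (the semimajor axis of $\ellipse_{q^{-1}}$) and $D'\coloneqq\ellipse_{q^{-1}}\setminus((-u_1,-1]\cup[1,u_1))$. First I would collect three facts already available. (i) By Lemma~\ref{lem:Phi}, $\Phi_r(u)$ is single-valued meromorphic on $\mathbb{C}$ and all its poles are real with $u\ge u_1$; hence $\Phi_r$ is analytic on $\ellipse_{q^{-1}}$, and since $D'\cap\mathbb{R}=(-1,1)\subset\mathbb{C}\setminus((-\infty,-1]\cup[1,\infty))$, on $D'$ it agrees with the principal-branch composition $\dn(\EK(k)\arccos(u)/\pi,k)$, so by Lemma~\ref{lem:utotau} the restriction $\Phi_r|_{D'}$ is a one-to-one conformal map of $D'$ onto $\rhp\setminus((0,r]\cup[1,\infty))$. (ii) By \eqref{eq:Phi_ineq}, $\Phi_r$ is real, positive, and strictly increasing on $(-u_1,u_1)$; since $-u_1$ is a simple zero and $u_1$ a simple pole of $\Phi_r$ (Lemma~\ref{lem:Phi}) and $\Phi_r(-1)=r$, $\Phi_r(1)=1$ by \eqref{eq:Phi_value}, it follows that $\Phi_r|_{(-u_1,u_1)}$ is an increasing bijection onto $(0,\infty)$ carrying $(-u_1,-1]$ onto $(0,r]$ and $[1,u_1)$ onto $[1,\infty)$. (iii) $\ellipse_{q^{-1}}=D'\sqcup(-u_1,-1]\sqcup[1,u_1)$.

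Injectivity on $\ellipse_{q^{-1}}$ I would settle by a short case analysis. Suppose $\Phi_r(a)=\Phi_r(b)=:\tau_0$ with $a,b\in\ellipse_{q^{-1}}$. If $a,b\in D'$ then $a=b$ by (i). If $a$ lies on one of the two slits, then by (ii) $\tau_0\in(0,r]$ or $\tau_0\in[1,\infty)$ according as $a\in(-u_1,-1]$ or $a\in[1,u_1)$; in either case $\tau_0\notin\rhp\setminus((0,r]\cup[1,\infty))$, so $b\notin D'$ by (i), and since $(0,r]\cap[1,\infty)=\emptyset$ (as $r<1$), $b$ must lie on the same slit as $a$, whence $a=b$ by the strict monotonicity in (ii). Hence $\Phi_r$ is injective on $\ellipse_{q^{-1}}$.

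For the image, (iii) together with (i) and (ii) gives $\Phi_r(\ellipse_{q^{-1}})=\Phi_r(D')\cup\Phi_r((-u_1,-1])\cup\Phi_r([1,u_1))=\bigl(\rhp\setminus((0,r]\cup[1,\infty))\bigr)\cup(0,r]\cup[1,\infty)=\rhp$. Finally $\Phi_r$ is analytic and one-to-one on $\ellipse_{q^{-1}}$, so $\Phi_r'(u)\ne0$ there by Lemma~\ref{lem:dfne0}; therefore $\Phi_r$ is a one-to-one conformal mapping of $\ellipse_{q^{-1}}$ onto $\rhp$.

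The step I would check most carefully — the only real subtlety — is this passage from the slit domain $D'$ to the full ellipse: one must be sure that filling in the two real slits $(-u_1,-1]$ and $[1,u_1)$ neither breaks injectivity nor overshoots $\rhp$. Both are controlled above by the observation that the slit points are sent into $(0,\infty)\subset\rhp$ while $\Phi_r(D')$ precisely omits $(0,r]\cup[1,\infty)$, so the slit images and $\Phi_r(D')$ dovetail without overlap and exactly reconstitute $\rhp$. (One could instead bypass Lemma~\ref{lem:utotau} and argue by the argument principle: the boundary formula \eqref{eq:Phi_ellipse1} shows $\Phi_r$ maps $\bd{\ellipse_{q^{-1}}}$ monotonically onto the imaginary axis, with a small indentation into $\ellipse_{q^{-1}}$ around the boundary pole $u_1$ closing the image contour through $\infty$ inside $\rhp$, which gives winding number $1$ about every point of $\rhp$ and $0$ elsewhere; but the gluing argument is shorter now that Lemma~\ref{lem:utotau} is in hand.)
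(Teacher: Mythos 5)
Your proposal is correct and follows essentially the same route as the paper's proof: glue the slit-domain conformal bijection from Lemma \ref{lem:utotau} onto the strictly increasing real behaviour of $\Phi_r$ on $(-u_1,u_1)$ from \eqref{eq:Phi_ineq} and \eqref{eq:Phi_value}, then invoke Lemma \ref{lem:dfne0} for the nonvanishing of $\Phi_r'$. Your explicit case analysis for injectivity and the remark that $\Phi_r$ agrees with the principal-branch composition on the slit domain merely spell out steps the paper leaves implicit.
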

\begin{proof}
Let $u_1 \coloneqq (q^{-1} + q) / 2$. By Lemma \ref{lem:utotau}, $\Phi_r(u)$ is a one-to-one conformal mapping of $\ellipse_{q^{-1}} \setminus ( (- u_1, -1] \cup [1,  u_1) )$ onto $\rhp \setminus ( (0, r] \cup [1, \infty) )$. We note that $\Phi_r(- u_1) = 0$ and $\Phi_r(u_1) = \infty$ by Lemma \ref{lem:Phi} and $\Phi_r(-1) = r$ and $\Phi_r(1) = 1$ by \eqref{eq:Phi_value}. Since $\Phi_r(u)$ is strictly increasing for $- u_1 < u < u_1$ by \eqref{eq:Phi_ineq}, $\Phi_r(u)$ is a one-to-one mapping of $( - u_1, -1]$ and $[1, u_1)$ onto $(0, r]$ and $[1, \infty)$, respectively. Thus $\Phi_r(u)$ is a one-to-one mapping of $\ellipse_{q^{-1}}$ onto $\rhp$. By using \eqref{eq:Phi_ana} and Lemma \ref{lem:dfne0}, $\Phi_r'(u) \ne 0$ on $u \in \ellipse_{q^{-1}}$. Thus $\Phi_r(u)$ is a one-to-one conformal mapping of $\ellipse_{q^{-1}}$ onto $\rhp$.
\end{proof}

Based on these results, we can conclude that $\Phi_r(u)$ defined in \eqref{eq:Phi} gives the function $\Phi_r(u)$ in Lemma \ref{lem:max_hatrho} as follows:
\begin{theorem}
\label{the:Phi}
Let $0 < r < 1$, $\vt_r \in \vtset_r$, and $\rhoana[\vt_r] > 1$. Let us use the notation of Lemma \ref{lem:Phi}. Then the maximum value of $\hat{\rho}[\vt_r]$ is attained iff $\vt_r(u) = \Phi_r(u)$ on $\ellipse_{q^{-1}}$ and $\hat{\rho}[\Phi_r]$ is given as
\begin{equation}
\label{eq:hatrho_Phi}
\hat{\rho}[\Phi_r] = \rhoana[\Phi_r] = q^{-1} = \enum^{ \pi \frac{ \EK(r) }{ \EK(k) } } = \enum^{ \pi \frac{ \EK(r) }{ \EK\left( \sqrt{ 1 - r^2 } \right) } }.
\end{equation}
\end{theorem}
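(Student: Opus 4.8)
The plan is to verify that the function $\Phi_r$ constructed in Lemma~\ref{lem:Phi} is an admissible competitor in the maximization problem — that is, it meets every hypothesis imposed on the distinguished function called $\Phi_r$ in Lemma~\ref{lem:max_hatrho} — and then to quote that lemma together with the containment \eqref{eq:vt_rhp}.

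First I would check that $\Phi_r \in \vtset_r$ with $\rhoana[\Phi_r] > 1$: the boundary values $\Phi_r(-1) = r$ and $\Phi_r(1) = 1$ are \eqref{eq:Phi_value}, the continuity and strict monotonicity on $[-1,1]$ are part of \eqref{eq:Phi_ineq}, so \eqref{eq:vt_increase_eq} holds, and \eqref{eq:Phi_ana} gives $\rhoana[\Phi_r] = q^{-1} \in (1,\infty)$. Next I would identify $\hat{\rho}[\Phi_r]$. By Lemma~\ref{lem:Phi_conformal}, $\Phi_r$ is a one-to-one conformal map of $\ellipse_{q^{-1}} = \ellipse_{\rhoana[\Phi_r]}$ onto $\rhp$, hence $\inf_{u \in \ellipse_{\rhoana[\Phi_r]}} \re \Phi_r(u) = \inf_{\tau \in \rhp} \re \tau = 0 \ge 0$, so the first branch of the definition \eqref{eq:hatrho} applies and $\hat{\rho}[\Phi_r] = \rhoana[\Phi_r] = q^{-1}$. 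Recalling $q = \enum^{-\pi \EK(r)/\EK(k)}$ and $k = \sqrt{1 - r^2}$, this is exactly \eqref{eq:hatrho_Phi}. In particular $\ellipse_{\hat{\rho}[\Phi_r]} = \ellipse_{q^{-1}}$, so Lemma~\ref{lem:Phi_conformal} says precisely that $\Phi_r$ maps $\ellipse_{\hat{\rho}[\Phi_r]}$ one-to-one onto $\rhp$; thus all hypotheses of Lemma~\ref{lem:max_hatrho} on $\Phi_r$ are in force.

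Then I would apply Lemma~\ref{lem:max_hatrho}. For any admissible $\vt_r$ (i.e. $\vt_r \in \vtset_r$, $\rhoana[\vt_r] > 1$), the inclusion \eqref{eq:vt_rhp} gives $\vt_r(\ellipse_{\hat{\rho}[\vt_r]}) \subseteq \rhp$, so exactly one of the two alternatives of the lemma holds. If $\vt_r(\ellipse_{\hat{\rho}[\vt_r]}) \subsetneq \rhp$ then \eqref{eq:hatrho_ineq} gives $\hat{\rho}[\vt_r] < \hat{\rho}[\Phi_r]$; if $\vt_r(\ellipse_{\hat{\rho}[\vt_r]}) = \rhp$ then \eqref{eq:hatrho_eq} gives $\hat{\rho}[\vt_r] = \hat{\rho}[\Phi_r]$ and $\vt_r = \Phi_r$ on $\ellipse_{\hat{\rho}[\Phi_r]} = \ellipse_{q^{-1}}$. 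Since $\Phi_r$ is itself admissible, the supremum of $\hat{\rho}[\vt_r]$ over all admissible $\vt_r$ equals $\hat{\rho}[\Phi_r] = q^{-1}$ and is attained, and by the dichotomy it is attained exactly when $\vt_r = \Phi_r$ on $\ellipse_{q^{-1}}$. Conversely, if $\vt_r = \Phi_r$ on $\ellipse_{q^{-1}}$, then since $\vt_r$ and $\Phi_r$ agree on the open set $\ellipse_{q^{-1}} \supseteq [-1,1]$ and analytic continuation is unique, they have the same maximal ellipse of analyticity and the same values on it, whence $\hat{\rho}[\vt_r] = \hat{\rho}[\Phi_r] = q^{-1}$, which is the maximum; this gives the reverse implication of the equivalence and completes the proof together with \eqref{eq:hatrho_Phi}.

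I do not expect a serious obstacle: the real content is already packaged into Lemma~\ref{lem:max_hatrho} (the abstract characterization of the maximizer) and Lemma~\ref{lem:Phi_conformal} (the conformal mapping property of $\Phi_r$). The one place demanding a little care is the identification $\hat{\rho}[\Phi_r] = \rhoana[\Phi_r]$: it matters that $\Phi_r$ carries the \emph{whole} ellipse $\ellipse_{\rhoana[\Phi_r]}$ into $\{\re \tau \ge 0\}$ (indeed onto $\rhp$), so that the non-negativity condition selecting the first branch of \eqref{eq:hatrho} holds and $\hat{\rho}[\Phi_r]$ equals $\rhoana[\Phi_r]$ rather than some strictly smaller $\rho_0[\Phi_r]$.
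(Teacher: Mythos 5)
Your proposal is correct and follows essentially the same route as the paper: verify $\Phi_r \in \vtset_r$ and $\rhoana[\Phi_r] = q^{-1}$ from \eqref{eq:Phi_value}, \eqref{eq:Phi_ineq}, and \eqref{eq:Phi_ana}, deduce $\hat{\rho}[\Phi_r] = \rhoana[\Phi_r] = q^{-1}$ from Lemma \ref{lem:Phi_conformal} and the first branch of \eqref{eq:hatrho}, and then invoke Lemma \ref{lem:max_hatrho} together with \eqref{eq:vt_rhp}. The only difference is that you spell out the converse implication of the ``iff'' via unique analytic continuation, which the paper leaves implicit.
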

\begin{proof}
By \eqref{eq:Phi_value} and \eqref{eq:Phi_ineq}, we have $\Phi_r \in \vtset_r$. By \eqref{eq:Phi_ana}, we have $1 < \rhoana[\Phi_r] = q^{-1} < \infty$. By $\rhoana[\Phi_r] = q^{-1}$ and Lemma \ref{lem:Phi_conformal}, we have
\begin{equation}
\label{eq:Phi_infre0}
\inf_{u \in \ellipse_{\rhoana[\Phi_r]}} \RE \Phi_r(u) = \inf_{u \in \ellipse_{q^{-1}}} \RE \Phi_r(u) = 0.
\end{equation}
Thus we have $\hat{\rho}[\Phi_r] = \rhoana[\Phi_r] = q^{-1}$. By $\hat{\rho}[\Phi_r] = q^{-1}$ and Lemma \ref{lem:Phi_conformal}, $\Phi_r(u)$ is a one-to-one mapping of $\ellipse_{\hat{\rho}[\Phi_r]}$ onto $\rhp$. Then the statement follows from Lemma \ref{lem:max_hatrho}.
\end{proof}
\begin{corollary}
Let $0 < a < b < \infty$ and $M \in \intge{1}$. Let $W(t)$ be a bounded nondecreasing function with at least $M + 1$ points of increase for $a \le t \le b$. Let $f(x) \coloneqq \int_a^b \enum^{-xt} \del W(t)$ for $0 \le x < \infty$. Then one has
\begin{align}
&0 < \dist\left( f, \eset_M, [0, \infty) \right) < \frac{16}{\pi} \rho^{-2M} f(0), \rho \coloneqq \enum^{\pi \frac{ \EK(a/b) }{ \EK(\sqrt{1 - (a/b)^2}) }}.
\end{align}
\end{corollary}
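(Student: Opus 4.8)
The plan is to specialize Theorem~\ref{the:Evt} to the optimal variable transformation $\vt_{a/b} = \Phi_{a/b}$ constructed in Lemma~\ref{lem:Phi}. Write $r \coloneqq a/b \in (0,1)$, $k \coloneqq \sqrt{1 - r^2}$, and $q \coloneqq \enum^{-\pi \EK(r)/\EK(k)}$, so that the quantity $\rho$ in the statement equals $q^{-1}$. First I would check that the hypotheses of Lemma~\ref{lem:vt_gauss} hold for this choice: the assumptions on $a$, $b$, $M$, $W$, and $f$ are precisely the ones given, and $\Phi_{a/b} \in \vtset_{a/b}$ by \eqref{eq:Phi_value} and \eqref{eq:Phi_ineq}, while $\rhoana[\Phi_{a/b}] = q^{-1} > 1$ by \eqref{eq:Phi_ana}; both facts are already recorded in the proof of Theorem~\ref{the:Phi}. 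This also supplies the extra condition $\rhoana[\vt_{a/b}] > 1$ needed to invoke Theorem~\ref{the:Evt}.

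With these verified, Lemma~\ref{lem:vt_gauss} produces nodes $-1 < u_1 < \cdots < u_M < 1$ and positive weights $c_1, \ldots, c_M$, and Theorem~\ref{the:Evt} yields
\begin{equation*}
0 < \max_{0 \le x < \infty} \left| f(x) - \sum_{\nu = 1}^M c_\nu \enum^{-b \Phi_{a/b}(u_\nu) x} \right| < \frac{16}{\pi}\, \hat{\rho}[\Phi_{a/b}]^{-2M} f(0).
\end{equation*}
By \eqref{eq:hatrho_Phi} in Theorem~\ref{the:Phi} we have $\hat{\rho}[\Phi_{a/b}] = q^{-1} = \rho$. Since each $b \Phi_{a/b}(u_\nu)$ is real and each $c_\nu$ is real, the function $g(x) \coloneqq \sum_{\nu = 1}^M c_\nu \enum^{-b \Phi_{a/b}(u_\nu) x}$ lies in $\eset_M$, so by the definition of $\dist$,
\begin{equation*}
\dist\bigl(f, \eset_M, [0, \infty)\bigr) \le \sup_{0 \le x < \infty} |f(x) - g(x)| < \frac{16}{\pi}\, \rho^{-2M} f(0),
\end{equation*}
which is the asserted upper bound.

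For the strict lower bound I would argue that $\dist(f, \eset_M, [0,\infty)) > 0$ because $f \notin \eset_M$: since $W$ has at least $M+1$ points of increase, $f$ cannot be a real exponential sum with at most $M$ terms (if it were, then, being completely monotonic, its determining measure would by Bernstein's theorem be purely atomic with at most $M$ atoms, forcing $W$ to have at most $M$ points of increase), which is the same observation used in the proof of Theorem~\ref{the:Evt}. By Theorem~\ref{the:kammler_best} a best approximant from $\eset_M$ exists, so $\dist(f, \eset_M, [0,\infty)) = 0$ would force $f \in \eset_M$, a contradiction; hence the distance is strictly positive. Combining the two displays with this remark completes the argument.

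I do not anticipate a real obstacle: the corollary is essentially a repackaging of Theorem~\ref{the:Evt} together with Theorem~\ref{the:Phi}. The only steps that need any attention are confirming that the Gaussian-quadrature approximant genuinely belongs to $\eset_M$ --- immediate from the realness of the nodes $b\Phi_{a/b}(u_\nu)$ and weights $c_\nu$ guaranteed by Lemma~\ref{lem:vt_gauss} --- and the short argument for positivity of the distance; the remainder is just bookkeeping of the constants $r = a/b$, $k$, $q$, and $\rho$.
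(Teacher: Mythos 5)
Your proposal is correct and follows essentially the same route as the paper: the upper bound is obtained by specializing Theorem \ref{the:Evt} to $\vt_{a/b} = \Phi_{a/b}$ and substituting $\hat{\rho}[\Phi_{a/b}] = \rho$ from Theorem \ref{the:Phi}, and the strict positivity of the distance comes from $f \notin \eset_M$ together with Kammler's results (the paper cites Lemma \ref{lem:kammler_zero}, while you route the same fact through Theorem \ref{the:kammler_best} and the uniqueness of the Laplace--Stieltjes representation). No gaps.
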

\begin{proof}
The lower bound follows from Lemma \ref{lem:kammler_zero}. The upper bound follows from Theorem \ref{the:Evt} and Theorem \ref{the:Phi}.
\end{proof}

\begin{table}[htbp]
{\footnotesize
\caption{Values of $\hat{\rho}[\vt_r]$ and $\hat{\rho}[\vt_r]^2$. Expressions are found in \eqref{eq:hatrho_Phi},  \eqref{eq:hatrho_exp}, \eqref{eq:hatrho_P2}, \eqref{eq:hatrho_P1}, and \eqref{eq:hatrho_R01}.
}
\label{tab:hatrho}
\begin{center}
\begin{tabular}{l|cc|cc|cc|cc}
\hline
  &
  \multicolumn{2}{c|}{$\Phi_r$} &
  \multicolumn{2}{c|}{$\vt_{\exp, r}$} &
  \multicolumn{2}{c|}{$\vt_{\pset_2, r}$} &
  \multicolumn{2}{c}{$\vt_{\pset_1, r}$ or $\vt_{\rset_{0,1}, r}$} \\ \cline{2-9}
 $r$ & $\hat{\rho}$  & $\hat{\rho}^2$ & $\hat{\rho}$ & $\hat{\rho}^2$ & $\hat{\rho}$ & $\hat{\rho}^2$ & $\hat{\rho}$ & $\hat{\rho}^2$ \\
\hline
$2^{-1}$ & 11.6556 & 135.853 & 9.17373 & 84.1573 & 8.24175 & 67.9264 & 5.82843 & 33.9706 \\
$2^{-2}$ & 5.99070 & 35.8885 & 4.74319 & 22.4978 & 4.23607 & 17.9443 & 3.00000 & 9.00000 \\
$2^{-3}$ & 4.15994 & 17.3051 & 3.32255 & 11.0393 & 2.94155 & 8.65273 & 2.09384 & 4.38415 \\
$2^{-4}$ & 3.27674 & 10.7370 & 2.64435 & 6.99256 & 2.31718 & 5.36931 & 1.66667 & 2.77778 \\
$2^{-5}$ & 2.76519 & 7.64629 & 2.25617 & 5.09031 & 1.95586 & 3.82538 & 1.42947 & 2.04340 \\
$2^{-6}$ & 2.43498 & 5.92910 & 2.00864 & 4.03462 & 1.72318 & 2.96935 & 1.28571 & 1.65306 \\
$2^{-7}$ & 2.20571 & 4.86514 & 1.83879 & 3.38117 & 1.56245 & 2.44125 & 1.19392 & 1.42544 \\
$2^{-8}$ & 2.03794 & 4.15322 & 1.71588 & 2.94425 & 1.44587 & 2.09054 & 1.13333 & 1.28444 \\
$2^{-9}$ & 1.91022 & 3.64895 & 1.62324 & 2.63492 & 1.35831 & 1.84500 & 1.09248 & 1.19350 \\
$2^{-10}$ & 1.80992 & 3.27582 & 1.55115 & 2.40608 & 1.29083 & 1.66623 & 1.06452 & 1.13319 \\
$2^{-11}$ & 1.72918 & 2.99006 & 1.49359 & 2.23082 & 1.23782 & 1.53220 & 1.04519 & 1.09243 \\
$2^{-12}$ & 1.66284 & 2.76505 & 1.44665 & 2.09279 & 1.19558 & 1.42941 & 1.03175 & 1.06450 \\
$2^{-13}$ & 1.60742 & 2.58378 & 1.40768 & 1.98155 & 1.16155 & 1.34919 & 1.02234 & 1.04519 \\
$2^{-14}$ & 1.56043 & 2.43495 & 1.37484 & 1.89018 & 1.13389 & 1.28570 & 1.01575 & 1.03174 \\
$2^{-15}$ & 1.52012 & 2.31076 & 1.34681 & 1.81390 & 1.11127 & 1.23491 & 1.01111 & 1.02234 \\
$2^{-16}$ & 1.48516 & 2.20570 & 1.32262 & 1.74932 & 1.09266 & 1.19392 & 1.00784 & 1.01575 \\
$2^{-17}$ & 1.45456 & 2.11576 & 1.30154 & 1.69401 & 1.07730 & 1.16059 & 1.00554 & 1.01111 \\
$2^{-18}$ & 1.42757 & 2.03794 & 1.28301 & 1.64612 & 1.06458 & 1.13333 & 1.00391 & 1.00784 \\
$2^{-19}$ & 1.40357 & 1.97001 & 1.26660 & 1.60428 & 1.05401 & 1.11094 & 1.00277 & 1.00554 \\
$2^{-20}$ & 1.38211 & 1.91022 & 1.25197 & 1.56744 & 1.04522 & 1.09248 & 1.00196 & 1.00391 \\
\hline
\end{tabular}
\end{center}
}
\end{table}

\begin{figure}[htbp]
\centering
\includegraphics{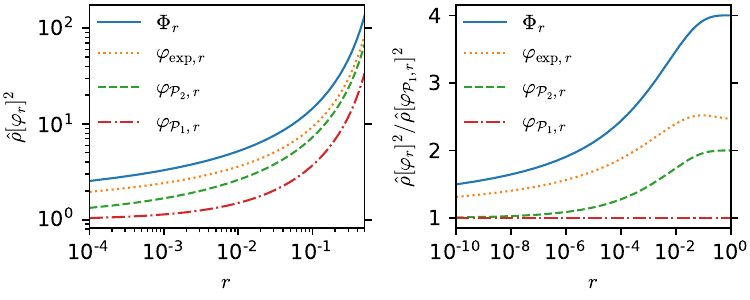}
\caption{$\hat{\rho}[\vt_r]^2$ for $10^{-4} \le r \le 0.5$ (left) and $\hat{\rho}[\vt_r]^2 / \hat{\rho}[\vt_{\pset_1, r}]^2$ for $10^{-10} \le r \le 0.99$ (right). $\vt_r$ is specified in the legend.
}
\label{fig:hatrho2}
\end{figure}
Some values of $\hat{\rho}[\vt_r]^2$ are shown in Table \ref{tab:hatrho} and Figure \ref{fig:hatrho2}. We can show
\begin{equation}
\label{eq:lim_hatrho2}
\lim_{r \uparrow 1} \frac{ \hat{\rho}[\vt_{\pset_2, r}]^2 }{ \hat{\rho}[\vt_{\pset_1, r}]^2 } = 2, \lim_{r \uparrow 1} \frac{ \hat{\rho}[\vt_{\exp, r}]^2 }{ \hat{\rho}[\vt_{\pset_1, r}]^2 } = \frac{\pi^2}{4} \approx 2.4674, \lim_{r \uparrow 1} \frac{ \hat{\rho}[\Phi_r]^2 }{ \hat{\rho}[\vt_{\pset_1, r}]^2 } = 4,
\end{equation}
where the second equality follows from $\lim\limits_{r \uparrow 1} \frac{ 1 - \sqrt{r} }{ \log(1/r) } = \frac{1}{2}$. The last equality and some properties of $\frac{ \hat{\rho}[\Phi_r] }{ \hat{\rho}[\vt_{\pset_1, r}] } = \frac{ 1 - \sqrt{r} }{ 1 + \sqrt{r} } \enum^{ \pi \frac{ \EK(r) }{ \EK(k) } }$ can be shown as follows:
\begin{corollary}
Let $0 < r < 1$ and $k \coloneqq \sqrt{1 - r^2}$. Then one has
\begin{align}
\label{eq:dhatrho_Phi_P1_ineq}
&\frac{ \del }{ \del r } \left( \frac{ 1 - \sqrt{r} }{ 1 + \sqrt{r} } \enum^{ \pi \frac{ \EK(r) }{ \EK(k) } } \right) > 0, \\
\label{eq:dhatrho_Phi_P1_lim}
&\lim_{ r \downarrow 0 } \left( \frac{ 1 - \sqrt{r} }{ 1 + \sqrt{r} } \enum^{ \pi \frac{ \EK(r) }{ \EK(k) } } \right) = 1, \lim_{ r \uparrow 1 } \left( \frac{ 1 - \sqrt{r} }{ 1 + \sqrt{r} } \enum^{ \pi \frac{ \EK(r) }{ \EK(k) } } \right) = 2, \\
\label{eq:hatrho_Phi_P1_ineq}
&1 < \frac{ 1 - \sqrt{r} }{ 1 + \sqrt{r} } \enum^{ \pi \frac{ \EK(r) }{ \EK(k) } } < 2.
\end{align}
\end{corollary}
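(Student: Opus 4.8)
Throughout write $g(r)\coloneqq\frac{1-\sqrt r}{1+\sqrt r}\,\enum^{\pi\EK(r)/\EK(k)}$ for $0<r<1$, where $k\coloneqq\sqrt{1-r^2}$; by \eqref{eq:hatrho_Phi} and \eqref{eq:hatrho_P1} this is exactly $\hat{\rho}[\Phi_r]/\hat{\rho}[\vt_{\pset_1,r}]$, the quantity appearing in all three displayed assertions. The plan is to (i) compute the two limits in \eqref{eq:dhatrho_Phi_P1_lim}, (ii) prove the strict monotonicity \eqref{eq:dhatrho_Phi_P1_ineq}, and then read off \eqref{eq:hatrho_Phi_P1_ineq} from $1=\lim_{r\downarrow 0}g(r)<g(r)<\lim_{r\uparrow 1}g(r)=2$ together with the continuity of $g$ on $(0,1)$.

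For $r\downarrow 0$ everything is elementary: $\frac{1-\sqrt r}{1+\sqrt r}\to 1$, $\EK(r)\to\EK(0)=\pi/2$, and $\EK(k)=\EK(\sqrt{1-r^2})\to\EK(1^-)=\infty$, so the exponent tends to $0$ and $g(r)\to 1$. For $r\uparrow 1$ there is a $0\cdot\infty$ indeterminacy: here $\EK(k)\to\EK(0)=\pi/2$, while the classical logarithmic behaviour $\EK(s)=\log\!\bigl(4/\sqrt{1-s^2}\bigr)+o(1)$ as $s\uparrow 1$ gives $\EK(r)=\log(4/k)+o(1)$, and with $\EK(k)=\pi/2+O(k^2)$ this yields $\pi\EK(r)/\EK(k)=2\log(4/k)+o(1)$, hence $\enum^{\pi\EK(r)/\EK(k)}=\frac{16}{k^2}(1+o(1))=\frac{16}{1-r^2}(1+o(1))$. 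Since $\frac{1-\sqrt r}{1+\sqrt r}=\frac{(1-\sqrt r)^2}{1-r}$ and $(1-r)^2=(1-\sqrt r)^2(1+\sqrt r)^2$, this collapses to $g(r)=\frac{16}{(1+\sqrt r)^2(1+r)}(1+o(1))\to 16/(4\cdot 2)=2$. (A self-contained alternative: \eqref{eq:Phi_prod} and \eqref{eq:Phi_value} force $\sqrt r=\prod_{n\ge 1}\bigl(\tfrac{1-q^{2n-1}}{1+q^{2n-1}}\bigr)^2$ with $q=\enum^{-\pi\EK(r)/\EK(k)}$, so $1-\sqrt r=4q+O(q^2)$ and $g(r)=q^{-1}\frac{1-\sqrt r}{1+\sqrt r}\to 2$ as $q\downarrow 0$.)

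For \eqref{eq:dhatrho_Phi_P1_ineq} I would differentiate $\log g$. The rational part is immediate: $\frac{\del}{\del r}\bigl[\log(1-\sqrt r)-\log(1+\sqrt r)\bigr]=-\frac{1}{\sqrt r(1-r)}$. For the elliptic part, using the standard formula $\frac{\del\EK(s)}{\del s}=\frac{\EE(s)-(1-s^2)\EK(s)}{s(1-s^2)}$, the chain rule with $\frac{\del k}{\del r}=-r/k$, and Legendre's relation $\EE(k)\EK(r)+\EE(r)\EK(k)-\EK(k)\EK(r)=\pi/2$ (valid since $r=\sqrt{1-k^2}$ is the modulus complementary to $k$), a direct computation --- in which the numerator simplifies after cancelling $k^2+r^2=1$ --- gives $\frac{\del}{\del r}\bigl[\pi\EK(r)/\EK(k)\bigr]=\frac{\pi^2}{2r(1-r^2)\EK(k)^2}$. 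Therefore
\begin{equation*}
\frac{\del}{\del r}\log g(r)=-\frac{1}{\sqrt r(1-r)}+\frac{\pi^2}{2r(1-r^2)\EK(k)^2},\qquad 0<r<1,
\end{equation*}
and this is positive precisely when $\EK(k)^2<\frac{\pi^2}{2\sqrt r(1+r)}$. Invoking the arithmetic--geometric-mean representation $\EK(\sqrt{1-r^2})=\frac{\pi}{2\,\mathrm{AGM}(1,r)}$ turns this into the purely algebraic inequality $\mathrm{AGM}(1,r)^2>\frac{(1+r)\sqrt r}{2}$.

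This last inequality is the heart of the argument, and I expect it to be the main obstacle --- the elliptic-integral bound it comes from is genuinely tight (both sides agree in the limits $r\to 0$ and $r\to 1$). The plan is to prove it via a single Gauss step: $\mathrm{AGM}(1,r)=\mathrm{AGM}\!\bigl(\tfrac{1+r}{2},\sqrt r\bigr)$, and for $0<r<1$ the two entries are distinct because $\tfrac{1+r}{2}-\sqrt r=\tfrac12(1-\sqrt r)^2>0$, so $\mathrm{AGM}\!\bigl(\tfrac{1+r}{2},\sqrt r\bigr)$ strictly exceeds the geometric mean $\sqrt{\tfrac{1+r}{2}\sqrt r}$ of its two arguments (the geometric-mean iterates form a strictly increasing sequence converging to the AGM); squaring gives exactly $\mathrm{AGM}(1,r)^2>\tfrac{(1+r)\sqrt r}{2}$. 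Hence $\frac{\del}{\del r}\log g(r)>0$, and since $g(r)>0$ this is \eqref{eq:dhatrho_Phi_P1_ineq}; combined with \eqref{eq:dhatrho_Phi_P1_lim} and the continuity of $g$ on $(0,1)$ it yields \eqref{eq:hatrho_Phi_P1_ineq}.
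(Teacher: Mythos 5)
Your proposal is correct and follows essentially the same route as the paper: both reduce \eqref{eq:dhatrho_Phi_P1_ineq} to the inequality $\EK(k)^2 < \frac{\pi^2}{2\sqrt{r}(1+r)}$ via the derivative formula $\frac{\del}{\del r}\frac{\EK(r)}{\EK(k)} = \frac{\pi}{2rk^2\EK(k)^2}$ (which you rederive from Legendre's relation rather than citing Jacobi), and both establish that inequality by a one-step Landen/AGM bound --- your $\mathrm{AGM}(1,r)=\mathrm{AGM}\bigl(\tfrac{1+r}{2},\sqrt r\bigr)>\sqrt{\tfrac{1+r}{2}\sqrt r}$ is exactly the paper's truncation $\bigl(\tfrac{2\EK(k)}{\pi}\bigr)^2<\tfrac{r_1}{r}$ of Jacobi's product $\bigl(\tfrac{2\EK(k)}{\pi}\bigr)^2=\tfrac{\prod_n r_n}{r}$. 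The limits are likewise handled by the same underlying asymptotics ($q\sim k^2/16$ versus the logarithmic blow-up of $\EK$ near modulus $1$), so the two proofs coincide in substance.
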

\begin{proof}
By using $\frac{ \del }{ \del r} \frac{ \EK(r) }{ \EK(k) } =\frac{ \pi }{ 2 r k^2 \EK(k)^2 }$ \cite[Sec.~32]{Jacobi1829}, we have
\begin{equation}
\label{eq:dhatrho_Phi_P1}
\frac{ \del }{ \del r } \left( \frac{ 1 - \sqrt{r} }{ 1 + \sqrt{r} } \enum^{ \pi \frac{ \EK(r) }{ \EK(k) } } \right) = \frac{ \pi^2 - 2 (1 + r) \sqrt{r} \EK(k)^2 }{ 2 (1 + r) (1 + \sqrt{r})^2 r \EK(k)^2 } \enum^{ \pi \frac{ \EK(r) }{ \EK(k) } }.
\end{equation}
By using $\frac{ 2 \EK(k) }{ \pi } = \sqrt{ \frac{ \prod_{n = 1}^\infty r_n }{ r } }$, $r_0 \coloneqq r$, $r_{n+1} \coloneqq \frac{ 2 \sqrt{r_n} }{ 1 + r_n }$, $n \in \intge{0}$ \cite[Eq. 38.1]{Jacobi1829}, we have $\left( \frac{ 2 \EK(k) }{ \pi } \right)^2 < \frac{ r_1 }{ r } = \frac{1}{r} \frac{ 2 \sqrt{r} }{ 1 + r }$. With this inequality and \eqref{eq:dhatrho_Phi_P1}, we have \eqref{eq:dhatrho_Phi_P1_ineq}.
 \eqref{eq:dhatrho_Phi_P1_lim} follows from $\lim\limits_{r \downarrow 0} \enum^{ -\pi \frac{ \EK(r) }{ \EK(k) } } = 1$ and $\lim\limits_{r \uparrow 1} \frac{ \enum^{ -\pi \frac{ \EK(r) }{ \EK(k) } } }{ k^2 } = \frac{1}{16}$. \eqref{eq:hatrho_Phi_P1_ineq} follows from \eqref{eq:dhatrho_Phi_P1_ineq} and \eqref{eq:dhatrho_Phi_P1_lim}.
\end{proof}

We also noticed the direct connections among $\hat{\rho}[\vt_{\pset_1, r}]$, $\hat{\rho}[\vt_{\pset_2, r}]$, $\hat{\rho}[\vt_{\rset_{0, 1}, r}]$, and $\hat{\rho}[\Phi_r]$ as follows:
\begin{corollary}
Let $0 < r < 1$. Expressions of $\hat{\rho}[\vt_{\pset_1, r}]$, $\hat{\rho}[\vt_{\pset_2, r}]$, $\hat{\rho}[\vt_{\rset_{0, 1}, r}]$, and $\hat{\rho}[\Phi_r]$ are given in \eqref{eq:hatrho_P1}, \eqref{eq:hatrho_P2}, \eqref{eq:hatrho_R01}, and \eqref{eq:hatrho_Phi}, respectively. Let $k(x)$ be the modulus corresponding to the nome $0 < x < 1$. Then one has
\begin{align}
\label{eq:hatrho_P1Phi}
&\hat{\rho}[\vt_{\pset_1, r}]^{-2} = k(\hat{\rho}[\Phi_r]^{-4}), \\
\label{eq:hatrho_P2Phi}
&\hat{\rho}[\vt_{\pset_2, r}]^{-4} = k(\hat{\rho}[\Phi_r]^{-8}) = \hat{\rho}\left[ \vt_{\pset_1, \frac{ 2 \sqrt{r} }{ 1 + r }} \right]^{-2}, \\
\label{eq:hatrho_R01_P1}
&\hat{\rho}[\vt_{\rset_{0, 1}, r}] = \hat{\rho}[\vt_{\pset_1, r}].
\end{align}
\end{corollary}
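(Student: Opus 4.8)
The plan is to derive all three identities from the classical Landen transformation of complete elliptic integrals, combined with the explicit closed forms for $\hat{\rho}[\vt_{\pset_1, r}]$, $\hat{\rho}[\vt_{\pset_2, r}]$, $\hat{\rho}[\vt_{\rset_{0, 1}, r}]$ recorded in \eqref{eq:hatrho_P1}, \eqref{eq:hatrho_P2}, \eqref{eq:hatrho_R01} and for $\hat{\rho}[\Phi_r]$ in \eqref{eq:hatrho_Phi}.

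First I would set up the nome picture. Put $k_0 \coloneqq \sqrt{1 - r^2}$, which has complementary modulus $r$; then by \eqref{eq:hatrho_Phi} the number $\hat{\rho}[\Phi_r]^{-1} = \enum^{-\pi \EK(r) / \EK(k_0)}$ is precisely the nome of the modulus $k_0$, i.e.\ $k(\hat{\rho}[\Phi_r]^{-1}) = k_0$. Next I would invoke the descending Landen step: for any modulus $\ell$ with complementary modulus $\ell'$ one has $\EK\!\left( \tfrac{1 - \ell'}{1 + \ell'} \right) = \tfrac{1 + \ell'}{2} \EK(\ell)$, and, since the complementary modulus of $\tfrac{1 - \ell'}{1 + \ell'}$ is $\tfrac{2 \sqrt{\ell'}}{1 + \ell'}$, also $\EK\!\left( \tfrac{2 \sqrt{\ell'}}{1 + \ell'} \right) = (1 + \ell') \EK(\ell')$; dividing, the ratio of $\EK$ at the complementary modulus to $\EK$ at the modulus itself exactly doubles, so the nome of $\tfrac{1 - \ell'}{1 + \ell'}$ is the square of the nome of $\ell$. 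Applying this step twice and simplifying $\tfrac{1 - 2\sqrt{\ell'}/(1 + \ell')}{1 + 2\sqrt{\ell'}/(1 + \ell')} = \bigl( \tfrac{1 - \sqrt{\ell'}}{1 + \sqrt{\ell'}} \bigr)^2$, I get the clean rule: if $p$ is the nome of a modulus whose complementary modulus is $\ell'$, then $k(p^2)$ has complementary modulus $\tfrac{2\sqrt{\ell'}}{1 + \ell'}$ and $k(p^4) = \bigl( \tfrac{1 - \sqrt{\ell'}}{1 + \sqrt{\ell'}} \bigr)^2$.

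Now \eqref{eq:hatrho_P1Phi} is this rule with $p = \hat{\rho}[\Phi_r]^{-1}$ and $\ell' = r$: $k(\hat{\rho}[\Phi_r]^{-4}) = \bigl( \tfrac{1 - \sqrt r}{1 + \sqrt r} \bigr)^2 = \hat{\rho}[\vt_{\pset_1, r}]^{-2}$, the last equality using \eqref{eq:hatrho_P1}, namely $\hat{\rho}[\vt_{\pset_1, r}] = \tfrac{1 + \sqrt r}{1 - \sqrt r}$. For \eqref{eq:hatrho_P2Phi}, write $\hat{\rho}[\Phi_r]^{-8} = \bigl( \hat{\rho}[\Phi_r]^{-2} \bigr)^4$; by the rule just stated $\hat{\rho}[\Phi_r]^{-2}$ is the nome of $\tfrac{1 - r}{1 + r}$, whose complementary modulus is $s \coloneqq \tfrac{2 \sqrt r}{1 + r}$, so applying the rule once more with $p = \hat{\rho}[\Phi_r]^{-2}$ and $\ell' = s$ gives $k(\hat{\rho}[\Phi_r]^{-8}) = \bigl( \tfrac{1 - \sqrt s}{1 + \sqrt s} \bigr)^2 = \hat{\rho}[\vt_{\pset_1, s}]^{-2}$ by \eqref{eq:hatrho_P1} evaluated at $s$. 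It then remains to identify $\hat{\rho}[\vt_{\pset_2, r}]^{-4}$ with this quantity; substituting \eqref{eq:hatrho_P2}, which gives $\hat{\rho}[\vt_{\pset_2, r}]^2 = \tfrac{1 + \sqrt s}{1 - \sqrt s} = \hat{\rho}[\vt_{\pset_1, s}]$, finishes the chain of equalities. Finally \eqref{eq:hatrho_R01_P1} follows by direct comparison of \eqref{eq:hatrho_R01} with \eqref{eq:hatrho_P1}, both equal to $\tfrac{1 + \sqrt r}{1 - \sqrt r}$.

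The computations involved — the Landen identities for $\EK$ and the algebraic simplification of the iterated moduli — are routine; the point that needs care is the bookkeeping of which Landen step is ascending versus descending and of complementary moduli at each stage, so that $\hat{\rho}[\Phi_r]^{-2}$ is correctly identified as the nome of $\tfrac{1 - r}{1 + r}$ and not of $s = \tfrac{2\sqrt r}{1 + r}$. A secondary, purely mechanical check is that the appendix expression \eqref{eq:hatrho_P2} for $\hat{\rho}[\vt_{\pset_2, r}]$ does reduce to $\sqrt{(1 + \sqrt s)/(1 - \sqrt s)}$ with $s = \tfrac{2\sqrt r}{1 + r}$.
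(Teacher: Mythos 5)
Your proof is correct and follows essentially the same route as the paper: both arguments iterate the nome-squaring (Gauss--Landen) transformation of Theorem \ref{the:q2} to compute $k(q^4)$ and $k(q^8)$ for $q = \hat{\rho}[\Phi_r]^{-1}$, then match the resulting expressions against the closed forms \eqref{eq:hatrho_P1}, \eqref{eq:hatrho_P2}, \eqref{eq:hatrho_R01}. The only cosmetic differences are that you re-derive the nome-doubling rule from the Landen identities for $\EK$ instead of citing Theorem \ref{the:q2} directly, and that you obtain the second equality of \eqref{eq:hatrho_P2Phi} by applying the two-step rule at the intermediate modulus $s = \tfrac{2\sqrt{r}}{1+r}$ rather than via the relation $\hat{\rho}[\Phi_{s}] = \hat{\rho}[\Phi_r]^2$ as the paper does; both are equivalent reorderings of the same computation.
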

\begin{proof}
Let $q \coloneqq \enum^{- \pi \frac{ \EK(r) }{ \EK(\sqrt{1 - r^2}) }}$. By noting $k(q) = \sqrt{1 - r^2}$ and using Theorem \ref{the:q2}, we have
\begin{align}
&k(q^2) = \frac{ 1 - \sqrt{1 - k(q)^2} }{ 1 + \sqrt{1 - k(q)^2} } = \frac{ 1 - r }{ 1 + r }, \\
\label{eq:kq4}
&k(q^4) = \frac{ 1 - \sqrt{1 - k(q^2)^2} }{ 1 + \sqrt{1 - k(q^2)^2} } = \frac{ (1 - \sqrt{r})^2 }{ (1 + \sqrt{r})^2 }, \\
\label{eq:kq8}
&k(q^8) = \frac{ 1 - \sqrt{1 - k(q^4)^2} }{ 1 + \sqrt{1 - k(q^4)^2} } = \frac{ (\sqrt{1 + r} - \sqrt{ 2 \sqrt{r} })^2 }{ (\sqrt{1 + r} + \sqrt{ 2 \sqrt{r} })^2 } = \frac{ (1 - \sqrt{r})^4 }{ (\sqrt{1 + r} + \sqrt{ 2 \sqrt{r} })^4 }.
\end{align}
By substituting \eqref{eq:hatrho_Phi} and \eqref{eq:hatrho_P1} for \eqref{eq:kq4}, we have \eqref{eq:hatrho_P1Phi}. By substituting \eqref{eq:hatrho_Phi} and \eqref{eq:hatrho_P2} for \eqref{eq:kq8}, we have the first equality in \eqref{eq:hatrho_P2Phi}. By using \eqref{eq:hatrho_Phi} and Theorem \ref{the:q2}, we have
\begin{equation}
\label{eq:hatrho_Phi2}
\hat{\rho}\left[\Phi_{\frac{ 2 \sqrt{r} }{ 1 + r }}\right] = \hat{\rho}[\Phi_r]^2.
\end{equation}
By replacing $r$ with $\frac{ 2 \sqrt{r} }{ 1 + r }$ in \eqref{eq:hatrho_P1Phi} and using \eqref{eq:hatrho_Phi2}, we have the second equality in \eqref{eq:hatrho_P2Phi}. \eqref{eq:hatrho_R01_P1} follows from \eqref{eq:hatrho_P1} and \eqref{eq:hatrho_R01}.
\end{proof}

\subsection{ Basis functions }
\label{sec:Phi_basis}
We consider the basis functions associated with $\Phi_r(u)$. We have some concrete expressions as follows:
\begin{lemma}
\label{lem:Phi_basis}
Let us use the notation of Lemma \ref{lem:Phi}. Let us introduce
\begin{equation}
\label{eq:basis_Phi_cos}
\basis{\Phi}_{r, n}(z) \coloneqq \frac{1}{\pi} \int_0^\pi \enum^{- z \Phi_r(\cos\theta)} \cos(n \theta) \del \theta = \frac{1}{\pi} \int_0^\pi \enum^{- z \dn(\EK(k) \theta / \pi, k) } \cos(n \theta) \del \theta, z \in \mathbb{C}, n \in \intge{0}.
\end{equation}
Let $\arcdn(\tau, k) \coloneqq \int_\tau^1 \frac{ \del t }{ \sqrt{ (1 - t^2) ( t^2 - r^2) } }$, $r \le \tau \le 1$, which satisfies $\arcdn(r, k) = \EK(k)$, $\arcdn(1, k) = 0$, and $\dn(\arcdn(\tau, k), k) = \tau$ \cite[Sec.~22.15]{DLMF}. Let $\EE(k)$ be the complete elliptic integral of the second kind with the modulus $k$. Then one has
\begin{align}
\label{eq:basis_Phi_exp}
&\enum^{ - z \dn( \EK(k) \theta / \pi, k ) } = \basis{\Phi}_{r, 0}(z) + 2 \sum_{n = 1}^\infty \basis{\Phi}_{r, n}(z) \cos(n \theta), \theta \in \imset_{\log(q^{-1})}, z \in \mathbb{C}, \\
\label{eq:basis_Phi_ineq_hatrho}
&\lvert \basis{\Phi}_{r, n}(x) \rvert \le q^n, 0 < x < \infty, n \in \intge{0}, \\
\label{eq:W0_Phi}
&W_{\Phi_r, 0}(\tau) = \frac{ \EK(k) - \arcdn(\tau, k) }{ \EK(k) } = \int_r^\tau \frac{ \del t }{ \EK(k) \sqrt{ (1 - t^2) ( t^2 - r^2) } }, r \le \tau \le 1, \\
\label{eq:basis_Phi_dt}
&\basis{\Phi}_{r, n}(z) = \int_r^1 \enum^{ -z \tau } \frac{ \cos( n \pi \arcdn(\tau, k) / \EK(k) ) }{ \EK(k) \sqrt{ (1 - \tau^2) ( \tau^2 - r^2 ) } } \del \tau, z \in \mathbb{C}, n \in \intge{0}, \\
\label{eq:basis_Phi_sin}
&\basis{\Phi}_{r, n}(z) = - \frac{z}{n \pi} \int_r^1 \enum^{ -z \tau } \sin( n \pi \arcdn(\tau, k) / \EK(k) ) \del \tau, z \in \mathbb{C}, n \in \intge{1}, \\
\label{eq:basis_Phi_deriv10}
&\basis{\Phi}_{r, n}'(0) = - \frac{ \pi }{ \EK(k) } \frac{ 1 }{ q^{-n} + q^n }, n \in \intge{0}, \\
\label{eq:basis_Phi_deriv20}
&\basis{\Phi}_{r, 0}''(0) = \frac{ \EE(k) }{ \EK(k) }, \basis{\Phi}_{r, n}''(0) = \frac{ \pi^2 }{ \EK(k)^2 } \frac{ n }{ q^{-n} - q^n }, n \in \intge{1}, \\
\label{eq:V_Phi}
&V_{\Phi_r} = (\EK(k) \EE(k) - (\pi / 2)^2) / \EK(k)^2.
\end{align}
\end{lemma}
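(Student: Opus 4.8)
The plan is to recognize that $\basis{\Phi}_{r,n}$ is exactly the basis function $\basis{\vt}_{r,n}$ of Lemma \ref{lem:basis} and Lemma \ref{lem:basis_chebyshev} for the choice $\vt_r = \Phi_r$, and then specialize every generic identity there using the relations of Lemma \ref{lem:Phi_eq}---chiefly $\Phi_r(\cos z) = \dn(\EK(k) z/\pi, k)$ from \eqref{eq:Phi_cos}, $\rhoana[\Phi_r] = q^{-1}$ from \eqref{eq:Phi_ana}, the Chebyshev series \eqref{eq:Phi_chebyshev}, and $\hat{\rho}[\Phi_r] = q^{-1}$ from Theorem \ref{the:Phi}. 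With these, the second equality in \eqref{eq:basis_Phi_cos} together with \eqref{eq:basis_Phi_exp} is just \eqref{eq:basis} and \eqref{eq:exp_vt_fourier} rewritten via \eqref{eq:Phi_cos} (using $\imset_{\log(\rhoana[\Phi_r])} = \imset_{\log(q^{-1})}$), and \eqref{eq:basis_Phi_ineq_hatrho} is \eqref{eq:basis_ineq_hatrho} with $\hat{\rho}[\Phi_r]^{-n} = q^n$.

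For \eqref{eq:W0_Phi} I would first invert $\Phi_r$ on $[r,1]$. From \eqref{eq:Phi_cos}, $\Phi_r(u) = \dn(\EK(k)\arccos(u)/\pi, k)$ for $u \in [-1,1]$; since $\dn(\arcdn(\tau,k),k) = \tau$ and $\arcdn(\tau,k)/\EK(k) \in [0,1]$, this gives $\Phi_r^{-1}(\tau) = \cos(\pi\arcdn(\tau,k)/\EK(k))$ for $r \le \tau \le 1$. Substituting into the definition \eqref{eq:basis_W0} of $W_{\vt_r,0}$ and using $\arccos(-\cos\phi) = \pi - \phi$ for $\phi \in [0,\pi]$ yields $W_{\Phi_r,0}(\tau) = (\EK(k) - \arcdn(\tau,k))/\EK(k)$; the integral form then follows from $\EK(k) = \arcdn(r,k)$ and the definition of $\arcdn$. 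Differentiating shows $W_{\Phi_r,0}$ has density $1/(\EK(k)\sqrt{(1-\tau^2)(\tau^2-r^2)})$ on $(r,1)$. For \eqref{eq:basis_Phi_dt} I combine this with $W_{\vt_r,n}(\tau) = \frac{(-1)^n}{n\pi}\sin(n\pi W_{\vt_r,0}(\tau))$ from \eqref{eq:basis_Wge1}, so $\del W_{\Phi_r,n}(\tau) = (-1)^n\cos(n\pi W_{\Phi_r,0}(\tau))\,\del W_{\Phi_r,0}(\tau)$; simplifying $\cos(n\pi W_{\Phi_r,0}(\tau)) = (-1)^n\cos(n\pi\arcdn(\tau,k)/\EK(k))$ and substituting into \eqref{eq:basis_fl} gives \eqref{eq:basis_Phi_dt} for $n \in \intge{1}$, while $n=0$ is immediate from the density. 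Equation \eqref{eq:basis_Phi_sin} is \eqref{eq:basis_sin} with $1 - W_{\Phi_r,0}(\tau) = \arcdn(\tau,k)/\EK(k)$.

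The derivative values come from \eqref{eq:basis_deriv_cos} at $z=0$: $\basis{\Phi}_{r,n}^{(m)}(0) = \frac{(-1)^m}{\pi}\int_0^\pi \dn(\EK(k)\theta/\pi, k)^m\cos(n\theta)\,\del\theta$. For $m=1$ this is minus the $n$-th Fourier cosine coefficient of $\dn(\EK(k)\theta/\pi,k)$, read off from Jacobi's series \eqref{eq:dn_fourier} (equivalently, minus the $n$-th Chebyshev coefficient of $\Phi_r$ in \eqref{eq:Phi_chebyshev}), giving \eqref{eq:basis_Phi_deriv10}. For $m=2$ the constant term is $\frac{1}{\EK(k)}\int_0^{\EK(k)}\dn^2(u,k)\,\del u = \EE(k)/\EK(k)$ by the standard representation of the complete elliptic integral of the second kind, and the coefficients with $n \ge 1$ come from the Fourier expansion $\dn^2(\EK(k)\theta/\pi,k) = \EE(k)/\EK(k) + \frac{2\pi^2}{\EK(k)^2}\sum_{n=1}^\infty \frac{n}{q^{-n}-q^n}\cos(n\theta)$ (equivalently, the termwise derivative of the Fourier series of Jacobi's zeta function), yielding \eqref{eq:basis_Phi_deriv20}. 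Finally \eqref{eq:V_Phi} is \eqref{eq:V_vt_eq}, $V_{\Phi_r} = \basis{\Phi}_{r,0}''(0) - \basis{\Phi}_{r,0}'(0)^2$, evaluated with the two values just obtained. The only input not already in the excerpt is the Fourier series of $\dn^2$ (equivalently of Jacobi's zeta), so the main obstacle is locating the right reference for it and keeping the normalizations of the Fourier and Chebyshev coefficients consistent across the several uses; the rest is routine bookkeeping with the cited results.
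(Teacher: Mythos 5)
Your proposal is correct and follows essentially the same route as the paper: specialize Lemmas \ref{lem:basis} and \ref{lem:basis_chebyshev} to $\vt_r = \Phi_r$ via \eqref{eq:Phi_cos}, \eqref{eq:Phi_ana}, and \eqref{eq:hatrho_Phi}, invert $\Phi_r$ on $[r,1]$ through $\arccos(\Phi_r^{-1}(\tau)) = \pi\arcdn(\tau,k)/\EK(k)$ to get \eqref{eq:W0_Phi}--\eqref{eq:basis_Phi_sin}, and read the derivative values at $0$ off Jacobi's Fourier series for $\dn$ and $\dn^2$. The Fourier expansion of $\dn^2$ that you flag as the one external input is exactly the ingredient the paper invokes (its equation \eqref{eq:dn2_fourier}, cited from Jacobi and the DLMF), with the same normalization you state.
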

\begin{proof}
The second equality in \eqref{eq:basis_Phi_cos} follows from \eqref{eq:Phi_cos}. \eqref{eq:basis_Phi_exp} follows from \eqref{eq:exp_vt_fourier}, \eqref{eq:Phi_cos}, and \eqref{eq:hatrho_Phi}. \eqref{eq:basis_Phi_ineq_hatrho} follows from \eqref{eq:basis_ineq_hatrho} and \eqref{eq:hatrho_Phi}. Let $\arccos(u)$ be the principal value of the inverse cosine function for $-1 \le u \le 1$. By setting $u = \Phi_r^{-1}(\tau)$ for $r \le \tau \le 1$ , we have $\tau = \Phi_r(u) = \dn(\EK(k) \arccos(u) / \pi, k)$ for $-1 \le u \le 1$ by \eqref{eq:Phi}. Then we have $\arccos(\Phi_r^{-1}(\tau)) = \pi \arcdn(\tau, k) / \EK(k)$ for $r \le \tau \le 1$. With this equality and \eqref{eq:basis_W0}, we have \eqref{eq:W0_Phi}. \eqref{eq:basis_Phi_dt} follows from \eqref{eq:W0_Phi}, \eqref{eq:basis_Wge1}, and \eqref{eq:basis_fl}. \eqref{eq:basis_Phi_sin} follows from \eqref{eq:basis_sin} and \eqref{eq:W0_Phi}. 

\eqref{eq:basis_Phi_deriv10} follows from $\basis{\Phi}_{r, n}'(0) = - \frac{1}{\pi} \int_0^\pi \dn(\EK(k) \theta / \pi, k) \cos(n \theta) \del \theta$ and \eqref{eq:dn_fourier}.

\eqref{eq:basis_Phi_deriv20} follows from $\basis{\Phi}_{r, n}''(0) = \frac{1}{\pi} \int_0^\pi \dn(\EK(k) \theta / \pi, k)^2 \cos(n \theta) \del \theta$ and Jacobi's Fourier series representation of $\dn^2$ function \cite[Eq.~42.4]{Jacobi1829}\cite[Eq. 22.11.13]{DLMF}
\begin{equation}
\label{eq:dn2_fourier}
\dn(\EK(k) \theta / \pi, k)^2 = \frac{ \EE(k) }{ \EK(k) } + \frac{ 2 \pi^2 }{ \EK(k)^2 } \sum_{n = 1}^\infty \frac{ n \cos(n \theta) }{ q^{-n} - q^n }, \theta \in \imset_{\log(q^{-1})}.
\end{equation}
\eqref{eq:V_Phi} follows from \eqref{eq:V_vt_eq}, \eqref{eq:basis_Phi_deriv10}, and \eqref{eq:basis_Phi_deriv20}.
\end{proof}
We can show that the basis functions $\basis{\Phi}_{r, n}(z)$ are eigenfunctions of a fourth order differential operator as follows:
\begin{theorem}
\label{the:L}
Let $0 < r < 1$ and $z \in \mathbb{C}$. Let $\mathcal{D}_r$ be a fourth order differential operator
\begin{equation}
\label{eq:L}
\mathcal{D}_r \coloneqq z^2 \frac{ \del^4 }{ \del z^4 } + 2 z \frac{ \del^3 }{ \del z^3 } - (1 + r^2) z^2 \frac{ \del^2 }{ \del z^2 } - (1 + r^2) z \frac{ \del }{ \del z } + r^2 z^2 = z \left( \frac{ \del^2 }{ \del z^2 } z \frac{ \del^2 }{ \del z^2 } - \frac{ \del }{ \del z } (1 + r^2) z \frac{ \del }{ \del z } + r^2 z \right).
\end{equation}
Let us use the notation of Lemma \ref{lem:Phi_basis}. Then one has
\begin{equation}
\label{eq:L_eigen}
\mathcal{D}_r \basis{\Phi}_{r, n}(z) = \frac{ n^2 \pi^2 }{ \EK(k)^2 } \basis{\Phi}_{r, n}(z), n \in \intge{0}.
\end{equation}
\end{theorem}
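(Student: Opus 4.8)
The plan is to exploit the pointwise identity $\mathcal{D}_r \enum^{-z\Phi_r(\cos\theta)} = -\frac{\pi^2}{\EK(k)^2}\frac{\partial^2}{\partial\theta^2}\enum^{-z\Phi_r(\cos\theta)}$ and then integrate against $\cos(n\theta)$ over $[0,\pi]$, transferring the two $\theta$-derivatives onto $\cos(n\theta)$ by integration by parts.

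First I would set $\phi(\theta)\coloneqq\dn(\EK(k)\theta/\pi,k)=\Phi_r(\cos\theta)$ (by \eqref{eq:Phi_cos}). From \eqref{eq:ddn} together with the identities $k^2\sn^2+\dn^2=1$ and $k^2\cn^2+r^2=\dn^2$ in \eqref{eq:dn2}, one gets $(\del\dn/\del v)^2=k^4\sn^2\cn^2=(1-\dn^2)(\dn^2-r^2)$, hence
\[
\phi'(\theta)^2=\frac{\EK(k)^2}{\pi^2}\bigl(1-\phi(\theta)^2\bigr)\bigl(\phi(\theta)^2-r^2\bigr),\qquad \phi''(\theta)=\frac{\EK(k)^2}{\pi^2}\phi(\theta)\bigl(1+r^2-2\phi(\theta)^2\bigr),
\]
the second relation following by differentiating the first.

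Next, compute $\frac{\partial^2}{\partial\theta^2}\enum^{-z\phi(\theta)}=\bigl(z^2\phi'(\theta)^2-z\phi''(\theta)\bigr)\enum^{-z\phi(\theta)}$, substitute the two ODEs, and use the elementary fact $\phi(\theta)^m\enum^{-z\phi(\theta)}=(-1)^m\frac{\partial^m}{\partial z^m}\enum^{-z\phi(\theta)}$ for $m=1,2,3,4$ to rewrite each power $\phi^m$ as a $z$-derivative. Matching the resulting coefficients of $z^2\partial_z^4$, $z\partial_z^3$, $z^2\partial_z^2$, $z\partial_z$, and $z^2$ against the expanded form of $\mathcal{D}_r$ in \eqref{eq:L} gives the identity $\mathcal{D}_r\enum^{-z\phi(\theta)}=-\frac{\pi^2}{\EK(k)^2}\frac{\partial^2}{\partial\theta^2}\enum^{-z\phi(\theta)}$. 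This algebraic bookkeeping is the step most prone to sign slips and is the main thing to get right; verifying that the factorized and expanded forms of $\mathcal{D}_r$ in \eqref{eq:L} agree is a routine aside.

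Finally, apply $\mathcal{D}_r$ to $\basis{\Phi}_{r,n}(z)=\frac{1}{\pi}\int_0^\pi\enum^{-z\phi(\theta)}\cos(n\theta)\,\del\theta$, differentiating under the integral sign, justified exactly as in Lemma \ref{lem:basis_chebyshev} via the Leibniz integral rule, since each $\partial_z^m F(\theta,z)$ with $F(\theta,z)=\enum^{-z\phi(\theta)}\cos(n\theta)$ is continuous on $[0,\pi]\times\mathbb{C}$. Then $\mathcal{D}_r\basis{\Phi}_{r,n}(z)=-\frac{\pi}{\EK(k)^2}\int_0^\pi\bigl(\frac{\partial^2}{\partial\theta^2}\enum^{-z\phi(\theta)}\bigr)\cos(n\theta)\,\del\theta$. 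Integrating by parts twice in $\theta$, the boundary terms involving $\frac{\partial}{\partial\theta}\enum^{-z\phi(\theta)}=-z\phi'(\theta)\enum^{-z\phi(\theta)}$ vanish because $\phi'(0)=\phi'(\pi)=0$ by \eqref{eq:dnvalue}, and the boundary terms involving $\enum^{-z\phi(\theta)}$ itself vanish because $\sin(0)=\sin(n\pi)=0$; what remains is $\int_0^\pi\bigl(\frac{\partial^2}{\partial\theta^2}\enum^{-z\phi(\theta)}\bigr)\cos(n\theta)\,\del\theta=-n^2\int_0^\pi\enum^{-z\phi(\theta)}\cos(n\theta)\,\del\theta=-n^2\pi\basis{\Phi}_{r,n}(z)$. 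Combining gives $\mathcal{D}_r\basis{\Phi}_{r,n}(z)=\frac{n^2\pi^2}{\EK(k)^2}\basis{\Phi}_{r,n}(z)$ for all $n\in\intge{0}$, with $n=0$ reducing to the trivial $0=0$.
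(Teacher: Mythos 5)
Your proof is correct and is essentially the paper's argument run in the opposite order: the paper first integrates by parts on $\basis{\Phi}_{r,n}$ (pulling out the factor $n^{-2}$) and then invokes the dn differential equations \eqref{eq:dn_de} together with \eqref{eq:basis_deriv_cos}, whereas you first establish the pointwise intertwining identity $\mathcal{D}_r\enum^{-z\phi(\theta)}=-\frac{\pi^2}{\EK(k)^2}\frac{\partial^2}{\partial\theta^2}\enum^{-z\phi(\theta)}$ and then integrate by parts against $\cos(n\theta)$, using the same ingredients (the ODEs for $\dn$, the boundary conditions $\phi'(0)=\phi'(\pi)=0$ from \eqref{eq:dnvalue}, and Leibniz differentiation under the integral). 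A minor advantage of your ordering is that $n=0$ is handled uniformly, while the paper's first step divides by $n$ and so must treat $n=0$ separately.
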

\begin{proof}
Let $D(\theta) \coloneqq \dn(\EK(k) \theta / \pi, k)$. By \eqref{eq:dnvalue}, we have $D'(0) = D'(\pi) = 0$. Then by applying integration by parts to \eqref{eq:basis_Phi_cos}, we have
\begin{equation}
\label{eq:basis_Phi_cos2}
\basis{\Phi}_{r, n}(z)
= \frac{z}{n \pi} \int_0^\pi D'(\theta) \enum^{ - z D(\theta) } \sin(n \theta) \del \theta = \frac{z}{n^2 \pi} \int_0^\pi \left( D''(\theta) - z D'(\theta)^2 \right) \enum^{ - z D(\theta) } \cos(n \theta) \del \theta, n \in \intge{1}.
\end{equation}
By using the differential equations satisfying $\dn$ function~\cite[Sec.~22.13]{DLMF}
\begin{subequations}
\label{eq:dn_de}
\begin{align}
&D'(\theta)^2 = (\EK(k) / \pi)^2 ( 1 - D(\theta)^2 ) ( D(\theta)^2 - r^2 ), \theta \in \mathbb{R}, \\
&D''(\theta) = (\EK(k) / \pi)^2 \left( (1 + r^2) D(\theta) - 2 D(\theta)^3 \right), \theta \in \mathbb{R},
\end{align}
\end{subequations}
\eqref{eq:basis_Phi_cos2} is expressed as
\begin{align}
\label{eq:basis_Phi_cos2_dn}
&\basis{\Phi}_{r, n}(z) = \frac{ \EK(k)^2 z }{n^2 \pi^3} \int_0^\pi \left( (1 + r^2) D(\theta) - 2 D(\theta)^3 - z ( 1 - D(\theta)^2 ) ( D(\theta)^2 - r^2 ) \right) \enum^{ - z D(\theta) } \cos(n \theta) \del \theta, n \in \intge{1}.
\end{align}
By using \eqref{eq:basis_deriv_cos} and \eqref{eq:basis_Phi_cos2_dn}, we have \eqref{eq:L_eigen} for $n \in \intge{1}$.
By using \eqref{eq:basis_deriv_cos} and \eqref{eq:dn_de}, we have
\begin{equation}
\mathcal{D}_r \basis{\Phi}_{r, 0}(z) = \frac{ \pi }{ \EK(k)^2 } \int_0^\pi \left( - z^2 D'(\theta)^2 + z D''(\theta) \right) \enum^{ - z D(\theta) } \del \theta.
\end{equation}
With this equality and $\int_0^\pi D''(\theta) \enum^{ - z D(\theta) } \del \theta = z \int_0^\pi D'(\theta)^2 \enum^{ - z D(\theta) } \del \theta$, we have $\mathcal{D}_r \basis{\Phi}_{r, 0}(z) = 0$.
\end{proof}

It is known that a differential operator of the form
\begin{equation}
\frac{1}{w(x)} \left( \frac{ \del^2 }{ \del x^2 } p_2(x) \frac{ \del^2 }{ \del x^2 } - \frac{ \del }{ \del x } p_1(x) \frac{ \del }{ \del x }
+ p_0(x) \right)
\end{equation}
can be symmetric with respect to the inner product with the weight function $w(x)$ by imposing some conditions on $w, p_0, p_1, p_2,$ and the boundary conditions \cite[Sec. 1]{Weidmann1987}. The operator \eqref{eq:L} has this form and we have $w(x) = 1 / x$. For $n \in \intge{1}$, we can show the orthogonality of $\basis{\Phi}_{r, n}(x)$ with the weight function $1 / x$ as follows:
\begin{theorem}
\label{the:basis_Phi_orthogonal}
Let us use the notation of Lemma \ref{lem:Phi_basis}. Then one has
\begin{equation}
\label{eq:basis_Phi_orthogonal}
\int_0^\infty \basis{\Phi}_{r, m}(x) \basis{\Phi}_{r, n}(x) \frac{1}{x} \del x =
\begin{cases}
0, & m \neq n, \\
\frac{1}{n} \frac{ 1 }{ q^{-2n} - q^{2n} }, & m = n,
\end{cases}
m \in \intge{1}, n \in \intge{1}.
\end{equation}
\end{theorem}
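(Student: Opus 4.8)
The plan is to compute the inner product $\int_0^\infty \basis{\Phi}_{r, m}(x) \basis{\Phi}_{r, n}(x) x^{-1} \del x$ by using the integral representation \eqref{eq:basis_Phi_sin}, which writes each factor as a Laplace transform of $\sin(\cdot)$. Concretely, from \eqref{eq:basis_Phi_sin} we have, for $n \in \intge{1}$,
\begin{equation*}
\basis{\Phi}_{r, n}(x) = -\frac{x}{n\pi}\int_r^1 \enum^{-x\tau}\sin(n\pi\arcdn(\tau,k)/\EK(k))\,\del\tau,
\end{equation*}
so that $\basis{\Phi}_{r, n}(x)/x$ is a genuine Laplace transform and the factor $x^{-1}$ in the weight exactly cancels one power of $x$. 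Substituting both representations and exchanging the order of integration (justified by absolute convergence, since $\sin$ is bounded and $r>0$ gives exponential decay), the $x$-integral becomes $\int_0^\infty x\,\enum^{-x(\sigma+\tau)}\,\del x = (\sigma+\tau)^{-2}$. Thus the left side reduces to a double integral over $[r,1]^2$ of $\sin(m\pi\arcdn(\sigma,k)/\EK(k))\sin(n\pi\arcdn(\tau,k)/\EK(k))(\sigma+\tau)^{-2}$ divided by $mn\pi^2$.

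Next I would change variables $\sigma \mapsto \theta = \pi\arcdn(\sigma,k)/\EK(k)$ and similarly for $\tau$. Since $\arcdn(\dn(\EK(k)\theta/\pi,k),k) = \EK(k)\theta/\pi$ this maps $[r,1]$ to $[0,\pi]$, with $\sigma = \dn(\EK(k)\theta/\pi,k) = D(\theta)$ (using the notation $D(\theta) \coloneqq \dn(\EK(k)\theta/\pi,k)$ from the proof of Theorem~\ref{the:L}), and $\del\sigma = D'(\theta)\,\del\theta$. This turns the expression into $\frac{1}{mn\pi^2}\int_0^\pi\int_0^\pi \frac{\sin(m\phi)\sin(n\theta) D'(\phi) D'(\theta)}{(D(\phi)+D(\theta))^2}\,\del\phi\,\del\theta$. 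Rather than evaluate this directly, the cleaner route is to invoke the self-adjointness structure already exhibited: the operator $\mathcal{D}_r$ of Theorem~\ref{the:L} has the symmetric form $\frac{1}{w(x)}(\frac{\del^2}{\del x^2}p_2\frac{\del^2}{\del x^2} - \frac{\del}{\del x}p_1\frac{\del}{\del x} + p_0)$ with $w(x) = 1/x$, and $\basis{\Phi}_{r,n}$ are its eigenfunctions with distinct eigenvalues $n^2\pi^2/\EK(k)^2$. So orthogonality for $m \neq n$ follows from a standard Green's-identity argument, provided the boundary terms at $0$ and $\infty$ vanish; the decay bound $|\basis{\Phi}_{r,n}^{(j)}(x)| \le \frac{2}{\pi}\enum^{-rx}$ from \eqref{eq:basis_deriv_ineq} and the behavior $\basis{\Phi}_{r,n}(x) = O(x)$ near $0$ (from \eqref{eq:basis_ineq1}) control those boundary terms.

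For the normalization $m = n$, I would compute $\int_0^\infty \basis{\Phi}_{r,n}(x)^2 x^{-1}\,\del x$ explicitly. Writing one factor via \eqref{eq:basis_Phi_sin} and the other via \eqref{eq:basis_Phi_dt} (the $\del W$-form), the $x^{-1}$ cancels the $x$ from \eqref{eq:basis_Phi_sin}, leaving $\int_0^\infty \enum^{-x\tau}\,\del x = 1/\tau$ after the $x$-integration. One then uses $\int_r^1 \sin(n\pi\arcdn(\sigma,k)/\EK(k))\,\enum^{-\sigma(\cdot)}\,\del\sigma$ paired against the density in \eqref{eq:basis_Phi_dt}; after the substitution $\theta = \pi\arcdn/\EK(k)$ this becomes a single integral $\frac{1}{n\pi}\int_0^\pi\int_0^\pi\frac{\sin(n\phi)\cos(n\theta)D'(\phi)}{D(\phi)+D(\theta)}\,\del\theta\,\del\phi$ or similar, which should be evaluable using the Fourier expansion \eqref{eq:basis_Phi_exp} of $\enum^{-zD(\theta)}$ together with Parseval's identity. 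Indeed the slickest approach to the whole theorem may be: expand $\enum^{-x D(\theta)} = \basis{\Phi}_{r,0}(x) + 2\sum_{n\ge1}\basis{\Phi}_{r,n}(x)\cos(n\theta)$, and more usefully note from \eqref{eq:basis_Phi_sin} and \eqref{eq:W0_Phi} that $\basis{\Phi}_{r,n}(x)$ are, up to the factor $x/(n\pi)$, the sine-Fourier coefficients of $\enum^{-x D(\theta)}$ in the variable $\theta$; then $\int_0^\infty\basis{\Phi}_{r,m}(x)\basis{\Phi}_{r,n}(x)x^{-1}\del x$ becomes $\frac{1}{mn\pi^2}\int_0^\infty x\left(\int_0^\pi\enum^{-xD(\phi)}\del_\phi(\text{sine kernel})\right)(\cdots)\del x$; the $x$-integral $\int_0^\infty x\,\enum^{-x(D(\phi)+D(\theta))}\del x = (D(\phi)+D(\theta))^{-2}$ produces the kernel $(D(\phi)+D(\theta))^{-2}$, and one recognizes $\partial_\phi\partial_\theta \log(D(\phi)+D(\theta)) = -D'(\phi)D'(\theta)(D(\phi)+D(\theta))^{-2}$, reducing everything to integrals of $\log(D(\phi)+D(\theta))$ against $\sin(m\phi)\sin(n\theta)$ that can be evaluated via the known Fourier series of $\log\dn$. \textbf{The main obstacle} I anticipate is this last evaluation — producing the closed form $\frac{1}{n}\frac{1}{q^{-2n}-q^{2n}}$ requires an explicit Fourier/$q$-series identity for $\log(\dn(\EK(k)\phi/\pi,k) + \dn(\EK(k)\theta/\pi,k))$ (or an equivalent addition-theorem manipulation), and checking convergence and the interchange of sum and integral there is the delicate point; the orthogonality for $m\neq n$ and the vanishing of boundary terms are comparatively routine given the decay estimate \eqref{eq:basis_deriv_ineq}.
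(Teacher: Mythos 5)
Your argument splits naturally into two halves, and they fare differently. The orthogonality for $m \neq n$ is essentially the paper's own proof: Green's identity for the formally self-adjoint operator $\mathcal{D}_r$ with weight $1/x$, with the boundary terms at $0$ and $\infty$ killed by $\basis{\Phi}_{r,n}(0)=0$ together with the derivative bounds \eqref{eq:basis_deriv_ineq}; this part of your plan is sound and matches the paper.

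The normalization $m=n$ is where you have a genuine gap, and you flag it yourself. Your route reduces $\langle \basis{\Phi}_{r,n},\basis{\Phi}_{r,n}\rangle$ to a double integral over $[0,\pi]^2$ with kernel $(D(\phi)+D(\theta))^{-2}$ and then requires the double cosine-Fourier coefficients of $\log\bigl(\dn(\EK(k)\phi/\pi,k)+\dn(\EK(k)\theta/\pi,k)\bigr)$. No such two-variable $q$-series identity is established anywhere in the paper (or in your proposal), and deriving one is substantially harder than anything else in the argument; as written, the closed form $\frac{1}{n}\frac{1}{q^{-2n}-q^{2n}}$ is not reached. The paper avoids this entirely with a trick you did not find: setting $\theta=0$ in the expansion \eqref{eq:basis_Phi_exp} and applying $\mathcal{D}_r$ term by term, the eigenvalue relation \eqref{eq:L_eigen} gives $-k^2 z\enum^{-z} = 2\sum_{m\ge 1}\frac{m^2\pi^2}{\EK(k)^2}\basis{\Phi}_{r,m}(z)$. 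Pairing this with $\basis{\Phi}_{r,n}$ in the weighted inner product and using the already-proved orthogonality collapses the sum to the single term $m=n$, so the squared norm equals a constant times $\int_0^\infty \enum^{-x}\basis{\Phi}_{r,n}(x)\,\del x = \frac{1}{\pi}\int_0^\pi\frac{\cos(n\theta)}{1+\dn(\EK(k)\theta/\pi,k)}\,\del\theta$, a \emph{single} integral that is then evaluated by the one-variable Fourier series \eqref{eq:inv_1_dn_fourier} (itself obtained from the classical $\dn^2$ series via a Landen transformation). If you want to salvage your own route, you would need to either prove the two-variable logarithmic expansion or, more realistically, replace that step with something like the paper's reduction to a one-variable Fourier integral.
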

\begin{proof}
Let $\langle f, g \rangle \coloneqq \int_0^\infty f(x) g(x) \frac{ 1 }{ x } \del x$. Let $m \in \intge{1}$, $n \in \intge{1}$, $F \coloneqq \basis{\Phi}_{r, m}$, and $G \coloneqq \basis{\Phi}_{r, n}$. By repeating integration by parts, we have
\begin{align}
\label{eq:L_green}
&\langle \mathcal{D}_r F, G \rangle - \langle F, \mathcal{D}_r G \rangle = \left[ x ( F'''(x) G(x) - F(x) G'''(x) ) \right. \\ \nonumber
&\qquad \left. + F''(x) G(x) - F(x) G''(x) + x ( F'(x) G''(x) - F''(x) G'(x) ) + (1 + r^2) x ( F(x) G'(x) - F'(x) G(x) ) \right]_0^\infty.
\end{align}
By \eqref{eq:basis_0_dWn}, we have $\basis{\Phi}_{r, n}(0) = 0$. By \eqref{eq:basis_deriv_ineq}, we have $\lvert \basis{\Phi}_{r, n}^{(l)}(0) \rvert \le 2/\pi$, $\lim\limits_{x \to \infty} \basis{\Phi}_{r, n}^{(l)}(x) = 0$, and $\lim\limits_{x \to \infty} x \basis{\Phi}_{r, n}^{(l)}(x) = 0$ for  $l \in \intge{0}$. Thus the right-hand side of \eqref{eq:L_green} becomes 0 and the equality $\langle \mathcal{D}_r \basis{\Phi}_{r, m}, \basis{\Phi}_{r, n} \rangle = \langle \basis{\Phi}_{r, m}, \mathcal{D}_r \basis{\Phi}_{r, n} \rangle$ holds. By substituting \eqref{eq:L_eigen} for the equality, we have $(m^2 - n^2) \langle \basis{\Phi}_{r, m}, \basis{\Phi}_{r, n} \rangle = 0$, which indicates $\langle \basis{\Phi}_{r, m}, \basis{\Phi}_{r, n} \rangle = 0$ for $m \ne n$. By substituting $\theta = 0$ for \eqref{eq:basis_Phi_exp} followed by operating $\mathcal{D}_r$ and using \eqref{eq:L_eigen}, we have
\begin{equation}
- k^2 z \enum^{- z} = 2 \sum_{m = 1}^\infty \frac{ m^2 \pi^2 }{ \EK(k)^2 } \basis{\Phi}_{r, m}(z), z \in \mathbb{C}.
\end{equation}
With this equality, $\langle \basis{\Phi}_{r, m}, \basis{\Phi}_{r, n} \rangle = 0$ for $m \ne n$, and \eqref{eq:basis_Phi_cos}, we have
\begin{equation}
\label{eq:basis_Phi_ip}
\langle \basis{\Phi}_{r, n}, \basis{\Phi}_{r, n} \rangle = - \frac{ k^2 \EK(k)^2 }{ 2 n^2 \pi^3 } \int_0^\pi \frac{ \cos(n \theta) }{ 1 + \dn(\EK(k) \theta / \pi, k) } \del \theta, n \in \intge{1}.
\end{equation}
By substituting \eqref{eq:inv_1_dn_fourier} for \eqref{eq:basis_Phi_ip}, we have $\langle \basis{\Phi}_{r, n}, \basis{\Phi}_{r, n} \rangle = \frac{1}{n} \frac{ 1 }{ q^{-2n} - q^{2n} }$.
\end{proof}
\begin{corollary}
\label{cor:basis_Phi_integral_eq}
Let us use the notation of Lemma \ref{lem:Phi_basis}. For $z \in \mathbb{C}$ and $n \in \intge{1}$, one has
\begin{equation}
\label{eq:basis_Phi_integral_eq}
\basis{\Phi}_{r, n}(z) = n \frac{ q^{-2n} - q^{2n} }{ 2 } \int_0^\infty \left( \basis{\Phi}_{r, 0}(z + y) - \basis{\Phi}_{r, 0}(z)  \basis{\Phi}_{r, 0}(y) \right) \basis{\Phi}_{r, n}(y) \frac{1}{y} \del y.
\end{equation}
\end{corollary}
\begin{proof}
The statement follows from \eqref{eq:basis_add} for $n = 0$ and \eqref{eq:basis_Phi_orthogonal}.
\end{proof}

Kellogg showed that if real continuous functions on $[0, 1]$ form an orthonormal system on $[0, 1]$ and Chebyshev system on $(0, 1)$, the zeros of functions satisfy the following property called the interlacing property of zeros:
\begin{theorem}[Kellogg \cite{Kellogg1916}]
\label{the:Kellogg}
Let $\phi_i(u)$ satisfy
\begin{subequations}
\label{eq:kellogg_condition}
\begin{align}
\label{eq:kellogg_continuous}
&\text{$\phi_i(u)$ is a real continuous function on $[0, 1]$, $i \in \intge{0}$,} \\
\label{eq:kellogg_orthogonal}
&\int_0^1 \phi_i(u) \phi_j(u) \del u =
\begin{cases}
1, & i = j, \\
0, & i \ne j,
\end{cases}
i \in \intge{0}, j \in \intge{0}, \\
\label{eq:kellogg_chebyshev}
&\det( \phi_j(u_i) )_{i, j = 0}^n > 0, 0 < u_0 < \cdots < u_n < 1, n \in \intge{0}.
\end{align}
\end{subequations}
For $i \in \intge{0}$, $\phi_i(u)$ vanishes exactly $i$ times on $(0, 1)$ and changes sign at each zero. For $i \in \intge{1}$, $\phi_i(u)$ changes sign between two successive zeros of $\phi_{i + 1}(u)$ on $(0, 1)$.
\end{theorem}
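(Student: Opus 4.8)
The plan is to derive both conclusions from the two structural hypotheses: the orthonormality \eqref{eq:kellogg_orthogonal}, and the Haar (Chebyshev) property \eqref{eq:kellogg_chebyshev}, which says that for each $m\in\intge{0}$ the space $U_m\coloneqq\operatorname{span}\{\phi_0,\dots,\phi_m\}$ is a Haar space of dimension $m+1$: every nonzero element of $U_m$ has at most $m$ zeros in $(0,1)$, and for any $m$ prescribed points of $(0,1)$ there is, up to a nonzero scalar, a unique element of $U_m$ vanishing precisely there, which then changes sign at each of them. (Taking $m=0$ in \eqref{eq:kellogg_chebyshev} also gives $\phi_0>0$ on $(0,1)$.)

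\emph{Step 1: $\phi_i$ has exactly $i$ sign changes in $(0,1)$ and no other zeros there.} The bound ``at most $i$'' is immediate because $\phi_i$ is a nonzero member of the Haar space $U_i$. For the lower bound, suppose $\phi_i$ changed sign at only $m\le i-1$ points $u_1<\cdots<u_m$ of $(0,1)$ (the case $m=0$ meaning $\phi_i$ has constant sign). Pick $g\in U_m$ vanishing exactly at $u_1,\dots,u_m$ (for $m=0$ take $g\coloneqq\phi_0$), with the scalar chosen so that $g$ and $\phi_i$ agree in sign on the first subinterval; since both alternate across the $u_j$, we get $\phi_i g\ge 0$ on $(0,1)$ with $\phi_i g\not\equiv 0$, hence $\int_0^1\phi_i g\,\del u>0$. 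But $g\in U_{i-1}$ and $\phi_i$ is orthogonal to $\phi_0,\dots,\phi_{i-1}$ by \eqref{eq:kellogg_orthogonal}, so that integral vanishes — a contradiction. Thus $\phi_i$ has exactly $i$ sign changes, and by the upper bound no further zeros, so it changes sign at each zero; for $i=0$ this reduces to $\phi_0>0$ on $(0,1)$.

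\emph{Step 2: interlacing.} Write $t_1<\cdots<t_i$ for the zeros of $\phi_i$ and $s_1<\cdots<s_{i+1}$ for those of $\phi_{i+1}$, all in $(0,1)$ and all sign changes by Step 1. Since there are exactly $i$ intervals $(s_k,s_{k+1})$ and $\phi_i$ has exactly $i$ zeros, it suffices to show each $(s_k,s_{k+1})$ contains a sign change of $\phi_i$: this forces one $t_j$ in each and none in $(0,s_1)$ or $(s_{i+1},1)$, which is the assertion. I would first prove that $\phi_i$ and $\phi_{i+1}$ have no common zero in $(0,1)$, and then run a Sturm-type comparison on the pencil $F_\theta\coloneqq\phi_i\cos\theta+\phi_{i+1}\sin\theta$, $\theta\in[0,\pi]$ (so $F_0=\phi_i$, $F_{\pi/2}=\phi_{i+1}$). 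Every $F_\theta$ lies in $U_{i+1}$ and is orthogonal to $U_{i-1}$, so the argument of Step 1 gives it at least $i$ sign changes while the Haar bound gives at most $i+1$; consequently the sign-change count can never drop by $2$, so as $\theta$ varies the (simple) zeros of $F_\theta$ move continuously, never collide inside $(0,1)$, and can enter or leave only through an endpoint. Granting in addition that each such zero depends monotonically on $\theta$, one follows the non-crossing zero curves from $\theta=0$ (where they sit at $t_1,\dots,t_i$) to $\theta=\pi/2$ (where they sit at $s_1,\dots,s_{i+1}$) together with the single newly entering zero, and reads off that the zero emanating from each $t_j$ is trapped in a fixed interval $(s_k,s_{k+1})$.

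\emph{Main obstacle.} Step 1 is routine. The entire difficulty lies in Step 2: the no-common-zero lemma has to be squeezed out of the Haar-plus-orthogonality hypotheses, and — more seriously — making the ``homotopy of zeros'' picture rigorous requires the \emph{monotone} dependence of each zero of $F_\theta$ on $\theta$, together with careful bookkeeping of through which endpoint the extra zero enters; this is the analogue, in Christoffel--Darboux / Wronskian-positivity terms, of the statement that $F_\theta$ and $F_{\theta+\pi/2}$ have strictly interlacing zeros. For orthogonal polynomials this is furnished for free by the three-term recurrence, but for a general Chebyshev system it must be obtained by establishing the appropriate sign-regularity of the reproducing kernel $K_i(u,v)=\sum_{j=0}^{i}\phi_j(u)\phi_j(v)$ from \eqref{eq:kellogg_chebyshev} and \eqref{eq:kellogg_orthogonal} directly. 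This is precisely the point at which Kellogg's original oscillation argument is most technical, and where I expect essentially all of the effort to be spent; a purely local sign-counting contradiction does not suffice, since every natural such attempt only produces a function of one sign change with vanishing integral, which is consistent.
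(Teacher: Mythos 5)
The paper does not prove this theorem: it is imported verbatim from Kellogg's 1916 paper and used only once (in the proof of Theorem \ref{the:basis_Phi_interlacing}), so there is no internal proof to compare yours against; your proposal has to stand on its own. Your Step 1 does stand: the upper bound on the number of zeros of $\phi_i$ comes from the Haar property of $U_i=\operatorname{span}\{\phi_0,\ldots,\phi_i\}$, the lower bound from orthogonality against a sign-matching element $g\in U_m$, and the auxiliary facts you invoke (that \eqref{eq:kellogg_chebyshev} makes each $U_m$ a Haar space, and that the element of $U_m$ vanishing at $m$ prescribed interior points is unique up to scale and changes sign at each of them) are standard consequences of the determinant condition. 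That half is complete and is essentially the classical argument.

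The interlacing half, however, is not proved. Your Step 2 rests on two statements you do not establish: that $\phi_i$ and $\phi_{i+1}$ have no common zero, and that the zeros of $F_\theta=\phi_i\cos\theta+\phi_{i+1}\sin\theta$ vary continuously and monotonically in $\theta$ without collisions. You flag the second yourself with ``granting in addition that each such zero depends monotonically on $\theta$,'' but that granted statement is exactly the interlacing of the zero sets of $F_\theta$ and $F_{\theta'}$ for nearby $\theta$, i.e.\ an infinitesimal form of the conclusion; as written the argument is circular rather than merely unfinished. There is also a structural problem: the $\phi_i$ are only continuous, so zeros of $F_\theta$ need not be isolated or simple and ``zero curves'' are not well defined without additional work. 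A non-circular route, closer to Kellogg's, needs no homotopy at all: if $s_k<s_{k+1}$ are consecutive zeros of $\phi_{i+1}$ and $\phi_i$ had no sign change in $(s_k,s_{k+1})$ (say $\phi_i>0$ and $\phi_{i+1}>0$ there), set $\lambda^*\coloneqq\max_{[s_k,s_{k+1}]}\phi_{i+1}/\phi_i$ and examine $g\coloneqq\phi_{i+1}-\lambda^*\phi_i\in U_{i+1}$: it is $\le 0$ on $[s_k,s_{k+1}]$ with an interior zero at which it does not change sign, yet by your Step 1 argument it already has at least $i$ sign changes elsewhere, so it attains the maximal zero count $i+1$ in the $(i+2)$-dimensional Haar space $U_{i+1}$ while failing to change sign at one zero --- contradicting the same Haar-space lemma you used in Step 1. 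One must still dispose of the possibility $\phi_i(s_k)=0$ or $\phi_i(s_{k+1})=0$, i.e.\ prove the no-common-zero lemma, and then a count of the $i$ zeros of $\phi_i$ against the $i$ gaps of $\phi_{i+1}$ finishes the proof. As submitted, the interlacing assertion remains a plan, not a proof.
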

Based on Kellogg's theorem, we can show the interlacing property of zeros of $\basis{\Phi}_{r, n}(x)$, $n \in \intge{1}$, on $(0, \infty)$ (Figure \ref{fig:Phi_basis}) as follows:
\begin{theorem}
\label{the:basis_Phi_interlacing}
Let us use the notation of Lemma \ref{lem:Phi_basis}. For $n \in \intge{1}$, $\basis{\Phi}_{r, n}(x)$ vanishes exactly $n - 1$ times on $(0, \infty)$ and changes sign at each zero. For $n \in \intge{2}$, $\basis{\Phi}_{r, n}(x)$ changes sign between two successive zeros of $\basis{\Phi}_{r, n + 1}(x)$ on $(0, \infty)$.
\end{theorem}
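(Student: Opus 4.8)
The plan is to deduce the statement from Kellogg's theorem (Theorem~\ref{the:Kellogg}) after transporting the half-line $(0,\infty)$ equipped with the weight $1/x$ onto the interval $(0,1)$ equipped with Lebesgue measure. First I would fix the increasing $C^1$ bijection $\psi\colon(0,1)\to(0,\infty)$, $\psi(u)\coloneqq u/(1-u)$, so that $\psi'(u)/\psi(u)=1/(u(1-u))>0$ on $(0,1)$ and, by the change of variable $x=\psi(u)$, $\int_0^\infty g(x)\,x^{-1}\,\del x=\int_0^1 g(\psi(u))\,(\psi'(u)/\psi(u))\,\del u$. Writing $c_n\coloneqq n^{-1}(q^{-2n}-q^{2n})^{-1}>0$ for the diagonal values appearing in Theorem~\ref{the:basis_Phi_orthogonal}, I would then define, for $i\in\intge0$,
\[
\phi_i(u)\coloneqq(-1)^{i+1}c_{i+1}^{-1/2}\sqrt{\psi'(u)/\psi(u)}\;\basis{\Phi}_{r,i+1}(\psi(u)),\qquad 0<u<1 .
\]
The idea is that squaring kills the sign $(-1)^{i+1}$ in the orthogonality computation while that same sign is exactly what converts the Chebyshev-system inequality \eqref{eq:basis_det_ineq} (which is stated for the functions $(-1)^j\basis{\vt}_{r,j}$) into Kellogg's hypothesis \eqref{eq:kellogg_chebyshev}.

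Next I would verify the three hypotheses of Theorem~\ref{the:Kellogg} for the family $\{\phi_i\}_{i\in\intge0}$. For \eqref{eq:kellogg_continuous}: each $\phi_i$ is real (all ingredients are real on the real axis, using that $\basis{\Phi}_{r,i+1}$ is real on $\mathbb{R}$ by \eqref{eq:basis_entire}) and extends continuously to $[0,1]$ with $\phi_i(0)=\phi_i(1)=0$; indeed near $u=0$ one has $\psi(u)\to 0$ and, since $\basis{\Phi}_{r,i+1}$ is entire with $\basis{\Phi}_{r,i+1}(0)=0$ by \eqref{eq:basis_entire} and \eqref{eq:basis_0_dWn}, $\basis{\Phi}_{r,i+1}(\psi(u))=O(u)$, whence $\phi_i(u)=O(\sqrt u)\to0$, while near $u=1$ one has $\psi(u)\to\infty$ and the exponential bound $|\basis{\Phi}_{r,i+1}(x)|\le(\enum^{-rx}-\enum^{-x})/((i+1)\pi)$ from \eqref{eq:basis_ineq1}, valid since $\Phi_r\in\vtset_r$, dominates the algebraic prefactor, so again $\phi_i(u)\to0$. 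For \eqref{eq:kellogg_orthogonal}: by construction and the change of variable, $\int_0^1\phi_i\phi_j\,\del u=(-1)^{i+j}c_{i+1}^{-1/2}c_{j+1}^{-1/2}\int_0^\infty\basis{\Phi}_{r,i+1}(x)\basis{\Phi}_{r,j+1}(x)\,x^{-1}\,\del x=\delta_{ij}$ by Theorem~\ref{the:basis_Phi_orthogonal}. For \eqref{eq:kellogg_chebyshev}: given $0<u_0<\cdots<u_n<1$, set $x_l\coloneqq\psi(u_{l-1})$, so $0<x_1<\cdots<x_{n+1}<\infty$; factoring the positive row factors $\sqrt{\psi'(u_i)/\psi(u_i)}$ and the positive column factors $c_{j+1}^{-1/2}$ out of $\det(\phi_j(u_i))_{i,j=0}^n$ and reindexing by $l=i+1$, $k=j+1$ leaves $\det\big((-1)^k\basis{\Phi}_{r,k}(x_l)\big)_{l,k=1}^{n+1}$, which is positive by \eqref{eq:basis_det_ineq} applied with $\vt_r=\Phi_r\in\vtset_r$ (the membership $\Phi_r\in\vtset_r$ being established in the proof of Theorem~\ref{the:Phi}). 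Hence $\det(\phi_j(u_i))_{i,j=0}^n>0$.

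Finally I would invoke Theorem~\ref{the:Kellogg} and translate the conclusion back through $\psi$. Since $\psi$ is an order-preserving bijection of $(0,1)$ onto $(0,\infty)$ and the prefactor $(-1)^{i+1}c_{i+1}^{-1/2}\sqrt{\psi'(u)/\psi(u)}$ is nowhere zero and of constant sign on $(0,1)$, the zeros of $\phi_i$ in $(0,1)$ correspond bijectively and monotonically to the zeros of $\basis{\Phi}_{r,i+1}$ in $(0,\infty)$, with sign changes preserved. Thus Kellogg's assertion that $\phi_i$ vanishes exactly $i$ times in $(0,1)$ and changes sign at each zero becomes, on setting $n=i+1\in\intge1$, the statement that $\basis{\Phi}_{r,n}$ vanishes exactly $n-1$ times in $(0,\infty)$ and changes sign at each zero; and Kellogg's interlacing assertion for $i\in\intge1$ (that $\phi_i$ changes sign between successive zeros of $\phi_{i+1}$) becomes, on setting $n=i+1\in\intge2$, the claimed interlacing of the zeros of $\basis{\Phi}_{r,n}$ and $\basis{\Phi}_{r,n+1}$ in $(0,\infty)$.

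I expect the main obstacle to be the bookkeeping in the verification of \eqref{eq:kellogg_chebyshev}, namely tracking the sign $(-1)^j$ of \eqref{eq:basis_det_ineq} through the reindexing so that it is absorbed cleanly into $\phi_i$, together with the endpoint continuity in \eqref{eq:kellogg_continuous}, which forces one to combine the linear vanishing of $\basis{\Phi}_{r,n}$ at the origin with its exponential decay at infinity; once the change of variable and the sign convention for $\phi_i$ are chosen correctly, the remaining steps are routine verifications of the hypotheses.
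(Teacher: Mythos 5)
Your proposal is correct and follows essentially the same route as the paper: the paper's proof also reduces the statement to Kellogg's theorem by transporting $(0,\infty)$ with weight $1/x$ onto $(0,1)$ with Lebesgue measure (via $u=\enum^{-rx}$ rather than your $x=u/(1-u)$), normalizing by the diagonal constants from Theorem \ref{the:basis_Phi_orthogonal}, and verifying \eqref{eq:kellogg_condition} using \eqref{eq:basis_ineq1} for the endpoint continuity and \eqref{eq:basis_det_ineq} for the Chebyshev-system determinant. The only differences are the choice of bijection and the sign normalization (your $(-1)^{i+1}$ with an order-preserving map versus the paper's uniform $-1$ with an order-reversing map), which are immaterial.
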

\begin{proof}
We introduce
\begin{align}
Z_{r, n} &\coloneqq \sqrt{ \int_0^\infty \basis{\Phi}_{r, n}(x)^2 \frac{1}{x} \del x } = \frac{1}{\sqrt{n}} \frac{1}{ \sqrt{ q^{-2n} - q^{2n} } } > 0, n \in \intge{1}, \\
\label{eq:phi_dn}
\phi_i(u) &\coloneqq
\begin{cases}
- \frac{ \basis{\Phi}_{r, i + 1}( \log(1 / u) / r ) }{ Z_{r, i+1} \sqrt{ u \log(1 / u) } }, & 0 < u < 1, \\
0, & u \in \{0, 1\}, \\
\end{cases}
i \in \intge{0}.
\end{align}
Since $\basis{\Phi}_{r,n}(x)$, $n \in \intge{1}$, is real continuous for $x \in \mathbb{R}$ by \eqref{eq:basis_entire}, $\phi_i(u), i \in \intge{0},$ is real continuous for $0 < u < 1$. By using \eqref{eq:basis_ineq1} and \eqref{eq:phi_dn}, the inequality
\begin{equation}
\lvert \phi_i(u) \rvert \le \frac{ 1 - r }{ Z_{r, i + 1} (i + 1) \pi r } \sqrt{ u \log(1 / u) }, 0 < u < 1,
i \in \intge{0},
\end{equation}
holds, which leads to $\lim\limits_{ u \downarrow 0 } \phi_i(u) = \lim\limits_{ u \uparrow 1 } \phi_i(u) = 0$. Thus $\phi_i(u)$ satisfies \eqref{eq:kellogg_continuous}. By applying the change of variable $u = \enum^{-r x}$ to \eqref{eq:basis_Phi_orthogonal}, \eqref{eq:phi_dn} satisfies \eqref{eq:kellogg_orthogonal}.

By introducing $x_i \coloneqq \log(1 / u_{n + 1 - i}) / r$, $i = 1, \ldots, n + 1$, for $0 < u_0 < \cdots < u_n < 1$ and $n \in \intge{0}$, we have $0 < x_1 < \cdots < x_{n + 1} < \infty$. By using \eqref{eq:basis_det_ineq} and \eqref{eq:phi_dn}, we have $\det( \phi_j(u_i) )_{i, j = 0}^n = \frac{ \det( (-1)^j \basis{\Phi}_{r, j}(x_i) )_{i, j = 1}^{n + 1} }{ \prod_{j = 1}^{n + 1} Z_{r, j} \prod_{i = 0}^n \sqrt{ u_i \log(1 / u_i) } } > 0$. Thus $\phi_i(u)$ satisfies \eqref{eq:kellogg_condition} and the statement follows from Theorem \ref{the:Kellogg} and \eqref{eq:phi_dn}.
\end{proof}
\begin{figure}[htbp]
\centering
\includegraphics{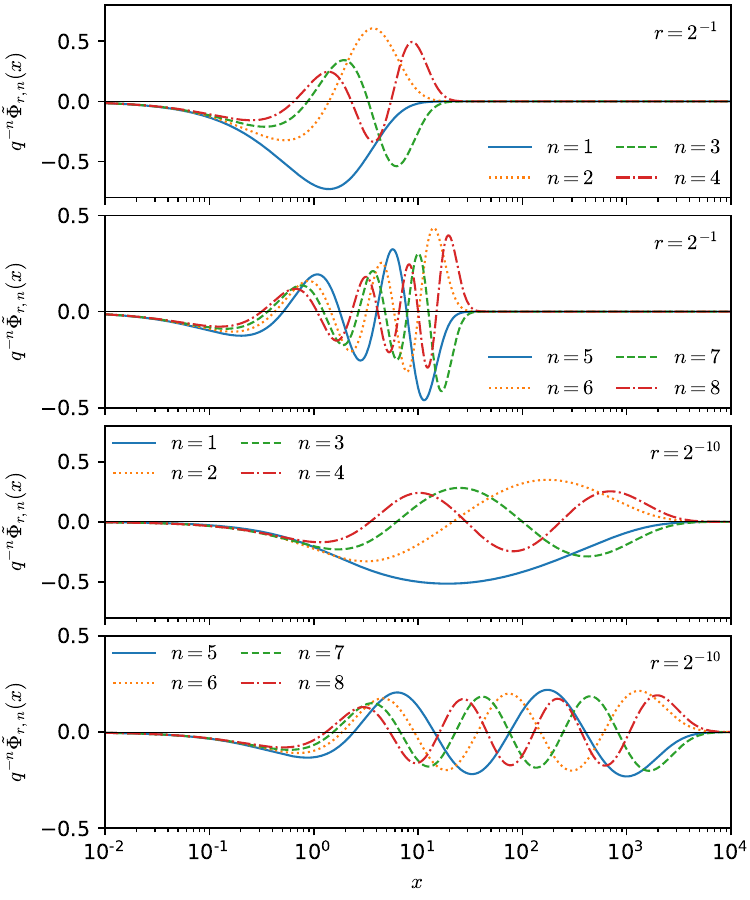}
\caption{Basis functions $\basis{\Phi}_{r, n}(x)$ associated with $\Phi_r(u)$. We multiplied $q^{-n}$ based on the inequality $\lvert q^{-n} \basis{\Phi}_{r, n}(x) \rvert \le 1$ on $(0, \infty)$ by \eqref{eq:basis_Phi_ineq_hatrho}. The computational method is given in Section \ref{sec:basis_numerical}.
}
\label{fig:Phi_basis}
\end{figure}

\section{Numerical results}
\label{sec:numerical}
We describe the numerical implementations and conduct numerical experiments. The corresponding Python codes are available in GitHub \cite{KoyamaGitHub2023}.

\subsection{Computation of $\Phi_r(u)$ and $\Phi_r'(u)$ on $[-1, 1]$}
\label{sec:Phi_numerical}
Let $-1 \le u \le 1$. For the computation of $\Phi_r(u)$ and $\Phi_r'(u)$, we used \eqref{eq:Phi_qn2} and \eqref{eq:dPhi_V}, which require the computation of $1 + 2 \sum_{n = 1}^\infty q^{n^2} T_n(\pm u)$ and $\sum_{n = 0}^\infty q^{ 2 n^2 + 2 n } V_n(1 - 2 u^2)$. We approximated them by $1 + 2 \sum_{n = 1}^{N_0} q^{n^2} T_n(\pm u)$ with $N_0 \in \intge{1}$ and $\sum_{n = 0}^{N_1} q^{ 2 n^2 + 2 n } V_n(1 - 2 u^2)$ with $N_1 \in \intge{1}$, respectively. By using inequalities 
\begin{subequations}
\begin{align}
&\sum_{n = N_0 + 1}^\infty q^{n^2} < \int_{N_0}^\infty \enum^{ - \pi \frac{ \EK(r) }{ \EK(k) } x^2 } \frac{ x }{ N_0 } \del x = \frac{ q^{N_0^2} }{ 2 N_0 \log(q^{-1}) }, \\
&\sum_{n = N_1 + 1}^\infty q^{2 n ^2 + 2 n} (2 n + 1) < \int_{N_1}^\infty \enum^{- \pi \frac{ \EK(r) }{ \EK(k) } (2 x^2 + 2 x)} \frac{ 2 N_1 + 3 }{ 2 N_1 + 1 } (2 x + 1) \del x = \frac{ 2 N_1 + 3 }{ 2 N_1 + 1 } \frac{ q^{ 2 N_1^2 + 2 N_1 } }{ 2 \log(q^{-1}) },
\end{align}
\end{subequations}
\eqref{eq:qn2Tn_ineq}, \eqref{eq:q2n22nVn_ineq}, $\lvert T_n(u) \rvert \le 1$, and $\lvert V_n(u) \rvert \le 2 n + 1$ for $n \in \intge{0}$, the relative errors by the truncations can be bounded as
\begin{subequations}
\label{eq:TV_re}
\begin{align}
&\left\lvert \frac{ 2 \sum_{n = N_0 + 1}^\infty q^{n^2} T_n(\pm u) }{ 1 + 2 \sum_{n = 1}^\infty q^{n^2} T_n(\pm u) } \right\rvert < \frac{ q^{N_0^2} }{ N_0 \sqrt{2 r \EK(k) / \pi} \log(q^{-1}) }, \\
&\left\lvert \frac{ \sum_{n = N_1 + 1}^\infty q^{ 2 n^2 + 2 n } V_n(1 - 2 u^2) }{ \sum_{n = 0}^\infty q^{ 2 n^2 + 2 n } V_n(1 - 2 u^2) } \right\rvert < \frac{ 2 N_1 + 3 }{ 2 N_1 + 1 } \frac{ 2 q^{ 2 (N_1 + \frac{1}{2})^2 } }{ k r^\frac{1}{4} (2 \EK(k) / \pi)^\frac{3}{2} \log(q^{-1}) }.
\end{align}
\end{subequations}

For the computation of the right-hand sides of \eqref{eq:Phi_qn2}, \eqref{eq:dPhi_V}, and \eqref{eq:TV_re}, we need the values of  $q$, $2 \EK(k) / \pi$, and $k$. To compute these values from our primary parameter $r$ (or complementary modulus $k' \coloneqq \sqrt{1 - k^2} = r$), we used the following equalities based on the arithmetic-geometric mean \cite[Sec.~52]{Jacobi1829}
\begin{subequations}
\label{eq:agm}
\begin{align}
&a_0 \coloneqq 1, g_0 \coloneqq r, a_{n + 1} \coloneqq (a_n + g_n) / 2, g_{n + 1} \coloneqq \sqrt{a_n g_n}, n \in \intge{0}, \\
&q = \frac{ 1 - r^2 }{ 16 r } \prod_{n = 1}^\infty \left( \frac{g_n}{a_n} \right)^\frac{3}{2^n}, \frac{ \pi }{ 2 \EK(k) } = \lim_{n \to \infty} a_n = \lim_{n \to \infty} g_n, k = \sqrt{1 - r^2}.
\end{align}
\end{subequations}

For the numerical computation, we used mpmath library \cite{mpmath}, which is the arbitrary precision Python library. Let $n_\mathrm{prec} \in \intge{1}$ be the precision in bits (mp.prec parameter in mpmath). For \eqref{eq:agm}, $n$ was increased to satisfy $\lvert a_n - g_n \rvert \le g_n 2^{- (n_\mathrm{prec} - 1)}$. Then we determined $N_0$ and $N_1$ to satisfy right-hand sides of \eqref{eq:TV_re} be equal or less than $2^{- n_\mathrm{prec}}$. $1 + 2 \sum_{n = 1}^{N_0} q^{n^2} T_n(\pm u)$ and $\sum_{n = 0}^{N_1} q^{ 2 n^2 + 2 n } V_n(1 - 2 u^2)$ were evaluated by Clenshaw algorithm \cite[Sec.~2.4.1]{Mason2003}.

\subsection{Computation of basis functions}
\label{sec:basis_numerical}
Let $0 < r < 1$, $\vt_r \in \vtset_r$, and $\rhoana[\vt_r] > 1$. Let $n \in \intge{0}$, $M \in \intge{1}$, and $z \in \mathbb{C}$. For the numerical computation of basis functions, we consider the approximation based on the Gaussian quadrature. By the change of variable $u = \cos\theta$ for \eqref{eq:basis}, we have
\begin{equation}
\label{eq:basisn_u}
\basis{\vt}_{r, n}(z) = \frac{1}{\pi} \int_{-1}^1 \enum^{- z \vt_r(u)} \frac{T_n(u)}{\sqrt{1 - u^2}} \del u, z \in \mathbb{C}, n \in \intge{0}.
\end{equation}
By applying $M$-point Gauss-Chebyshev quadrature \cite[Example 1.49]{Gautschi2004} to the right-hand side of \eqref{eq:basisn_u}, we have
\begin{equation}
\label{eq:basisn_gauss}
\basis{\vt}_{r, n}(z) \approx \frac{1}{M} \sum_{\nu = 1}^M \enum^{- \vt_r\left( \cos\left( \frac{ 2 \nu - 1 }{ 2 M } \pi \right) \right) z} \cos\left( n \frac{ 2 \nu - 1 }{ 2 M } \pi \right), z \in \mathbb{C}, n \in \intge{0}.
\end{equation}
By using \eqref{eq:exp_vt_fourier} and
$
\frac{1}{M} \sum_{\nu = 1}^M \cos\left( j \frac{ 2 \nu - 1 }{ 2 M } \pi \right) =
\begin{cases}
0, & j \in \mathbb{Z} \setminus 2 M \mathbb{Z}, \\
(-1)^\frac{ j }{ 2M }, & j \in 2 M \mathbb{Z},
\end{cases}
$
the approximation error of \eqref{eq:basisn_gauss} is expressed as
\begin{equation}
\label{eq:basisn_gauss_error}
E_{n, M}(z) \coloneqq \basis{\vt}_{r, n}(z) - \frac{1}{M} \sum_{\nu = 1}^M \enum^{- \vt_r\left( \cos\left( \frac{ 2 \nu - 1 }{ 2 M } \pi \right) \right) z} \cos\left( n \frac{ 2 \nu - 1 }{ 2 M } \pi \right) = \sum_{l =1 }^\infty (-1)^{l-1} (\basis{\vt}_{r, \lvert 2Ml-n \rvert}(z) + \basis{\vt}_{r, 2Ml+n}(z) ), z \in \mathbb{C}.
\end{equation}
By using \eqref{eq:basis_0_dWn} and \eqref{eq:basisn_gauss_error}, we have
\begin{equation}
E_{n, M}(0) =
\begin{cases}
0, & n \in \intge{0} \setminus 2 M \intge{1}, \\
(-1)^{ \frac{n}{2M} - 1 }, & n \in 2 M \intge{1},
\end{cases}
M \in \intge{1},
\end{equation}
which indicates
\begin{equation}
\label{eq:basisn_gauss_E0}
E_{n, M}(0) = 0, M \in \intge{(n + 1) / 2}, n \in \intge{0}.
\end{equation}
$T_n(u)$ is an entire function, real on the real axis, and satisfies $\sup\limits_{u \in \ellipse_\rho} \lvert T_n(u) \rvert = \frac{ \rho^n + \rho^{ -n } }{ 2 }$ for $1 < \rho < \infty$. Then for a given $0 < x < \infty$, the integrand $\enum^{- x \vt_r(u)} T_n(u)$ of \eqref{eq:basisn_u} is analytic on $\ellipse_{\rhoana[\vt_r]}$, real on $[-1, 1]$, and satisfies
\begin{equation}
0 < \enum^{- r x} = \max_{u \in [-1, 1]} \lvert \enum^{- x \vt_r(u)} T_n(u) \rvert \le \sup_{u \in \ellipse_\rho} \lvert \RE (\enum^{- x \vt_r(u)} T_n(u)) \rvert \le \enum^{- x \inf\limits_{u \in \ellipse_\rho} \RE \vt_r(u)} \frac{ \rho^n + \rho^{ -n } }{ 2 } < \infty, \rho \in I_{\vt_r}, 0 < x < \infty.
\end{equation}
Then by applying Theorem \ref{the:achieser_stenger} to \eqref{eq:basisn_u} with $\rho = \hat{\rho}[\vt_r]$ and using \eqref{eq:infRe_hatrho_ineq} and $\int_{-1}^1 \frac{ 1 }{ \pi \sqrt{1 - u^2} } \del u = 1$, we have
\begin{equation}
\label{eq:basisn_gauss_stenger}
\lvert E_{n, M}(x) \rvert < \frac{16}{\pi} \frac{ \hat{\rho}[\vt_r]^n + \hat{\rho}[\vt_r]^{ -n } }{ 2 } \hat{\rho}[\vt_r]^{-2M}, 0 < x < \infty, M \in \intge{1}, n \in \intge{0}.
\end{equation}
For a given $n \in \intge{0}$, by using \eqref{eq:basisn_gauss_E0} and \eqref{eq:basisn_gauss_stenger}, we determined $M$ to satisfy $E_{n, M}(0) = 0$ and  $\lvert E_{n, M}(x) \rvert < 10^{-10} \hat{\rho}[\vt_r]^{-n}$ on $(0, \infty)$, where we multiplied $\hat{\rho}[\vt_r]^{-n}$ based on \eqref{eq:basis_ineq_hatrho}.

\subsection{Initialization of Remez algorithm}
\label{sec:init_remez}
Let $0 < a < b < \infty$ and $M \in \intge{1}$. Let $W(t)$ be a bounded nondecreasing function with at least $M + 1$ points of increase for $a \le t \le b$. We describe a computational method to find the best exponential sum for $f(x) \coloneqq \int_a^b \enum^{ - x t } \del W(t)$, $0 \le x < \infty$. By Theorem \ref{the:kammler_best}, the best exponential sum $\sum_{\nu = 1}^M c_\nu \enum^{- t_\nu x}$ exists and the equations to be satisfied are given as
\begin{subequations}
\label{eq:best_eq}
\begin{align}
&\text{$E_M(x) \coloneqq f(x) - \sum_{\nu = 1}^M c_\nu \enum^{- t_\nu x}$ and $x_0 \coloneqq 0 < x_1 < \cdots < x_{2M} < \infty$}, \\
\label{eq:best_eq_st}
&E'_M(x_i) = f'(x_i) + \sum_{\nu = 1}^M c_\nu t_\nu \enum^{- t_\nu x_i} = 0, i = 1, \ldots, 2 M, \\
\label{eq:best_eq_eq}
&E_M(x_{i - 1}) + E_M(x_i) = f(x_{i-1}) - \sum_{\nu = 1}^M c_\nu \enum^{- t_\nu x_{i-1}} + f(x_i) - \sum_{\nu = 1}^M c_\nu \enum^{- t_\nu x_i} = 0, i = 1, \ldots, 2 M.
\end{align}
\end{subequations}
To find the best exponential approximation, Remez algorithm can be used \cite{Barrar1970,Kammler1976,Braess2009,Hackbusch2019}. In the Remez algorithm, (i) each $x_1, \ldots, x_{2M}$ is moved to increase $\lvert E_M(x_i) \rvert$ for given $t_1, \ldots, t_M$ and $c_1, \ldots, c_M$. Then (ii) $c_1, \ldots, c_M$ and $t_1, \ldots, t_M$ are determined to satisfy $\eqref{eq:best_eq_eq}$ for given $x_1, \ldots, x_{2M}$. Procedures (i) and (ii) are iterated to converge the parameters.

Let $0 < h < \infty$. For given $x_i = ih$, $i = 1, \dots, 2M$, it is known that $t_1, \ldots, t_M$ and $c_1, \ldots, c_M$ satisfying \eqref{eq:best_eq_eq} can be determined by a variant of Prony's method \cite[Sec.~10.2]{Meinardus1967} \cite{Kammler1977}. The identical problem can also be solved by the Gaussian quadrature as follows:
\begin{lemma}
\label{lem:best_init}
Let $0 < a < b < \infty$ and $M \in \intge{1}$. Let $W(t)$ be a bounded nondecreasing function with at least $M + 1$ points of increase for $a \le t \le b$. Let $f(x) \coloneqq \int_a^b \enum^{-xt} \del W(t)$ for $0 \le x < \infty$. For a given $0 < h < \infty$, let us introduce $W_h(y) \coloneqq - W(- \log(y) / h)$ and $\widehat{W}_h(y) \coloneqq \int_{ \enum^{-bh} }^y (1 + Y) \del W_h(Y)$ for $\enum^{-bh} \le y \le \enum^{-ah}$. Then one has
\begin{align}
\label{eq:Wh_bounded}
&\text{$W_h(y)$ and $\widehat{W}_h(y)$ are bounded nondecreasing functions} \\ \nonumber
&\text{with at least $M + 1$ points of increase for $\enum^{-bh} \le y \le \enum^{-ah}$}, \\
\label{eq:Wh_eq}
&\int_a^b F(\enum^{-ht}) (1 + \enum^{-ht}) \del W(t) = \int_{\enum^{-bh}}^{\enum^{-ah}} F(y) (1 + y) \del W_h(y) \\ \nonumber
&= \int_{\enum^{-bh}}^{\enum^{-ah}} F(y) \del \widehat{W}_h(y) \text{ if $F(y)$ is continuous on $[\enum^{-bh}, \enum^{-ah}]$}.
\end{align}
Furthermore, there exists $y_{h, \nu}$ and $d_{h, \nu}$, $\nu = 1, \ldots, M$, satisfying
\begin{align}
\label{eq:yhdh_ineq}
&\text{$\enum^{-bh} < y_{h, 1} < \cdots < y_{h, M} < \enum^{-ah}$ and $d_{h, \nu} > 0, \nu = 1, \ldots, M$}, \\
\label{eq:best_eq_yhdh}
&f(ih) + f((i + 1) h) = \int_{\enum^{-bh}}^{\enum^{-ah}} y^i (1 + y) \del W_h(y) = \int_{\enum^{-bh}}^{\enum^{-ah}} y^i \del \widehat{W}_h(y) = \sum_{\nu = 1}^M d_{h, \nu} y_{h, \nu}^i, i = 0, \ldots, 2 M - 1.
\end{align}
\end{lemma}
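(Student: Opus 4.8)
The plan is to establish the three groups of assertions in the stated order, the substance being a single change of variables $y=\enum^{-ht}$ followed by an application of the Gaussian quadrature to the measure $\del\widehat W_h$.

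First I would verify \eqref{eq:Wh_bounded}. The map $t\mapsto\enum^{-ht}$ is a continuous strictly decreasing bijection of $[a,b]$ onto $[\enum^{-bh},\enum^{-ah}]$ with inverse $y\mapsto-\log(y)/h$, so $W_h(y)=-W(-\log(y)/h)$ is the negative of the composition of the nondecreasing bounded $W$ with a decreasing map; hence $W_h$ is nondecreasing and bounded, and $y_0$ is a point of increase of $W_h$ precisely when $-\log(y_0)/h$ is a point of increase of $W$, so $W_h$ has at least $M+1$ of them on $[\enum^{-bh},\enum^{-ah}]$. For $\widehat W_h$, the factor $1+Y$ is continuous and strictly positive on that interval, so $\widehat W_h(y)=\int_{\enum^{-bh}}^{y}(1+Y)\,\del W_h(Y)$ is nondecreasing, is bounded above by $2\,\TV[W_h(y)]_{\enum^{-bh}}^{\enum^{-ah}}<\infty$, and (again because $1+Y>0$) has exactly the points of increase of $W_h$; this gives \eqref{eq:Wh_bounded}.

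Next, for \eqref{eq:Wh_eq}: the first equality is the change-of-variable formula for Stieltjes integrals (Theorem~\ref{the:stieltjes_change}) applied with $t=-\log(y)/h$, the only point needing care being the orientation reversal --- under $y=\enum^{-ht}$ the limits $t=a,b$ become $y=\enum^{-ah},\enum^{-bh}$, and $W(-\log(y)/h)=-W_h(y)$, so the sign from swapping the limits and the sign from the determining function cancel; writing the continuous integrand as $g(t)=F(\enum^{-ht})(1+\enum^{-ht})$ and noting $g(-\log(y)/h)=F(y)(1+y)$ then yields $\int_a^b F(\enum^{-ht})(1+\enum^{-ht})\,\del W(t)=\int_{\enum^{-bh}}^{\enum^{-ah}}F(y)(1+y)\,\del W_h(y)$. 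The second equality is immediate from the defining relation $\del\widehat W_h(y)=(1+y)\,\del W_h(y)$ together with the associativity of the Stieltjes integral, the same property invoked (via Theorem~\ref{the:stieltjes_riemann}) in the proof of Lemma~\ref{lem:basis_chebyshev}.

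Finally, \eqref{eq:yhdh_ineq} and \eqref{eq:best_eq_yhdh}: by \eqref{eq:Wh_bounded} the function $\widehat W_h$ is bounded, nondecreasing, and has at least $M+1$ points of increase on $[\enum^{-bh},\enum^{-ah}]$, so Theorem~\ref{the:gauss} supplies nodes $\enum^{-bh}<y_{h,1}<\cdots<y_{h,M}<\enum^{-ah}$ and positive weights $d_{h,1},\dots,d_{h,M}$ with $\int_{\enum^{-bh}}^{\enum^{-ah}}p(y)\,\del\widehat W_h(y)=\sum_{\nu=1}^M d_{h,\nu}p(y_{h,\nu})$ for all $p\in\pset_{2M-1}$; this is \eqref{eq:yhdh_ineq} and, upon taking $p(y)=y^i$, the last equality of \eqref{eq:best_eq_yhdh}. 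Applying \eqref{eq:Wh_eq} with the continuous $F(y)=y^i$ gives $\int_{\enum^{-bh}}^{\enum^{-ah}}y^i\,\del\widehat W_h(y)=\int_a^b(\enum^{-iht}+\enum^{-(i+1)ht})\,\del W(t)=f(ih)+f((i+1)h)$, which completes \eqref{eq:best_eq_yhdh}. I expect the main obstacle to be purely bookkeeping: getting the orientation reversal in the change of variables right (the interval $[a,b]$ maps \emph{onto} $[\enum^{-bh},\enum^{-ah}]$ order-reversingly, which is exactly why $W_h$ carries a compensating minus sign) and confirming that ``at least $M+1$ points of increase'' survives both the change of variable and multiplication by the strictly positive weight $1+y$; once those are pinned down, the result is a direct invocation of Theorem~\ref{the:gauss} and the Stieltjes-integral lemmas already established.
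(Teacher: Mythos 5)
Your proposal is correct and follows essentially the same route as the paper: the order-reversing substitution $y=\enum^{-ht}$ (which the paper packages as Theorem~\ref{the:stieltjes_neg} followed by Theorem~\ref{the:stieltjes_change}), the strict positivity of the weight $1+y$ to preserve boundedness, monotonicity, and the points of increase (Corollary~\ref{cor:int_f_da}), and Theorem~\ref{the:gauss} applied to $\del\widehat{W}_h$. The only nit is a citation slip: the ``associativity'' step $\del\widehat{W}_h(y)=(1+y)\,\del W_h(y)$ is Theorem~\ref{the:int_fphi_da}, not Theorem~\ref{the:stieltjes_riemann} (which concerns determining functions that are integrals with respect to Lebesgue measure), though the property you describe is the right one.
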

\begin{proof}
The statement of \eqref{eq:Wh_bounded} for $W_h(y)$ follows from the definition. The statement of \eqref{eq:Wh_bounded} for $\widehat{W}_h(y)$ follows from Corollary \ref{cor:int_f_da}. The first equality of \eqref{eq:Wh_eq} follows from Theorem \ref{the:stieltjes_neg} and Theorem \ref{the:stieltjes_change}. The second equality of \eqref{eq:Wh_eq} follows from Theorem \ref{the:int_fphi_da}. By substituting $F(y) = y^i$, $i = 0, \ldots, 2 M - 1$, for \eqref{eq:Wh_eq}, we have the first and the second equalities of \eqref{eq:best_eq_yhdh}. By \eqref{eq:Wh_bounded} for $\widehat{W}_h(y)$ and Theorem \ref{the:gauss}, there exists $y_{h, \nu}$ and $d_{h, \nu}$, $\nu = 1, \ldots, M$, satisfying \eqref{eq:yhdh_ineq} and the third equality of \eqref{eq:best_eq_yhdh}.
\end{proof}
\begin{corollary}
\label{cor:best_init}
Let the conditions of Lemma \ref{lem:best_init} be satisfied. Let us introduce
\begin{equation}
\label{eq:thch}
t_{h, \nu} \coloneqq \frac{- \log(y_{h, M - \nu + 1})}{h}, c_{h, \nu} \coloneqq \frac{ d_{h, M - \nu + 1} }{ 1 + y_{h, M - \nu + 1} }, \nu = 1, \ldots, M.
\end{equation}
Then one has
\begin{align}
\label{eq:thch_ineq}
&\text{$a < t_{h, 1} < \cdots < t_{h, M} < b$ and $c_{h, \nu} > 0$, $\nu = 1, \ldots, M$}, \\
\label{eq:best_eq_thch}
&f(ih) + f((i + 1) h) = \sum_{\nu = 1}^M c_{h, \nu} \enum^{- t_{h, \nu} i h } + \sum_{\nu = 1}^M c_{h, \nu} \enum^{- t_{h, \nu} (i + 1) h }, i = 0, \ldots, 2 M - 1.
\end{align}
\end{corollary}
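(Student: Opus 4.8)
The plan is to obtain the corollary directly from Lemma~\ref{lem:best_init} by composing the reindexing $\nu \mapsto M-\nu+1$ of \eqref{eq:thch} with the elementary strictly decreasing bijection $y \mapsto -\log(y)/h$ between $[\enum^{-bh},\enum^{-ah}]$ and $[a,b]$, so essentially no new analysis is needed.

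First I would prove the ordering and positivity in \eqref{eq:thch_ineq}. Since $h>0$, the map $y \mapsto -\log(y)/h$ is continuous and strictly decreasing on $(0,\infty)$; applied to the chain $\enum^{-bh} < y_{h,1} < \cdots < y_{h,M} < \enum^{-ah}$ from \eqref{eq:yhdh_ineq}, it reverses the order and sends the endpoints to $a$ and $b$, hence $a < -\log(y_{h,M})/h < \cdots < -\log(y_{h,1})/h < b$, which is exactly $a < t_{h,1} < \cdots < t_{h,M} < b$ after the relabelling in \eqref{eq:thch}. The positivity $c_{h,\nu}>0$ follows at once from $d_{h,M-\nu+1}>0$ in \eqref{eq:yhdh_ineq} and $1+y_{h,M-\nu+1} > 1 > 0$.

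Next I would establish \eqref{eq:best_eq_thch}. Inverting \eqref{eq:thch} gives $y_{h,\nu} = \enum^{-t_{h,M-\nu+1}h}$ and $d_{h,\nu} = c_{h,M-\nu+1}(1+y_{h,\nu})$ for $\nu=1,\ldots,M$. Substituting these into the last member of \eqref{eq:best_eq_yhdh} and relabelling $\mu = M-\nu+1$,
\[
\sum_{\nu=1}^M d_{h,\nu} y_{h,\nu}^i
= \sum_{\nu=1}^M c_{h,M-\nu+1}\bigl(y_{h,\nu}^i + y_{h,\nu}^{i+1}\bigr)
= \sum_{\mu=1}^M c_{h,\mu}\bigl(\enum^{-t_{h,\mu} i h} + \enum^{-t_{h,\mu}(i+1)h}\bigr),
\]
and combining with $f(ih)+f((i+1)h) = \sum_{\nu=1}^M d_{h,\nu} y_{h,\nu}^i$ from \eqref{eq:best_eq_yhdh} yields \eqref{eq:best_eq_thch} for $i=0,\ldots,2M-1$, after splitting the right-hand sum into its $i$ and $i+1$ pieces.

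Since the argument is only a monotone change of variable together with an index reversal, there is no real obstacle; the only point requiring care is consistency of the two index conventions — $y_{h,\nu}$ increasing versus $t_{h,\nu}$ increasing — and checking that the $2M$ moment equations $i=0,\ldots,2M-1$ furnished by the $M$-point Gaussian quadrature in Lemma~\ref{lem:best_init} are precisely what is needed to populate \eqref{eq:best_eq_thch}.
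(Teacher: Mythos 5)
Your proposal is correct and is exactly the argument the paper leaves implicit: the corollary is stated without proof because it follows from Lemma \ref{lem:best_init} by the order-reversing substitution $y \mapsto -\log(y)/h$, the index reversal $\nu \mapsto M-\nu+1$, and direct substitution into the moment identities \eqref{eq:best_eq_yhdh}. Your verification of the endpoint images, the sign of $c_{h,\nu}$, and the splitting of $(1+y)y^i$ into the $i$ and $i+1$ terms is complete and accurate.
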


By comparing \eqref{eq:best_eq_eq} and \eqref{eq:best_eq_thch}, \eqref{eq:best_eq_eq} are satisfied by setting $x_i = ih$, $i = 1, \ldots, 2M$, $t_\nu = t_{h, \nu}$, $c_\nu = c_{h, \nu}$, $\nu = 1, \ldots, M$, which can be used for the initial parameters of Remez algorithm. For this purpose, we need to select a value of $0 < h < \infty$. For the initial parameters, by \eqref{eq:best_eq_eq}, we have $\lvert E_M(ih) \rvert = \lvert E_M(0) \rvert = \lvert f(0) - \sum_{\nu = 1}^M c_{h, \nu} \rvert$, $i = 1, \ldots, 2M$. Since the evaluation of $E_M(x)$ in the neighbourhood of $x_i = ih$ is necessary for the initial iteration of the Remez algorithm, a smaller value of $\lvert E_M(ih) \rvert = \lvert f(0) - \sum_{\nu = 1}^M c_{h, \nu} \rvert$ requires a higher precision for the computation of $E_M(x)$. Then we investigate properties of $f(0) - \sum_{\nu = 1}^M c_{h, \nu}$ as follows:
\begin{lemma}
\label{lem:Eh_eq}
Let the conditions of Lemma \ref{lem:best_init} and Corollary \ref{cor:best_init} be satisfied. Let $H_{M, h}$ be an $(M + 1) \times (M + 1)$ matrix, whose elements are given as $(H_{M, h})_{i,j} = \int_{ \enum^{-bh} }^{ \enum^{ -ah } } y^{i + j} \del W_h(y) = f((i + j) h)$, $i = 0, \ldots, M$, $j = 0, \ldots, M$. Let $G_{M, h}$ be an $M \times M$-matrix, whose elements are given as $(G_{M, h})_{i,j} = f((i + j) h) + 2 f((i + j + 1) h) + f((i + j + 2) h)$, $i = 0, \ldots, M - 1$, $j = 0, \ldots, M - 1$.
Let $\V{u}_M \coloneqq \left( (-1)^0 \quad (-1)^1 \quad \cdots \quad (-1)^M \right)^\trans$. Then $H_{M, h}$ and $G_{M, h}$ are positive definite matrices and one has
\begin{subequations}
\label{eq:Eh_eq}
\begin{align}
\label{eq:Eh_int}
&f(0) - \sum_{\nu = 1}^M c_{h, \nu} = \int_{\enum^{-bh}}^{\enum^{-ah}} \frac{1}{1 + y} \del \widehat{W}_h(y) - \sum_{\nu = 1}^M \frac{ d_{h, \nu} }{ 1 + y_{h, \nu} } \\
\label{eq:Eh_variational}
&= \min_{p \in \pset_M, p(-1) = 1} \int_a^b p(\enum^{-ht})^2 \del W(t) \\
&= \frac{ \det(H_{M, h}) }{ \det(G_{M, h}) } = \frac{ 1 }{ \V{u}_M^\trans H_{M, h}^{-1} \V{u}_M } > 0.
\end{align}
\end{subequations}
\end{lemma}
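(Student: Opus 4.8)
The plan is to verify the four quantities in \eqref{eq:Eh_eq} one after another, passing between the variable $t$ and the transformed variable $y=\enum^{-ht}$ with the Stieltjes change-of-variable and measure results of the appendix (the ones already used to prove Lemma~\ref{lem:best_init}), and exploiting that the rule in \eqref{eq:best_eq_yhdh} integrates every element of $\pset_{2M-1}$ exactly. First I would record the positive definiteness. Under $y=\enum^{-ht}$ one has $(H_{M,h})_{i,j}=f((i+j)h)=\int_{\enum^{-bh}}^{\enum^{-ah}} y^{i+j}\,\del W_h(y)$, so for $\V{p}=(p_0,\dots,p_M)^\trans$ and $p(y)=\sum_{j=0}^{M}p_j y^j$,
\[
\V{p}^\trans H_{M,h}\V{p}=\int_{\enum^{-bh}}^{\enum^{-ah}} p(y)^2\,\del W_h(y).
\]
Since $W_h$ has at least $M+1$ points of increase (Lemma~\ref{lem:best_init}) and a nonzero $p\in\pset_M$ cannot vanish at $M+1$ distinct points, this is strictly positive for $\V{p}\ne\V{0}$, so $H_{M,h}$ is positive definite; likewise $(G_{M,h})_{i,j}=\int_{\enum^{-bh}}^{\enum^{-ah}} y^{i+j}(1+y)^2\,\del W_h(y)$ gives $\V{q}^\trans G_{M,h}\V{q}=\int_{\enum^{-bh}}^{\enum^{-ah}}\bigl(q(y)(1+y)\bigr)^2\,\del W_h(y)$, and as $1+y>0$ on $[\enum^{-bh},\enum^{-ah}]$ the same argument (with $q\in\pset_{M-1}$) shows $G_{M,h}$ is positive definite, hence both determinants are positive.

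For \eqref{eq:Eh_int} I would use $\del\widehat{W}_h(y)=(1+y)\,\del W_h(y)$: then $\int_{\enum^{-bh}}^{\enum^{-ah}}\frac{1}{1+y}\,\del\widehat{W}_h(y)=\int_{\enum^{-bh}}^{\enum^{-ah}}\del W_h(y)=W(b)-W(a)=f(0)$ by the definition of $W_h$, while $\sum_{\nu=1}^{M}\frac{d_{h,\nu}}{1+y_{h,\nu}}=\sum_{\nu=1}^{M}c_{h,\nu}$ after reindexing in \eqref{eq:thch}. For the variational identity \eqref{eq:Eh_variational}, take $p\in\pset_M$ with $p(-1)=1$, write $\int_a^b p(\enum^{-ht})^2\,\del W(t)=\int_{\enum^{-bh}}^{\enum^{-ah}}\frac{p(y)^2}{1+y}\,\del\widehat{W}_h(y)$ via \eqref{eq:Wh_eq}, and perform the polynomial division $p(y)^2=1+(1+y)q(y)$ with $q\in\pset_{2M-1}$. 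Then $\frac{p(y)^2}{1+y}=\frac{1}{1+y}+q(y)$, the first term integrates to $f(0)$ as above, and the exactness in \eqref{eq:best_eq_yhdh} gives $\int q\,\del\widehat{W}_h=\sum_\nu d_{h,\nu}q(y_{h,\nu})=\sum_\nu\frac{d_{h,\nu}}{1+y_{h,\nu}}\bigl(p(y_{h,\nu})^2-1\bigr)$. Combining,
\[
\int_a^b p(\enum^{-ht})^2\,\del W(t)=\Bigl(f(0)-\sum_{\nu=1}^{M}c_{h,\nu}\Bigr)+\sum_{\nu=1}^{M}\frac{d_{h,\nu}}{1+y_{h,\nu}}\,p(y_{h,\nu})^2,
\]
and since $d_{h,\nu}>0$ and $1+y_{h,\nu}>0$ by \eqref{eq:yhdh_ineq}, the last sum is nonnegative and vanishes exactly for the admissible polynomial $\tilde{p}(y)=\prod_{\nu=1}^{M}\frac{y-y_{h,\nu}}{-1-y_{h,\nu}}$; hence the minimum over such $p$ equals $f(0)-\sum_\nu c_{h,\nu}$, which is positive because $\int_a^b\tilde{p}(\enum^{-ht})^2\,\del W(t)>0$ ($\tilde{p}$ has only $M$ zeros while $W$ has at least $M+1$ points of increase), or equivalently by the determinant formula.

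Finally the determinant and quadratic-form expressions. From $\V{p}^\trans H_{M,h}\V{p}=\int p(y)^2\,\del W_h(y)$ and $\V{u}_M^\trans\V{p}=p(-1)$, the variational quantity equals $\min\{\V{p}^\trans H_{M,h}\V{p}:\V{u}_M^\trans\V{p}=1\}$, which by the standard Lagrange-multiplier computation (legitimate since $H_{M,h}$ is positive definite) equals $1/(\V{u}_M^\trans H_{M,h}^{-1}\V{u}_M)$. For the ratio of determinants, change basis from $\{1,y,\dots,y^M\}$ to $\{1,\,1+y,\,y(1+y),\dots,y^{M-1}(1+y)\}$: the transition matrix is unit upper triangular, so the Gram matrix of the new basis with respect to $\langle g_1,g_2\rangle=\int g_1 g_2\,\del W_h$ still has determinant $\det(H_{M,h})$, while in block form it is $\bigl(\begin{smallmatrix} f(0) & \V{b}^\trans\\ \V{b} & G_{M,h}\end{smallmatrix}\bigr)$ with $\V{b}_i=\int y^i(1+y)\,\del W_h$; the Schur complement gives $\det(H_{M,h})=\det(G_{M,h})\,(f(0)-\V{b}^\trans G_{M,h}^{-1}\V{b})$. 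It remains to identify $f(0)-\V{b}^\trans G_{M,h}^{-1}\V{b}$ with the same minimum: parametrizing an admissible $p$ as $p=1+q$ with $q\in(1+y)\pset_{M-1}$ and coordinate vector $\V{q}$ in the basis $\{y^i(1+y)\}$, one has $\V{p}^\trans H_{M,h}\V{p}=\lVert 1+q\rVert^2_{W_h}=f(0)+2\V{b}^\trans\V{q}+\V{q}^\trans G_{M,h}\V{q}$, whose minimum over $\V{q}$ is exactly $f(0)-\V{b}^\trans G_{M,h}^{-1}\V{b}$.

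I expect the main obstacle to be bookkeeping rather than anything conceptual: making the substitution $y=\enum^{-ht}$ and the identity $\del\widehat{W}_h=(1+y)\,\del W_h$ rigorous through the appropriate Stieltjes change-of-variable theorems, and, for the last step, lining up the block structure of the Gram matrix of the shifted basis with $H_{M,h}$ and $G_{M,h}$ through the unit-triangular change of basis in the correct order so the Schur complement can be read off cleanly.
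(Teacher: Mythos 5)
Your proposal is correct, and while it reaches the same four identities, it does so by a genuinely more elementary route than the paper for the two central equalities. For the variational identity \eqref{eq:Eh_variational} the paper passes through the Markov function expansion \eqref{eq:markov_function} evaluated at $z=-1$, Christoffel's formula for $\hat{p}_M(y)/\hat{p}_M(-1)$, and the extremal and moment-determinant characterizations of the Christoffel function (citing Gautschi, Szeg\"o, and Schm\"udgen); you instead divide $p(y)^2-1$ by $1+y$ to get $q\in\pset_{2M-1}$, invoke the exactness \eqref{eq:best_eq_yhdh} of the quadrature on $\pset_{2M-1}$, and exhibit the explicit minimizer $\tilde{p}(y)=\prod_{\nu}\tfrac{y-y_{h,\nu}}{-1-y_{h,\nu}}$ (admissible since $y_{h,\nu}>0>-1$), which simultaneously yields the strict positivity via Theorem \ref{the:stieltjes_pos}. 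For the determinant and quadratic-form expressions, the paper manipulates a bordered determinant into $-\det(G_{M,h})$ and applies the Schur complement to the same bordered matrix, whereas you obtain $1/(\V{u}_M^\trans H_{M,h}^{-1}\V{u}_M)$ by constrained minimization of $\V{p}^\trans H_{M,h}\V{p}$ subject to $\V{u}_M^\trans\V{p}=p(-1)=1$, and $\det(H_{M,h})/\det(G_{M,h})$ by a unit-triangular change of basis to $\{1,(1+y)y^{i-1}\}$ followed by a Schur complement of the resulting Gram matrix, identified with the same minimum via the parametrization $p=1+(1+y)s$, $s\in\pset_{M-1}$. Both arguments are sound; yours buys a self-contained proof that needs only the quadrature exactness and linear algebra, at the cost of losing the explicit link to the Christoffel function $(\sum_i p_i(-1)^2)^{-1}$ that the paper's route makes visible (though that identity is not reused elsewhere). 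The remaining steps — positive definiteness of $H_{M,h}$ and $G_{M,h}$ and the identity \eqref{eq:Eh_int} via $\del\widehat{W}_h(y)=(1+y)\,\del W_h(y)$ — coincide with the paper's.
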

\begin{proof}
Let $\V{0}_n \coloneqq (0, \ldots, 0)^\trans \in \mathbb{R}^n$, $n \in \intge{1}$. Let $(\alpha_0, \ldots, \alpha_M)^\trans \in \mathbb{R}^{M + 1} \setminus \{ \V{0}_{M + 1} \}$. By using $\sum_{i, j = 0}^M (H_{M, h})_{i,j} \alpha_i \alpha_j = \int_{\enum^{-bh}}^{\enum^{-ah}} (\sum_{i = 0}^M \alpha_i y^i)^2 \del W_h(y)$, \eqref{eq:Wh_bounded} for $W_h(y)$, and Theorem \ref{the:stieltjes_pos}, we have $\sum_{i, j = 0}^M (H_{M, h})_{i,j} \alpha_i \alpha_j > 0$. Thus $H_{M, h}$ is a positive definite matrix. Let $(\beta_0, \ldots, \beta_{M - 1})^\trans \in \mathbb{R}^M \setminus \{ \V{0}_M \}$. By \eqref{eq:Wh_eq} with $F(y) = y^{i + j} (1 + y)$, we have $(G_{M, h})_{i,j} = \int_{\enum^{-bh}}^{\enum^{-ah}} y^{i + j} (1 + y)^2 \del W_h(y)$. Then by using $\sum_{i, j = 0}^{M - 1} (G_{M, h})_{i,j} \beta_i \beta_j = \int_{\enum^{-bh}}^{\enum^{-ah}} (\sum_{i = 0}^{M - 1} \beta_i y^i)^2 (1 + y)^2 \del W_h(t)$, \eqref{eq:Wh_bounded} for $W_h(y)$, and Theorem \ref{the:stieltjes_pos}, we have $\sum_{i, j = 0}^{M - 1} (G_{M, h})_{i,j} \beta_i \beta_j > 0$. Thus $G_{M, h}$ is a positive definite matrix.

By substituting $F(y) = 1 / (1 + y)$ for \eqref{eq:Wh_eq}, we have $f(0) = \int_{\enum^{-bh}}^{\enum^{-ah}} \del W_h(y) = \int_{\enum^{-bh}}^{\enum^{-ah}} \frac{1}{1 + y} \del \widehat{W}_h(y)$. With this equality and \eqref{eq:thch}, we have \eqref{eq:Eh_int}.

For a given $0 < h < \infty$, let $\hat{p}_i(y)$, $i = 0, \ldots, M$, be the $i$-th degree orthonormal polynomial satisfying
\begin{equation}
\int_{\enum^{-bh}}^{\enum^{-ah}} \hat{p}_i(y) \hat{p}_j(y) \del \widehat{W}_h(y) = \delta_{i, j}, i = 0, \ldots, M, j = 0, \ldots, M,
\end{equation}
whose existence is guaranteed by \eqref{eq:Wh_bounded} for $\widehat{W}_h(y)$ and Theorem \ref{the:gauss}.
For $z \in \mathbb{C} \setminus [\enum^{-bh}, \enum^{-ah}]$, the Markov function is expressed as \cite[Eq.~1.3.40 and Theorem 1.47]{Gautschi2004}
\begin{align}
\label{eq:markov_function}
&\int_{\enum^{-bh}}^{\enum^{-ah}} \frac{1}{z - y} \del \widehat{W}_h(y) = \sum_{\nu = 1}^M \frac{ d_{h, \nu} }{ z - y_{h, \nu} } + \int_{\enum^{-bh}}^{\enum^{-ah}} \frac{ \hat{p}_M(y) }{ \hat{p}_M(z) } \frac{1}{z - y} \del \widehat{W}_h(y).
\end{align}
By substituting  $z = -1$ for \eqref{eq:markov_function} and using $f(0) = \int_{\enum^{-bh}}^{\enum^{-ah}} \frac{1}{1 + y} \del \widehat{W}_h(y)$, \eqref{eq:thch}, and \eqref{eq:Wh_eq} with $F(y) = \frac{ \hat{p}_M(y) }{ \hat{p}_M(-1) } \frac{1}{1 + y}$, we have
\begin{equation}
\label{eq:Eh_markov}
f(0) - \sum_{\nu = 1}^M c_{h, \nu} = \int_{\enum^{-bh}}^{\enum^{-ah}} \frac{\hat{p}_M(y)}{\hat{p}_M(-1)} \del W_h(y).
\end{equation}
Let $p_i(y)$ be the $i$-th degree orthonormal polynomial satisfying
\begin{equation}
\label{eq:dWh_orthonormal}
\int_{\enum^{-bh}}^{\enum^{-ah}} p_i(y) p_j(y)\del W_h(y) = \delta_{i, j}, i = 0, \ldots, M, j = 0, \ldots, M,
\end{equation}
whose existence is guaranteed by \eqref{eq:Wh_bounded} for $W_h(y)$ and Theorem \ref{the:gauss}. By substituting a formula of Christoffel $\frac{ \hat{p}_M(y) }{ \hat{p}_M(-1) } = \frac{ \sum_{i = 0}^M p_i(-1) p_i(y) }{ \sum_{i = 0}^M p_i(-1)^2 }$ \cite[Sec.~2.5 and Theorem 3.1.4]{Szego1975} for \eqref{eq:Eh_markov} and using \eqref{eq:dWh_orthonormal}, we have
\begin{equation}
\label{eq:Eh_christoffel}
f(0) - \sum_{\nu = 1}^M c_{h, \nu} = \left( \sum_{i = 0}^M p_i(-1)^2 \right)^{-1}.
\end{equation}
The function $\left( \sum_{i = 0}^M p_i(x)^2 \right)^{-1}$, $x \in \mathbb{R},$ is called the (real) Christoffel function,
which has the following extremal property \cite[Proposition 9.12]{Schmudgen2017}
\begin{equation}
\label{eq:christoffel_variational}
\left( \sum_{i = 0}^M p_i(x)^2 \right)^{-1} = \min_{ p \in \pset_M, p(x) = 1 } \int_{ \enum^{-bh} }^{ \enum^{-ah} } p(y)^2 \del W_h(y), x \in \mathbb{R}.
\end{equation}
By substituting $x = -1$ for \eqref{eq:christoffel_variational} and using \eqref{eq:Wh_eq} with $F(y) = p(y)^2 / (1 + y)$ and \eqref{eq:Eh_christoffel}, we have $f(0) - \sum_{\nu = 1}^M c_{h, \nu} = \min\limits_{p \in \pset_M, p(-1) = 1} \int_a^b p(\enum^{-ht})^2 \del W(t)$. The Christoffel function can also be expressed by the moments \cite[Proposition 9.11]{Schmudgen2017} as
\begin{equation}
\label{eq:christoffel_det}
\left( \sum_{i = 0}^M p_i(-1)^2 \right)^{-1} = \left. - \det(H_{M, h}) \middle/
\begin{vmatrix}
0 & \V{u}_M^\trans \\
\V{u}_M & H_{M, h}
\end{vmatrix}
\right..
\end{equation}
Let $\V{v} \in \mathbb{R}^M$, whose $i$-th component is  given as $f((i-1) h) + f(i h)$, $i = 1, \ldots, M$. By using \eqref{eq:Eh_christoffel}, \eqref{eq:christoffel_det}, and 
\begin{equation}
\begin{vmatrix}
0 & \V{u}_M^\trans \\
\V{u}_M & H_{M, h}
\end{vmatrix}
=
\begin{vmatrix}
0 & 1 & \V{0}_M^\trans \\
1 & f(0) & \V{v}^\trans \\
\V{0}_M & \V{v} & G_{M, h}
\end{vmatrix}
= - \det(G_{M, h}),
\end{equation}
we have $f(0) - \sum_{\nu = 1}^M c_{h, \nu} = \frac{ \det(H_{M, h}) }{ \det(G_{M, h}) }$. Since $H_{M, h}$ is a positive definite matrix, $H_{M, h}$ is invertible. Then by using \eqref{eq:Eh_christoffel}, \eqref{eq:christoffel_det}, and the Schur complement determinant formula $
\begin{vmatrix}
0 & \V{u}_M^\trans \\
\V{u}_M & H_{M, h}
\end{vmatrix}
= \det(H_{M, h}) \left( 0 - \V{u}_M^\trans H_{M, h}^{-1} \V{u}_M \right)
$ \cite[Sec.~0.8.5]{Horn1985}, we have $f(0) - \sum_{\nu = 1}^M c_{h, \nu} = \frac{ 1 }{ \V{u}_M^\trans H_{M, h}^{-1} \V{u}_M }$. Since $H_{M, h}$ and $G_{M, h}$ are positive definite matrices, we have $\frac{ \det(H_{M, h}) }{ \det(G_{M, h}) } > 0$.
\end{proof}

\begin{lemma}
\label{lem:hr}
For a given $0 < r < 1$, the equation
\begin{equation}
\label{eq:hr}
(1 - r) \enum^{- h } +  \enum^{- (1 - r) h } - r = 0
\end{equation}
has a unique solution in $0 < h < \infty$. Let $h_r$ be the solution. Let $W_0(\enum^{-1})$ be the principal value of the Lambert W function \cite{Corless1996} at $\enum^{-1}$. Then one has
\begin{align}
&\text{$\max_{0 < h < \infty} \frac{ 1 + \enum^{ - r h } }{ 1 + \enum^{- h} } = r \enum^{ (1 - r) h_r}$ and the maximum is attained iff $h = h_r$}, \\
\label{eq:hr_ineq}
&(1 - r / 2) \frac{ \log((2 - r)/r) }{ 1 - r } < h_r < \frac{ \log((2 - r) / r) }{ 1 - r }, \\
\label{eq:hr_ineq_W}
&1 + W_0(\enum^{-1}) < h_r < (1 + W_0(\enum^{-1})) / r, \\
\label{eq:h1}
&\lim_{r \uparrow 1} h_r = 1 + W_0(\enum^{-1}) \approx 1.278464542761.
\end{align}
\end{lemma}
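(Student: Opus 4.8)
The plan is to study the single function $\phi(h) \coloneqq (1-r)\enum^{-h} + \enum^{-(1-r)h} - r$, whose zero set is exactly the solution set of \eqref{eq:hr}, and to read off every assertion from the monotonicity and sign of $\phi$. First I would note that $\phi(0) = 2(1-r) > 0$, $\lim_{h\to\infty}\phi(h) = -r < 0$, and $\phi'(h) = -(1-r)(\enum^{-h} + \enum^{-(1-r)h}) < 0$ for all $h$; hence $\phi$ is continuous and strictly decreasing on $(0,\infty)$, so it has a unique zero $h_r$, and $\phi(h) > 0$ precisely for $0 < h < h_r$.

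Next, for the maximum of $g(h) \coloneqq (1 + \enum^{-rh})/(1 + \enum^{-h})$ I would differentiate $\log g$, obtaining $(\log g)'(h) = (\enum^h + 1)^{-1} - r(\enum^{rh} + 1)^{-1}$, which is positive iff $\enum^{rh} + 1 > r(\enum^h + 1)$; multiplying this inequality through by $\enum^{-h}$ turns it into $\phi(h) > 0$. Hence $\log g$, and therefore $g$, is strictly increasing on $(0, h_r)$ and strictly decreasing on $(h_r, \infty)$, which gives the maximizer $h_r$ and its uniqueness. For the maximal value, $\phi(h_r) = 0$ gives $\enum^{-(1-r)h_r} = r - (1-r)\enum^{-h_r}$, and substituting this, together with $\enum^{-rh_r} = \enum^{-h_r}\enum^{(1-r)h_r}$, into $g(h_r)$ collapses it to $r\enum^{(1-r)h_r}$ after a one-line simplification.

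For the two-sided estimate \eqref{eq:hr_ineq}, since $\phi$ is strictly decreasing it suffices to determine the sign of $\phi$ at the two proposed endpoints. At $h_1 \coloneqq \log((2-r)/r)/(1-r)$ one has $\enum^{-(1-r)h_1} = r/(2-r)$, hence $\phi(h_1) = (1-r)(\enum^{-h_1} - r/(2-r))$; with $\beta \coloneqq r/(2-r) \in (0,1)$ and $\enum^{-h_1} = \beta^{1/(1-r)} < \beta$ (because $1/(1-r) > 1$), this is negative, so $h_r < h_1$. At $h_2 \coloneqq (1 - r/2)h_1$, reparametrizing by $\beta = (1-s)/(1+s)$ with $s \coloneqq 1-r$, one computes $\enum^{-h_2} = \beta^{1/(1-\beta)}$ and $\enum^{-(1-r)h_2} = \beta^{1/(1+\beta)}$, so that $(1+\beta)\,\phi(h_2) = (1-\beta)\beta^{1/(1-\beta)} + (1+\beta)\beta^{1/(1+\beta)} - 2\beta$. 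The crux is the weighted arithmetic--geometric mean inequality with weights $\frac{1-\beta}{2}$ and $\frac{1+\beta}{2}$ applied to $\beta^{1/(1-\beta)}$ and $\beta^{1/(1+\beta)}$, which yields $(1-\beta)\beta^{1/(1-\beta)} + (1+\beta)\beta^{1/(1+\beta)} \ge 2\beta^{1/2}\beta^{1/2} = 2\beta$, strictly since the two exponents differ on $(0,1)$; therefore $\phi(h_2) > 0$ and $h_2 < h_r$.

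For \eqref{eq:hr_ineq_W}, I would use that $c \coloneqq 1 + W_0(\enum^{-1})$ is characterized by $\enum^{-c} = c - 1$. Then $\phi(c) = (1-r)c - 1 + \enum^{-(1-r)c} \ge 0$ by the elementary bound $\enum^{-x} \ge 1-x$, strictly when $r < 1$, so $h_r > c$. For the upper bound I would substitute $h = c/r$ and set $u \coloneqq (1-r)c/r > 0$, so that $r = c/(c+u)$, $c/r = c+u$, $\enum^{-c/r} = (c-1)\enum^{-u}$ and $\enum^{-(1-r)c/r} = \enum^{-u}$; a short simplification then gives $\phi(c/r) = \frac{c}{c+u}\bigl((u+1)\enum^{-u} - 1\bigr) < 0$ because $\enum^{u} > 1 + u$ for $u > 0$, so $h_r < c/r$. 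Finally \eqref{eq:h1} follows by squeezing: as $r \uparrow 1$ the lower bound in \eqref{eq:hr_ineq_W} is the constant $c$ and the upper bound $c/r$ tends to $c$, while the quoted decimal is a direct numerical evaluation of $1 + W_0(\enum^{-1})$. I expect the main obstacle to be the lower bound in \eqref{eq:hr_ineq}: one has to find the reparametrization $\beta = r/(2-r)$ that symmetrizes $\phi(h_2)$ and the exact weights that make the weighted AM--GM applicable; once that is in hand, everything else reduces to the monotonicity of $\phi$ together with the inequality $\enum^{x} \ge 1 + x$.
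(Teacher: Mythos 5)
Your proposal is correct and follows essentially the same route as the paper: both reduce everything to the sign of $G(h)=(1-r)\enum^{-h}+\enum^{-(1-r)h}-r$, identify $h_r$ as the unique zero via monotonicity, recover the critical point of $\frac{1+\enum^{-rh}}{1+\enum^{-h}}$ from the factorization of its (logarithmic) derivative through $G$, and prove the bounds with the same inequalities (weighted AM--GM/convexity of the exponential for the lower bound in \eqref{eq:hr_ineq}, $\enum^{-h}<\enum^{-(1-r)h}$ for the upper bound, and $\enum^{x}\ge 1+x$ type estimates for \eqref{eq:hr_ineq_W}). The only difference is presentational: the paper sandwiches $G$ between explicit auxiliary functions and locates their roots, whereas you evaluate $G$ at the candidate endpoints and check its sign directly -- the underlying computations coincide.
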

\begin{proof}
Let $0 < h < \infty$, $F(h) \coloneqq \frac{ 1 + \enum^{- r h} }{ 1 + \enum^{-h} }$, and $G(h) \coloneqq (1 - r) \enum^{-h} + \enum^{- (1 - r) h} - r$. Then we have $F'(h) = \enum^{- r h} \frac{ G(h) }{ (\enum^{-h} + 1)^2 }$. Since $G(h)$ satisfies $G'(h) < 0$, $G(0) > 0$, and $\lim_{h \to \infty} G(h) < 0$, there exists a unique $0 < h_r < \infty$ satisfying $G(h_r) = 0$. Then $F'(h) > 0$ on $(0, h_r)$, $F'(h_r) = 0$, and $F'(h) < 0$ on $(h_r, \infty)$, which indicate $F(h)$ is maximized iff $h = h_r$. By using $G(h_r) = 0$, we have $F(h_r) = r \enum^{ (1 - r) h_r }$. By the strict convexity of the exponential function, we have $G(h) > (2 - r) \enum^{ - \frac{ 1 - r }{1 - r / 2} h } - r \eqqcolon L(h)$. Since $L(h)$ has a root $(1 - r / 2) \frac{ \log((2 - r)/r) }{ 1 - r }$, we have the first inequality in \eqref{eq:hr_ineq}. By using $\enum^{-h} < \enum^{- (1 - r) h}$, we have $G(h) < (2 - r) \enum^{-(1 - r) h} - r \eqqcolon U(h)$. Since $U(h)$ has a root $\frac{ \log((2 - r)/r) }{ 1 - r }$, we have the second inequality in \eqref{eq:hr_ineq}. By using $\enum^{- (1 - r) h} > 1 - (1 - r) h$, we have $G(h) > (1 - r) (\enum^{-h} + 1 - h) \eqqcolon l(h)$. Since $l(h)$ has a root $1 + W_0(\enum^{-1})$, we have the first inequality in \eqref{eq:hr_ineq_W}. By using $\enum^{- (1 - r) h} < \frac{ 1 }{ 1 + (1 - r) h }$, we have $G(h) < (1 - r) \frac{ \enum^{- r h} + 1 - r h }{ 1 + (1 - r) h } \eqqcolon u(h)$. Since $u(h)$ has a root $(1 + W_0(\enum^{-1})) / r$, we have the second inequality in \eqref{eq:hr_ineq_W}. \eqref{eq:h1} follows from \eqref{eq:hr_ineq_W}.
\end{proof}
\begin{theorem}
\label{the:Eh_ineq}
Under the conditions in Lemma \ref{lem:best_init} and Corollary \ref{cor:best_init}, one has
\begin{subequations}
\begin{align}
\label{eq:Eh_ineq_gauss}
&0 < f(0) - \sum_{\nu = 1}^M c_{h, \nu} \le \left( \sqrt{ \frac{ 1 + \enum^{-ah} }{ 1 + \enum^{-bh} } } + 1 \right)^2 \left( \frac{ \sqrt{ \frac{ 1 + \enum^{-ah} }{ 1 + \enum^{-bh} } } + 1 }{ \sqrt{ \frac{ 1 + \enum^{-ah} }{ 1 + \enum^{-bh} } } - 1 } \right)^{-2M}  f(0) \\
\label{eq:Eh_ineq_hr}
&\le \left( \sqrt{a/b} \enum^{ \frac{1}{2} (1 - a/b) h_{a/b} } + 1 \right)^2 \left( \frac{ \sqrt{a/b} \enum^{ \frac{1}{2} (1 - a/b) h_{a/b} } + 1 }{ \sqrt{a/b} \enum^{ \frac{1}{2} (1 - a/b) h_{a/b} } - 1 } \right)^{-2M} f(0) \\
\label{eq:Eh_ineq_h0}
&< (\sqrt{2} + 1)^{-4M + 2} f(0) \approx 5.8284271247462 \times 33.970562748477^{-M} f(0),
\end{align}
\end{subequations}
where $h_{a/b}$ is defined in Lemma \ref{lem:hr} and the equality in \eqref{eq:Eh_ineq_hr} holds iff $h = h_{a/b} / b$.
\end{theorem}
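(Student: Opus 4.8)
The plan is to run everything through the variational identity
\[
f(0) - \sum_{\nu = 1}^M c_{h, \nu} = \min_{p \in \pset_M,\ p(-1) = 1} \int_a^b p(\enum^{-ht})^2 \del W(t)
\]
from \eqref{eq:Eh_variational} in Lemma \ref{lem:Eh_eq}, which already supplies the strict lower bound $f(0) - \sum_\nu c_{h,\nu} > 0$. To get the upper bound \eqref{eq:Eh_ineq_gauss} I would feed into this minimum a single competitor: the rescaled Chebyshev polynomial $p^*(y) \coloneqq T_M(\ell(y)) / T_M(\ell(-1))$, where $\ell$ is the affine bijection of $[\enum^{-bh}, \enum^{-ah}]$ onto $[-1, 1]$. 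Then $p^* \in \pset_M$, $p^*(-1) = 1$, and since $\enum^{-ht}$ ranges over $[\enum^{-bh}, \enum^{-ah}]$ as $t$ runs over $[a,b]$ while $\int_a^b \del W(t) = f(0)$,
\[
f(0) - \sum_{\nu = 1}^M c_{h, \nu} \le f(0) \max_{\enum^{-bh} \le y \le \enum^{-ah}} p^*(y)^2 = \frac{f(0)}{T_M(\ell(-1))^2}.
\]
The next step is bookkeeping: writing $s \coloneqq \frac{1 + \enum^{-ah}}{1 + \enum^{-bh}} > 1$ one has $|\ell(-1)| = \frac{(1+\enum^{-ah}) + (1+\enum^{-bh})}{(1+\enum^{-ah}) - (1+\enum^{-bh})} = \frac{s+1}{s-1}$, and with $\rho \coloneqq \frac{\sqrt{s}+1}{\sqrt{s}-1}$ one checks $\frac12(\rho + \rho^{-1}) = \frac{s+1}{s-1}$, so $T_M(\ell(-1))^2 = \frac14(\rho^M + \rho^{-M})^2 \ge \frac14 \rho^{2M}$. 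Combining this with $4 \le (\sqrt{s}+1)^2$ (valid since $s > 1$) produces exactly \eqref{eq:Eh_ineq_gauss}; I will abbreviate its right-hand side as $g(\sqrt{s})\, f(0)$ with $g(w) \coloneqq (w+1)^{2-2M}(w-1)^{2M}$.

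For \eqref{eq:Eh_ineq_hr} I would first record that $g$ is strictly increasing on $(1, \infty)$ for every $M \in \intge{1}$, since $g'(w)/g(w) = \frac{2w + 4M - 2}{w^2 - 1} > 0$ there. Hence it suffices to maximize $s = s(h) = \frac{1+\enum^{-ah}}{1+\enum^{-bh}}$ over $0 < h < \infty$: the substitution $h = \tilde h / b$ turns this into $\sup_{\tilde h > 0} \frac{1 + \enum^{-(a/b)\tilde h}}{1 + \enum^{-\tilde h}}$, which by Lemma \ref{lem:hr} (applied with $r = a/b$) equals $(a/b)\, \enum^{(1 - a/b) h_{a/b}}$ and is attained precisely at $\tilde h = h_{a/b}$, i.e.\ at $h = h_{a/b}/b$. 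Therefore $\sqrt{s(h)} \le \sqrt{a/b}\, \enum^{(1-a/b)h_{a/b}/2}$ with equality iff $h = h_{a/b}/b$, and feeding this into the monotone $g$ gives \eqref{eq:Eh_ineq_hr} together with its equality case.

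Finally, for \eqref{eq:Eh_ineq_h0} the key identity is $g(\sqrt{2}) = (\sqrt{2}+1)^{2}\left(\frac{\sqrt{2}+1}{\sqrt{2}-1}\right)^{-2M} = (\sqrt{2}+1)^{2-4M}$, using $\frac{\sqrt2+1}{\sqrt2-1} = (\sqrt2+1)^2$; since $g$ is increasing, \eqref{eq:Eh_ineq_h0} is equivalent to the strict inequality $\sqrt{a/b}\, \enum^{(1-a/b)h_{a/b}/2} < \sqrt{2}$, i.e.\ $(a/b)\, \enum^{(1-a/b)h_{a/b}} < 2$. Here I would invoke the second inequality of \eqref{eq:hr_ineq}, namely $h_{a/b} < \frac{\log((2 - a/b)/(a/b))}{1 - a/b}$, which gives $\enum^{(1-a/b)h_{a/b}} < \frac{2 - a/b}{a/b}$ and hence $(a/b)\, \enum^{(1-a/b)h_{a/b}} < 2 - a/b < 2$, closing the chain. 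The only points that need care are the exact reductions $|\ell(-1)| = \frac{s+1}{s-1}$, $\rho = \frac{\sqrt s + 1}{\sqrt s - 1}$, and $g(\sqrt2) = (\sqrt2+1)^{2-4M}$, together with the monotonicity of $g$, which is what permits optimizing over the scalar $s(h)$ and quoting Lemma \ref{lem:hr} directly; the rest is assembly of already-proven facts.
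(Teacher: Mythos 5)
Your proof is correct, and for the first inequality it takes a genuinely different route from the paper. The paper proves \eqref{eq:Eh_ineq_gauss} by combining the Markov-function/quadrature representation \eqref{eq:Eh_int} with the generic Gaussian-quadrature error bound \eqref{eq:gauss_error_dist} and Chebyshev's exact formula \eqref{eq:dist_inverse} for $\dist\bigl(\tfrac{1}{u-v},\pset_{2M-1},[-1,1]\bigr)$, then multiplies out to land exactly on $(\sqrt{s}+1)^{2-2M}(\sqrt{s}-1)^{2M}f(0)$. You instead take the Christoffel-function characterization \eqref{eq:Eh_variational} from the same Lemma \ref{lem:Eh_eq} and feed it the single test polynomial $T_M(\ell(y))/T_M(\ell(-1))$; your reductions $|\ell(-1)|=\tfrac{s+1}{s-1}=\tfrac12(\rho+\rho^{-1})$ with $\rho=\tfrac{\sqrt{s}+1}{\sqrt{s}-1}$ all check out, and in fact your intermediate bound $4\rho^{-2M}f(0)$ is strictly sharper than the stated right-hand side (since $(\sqrt{s}+1)^2>4$ for $s>1$), which you then correctly relax to match \eqref{eq:Eh_ineq_gauss}. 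What your route buys is self-containedness (only $\max_{[-1,1]}|T_M|=1$ and $T_M\bigl(\tfrac{\rho+\rho^{-1}}{2}\bigr)=\tfrac{\rho^M+\rho^{-M}}{2}$ are needed, with no appeal to \eqref{eq:gauss_error_dist} or \eqref{eq:dist_inverse}) and a marginally better constant; what the paper's route buys is that the constant $(\sqrt{s}+1)^2$ emerges without any deliberate weakening. For \eqref{eq:Eh_ineq_hr} and \eqref{eq:Eh_ineq_h0} your argument coincides with the paper's (Lemma \ref{lem:hr} applied to $s(h)$ after rescaling $h$, then the second inequality of \eqref{eq:hr_ineq} to get $(a/b)\enum^{(1-a/b)h_{a/b}}<2-a/b<2$); your explicit verification that $g(w)=(w+1)^{2-2M}(w-1)^{2M}$ is strictly increasing on $(1,\infty)$ is a useful detail the paper leaves implicit when it pushes the maximization of $\tfrac{1+\enum^{-ah}}{1+\enum^{-bh}}$ through the bound, and it also cleanly delivers the stated equality case.
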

\begin{proof}
By using \eqref{eq:Eh_eq} and \eqref{eq:gauss_error_dist}, we have
\begin{equation}
\label{eq:Eh_gauss_best}
0 < f(0) - \sum_{\nu = 1}^M c_{h, \nu} \le 2 \dist\left( \frac{1}{1 + y}, \pset_{2M-1}, [\enum^{-bh}, \enum^{-ah}] \right) \int_{\enum^{-bh}}^{ \enum^{-ah} } \del \widehat{W}_h(y).
\end{equation}
By using Chebyshev's equality \eqref{eq:dist_inverse}, we have
\begin{equation}
\label{eq:chebyshev_exp}
\dist\left( \frac{1}{1 + y}, \pset_{2M-1}, [\enum^{-bh}, \enum^{-ah}] \right) = \frac{ \enum^{-ah} - \enum^{-bh} }{ 2 (\enum^{-ah} + 1) (\enum^{-bh} + 1) } \left( \frac{ \sqrt{\enum^{-ah} + 1} + \sqrt{\enum^{-bh} + 1} }{ \sqrt{\enum^{-ah} + 1} - \sqrt{\enum^{-bh} + 1} } \right)^{ -(2M-1) }.
\end{equation}
By substituting \eqref{eq:chebyshev_exp} for \eqref{eq:Eh_gauss_best} and using $\int_{\enum^{-bh}}^{ \enum^{-ah} } \del \widehat{W}_h(y) \le (1 + \enum^{-ah}) f(0)$ by \eqref{eq:Wh_eq} with $F(y) = 1$, we have \eqref{eq:Eh_ineq_gauss}. By Lemma \ref{lem:hr}, we have $\max\limits_{0 < h < \infty} \frac{ 1 + \enum^{-ah} }{ 1 + \enum^{-bh} } = \frac{a}{b} \enum^{ (1 - a/b) h_{a/b} }$ and the maximum value is attained iff $bh = h_{a/b}$, which leads to \eqref{eq:Eh_ineq_hr}. By the second inequality in \eqref{eq:hr_ineq} and $0 < \frac{a}{b} < 1$, we have $\frac{a}{b} \enum^{ (1 - a/b) h_{a/b} } < 2 - a/b < 2$, which leads to \eqref{eq:Eh_ineq_h0}. 
\end{proof}

By \eqref{eq:Eh_ineq_gauss}, we have $\lim\limits_{h \downarrow 0} (f(0) - \sum_{\nu = 1}^M c_{h, \nu}) = \lim\limits_{h \uparrow \infty} (f(0) - \sum_{\nu = 1}^M c_{h, \nu}) = 0$. Thus the value of $f(0) - \sum_{\nu = 1}^M c_{h, \nu}$ can be close to zero and we need to be careful of the choice of $0 < h < \infty$. As shown in Theorem \ref{the:Eh_ineq}, the upper bound in \eqref{eq:Eh_ineq_gauss} is maximized iff $h = h_{a/b} / b$. Thus if the upper bound is the reasonable estimate of $f(0) - \sum_{\nu = 1}^M c_{h, \nu}$, the use of $h = h_{a/b} / b$ is expected to be larger value of $f(0) - \sum_{\nu = 1}^M c_{h, \nu}$. For $h = h_{a/b} / b$, the bound \eqref{eq:Eh_ineq_hr} decreases geometrically with respect to $M$ and the decreasing rate is faster than $ (\sqrt{2} + 1)^4 \approx 33.97$ as indicated by \eqref{eq:Eh_ineq_h0} and Table \ref{tab:hr}. It is necessary to use a higher precision than the value predicted by \eqref{eq:Eh_ineq_hr} to obtain initial parameters of Remez algorithm described in this section.

\begin{table}[htbp]
{\footnotesize
\caption{Values of $h_r$ in Lemma \ref{lem:hr} and factors in \eqref{eq:Eh_ineq_hr}. The equation \eqref{eq:hr} was numerically solved by Newton's method, whose initial value was given by the lower bound in \eqref{eq:hr_ineq}. }
\label{tab:hr}
\begin{center}
\begin{tabular}{lcccc}
\hline
 $r$
 & $h_r$
 & $r \enum^{ (1 - r) h_r }$
 & $\left( \sqrt{r} \enum^{ \frac{1}{2} (1 - r) h_r } + 1 \right)^2$
 & $\left( \frac{ \sqrt{r} \enum^{ \frac{1}{2} (1 - r) h_r } + 1 }
 { \sqrt{r} \enum^{ \frac{1}{2} (1 - r) h_r } - 1 } \right)^2$ \\
\hline
$2^{-1}$ & 1.7627472 & 1.2071068 & 4.4044750 & 452.27196 \\
$2^{-2}$ & 2.3162347 & 1.4203192 & 4.8038622 & 130.62386 \\
$2^{-3}$ & 2.9189951 & 1.6074984 & 5.1432416 & 71.677624 \\
$2^{-4}$ & 3.5547945 & 1.7507270 & 5.3970279 & 51.682703 \\
$2^{-5}$ & 4.2121565 & 1.8492696 & 5.5690267 & 42.999899 \\
$2^{-6}$ & 4.8833662 & 1.9120587 & 5.6776029 & 38.751138 \\
$2^{-7}$ & 5.5633166 & 1.9499874 & 5.7428263 & 36.543968 \\
$2^{-8}$ & 6.2486724 & 1.9720746 & 5.7806860 & 35.363856 \\
$2^{-9}$ & 6.9372990 & 1.9846151 & 5.8021425 & 34.725121 \\
$2^{-10}$ & 7.6278639 & 1.9916093 & 5.8140970 & 34.378220 \\
$2^{-11}$ & 8.3195557 & 1.9954597 & 5.8206744 & 34.190015 \\
$2^{-12}$ & 9.0118921 & 1.9975586 & 5.8242589 & 34.088232 \\
$2^{-13}$ & 9.7045920 & 1.9986941 & 5.8261977 & 34.033408 \\
$2^{-14}$ & 10.397495 & 1.9993046 & 5.8272399 & 34.004002 \\
$2^{-15}$ & 11.090509 & 1.9996311 & 5.8277974 & 33.988294 \\
$2^{-16}$ & 11.783584 & 1.9998050 & 5.8280942 & 33.979935 \\
$2^{-17}$ & 12.476693 & 1.9998972 & 5.8282516 & 33.975503 \\
$2^{-18}$ & 13.169820 & 1.9999459 & 5.8283349 & 33.973160 \\
$2^{-19}$ & 13.862956 & 1.9999717 & 5.8283787 & 33.971925 \\
$2^{-20}$ & 14.556097 & 1.9999852 & 5.8284018 & 33.971275 \\
\hline
\end{tabular}
\end{center}
}
\end{table}

\subsection{Finite completely monotonic function associated with the inverse power function}
\label{sec:inverse_power}
Let $0 < a < b < \infty$, $0 \le x < \infty$, and $0 < \eta < \infty$. For numerical experiments, we consider the finite completely monotonic function associated with the inverse power function \eqref{eq:f_inv} as follows:
\begin{align}
\label{eq:fcmf_inv}
&f(x) = \int_a^b \enum^{-xt} \frac{ t^{\eta - 1} }{ \Gamma(\eta) } \del t = \int_a^b \enum^{-xt} W'(t) \del t = \int_a^b \enum^{-xt} \del W(t) \eqqcolon f_{\eta, a, b}(x), \\
&W'(t) = \frac{ t^{\eta - 1} }{ \Gamma(\eta) }, W(t) = \int_a^t \frac{ s^{\eta - 1} }{ \Gamma(\eta) } \del s = \frac{ t^\eta - a^\eta }{ \Gamma(\eta + 1) }, a \le t \le b.
\end{align}
By using the generalized incomplete gamma function $\Gamma(y, A, B) = \int_A^B s^{y - 1} \enum^{-s} \del s$ for $0 < y < \infty$ and $0 \le A < B$ \cite{mpmath}, $f^{(n)}(x)$ for $n \in \intge{0}$ can be computed by
\begin{equation}
f^{(n)}(x) =
\begin{cases}
\frac{ (-1)^n }{ \Gamma(\eta) } \frac{ b^{n + \eta} - a^{n + \eta} }{ n + \eta } = \frac{ (-1)^n a^{n + \eta} \expm1((n + \eta) \log(b/a)) }{ (n + \eta) \Gamma(\eta) }, & x = 0, \\
\frac{ (-1)^n }{ \Gamma(\eta) } \frac{ \Gamma(n + \eta, ax, bx) }{ x^{ n + \eta } }, & 0 < x < \infty,
\end{cases}
\end{equation}
where $\expm1(x) \coloneqq \enum^x - 1$. By the change of variable $t = b \tau$ for \eqref{eq:fcmf_inv}, we have
\begin{equation}
\label{eq:fab_inv_scaling}
f_{\eta, a, b}(x) = \int_{a/b}^1 \enum^{-x b \tau} \frac{(b \tau)^{\eta - 1}}{\Gamma(\eta)} b \del \tau = b^\eta f_{\eta, a/b, 1}(bx),
\end{equation}
which indicates $f_{\eta, a, b}(x)$ can be obtained by linear scaling of $f_{\eta, a/b, 1}(x)$. Then without loss of generality, we fixed $b = 1$ in the following numerical experiments. For the other parameters, we used $\eta \in \{1/2, 1, 2\}$ and $a \in \{2^{-1}, 2^{-10}\}$. We used $\vt_{a/b} \in \{ \Phi_{a/b}, \vt_{\exp, a/b}, \vt_{\pset_2, a/b}, \vt_{\pset_1, a/b}, \vt_{\rset_{0, 1}, a/b} \}$ for the variable transformations. Among these parameter sets, by noting
\begin{align}
&f_{1, a, b}(x) = \int_a^b \enum^{-xt} \del t = \frac{ b - a }{ 2 } \int_{-1}^1 \enum^{-x b \vt_{\pset_1, a/b}(u)} \del u, \\
\label{eq:fab_1_2}
&f_{1/2, a, b}(x) = \int_a^b \enum^{-xt} \frac{ t^{- 1/2} }{ \sqrt{\pi} } \del t = \frac{ 2 }{ \sqrt{\pi} } \int_{\sqrt{a}}^{\sqrt{b}} \enum^{-x s^2} \del s = \frac{ \sqrt{b} - \sqrt{a} }{ \sqrt{\pi} } \int_{-1}^1 \enum^{-x b \vt_{\pset_2, a/b}(u)} \del u,
\end{align}
nodes $u_\nu$ and weights $c_\nu$ of $M$-point Gaussian quadrature in Lemma \ref{lem:vt_gauss} can be expressed by nodes $u_\nu^\mathrm{GL}$ and weights $c_\nu^\mathrm{GL}$ of $M$-point Gauss--Legendre quadrature as
\begin{align}
&\text{$u_\nu = u_\nu^\mathrm{GL}$ and $c_\nu = \frac{ b - a }{ 2 } c_\nu^\mathrm{GL}$ if $\eta = 1$ and $\vt_{a/b} = \vt_{\pset_1, a/b}$}, \\
\label{eq:fab_1_2_P2_gauss}
&\text{$u_\nu = u_\nu^\mathrm{GL}$ and $c_\nu = \frac{ \sqrt{b} - \sqrt{a} }{ \sqrt{\pi} } c_\nu^\mathrm{GL}$ if $\eta = 1/2$ and $\vt_{a/b} = \vt_{\pset_2, a/b}$},
\end{align}
where $\nu = 1, \ldots, M$.

\subsubsection{Computation of the best exponential sum approximation}
\label{sec:numerical_best}
We consider the computation of the best exponential sum approximation for \eqref{eq:fcmf_inv}. We used $M = 1, \dots, 17$ for the numerical experiments. For $M = 17$, based on the bound \eqref{eq:Eh_ineq_hr} and the values in Table \ref{tab:hr}, we need the precision at least $4.404 \times 452.3^{-17} \approx 3.174 \times 10^{-45}$ for $a = 2^{-1}$ and $5.814 \times 34.38^{-17} \approx 4.439 \times 10^{-26}$ for $a = 2^{-10}$. For the consideration of the overestimation of the bound and the accuracy of the parameters $t_{h, \nu}$ and $c_{h, \nu}$, we used additional $10^{-30}$ precision. As a result, we required the precision of $3.174 \times 10^{-(45+30)} \approx 2^{-247.5}$ for $a = 2^{-1}$ and $4.439 \times 10^{-(26+30)} \approx 2^{-183.9}$ for $a = 2^{-10}$. Then we set the precision parameter (mp.prec) of mpmah library as 248 bit for $a = 2^{-1}$ and 184 bit for $a = 2^{-10}$. Let $E_{M, h} \coloneqq f(0) - \sum_{\nu = 1}^M c_{h, \nu}$. In Figure \ref{fig:EMh_f0}, we showed $E_{M, h} / f(0)$ and the corresponding upper bound \eqref{eq:Eh_ineq_gauss}. For $a = 2^{-1}$, we see that the upper bound gives the accurate estimate of $E_{M, h}$. For $a = 2^{-10}$, the upper bound becomes larger by increasing $\eta$ for $h > h_{a/b} / b$. However, the deviation for $h < h_{a/b} / b$ is modest and the selection of $h = h_{a/b} / b$ succeeds to give a larger value of $E_{M, h}$. Then we used $h = h_{a/b} / b$ for all cases.
\begin{figure}[htbp]
\centering
\includegraphics{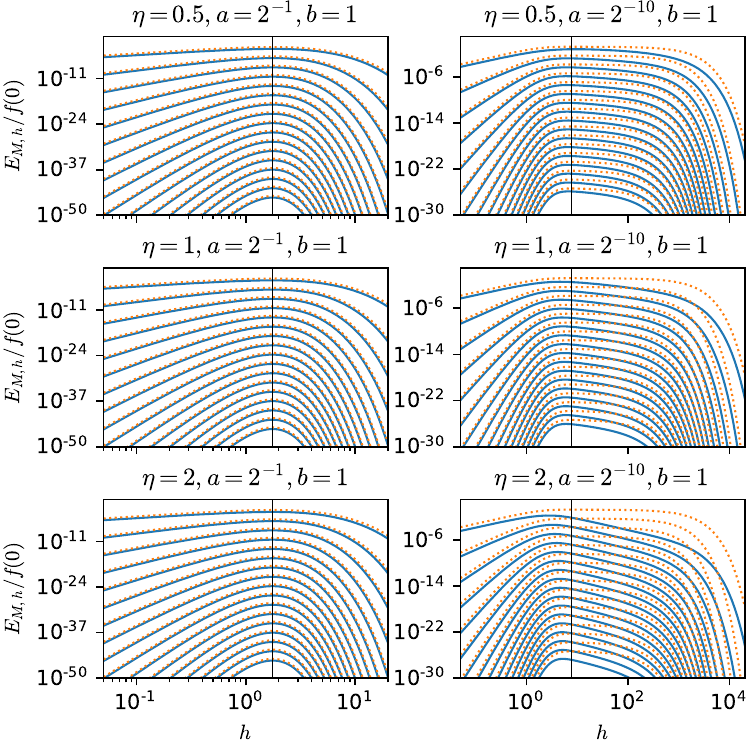}
\caption{ $E_{M, h} / f(0)$ (solid curves), $M = 1$ (top solid curve), $\ldots, 17$ (bottom solid curve), where $E_{M, h} \coloneqq f(0) - \sum_{\nu = 1}^M c_{h, \nu}$ and $f(x)$ is defined in \eqref{eq:fcmf_inv}. The dotted curves represent the upper bound in \eqref{eq:Eh_ineq_gauss}, $M = 1$ (top dotted curve), $\ldots, 17$ (bottom dotted curve). The vertical solid line represents $h = h_{a/b} / b$ in Theorem \ref{the:Eh_ineq}. We used $E_{M, h} = (\V{u}_M^\trans H_{M, h}^{-1} \V{u}_M)^{-1}$ in Lemma \ref{lem:Eh_eq} for the computation.
}
\label{fig:EMh_f0}
\end{figure}

Next, we consider the computation of nodes $y_{h, \nu}$ and weights $d_{h, \nu}$, $\nu = 1, \ldots, M$, of the Gaussian quadrature in Lemma \ref{lem:best_init}. For the computation, we used the discretized Stieltjes procedure by Gautschi \cite{Gautschi1982} and Golub-Welsch algorithm \cite{Golub1969}. For the discretized Stieltjes procedure, it is necessary to approximate the integral $\int_{ \enum^{ -bh } }^{ \enum^{ -ah } } P(y) \del \widehat{W}_h(y)$ by some $M_\mathrm{DS}$-point quadrature method with nodes $y_{h, \nu}^\mathrm{DS}$ and weights $d_{h, \nu}^\mathrm{DS}$, $\nu = 1, \ldots, M_\mathrm{DS}$, as
\begin{equation}
\int_{ \enum^{ -bh } }^{ \enum^{ -ah } } P(y) \del \widehat{W}_h(y) \approx \sum_{\nu = 1}^{M_\mathrm{DS}} d_{h, \nu}^\mathrm{DS} P(y_{h, \nu}^\mathrm{DS}), P \in \pset_{2M-1}.
\end{equation}
Let $\vt_{a/b} \in \vtset_{a/b}$ and $\rhoana[\vt_{a/b}] > 1$. Then we have
\begin{align}
\label{eq:init_remez_vt}
&\int_{ \enum^{ -bh } }^{ \enum^{ -ah } } P(y) \del \widehat{W}_h(y) = \int_a^b P(\enum^{-ht}) (1 + \enum^{-ht}) \del W(t) = \int_{-1}^1 P(\enum^{-h b \vt_{a/b}(u)}) (1 + \enum^{-h b \vt_{a/b}(u)}) \del W(b \vt_{a/b}(u)) \\ \nonumber
&= \int_{-1}^1 P(\enum^{-h b \vt_{a/b}(u)}) (1 + \enum^{-h b \vt_{a/b}(u)}) b \vt_{a/b}'(u) W'(b \vt_{a/b}(u)) \del u.
\end{align}
By applying $M_\mathrm{DS}$-point Gauss--Legendre quadrature with nodes $u_\nu^\mathrm{GL}$ and weights $c_\nu^\mathrm{GL}$, $\nu = 1, \ldots, M_\mathrm{DS}$, to the right-hand side of \eqref{eq:init_remez_vt}, we used
\begin{subequations}
\label{eq:ydDS}
\begin{align}
&y_{h, \nu}^\mathrm{DS} = \enum^{-h b \vt_{a/b}(u_\nu^\mathrm{GL})}, \\
&d_{h, \nu}^\mathrm{DS} = c_\nu^\mathrm{GL} (1 + \enum^{-h b \vt_{a/b}(u_\nu^\mathrm{GL})}) b \vt'_{a/b}(u_\nu^\mathrm{GL}) W'(b \vt_{a/b}(u_\nu^\mathrm{GL})),
\end{align}
\end{subequations}
where $\nu = 1, \ldots, M_\mathrm{DS}$. Let $t_{h, \nu}^{\vt_{a/b}, M_\mathrm{DS}}$ (resp. $c_{h, \nu}^{\vt_{a/b}, M_\mathrm{DS}}$), $1 \le \nu \le M$, be the estimate of $t_{h, \nu}$ (resp. $c_{h, \nu}$) in Corollary \ref{cor:best_init} computed by the discretized Stieltjes procedure with \eqref{eq:ydDS} and Golub-Welsch algorithm. To measure the convergence by increasing $M_\mathrm{DS}$, we define the maximum relative error as
\begin{equation}
\label{eq:MREh}
\mathrm{MRE}_h \coloneqq \max_{X \in \{ t, c \}, 1 \le \nu \le M} \left\lvert \frac{ X_{h, \nu}^{\vt_{a/b}, M_\mathrm{DS}} - X_{h, \nu}^{\vt_{a/b}, 2  M_\mathrm{DS}} }{ X_{h, \nu}^{\vt_{a/b}, 2 M_\mathrm{DS}} } \right\rvert.
\end{equation}
In Figure \ref{fig:best_init_MDS}, we showed $\mathrm{MRE}_h$ with $h = h_{a/b} / b$. Since $\vt_{\pset_2, a/b}$ gives the good convergence for all conditions, we used $\vt_{a/b} = \vt_{\pset_2, a/b}$. Based on the convergence, we used $M_\mathrm{DS} = 96$ and $M_\mathrm{DS} = 192$ for $a = 2^{-1}$ and $a = 2^{-10}$, respectively.
\begin{figure}[htbp]
\centering
\includegraphics{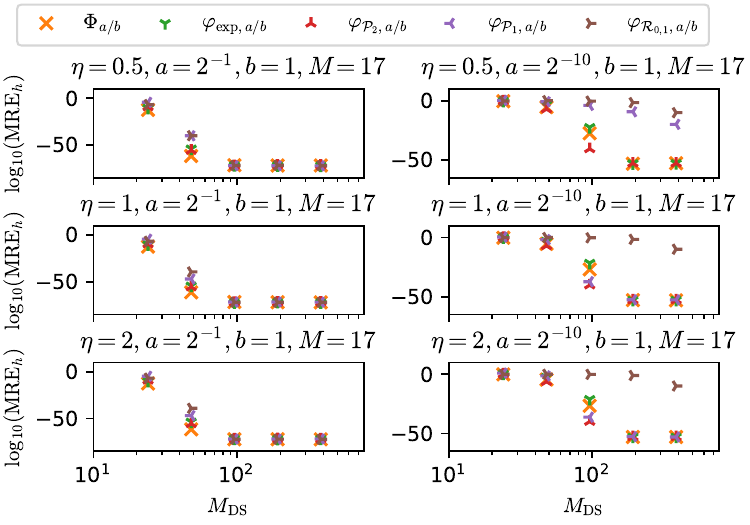}
\caption{$\mathrm{MRE}_h$ defined in \eqref{eq:MREh} with $h = h_{a/b} / b$. $\vt_{a/b}$ is specified in the legend. $M_{\mathrm{DS}} = 24, 48, 96, 192, 384$.
}
\label{fig:best_init_MDS}
\end{figure}

Then we set the initial parameters of the Remez algorithm as $t_\nu = t_{h, \nu}^{\vt_{a/b}, M_\mathrm{DS}}$, $c_\nu = c_{h, \nu}^{\vt_{a/b}, M_\mathrm{DS}}$, $\nu = 1, \ldots, M$, and $x_i = ih$, $i = 1, \ldots, 2M$, with $h = h_{a/b}/b$. We iterated the Remez algorithm to satisfy $\max\limits_{1 \le i \le 2M} \lvert \frac{ E_M(x_{i-1}) + E_M(x_i) }{ E_M(x_i)} \rvert \le \varepsilon$ and $\max\limits_{1 \le i \le 2M} \lvert x_i \frac{ E_M'(x_i) }{ E_M(x_i) } \rvert \le \varepsilon$, where $E_M(x)$ is defined in \eqref{eq:best_eq} and we set $\varepsilon = 10^{-10}$. For further refinement, we conducted 5 iterations of Newton's method for \eqref{eq:best_eq}. We confirmed the obtained error curves have the equioscillation property \cite{KoyamaZenodo2023}.

\subsubsection{Computation of nodes and weights in Lemma \ref{lem:vt_gauss}}
\label{sec:numerical_vt_gauss}
We consider the computation of nodes $u_\nu$ and weights $c_\nu$ of $M$-point Gaussian quadrature in Lemma \ref{lem:vt_gauss}. For the discretized Stieltjes procedure, we need to approximate the integral $\int_{-1}^1 P(u) \del W(b \vt_{a / b}(u))$ by some $M_\mathrm{DS}$-point quadrature method with nodes $u_\nu^\mathrm{DS}$ and weights $c_\nu^\mathrm{DS}$ as
\begin{equation}
\int_{-1}^1 P(u) \del W(b \vt_{a / b}(u)) \approx \sum_{\nu = 1}^{M_\mathrm{DS}} c_\nu^\mathrm{DS} P(u_\nu^\mathrm{DS}), P \in \pset_{2M-1}.
\end{equation}
Let $\vt_{a/b} \in \vtset_{a/b}$ and $\rhoana[\vt_{a/b}] > 1$. By using
\begin{equation}
\int_{-1}^1 P(u) \del W(b \vt_{a / b}(u)) = \int_{-1}^1 P(u) b \vt'_{a / b}(u) W'(b \vt_{a / b}(u)) \del u
\end{equation}
and applying $M_\mathrm{DS}$-point Gauss--Legendre quadrature, we used
\begin{equation}
\label{eq:ucDS}
u_\nu^\mathrm{DS} = u_\nu^\mathrm{GL}, c_\nu^\mathrm{DS} = c_\nu^\mathrm{GL} b \vt'_{a / b}(u_\nu^\mathrm{GL}) W'(b \vt_{a / b}(u_\nu^\mathrm{GL})), \nu = 1, \ldots, M_\mathrm{DS}.
\end{equation}

Let $u_\nu^{\vt_{a/b}, M_\mathrm{DS}}$ (resp. $c_\nu^{\vt_{a/b}, M_\mathrm{DS}}$), $1 \le \nu \le M$, be the estimate of $u_\nu$ (resp. $c_\nu$) in Lemma \ref{lem:vt_gauss} computed by the discretized Stieltjes procedure with \eqref{eq:ucDS} and Golub-Welsch algorithm. Let $t_\nu^{\vt_{a/b}, M_\mathrm{DS}} \coloneqq b \vt_{a/b}(u_\nu^{\vt_{a/b}, M_\mathrm{DS}})$. To measure the convergence by increasing $M_\mathrm{DS}$, we define the maximum relative error as
\begin{equation}
\label{eq:MRE}
\mathrm{MRE} \coloneqq \max_{X \in \{ t, c \}, 1 \le \nu \le M} \left\lvert \frac{ X_\nu^{\vt_{a/b}, M_\mathrm{DS}} - X_\nu^{\vt_{a/b}, 2  M_\mathrm{DS}} }{ X_\nu^{\vt_{a/b}, 2 M_\mathrm{DS}} } \right\rvert.
\end{equation}
Based on MRE in Figure \ref{fig:gauss_MDS}, we used $M_\mathrm{DS} = 96$ and $M_\mathrm{DS} = 1536$ for $a = 2^{-1}$ and $a = 2^{-10}$, respectively.
\begin{figure}[htbp]
\centering
\includegraphics{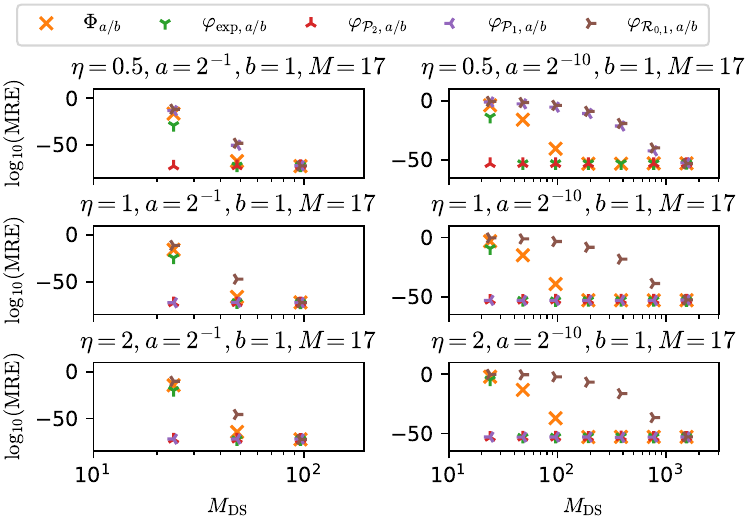}
\caption{MRE defined in \eqref{eq:MRE}. $\vt_{a/b}$ is specified in the legend. For $a = 2^{-1}$ (left figures), $M_{\mathrm{DS}} = 24, 48, 96$. For $a = 2^{-10}$ (right figures), $M_{\mathrm{DS}} = 24, 48, 96, 192, 384, 768, 1536$. The symbol for $\vt_{\pset_2, a/b}$ at $M_\mathrm{DS} = 384$ in the upper right figure is not displayed because $\mathrm{MRE} = 0$.
}
\label{fig:gauss_MDS}
\end{figure}

\subsubsection{Errors of exponential sum approximations}
Let $E_M$ be the maximum absolute error of the $M$-term exponential sum approximation on $[0, \infty)$. To measure the decreasing rate of the maximum absolute error by increasing $M$, we showed $E_M / E_{M+1}$ in Figure \ref{fig:EM}. For $a = 2^{-1}$ or $a = 2^{-10}$ and $\eta = 1/2$, the decreasing rate of $E_M$ with $\Phi_{a/b}$ is comparable to that of the best exponential sum approximation and is fastest among the variable transformations. For $a = 2^{-10}$ and $\eta = 1$, the decreasing rate of $E_M$ with $\vt_{\pset_2, a/b}$ is comparable to that of $\Phi_{a/b}$ for $M = 1$. For $a = 2^{-10}$ and $\eta = 2$, the decreasing rate of $E_M$ with $\vt_{\pset_2, a/b}$ is faster than that of $\Phi_{a/b}$ for $M = 1,2$. However, by increasing $M$, the decreasing rate of $E_M$ with $\Phi_{a/b}$ becomes fastest among the variable transformations and is comparable to that of the best exponential sum approximation. Thus the decreasing rate of $E_M$ with $\Phi_{a/b}$ is not necessary fastest among the variable transformations for smaller $M$ but it becomes fastest and is comparable to the best exponential sum approximation by increasing $M$ in the current examples.
\begin{figure}[htbp]
\centering
\includegraphics{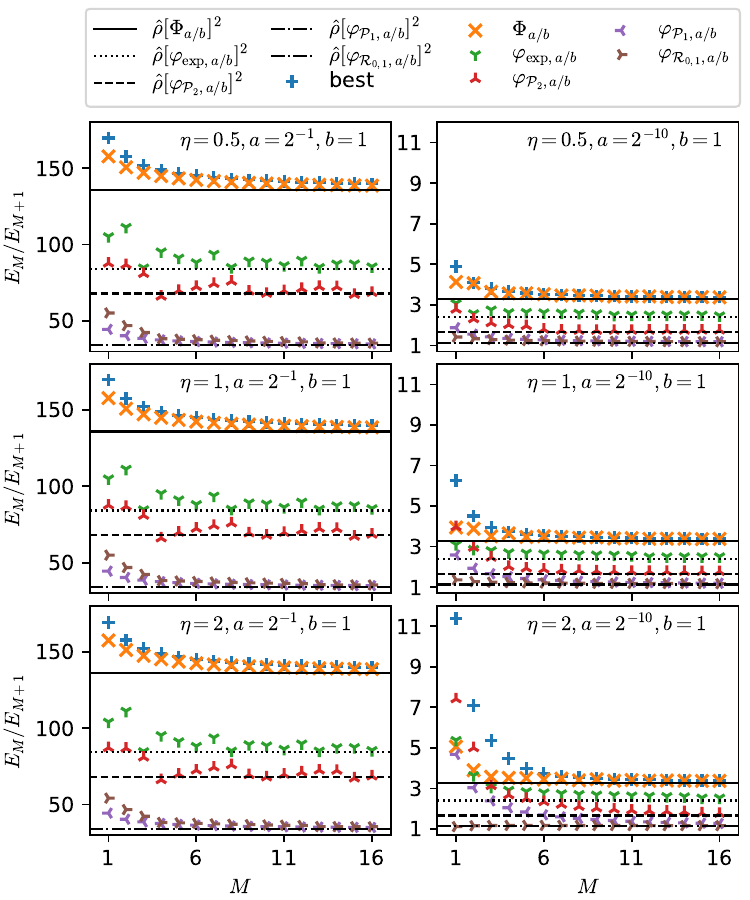}
\caption{The ratio of $E_M$ to $E_{M+1}$, where $E_M$ is the maximum absolute error of the $M$-term exponential sum approximation of \eqref{eq:fcmf_inv} on $[0, \infty)$. The ``+'' symbol represents the best exponential sum approximation and the other symbols represent the exponential sum approximation by the Gaussian quadrature with the variable transformation specified in the legend. The corresponding $x$-dependent error curves with the numerically identified maximum absolute errors have been uploaded to Zenodo \cite{KoyamaZenodo2023}.
}
\label{fig:EM}
\end{figure}

In Figure \ref{fig:Ex}, we showed $x$-dependent error curves of the exponential sum approximations with $\eta = 1$ and $M = 8$. For $a = 2^{-1}$, we see that the error curve of the Gaussian quadrature with $\Phi_{a/b}$ is well approximated by the leading term (i.e., $2 \varepsilon_{M, 2M} \basis{\Phi}_{a/b, 2M}(bx)$) of the basis expansion in Theorem \ref{the:f_gauss_basis}. We note that the absolute error of the Gaussian quadrature with $\Phi_{a/b}$ is smaller in the wide range of $x$ compared with that of the best exponential sum approximation. For $a = 2^{-10}$, the error curve of the Gaussian quadrature with $\Phi_{a/b}$ is well approximated by the leading 5 terms (i.e., $2 \sum_{n = 2M}^{2M+4} \varepsilon_{M, n} \basis{\Phi}_{a/b, n}(bx)$) of the basis expansion.
\begin{figure}[htbp]
\centering
\includegraphics{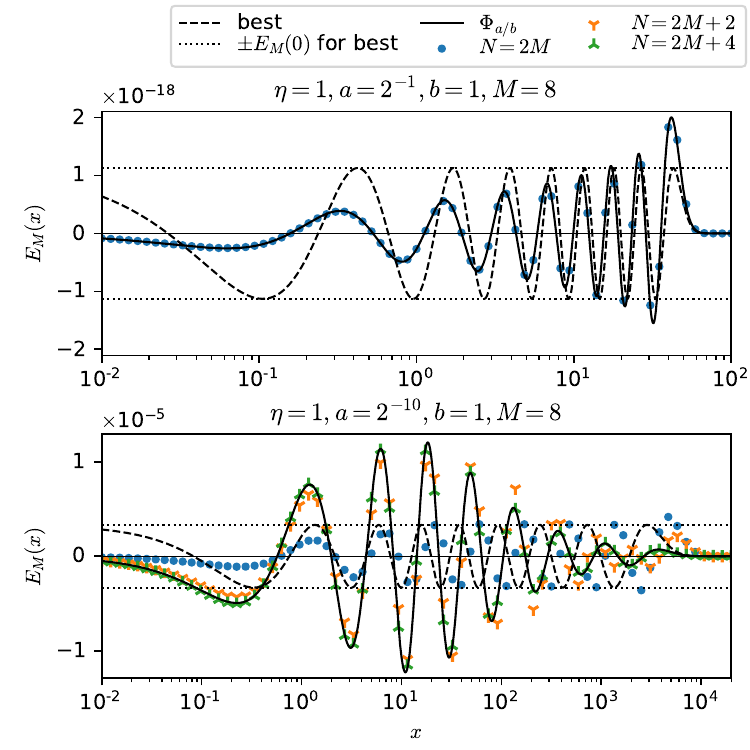}
\caption{$E_M(x) \coloneqq f(x) - \sum_{\nu = 1}^M c_\nu \enum^{-t_\nu x}$ for the best exponential sum approximation (dashed curve) and $E_M(x) \coloneqq f(x) - \sum_{ \nu = 1 }^M c_\nu \enum^{ - b \Phi_{a/b}(u_\nu) x }$ for the Gaussian quadrature with $\vt_{a/b} = \Phi_{a/b}$ in Lemma \ref{lem:vt_gauss} (solid curve). $f(x)$ is defined in \eqref{eq:fcmf_inv}. Dotted horizontal lines represent $\pm E_M(0)$ for the best exponential sum approximation. The symbols represent $2 \sum_{n = 2M}^N \varepsilon_{M, n} \basis{\Phi}_{a/b, n}(bx)$ in Theorem \ref{the:f_gauss_basis}.
}
\label{fig:Ex}
\end{figure}

\section{Discussion}
\label{sec:discussion}
We applied the Achieser--Stenger bound (Theorem \ref{the:achieser_stenger}) to obtain the approximation error bound of a finite completely monotonic function by the Gaussian quadrature with an analytic variable transformation $\vt_r(u)$ (Lemma \ref{lem:Evtx}). We introduced $\hat{\rho}[\vt_r]$ in Lemma \ref{lem:hatrho} to obtain the best possible maximum absolute error bound (Theorem \ref{the:Evt}). Then we obtained $\Phi_r(u)$ in Lemma \ref{lem:Phi} as the maximizer of $\hat{\rho}[\vt_r]$ (Theorem \ref{the:Phi}). The function satisfies $\Phi_r(\cos z) = \dn(\EK(k) z / \pi , k)$ on $\mathbb{C}$ by \eqref{eq:Phi_cos}, which indicates the composition of $\Phi_r$ and cosine leads to the elliptic function $\dn$. Gross considered the conditions such that an elliptic function $h(z)$ can be expressed as $h(z) = f(g(z))$ \cite{Gross1968}. Let $0 < k < 1$, $k' \coloneqq \sqrt{1 - k^2}$, and $q \coloneqq \enum^{-\pi \frac{ \EK(k') }{ \EK(k) }}$. Based on Jacobi's infinite product representations \eqref{eq:sn_ip} and \eqref{eq:cn_ip}, the functions
\begin{align}
\phi_k(u) &\coloneqq \sn(2 \EK(k) \arcsin(u) / \pi, k) = \frac{ 1 }{ \sqrt{k}  } \frac{ 2 q^\frac{1}{4} u \prod_{n = 1}^\infty (1 + 2 q^{2n} (2 u^2 - 1) + q^{4n}) }{ \prod_{n = 1}^\infty (1 + 2 q^{2n - 1} (2 u^2 - 1) + q^{4n - 2}) }, u \in \mathbb{C}, \\
\psi_k(u) &\coloneqq \cn( 2 \EK(k) \arccos(u) / \pi, k ) = \sqrt{ \frac{ k' }{ k } } \frac{ 2 q^\frac{1}{4} u \prod_{n = 1}^\infty (1 + 2 q^{2n} (2 u^2 - 1) + q^{4n}) }{ \prod_{n = 1}^\infty (1 - 2 q^{2n - 1} (2 u^2 -1) + q^{4n - 2}) }, u \in \mathbb{C},
\end{align}
are single-valued meromorphic functions on $\mathbb{C}$, one-to-one mappings of $[-1, 1]$ onto $[-1, 1]$, and satisfy $\phi_k(\sin z) = \sn(2 \EK(k) z / \pi, k)$ and $\psi_k(\cos z) = \cn(2 \EK(k) z / \pi, k)$ on $\mathbb{C}$. To evaluate an integral $I = \int_{-1}^1 \frac{ f(x) \del x }{ \sqrt{(1 - x^2) (1 - k^2 x^2)} }$, Goodrich and Stenger used the variable transformation $x = \phi_k(u)$ and obtained $I = \frac{2 \EK(k)}{\pi} \int_{-1}^1 \frac{ f(\phi_k(u)) }{ \sqrt{1 - u^2} } \del u$ \cite{Goodrich1970}. Then they applied the Gauss--Chebyshev quadrature and used the Achieser--Stenger bound by noting that $\phi_k(u)$ is the one-to-one conformal mapping of $\ellipse_{q^{-1/4}}$ onto the interior of the circle with radius $k^{-1/2}$~\cite{Schwarz1869}\cite[Sec.~13.4]{Kober1957}. To approximate a Markov function $f(z) \coloneqq \int_{-1}^1 \frac{w(t)}{z - t} \del t$ by a rational function $\sum_{\nu = 1}^M \frac{ c_\nu }{ z - t_\nu }$ on $(-\infty, -1/k] \cup [1/k, \infty)$ (resp. $\inum (-\infty, -k'/k] \cup \inum [k'/k, \infty)$), Barrett used the Gaussian quadrature with the variable transformation $t = \phi_k(u)$ (resp. $t = \psi_k(u))$, which is the one-to-one conformal mapping of $\ellipse_{q^{-1/2}}$ onto $\mathbb{C} \setminus ( (- \infty, - 1 / k] \cup [1/k, \infty) )$ (resp. $\mathbb{C} \setminus ( \inum (- \infty, - k' / k] \cup \inum [k'/k, \infty) )$) \cite{Barrett1971}. Applications of conformal mappings in quadrature methods are also found in \cite{Bakhvalov1967,Osipenko1989,Petras1998,Hale2008}\cite[Sec.~4.9]{Brass2011}. We also note the dn function played an important role for the derivation of Achieser's bound for the best uniform polynomial approximation error of an analytic function \cite{Achyeser1938}{\cite[Sec.~94 and 95]{Achieser1992}.

We introduced basis functions $\basis{\vt}_{r, n}(z)$ in Lemma \ref{lem:basis} to expand the approximation error of a finite completely monotonic function by the Gaussian quadrature with an analytic variable transformation $\vt_r(u)$ (Theorem \ref{the:f_gauss_basis}). As a byproduct, we obtained the basis expansion of a finite Laplace--Stieltjes transform (Lemma \ref{lem:f_basis} and Corollary \ref{cor:f_basis_error}). We showed the basis functions $\basis{\Phi}_{r, n}(z)$ associated with $\Phi_r(u)$ defined in Lemma \ref{lem:Phi_basis} are eigenfunctions of the fourth order differential operator $\mathcal{D}_r$ (Theorem \ref{the:L}). Although the operator $\mathcal{D}_r$ has a simple form among fourth order differential operators, we could not find the result in the literature. If we introduce
\begin{align}
&k \coloneqq \sqrt{1 - (a/b)^2}, q \coloneqq \enum^{- \pi \frac{\EK(a/b)}{\EK(k)}}, \\
&\basis{\Phi}_{a, b, n}(z) \coloneqq \basis{\Phi}_{a/b, n}(bz) = \frac{1}{\pi} \int_0^\pi \enum^{- bz \Phi_{a/b}(\cos\theta)} \cos(n \theta) \del \theta = \frac{1}{\pi} \int_0^\pi \enum^{- bz \dn(\EK(k) \theta / \pi, k) } \cos(n \theta) \del \theta, z \in \mathbb{C}, n \in \intge{0}, \\
&\mathcal{D}_{a, b} \coloneqq z^2 \frac{ \del^4 }{ \del z^4 } + 2 z \frac{ \del^3 }{ \del z^3 } - (a^2 + b^2) z^2 \frac{ \del^2 }{ \del z^2 } - (a^2 + b^2) z \frac{ \del }{ \del z } + a^2 b^2 z^2 = z \left( \frac{ \del^2 }{ \del z^2 } z \frac{ \del^2 }{ \del z^2 } - \frac{ \del }{ \del z } (a^2 + b^2) z \frac{ \del }{ \del z } + a^2 b^2 z \right),
\end{align}
by using Theorem \ref{the:L}, Theorem \ref{the:basis_Phi_orthogonal}, and Corollary \ref{cor:basis_Phi_integral_eq}, we have
\begin{align}
&\mathcal{D}_{a, b} \basis{\Phi}_{a, b, n}(z) = \frac{ b^2 n^2 \pi^2 }{ \EK(k)^2 } \basis{\Phi}_{a, b, n}(z), z \in \mathbb{C}, n \in \intge{0}, \\
&\int_0^\infty \basis{\Phi}_{a, b, m}(x) \basis{\Phi}_{a, b, n}(x) \frac{1}{x} \del x =
\begin{cases}
0, & m \neq n, \\
\frac{1}{n} \frac{ 1 }{ q^{-2n} - q^{2n} }, & m = n,
\end{cases}
m \in \intge{1}, n \in \intge{1}, \\
&\basis{\Phi}_{a, b, n}(z) = n \frac{ q^{-2n} - q^{2n} }{ 2 } \int_0^\infty \left( \basis{\Phi}_{a, b, 0}(z + y) - \basis{\Phi}_{a, b, 0}(z)  \basis{\Phi}_{a, b, 0}(y) \right) \basis{\Phi}_{a, b, n}(y) \frac{1}{y} \del y, z \in \mathbb{C}, n \in \intge{1}.
\end{align}
For a finite Laplace transform, Bertero and Gr\"unbaum \cite{Bertero1985} considered the singular system $\int_a^b \enum^{-xs} u_i(s) \del s = \alpha_i v_i(x)$ for $x_1 \le x \le x_2$ and $\int_{x_1}^{x_2} \enum^{-ty} v_i(y) \del y = \alpha_i u_i(t)$ for $a \le t \le b$, $i \in \intge{0}$. Functions $u_i(t)$ and $v_i(x)$ are called right and left singular functions, respectively, and satisfy orthogonality conditions $\int_a^b u_i(t) u_j(t) \del t = 0$ and $\int_{x_1}^{x_2} v_i(x) v_j(x) \del x = 0$ for $i \ne j$.  For $x_1 = 0$ and $x_2 = \infty$, they showed left singular functions $v_i(x)$ are eigenfunctions of the following fourth order differential operator
\begin{align}
\label{eq:Lfl}
&\mathcal{L}_{a, b} \coloneqq \frac{\del^2}{\del x^2} x^2 \frac{\del^2}{\del x^2} - (a^2 + b^2) \frac{\del}{\del x} x^2 \frac{\del}{\del x} + a^2 b^2 x^2 - 2 a^2 \\ \nonumber
&= x^2 \frac{\del^4}{\del x^4} + 4 x \frac{\del^3}{\del x^3} - ((a^2 + b^2) x^2 - 2) \frac{\del^2}{\del x^2} - 2 (a^2 + b^2) x \frac{\del}{\del x} + a^2 b^2 x^2 - 2 a^2.
\end{align}
Lederman and Rokhlin constructed efficient algorithms for right \cite{Lederman2015} and left \cite{Lederman2016} singular functions. de Villiers and Pike discussed the interlacing property of zeros of right singular functions \cite[Sec.~4.8.2]{Villiers2016}. Since $\mathcal{L}_{a, b}$ and $\mathcal{D}_{a, b}$ are similar, it may be interesting to consider the relationship between them.

Beylkin and Monz\'on investigated the relative error of the exponential (or Gaussian) sum approximation of the inverse power function \cite{Beylkin2005,Beylkin2010}. The approximation was used to compute the Coulomb interactions \cite{Shaw2014,Predescu2020,Shaw2021}. We developed a method named the tensor-structured multilevel Ewald summation method (TME) to compute Coulomb interactions \cite{Morimoto2021}. A splitting of the Coulomb potential $\frac{1}{r} = \frac{2}{\sqrt{\pi}} \int_0^\infty \enum^{-r^2 s^2} \del s = \frac{2}{\sqrt{\pi}} \int_\alpha^\infty \enum^{-r^2 s^2} \del s + \frac{2}{\sqrt{\pi}} \int_0^\alpha \enum^{-r^2 s^2} \del s$ \cite{Ewald1921} is called the Ewald splitting and $0 < \alpha < \infty$ is called the Ewald splitting parameter. In TME method, based on the Ewald splitting, the Coulomb potential was divided into multilevel potentials as follows:
\begin{equation}
\frac{1}{r} = \frac{2}{\sqrt{\pi}} \int_0^\infty \enum^{-r^2 s^2} \del s = \frac{2}{\sqrt{\pi}} \int_\alpha^\infty \enum^{-r^2 s^2} \del s + \sum_{l = 1}^L \frac{2}{\sqrt{\pi}} \int_{\alpha / 2^l}^{\alpha / 2^{l-1}} \enum^{-r^2 s^2} \del s + \frac{2}{\sqrt{\pi}} \int_0^{\alpha / 2^L} \enum^{-r^2 s^2} \del s, 0 < r < \infty,
\end{equation}
where $L \in \intge{1}$. By using Gauss--Legendre quadrature, the level $l$ potential $\frac{2}{\sqrt{\pi}} \int_{\alpha / 2^l}^{\alpha / 2^{l-1}} \enum^{-r^2 s^2} \del s$ was approximated by the $M$-term Gaussian sum. By using \eqref{eq:fab_1_2}, the level $l$ potential is expressed as
\begin{equation}
\frac{2}{\sqrt{\pi}} \int_{\alpha / 2^l}^{\alpha / 2^{l-1}} \enum^{-r^2 s^2} \del s = f_{1/2, \alpha^2 / 4^l, \alpha^2 / 4^{l-1}}(r^2), 0 \le r < \infty.
\end{equation}
Then the Gaussian sum approximation is interpreted as the exponential sum approximation of $f_{1/2, \alpha^2 / 4^l, \alpha^2 / 4^{l-1}}(x)$ with the variable transformation $\vt_{\pset_2, 1/4}(u)$. By using Theorem \ref{the:Evt} and \eqref{eq:fab_1_2_P2_gauss}, we have the error bound
\begin{equation}
0 < \max_{0 \le r < \infty} \left\lvert f_{1/2, \alpha^2 / 4^l, \alpha^2 / 4^{l-1}}(r^2) - \sum_{\nu = 1}^M \frac{ \alpha c_\nu^\mathrm{GL} }{ 2^l \sqrt{\pi} } \enum^{- \frac{\alpha^2}{4^{l-1}} \vt_{\pset_2, 1/4}(u_\nu^\mathrm{GL}) r^2} \right\rvert < \frac{16}{\pi} \hat{\rho}[\vt_{\pset_2, 1/4}]^{- 2M} \frac{\alpha}{2^{l-1} \sqrt{\pi}}, l = 1, \ldots, L.
\end{equation}
The fast decreasing rate $\hat{\rho}[\vt_{\pset_2, 1/4}]^{- 2M} \approx 17.9443^{-M}$ (Table \ref{tab:hatrho}) explains our previous numerical result, which implied the geometrical convergence with respect to the number of Gaussian functions (Fig.3 in \cite{Morimoto2021}).

Based on the current study, the splitting can be generalized to a wider class of potential energy functions and variable transformations as follows. Let $f(x)$ be completely monotonic in $0 < x < \infty$ and the integral representation is given as $f(x) = \int_0^\infty \enum^{-xt} \del W(t)$. Let $g(r)$ be a potential energy function expressed as $g(r) = f(r^2) = \int_0^\infty \enum^{-r^2 t} \del W(t)$ for $0 < r < \infty$. It is known that various potential energy functions or kernel functions can be represented in this form (e.g., $r^{-\kappa} = \int_0^\infty \enum^{-r^2 t} \frac{ t^{\kappa/2 - 1} }{\Gamma(\kappa/2)} \del t$, $\enum^{- \kappa r} = \int_0^\infty \enum^{-r^2 t} \frac{\kappa}{2 \sqrt{\pi} t^{3/2}} \enum^{- \frac{\kappa^2}{4t}} \del t$, and $\frac{\enum^{- \kappa r}}{r} = \int_0^\infty \enum^{-r^2 t} \frac{1}{\sqrt{\pi t}} \enum^{- \frac{\kappa^2}{4t}} \del t$ for a given $0 < \kappa < \infty$) \cite[Anhang. 2.]{Doetsch1937} \cite{Kutzelnigg1994,Harrison2003,Hackbusch2006,Mazars2011}. Then $g(r)$ can be split  as
\begin{equation}
g(r) = f(r^2) = \int_0^\infty \enum^{-r^2 t} \del W(t) = \int_{\alpha^2}^\infty \enum^{-r^2 t} \del W(t) + \sum_{l = 1}^L \int_{\alpha^2 / 4^l}^{\alpha^2 / 4^{l-1}} \enum^{-r^2 t} \del W(t) + \int_0^{\alpha^2 / 4^L} \enum^{-r^2 t} \del W(t), 0 < r < \infty.
\end{equation}
Let $M \in \intge{1}$, $\vt_{1/4} \in \vtset_{1/4}$, and $\rhoana[\vt_{1/4}] > 1$. If $W(t)$ has at least $M+1$ points of increase for $\alpha^2 / 4^l \le t \le \alpha^2 / 4^{l-1}$, by using Theorem \ref{the:Evt}, we have
\begin{equation}
0 < \max_{0 \le r < \infty} \left\lvert \int_{\alpha^2 / 4^l}^{\alpha^2 / 4^{l-1}} \enum^{-r^2 t} \del W(t) - \sum_{\nu = 1}^M c_{l, \nu} \enum^{- \frac{\alpha^2}{4^{l-1}} \vt_{1/4}(u_{l, \nu}) r^2} \right\rvert < \frac{16}{\pi} \hat{\rho}[\vt_{1/4}]^{- 2M} \int_{\alpha^2 / 4^l}^{\alpha^2 / 4^{l-1}} \del W(t), l = 1, \ldots, L,
\end{equation}
where we used $u_{l, \nu}$ and $c_{l, \nu}$ instead of $u_\nu$ and $c_\nu$ to represent $l$ dependency. If we use $\vt_{1/4}(u) = \Phi_{1/4}(u)$, we have $\hat{\rho}[\Phi_{1/4}]^{- 2M} \approx 35.8885^{-M}$ (Table \ref{tab:hatrho}). The fast decreasing rate will make it possible to use a few number of Gaussian functions to approximate the level $l$ potential function $\int_{\alpha^2 / 4^l}^{\alpha^2 / 4^{l-1}} \enum^{-r^2 t} \del W(t)$.

\section*{Acknowledgments}
The author would like to thank Makoto Taiji for providing the opportunity to conduct the current study. This work used computational resources of the HOKUSAI BigWaterfall system provided by RIKEN. This work was supported by JSPS KAKENHI Grant Number JP19H01107.

\appendix
\section{Properties of Stieltjes integral}
We listed the properties of Stieltjes integral used in the current study. If not specified, we assumed $-\infty < a < b < \infty$.
\begin{lemma}[{\cite[Lemma II.5]{Widder1941}}]
\label{lem:fl_entire}
Let $0 \le a \le b < \infty$ and $z \in \mathbb{C}$. Let $\alpha(t)$ be of bounded variation in $a \le t \le b$. Then $f(z) = \int_a^b \enum^{-zt} \del \alpha(t)$ is an entire function and satisfies $f^{(n)}(z) = (-1)^n \int_a^b \enum^{-zt} t^n \del \alpha(t)$, $n \in \intge{1}$.
\end{lemma}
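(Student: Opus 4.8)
The plan is to represent $f(z)$ as a power series in $z$ with infinite radius of convergence, obtained by expanding $\enum^{-zt}$ and integrating term by term against $\alpha$. First I would fix $R > 0$ and observe that for $t \in [a,b]$ (so $0 \le t \le b$) and $|z| \le R$ the Maclaurin series $\enum^{-zt} = \sum_{k=0}^\infty \frac{(-zt)^k}{k!}$ converges uniformly in $t$, since $\bigl| \tfrac{(-zt)^k}{k!} \bigr| \le \tfrac{(Rb)^k}{k!}$ and $\sum_{k \ge 0} \tfrac{(Rb)^k}{k!} = \enum^{Rb} < \infty$. Because $\alpha$ is of bounded variation, each continuous partial sum $S_N(t) = \sum_{k=0}^N \tfrac{(-zt)^k}{k!}$ is Riemann--Stieltjes integrable against $\alpha$, and the standard estimate $\bigl| \int_a^b g(t)\del\alpha(t) \bigr| \le \bigl( \sup_{t\in[a,b]} |g(t)| \bigr)\,\TV[\alpha(t)]_a^b$ applied to $g = \enum^{-zt} - S_N(t)$ shows $\int_a^b S_N\,\del\alpha \to \int_a^b \enum^{-zt}\del\alpha = f(z)$. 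Hence term-by-term integration is legitimate and
\[
f(z) = \sum_{k=0}^\infty c_k z^k, \qquad c_k \coloneqq \frac{(-1)^k}{k!} \int_a^b t^k \del \alpha(t), \quad |z| \le R .
\]

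Since $|c_k| \le \tfrac{b^k}{k!}\,\TV[\alpha(t)]_a^b$, we get $|c_k|^{1/k} \to 0$, so this power series has infinite radius of convergence; as $R > 0$ is arbitrary, $f$ is entire. Next I would differentiate the series $n$ times term by term (valid inside the radius of convergence), obtaining $f^{(n)}(z) = \sum_{k \ge n} \tfrac{(-1)^k z^{k-n}}{(k-n)!} \int_a^b t^k \del\alpha(t)$. Re-indexing with $j = k - n$ gives $f^{(n)}(z) = (-1)^n \sum_{j \ge 0} \tfrac{(-z)^j}{j!} \int_a^b t^{j+n} \del\alpha(t)$, and reversing the term-by-term integration argument — the series $\sum_{j \ge 0} \tfrac{(-zt)^j}{j!}\, t^n$ again converges uniformly in $t \in [a,b]$ for $|z| \le R$ — yields $f^{(n)}(z) = (-1)^n \int_a^b t^n \enum^{-zt} \del\alpha(t)$, which is the desired formula.

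The only genuinely delicate point is the interchange of summation and Stieltjes integration in both directions; this is routine once one invokes the tail estimate $\bigl| \int_a^b g\,\del\alpha \bigr| \le \sup_{[a,b]}|g| \cdot \TV[\alpha]_a^b$ together with uniform convergence of the exponential series on $[a,b]$. The boundary cases cause no trouble: at $a = 0$ the factor $t^k$ (with $t^0 = 1$) is continuous on $[0,b]$, and if $a = b$ every integral vanishes and $f \equiv 0$. An alternative route would combine Fubini/Morera to show analyticity and then differentiate under the integral, but the power-series argument above is self-contained and simultaneously delivers the derivative formula for all $n \in \intge{1}$.
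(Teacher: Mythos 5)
Your proof is correct. Note that the paper does not prove this lemma at all: it is imported verbatim as Lemma II.5 of Widder's \emph{The Laplace Transform}, so there is no in-paper argument to compare against. Widder's own proof goes the other way around: he establishes $f'(z) = -\int_a^b t\,\enum^{-zt}\,\del\alpha(t)$ directly by estimating the difference quotient, using a uniform bound on $\bigl(\enum^{-ht}-1\bigr)/h + t$ over $t\in[a,b]$ as $h\to 0$ together with the total-variation inequality, and then iterates; analyticity follows because the complex derivative exists everywhere. Your power-series route is a legitimate alternative that delivers entirety and the formula for $f^{(n)}$ for all $n$ in one stroke, and every interchange you perform is covered by the paper's own toolbox (Theorem \ref{the:stieltjes_abs_ineq} for the tail estimate, or Theorem \ref{the:stieltjes_series} for term-by-term integration of a uniformly convergent series of continuous functions against a BV integrator). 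The one point worth stating explicitly rather than implicitly is that the coefficients $c_k$ do not depend on the truncation radius $R$, so the single power series $\sum_k c_k z^k$ represents $f$ on all of $\mathbb{C}$; you do say this, and the coefficient bound $|c_k|\le b^k\,\TV[\alpha(t)]_a^b/k!$ makes the infinite radius of convergence immediate.
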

\begin{theorem}[{\cite[Theorem I.7c]{Widder1941}}]
\label{the:stieltjes_pos}
Let $n \in \intge{0}$. If $\alpha(x)$ is nondecreasing with at least $n + 1$ points of increase, if $f(x)$ is continuous and nonnegative with at most $n$ zeros in $a \le x \le b$, then $\int_a^b f(x) \del \alpha(x) > 0$.
\end{theorem}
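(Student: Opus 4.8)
The plan is to argue directly, within the Riemann--Stieltjes framework in which the integral is understood. Since $f(x)$ is continuous and $\alpha(x)$ is of bounded variation (being nondecreasing) on $[a,b]$, the integral $\int_a^b f(x)\,\del\alpha(x)$ exists; since $f(x)\ge 0$ and $\alpha(x)$ is nondecreasing, it is nonnegative. It therefore suffices to exclude the value $0$.

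First I would select $n+1$ points of increase of $\alpha$, say $a\le x_0<x_1<\cdots<x_n\le b$, which exist by hypothesis. Since $f$ has at most $n$ zeros on $[a,b]$, at least one of these points, call it $x_j$, satisfies $f(x_j)>0$. By continuity of $f$ there are constants $\delta>0$ and $c>0$ with $f(x)\ge c$ for every $x$ in $J\coloneqq [x_j-\delta,x_j+\delta]\cap[a,b]$, a nondegenerate closed subinterval of $[a,b]$. Because $x_j$ is a point of increase of $\alpha$, one has $\alpha(\sup J)-\alpha(\inf J)>0$; here one reads the definition of ``point of increase'' with the one-sided convention appropriate to the case $x_j=a$ or $x_j=b$, but in every case $J$ carries strictly positive $\alpha$-increment.

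Finally I would split $[a,b]$ into $J$ together with the at most two complementary closed subintervals. On each complementary piece the integrand is nonnegative and $\alpha$ is nondecreasing, so each contributes a nonnegative amount; additivity of the Stieltjes integral over abutting subintervals then gives
\[
\int_a^b f(x)\,\del\alpha(x)\;\ge\;\int_J f(x)\,\del\alpha(x)\;\ge\;c\,\bigl(\alpha(\sup J)-\alpha(\inf J)\bigr)\;>\;0,
\]
where the middle inequality uses the elementary bound $\int_u^v g\,\del\alpha\ge\bigl(\inf_{[u,v]}g\bigr)\bigl(\alpha(v)-\alpha(u)\bigr)$ valid for continuous $g$ and nondecreasing $\alpha$. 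This yields the strict positivity.

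The main obstacle is bookkeeping rather than substance: one must handle the endpoint cases in the definition of a point of increase and be precise about subinterval additivity and the monotone lower bound for a Riemann--Stieltjes integral when $\alpha$ may have jumps (continuity of $f$ is exactly what keeps all of this harmless, since $f$ and $\alpha$ then share no discontinuities). Alternatively, the same argument can be packaged measure-theoretically: $\int_a^b f\,\del\alpha=0$ together with $f\ge 0$ forces $f$ to vanish at every point of increase of $\alpha$, so $f$ would have at least $n+1$ zeros, a contradiction; I would present whichever version fits the paper's conventions more cleanly.
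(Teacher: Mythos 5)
Your argument is correct: the pigeonhole step (among $n+1$ points of increase at least one is not a zero of $f$), the local lower bound $f\ge c>0$ near that point, and the monotone estimate $\int_J f\,\del\alpha\ge c\,(\alpha(\sup J)-\alpha(\inf J))>0$ together with nonnegativity of the remaining pieces give exactly the strict positivity claimed. The paper does not prove this statement itself but cites it from Widder, and your proof is essentially the standard argument given there, so there is nothing to add beyond the endpoint bookkeeping you already flagged.
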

\begin{theorem}
\label{the:stieltjes_neg}
If $f(x)$ is continuous, $\alpha(x)$ is of bounded variation in $a \le x \le b$, and $\beta(y) = - \alpha(-y)$ in $-b \le y \le -a$, then $\int_a^b f(x) \del \alpha(x) = \int_{-b}^{-a} f(-y) \del \beta(y)$.
\end{theorem}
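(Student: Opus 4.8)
The plan is to prove the identity directly from the definition of the Riemann--Stieltjes integral, since both sides exist as genuine Riemann--Stieltjes integrals: $f(x)$ is continuous on $[a,b]$ and $\alpha(x)$ is of bounded variation there, so $\int_a^b f(x)\,\del\alpha(x)$ exists; and $\beta(y) = -\alpha(-y)$ has the same total variation on $[-b,-a]$ as $\alpha$ on $[a,b]$ (reflection of the argument is a bijective reparametrization that sends monotone pieces to monotone pieces), hence is of bounded variation, while $y \mapsto f(-y)$ is continuous on $[-b,-a]$, so $\int_{-b}^{-a} f(-y)\,\del\beta(y)$ also exists. Thus it suffices to match the defining approximating sums.

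The core of the argument is a mesh-preserving bijection between tagged partitions. Given a tagged partition of $[a,b]$, say $a = x_0 < x_1 < \cdots < x_n = b$ with tags $\xi_i \in [x_{i-1}, x_i]$, set $y_i \coloneqq -x_{n-i}$ for $i = 0, \ldots, n$ and $\eta_i \coloneqq -\xi_{n-i+1}$ for $i = 1, \ldots, n$. Then $-b = y_0 < y_1 < \cdots < y_n = -a$ is a tagged partition of $[-b,-a]$ with $\eta_i \in [y_{i-1}, y_i]$ and the same mesh. Using $\beta(y_i) - \beta(y_{i-1}) = -\alpha(-y_i) + \alpha(-y_{i-1}) = \alpha(x_{n-i+1}) - \alpha(x_{n-i})$ and $f(-\eta_i) = f(\xi_{n-i+1})$, the Riemann--Stieltjes sum for the right-hand side satisfies
\[
\sum_{i=1}^n f(-\eta_i)\,(\beta(y_i) - \beta(y_{i-1})) = \sum_{i=1}^n f(\xi_{n-i+1})\,(\alpha(x_{n-i+1}) - \alpha(x_{n-i})) = \sum_{j=1}^n f(\xi_j)\,(\alpha(x_j) - \alpha(x_{j-1})),
\]
which is exactly the Riemann--Stieltjes sum for the left-hand side. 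Since this correspondence is a bijection of tagged partitions that preserves the mesh, letting the mesh tend to $0$ on both sides yields the claimed equality.

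There is essentially no obstacle here; the only point requiring care is the bookkeeping of the index reversal so that subintervals and tags match up, and confirming that the reflection carries the ordered partition of $[a,b]$ onto an ordered partition of $[-b,-a]$. Alternatively one could invoke a general change-of-variable theorem for Riemann--Stieltjes integrals under the monotone substitution $x = -y$, but carrying out the sum identification above is self-contained and keeps the appendix independent of external results beyond the standard existence criterion for $\int f\,\del\alpha$ with $f$ continuous and $\alpha$ of bounded variation.
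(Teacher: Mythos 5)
Your proof is correct. The paper states this result in its appendix without proof (it is the one item there given neither an argument nor a citation to Widder, being the orientation-reversing companion to the change-of-variable theorem), so there is no authorial proof to compare against; your reflection of tagged partitions, with the index bookkeeping $y_i = -x_{n-i}$, $\eta_i = -\xi_{n-i+1}$, is a complete and valid justification, and your appeal to mesh convergence is legitimate here precisely because $f$ is continuous and $\alpha$ is of bounded variation, which guarantees both integrals exist as limits of Riemann--Stieltjes sums as the mesh tends to zero.
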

\begin{theorem}[{\cite[Theorem I.11a with the correction of typo]{Widder1941}}]
\label{the:stieltjes_change}
If $f(x)$ is continuous, $\alpha(x)$ is of bounded variation in $a \le x \le b$, and $\beta(y)$ is continuous increasing with no points of invariability in $c \le y \le d$, then $\int_a^b f(x) \del \alpha(x) = \int_c^d f(\beta(y)) \del \alpha(\beta(y))$, where $a = \beta(c), b = \beta(d)$.
\end{theorem}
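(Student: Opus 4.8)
The plan is to reduce the identity to a termwise comparison of Riemann--Stieltjes sums transported across the homeomorphism $\beta$. First I would record the structural facts. Since $\beta(y)$ is continuous and strictly increasing on $[c, d]$ with $\beta(c) = a$ and $\beta(d) = b$, it is a homeomorphism of $[c, d]$ onto $[a, b]$; in particular $\beta$ is uniformly continuous and $\beta^{-1}$ is continuous and strictly increasing. Because $\beta$ is monotone, the composition $\alpha(\beta(y))$ has the same total variation on $[c, d]$ as $\alpha(x)$ on $[a, b]$, namely $\TV[\alpha(\beta(y))]_c^d = \TV[\alpha(x)]_a^b < \infty$, so it is of bounded variation; and $f(\beta(y))$ is continuous on $[c, d]$. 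Hence both $\int_a^b f(x) \del \alpha(x)$ and $\int_c^d f(\beta(y)) \del \alpha(\beta(y))$ exist as Riemann--Stieltjes integrals (a continuous integrand against a function of bounded variation), and each is the limit of its Riemann--Stieltjes sums as the mesh of the partition tends to $0$.

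Next I would set up the correspondence of partitions. Given a partition $P\colon c = y_0 < y_1 < \cdots < y_n = d$ of $[c, d]$ with mesh $\|P\|$, strict monotonicity of $\beta$ produces a partition $Q\colon a = x_0 < x_1 < \cdots < x_n = b$ of $[a, b]$ with $x_i = \beta(y_i)$, and uniform continuity of $\beta$ forces $\|Q\| \to 0$ as $\|P\| \to 0$. For any choice of tags $\eta_i \in [y_{i-1}, y_i]$ the points $\xi_i \coloneqq \beta(\eta_i)$ lie in $[x_{i-1}, x_i]$, and the Riemann--Stieltjes sum for the right-hand side,
\[
\sum_{i=1}^n f(\beta(\eta_i)) \bigl( \alpha(\beta(y_i)) - \alpha(\beta(y_{i-1})) \bigr) = \sum_{i=1}^n f(\xi_i) \bigl( \alpha(x_i) - \alpha(x_{i-1}) \bigr),
\]
is exactly a Riemann--Stieltjes sum for the left-hand side associated with the partition $Q$ and the tags $\{\xi_i\}$. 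Letting $\|P\| \to 0$, the left sum converges to $\int_c^d f(\beta(y)) \del \alpha(\beta(y))$, while the right sum, since $\|Q\| \to 0$, converges to $\int_a^b f(x) \del \alpha(x)$; as the two sums are equal for every $P$, the integrals coincide.

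The point requiring care — and, I suspect, the source of the typo flagged in the citation — is the hypothesis that $\beta$ have no points of invariability, i.e.\ be genuinely strictly increasing: this is what guarantees $Q$ has distinct nodes so that the tag correspondence is well defined and $Q$ is a bona fide partition of the full interval $[a, b]$; mere monotonicity would allow several $y_i$ to collapse onto one $x_i$, and one would then need extra bookkeeping to show the degenerate blocks contribute nothing. I would also emphasize that the boundary normalization $a = \beta(c)$, $b = \beta(d)$ is precisely what makes $Q$ span all of $[a, b]$, and that the interchange of limits in the last step is legitimate because for a continuous integrand against a bounded-variation integrator the Riemann--Stieltjes integral is simultaneously the mesh-limit of its sums, so no subsequence extraction is needed.
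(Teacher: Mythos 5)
Your argument is correct and complete: transporting tagged partitions through the homeomorphism $\beta$, observing that the corresponding Riemann--Stieltjes sums are literally identical, and using uniform continuity of $\beta$ to drive both meshes to zero is exactly the classical proof of this change-of-variable formula. The paper itself offers no proof --- the theorem is quoted verbatim from Widder's book (Theorem I.11a) --- so there is nothing to diverge from; your write-up matches the standard textbook argument, including the correct justification that a continuous integrand against a bounded-variation integrator makes both integrals mesh-limits of their sums, so no refinement or subsequence argument is needed. The only caveat is your speculation about the nature of the ``typo'' in Widder's statement: the strict-increase hypothesis is indeed what keeps the image partition nondegenerate, but the paper gives no indication that this is what the correction refers to, so that remark should be read as an aside rather than as part of the proof.
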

\begin{theorem}[{\cite[Theorem I.6a]{Widder1941}}]
\label{the:stieltjes_riemann}
If $f(x)$ is continuous and $\varphi(x)$ is measurable in $a \le x \le b$ and satisfies $\int_a^b \lvert \varphi(x) \rvert \del x < \infty$, and if $\alpha(x) = \int_c^x \varphi(t) \del t, a \le c \le b, a \le x \le b,$ then $\int_a^b f(x) \del \alpha(x) = \int_a^b f(x) \varphi(x) \del x = \int_a^b f(x) \alpha'(x) \del x$.
\end{theorem}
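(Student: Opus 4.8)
The plan is to reduce the identity to two classical facts: the existence of the Riemann--Stieltjes integral of a continuous integrand against an integrator of bounded variation, together with the representation of that integral as the limit of Riemann--Stieltjes sums; and the Lebesgue differentiation theorem, which says that an indefinite Lebesgue integral is differentiable almost everywhere with derivative equal to the integrand. The first fact handles the equality $\int_a^b f(x)\,\del\alpha(x) = \int_a^b f(x)\varphi(x)\,\del x$ and the second handles $\int_a^b f(x)\varphi(x)\,\del x = \int_a^b f(x)\alpha'(x)\,\del x$.

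First I would record that, since $\varphi$ is Lebesgue integrable on $[a,b]$, the function $\alpha(x) = \int_c^x \varphi(t)\,\del t$ is of bounded variation on $[a,b]$ with $\TV[\alpha(x)]_a^b = \int_a^b |\varphi(t)|\,\del t < \infty$. Because $f$ is continuous on the compact interval $[a,b]$ and $\alpha$ has bounded variation, the Riemann--Stieltjes integral $\int_a^b f(x)\,\del\alpha(x)$ exists and equals $\lim_{\|P\| \to 0} \sum_{i=1}^n f(\xi_i)\bigl(\alpha(x_i) - \alpha(x_{i-1})\bigr)$, the limit taken over tagged partitions $P \colon a = x_0 < \cdots < x_n = b$ with $\xi_i \in [x_{i-1}, x_i]$ as the mesh $\|P\| = \max_i (x_i - x_{i-1})$ shrinks to $0$.

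Next I would evaluate a Riemann--Stieltjes sum by additivity of the Lebesgue integral: $\alpha(x_i) - \alpha(x_{i-1}) = \int_{x_{i-1}}^{x_i} \varphi(t)\,\del t$, hence $\sum_{i=1}^n f(\xi_i)\bigl(\alpha(x_i) - \alpha(x_{i-1})\bigr) = \sum_{i=1}^n \int_{x_{i-1}}^{x_i} f(\xi_i)\varphi(t)\,\del t$. Subtracting $\int_a^b f(t)\varphi(t)\,\del t = \sum_{i=1}^n \int_{x_{i-1}}^{x_i} f(t)\varphi(t)\,\del t$, the difference is bounded in absolute value by $\sum_{i=1}^n \int_{x_{i-1}}^{x_i} |f(\xi_i) - f(t)|\,|\varphi(t)|\,\del t \le \omega_f(\|P\|)\int_a^b |\varphi(t)|\,\del t$, where $\omega_f$ is the modulus of continuity of $f$. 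Since $f$ is uniformly continuous on $[a,b]$, $\omega_f(\delta) \to 0$ as $\delta \downarrow 0$, so the Riemann--Stieltjes sums converge to $\int_a^b f(t)\varphi(t)\,\del t$; by uniqueness of the limit this proves $\int_a^b f(x)\,\del\alpha(x) = \int_a^b f(x)\varphi(x)\,\del x$.

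Finally, for the second equality I would invoke the Lebesgue differentiation theorem: $\varphi \in L^1([a,b])$ implies that its indefinite integral $\alpha$ is differentiable at almost every point of $[a,b]$ with $\alpha'(x) = \varphi(x)$ a.e. Since $f$ is bounded on $[a,b]$, the functions $f\varphi$ and $f\alpha'$ are integrable and agree almost everywhere, so their integrals coincide, giving $\int_a^b f(x)\varphi(x)\,\del x = \int_a^b f(x)\alpha'(x)\,\del x$. The only nonroutine ingredient is the existence (and Riemann-sum representation) of the Riemann--Stieltjes integral of a continuous function against a function of bounded variation; everything else is bookkeeping, and in fact the whole statement may simply be quoted as Theorem~I.6a of Widder.
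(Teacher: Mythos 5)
Your proof is correct. The paper itself offers no argument for this statement --- it is quoted verbatim as Theorem I.6a of Widder's \emph{The Laplace Transform} --- so there is nothing to compare against; your self-contained derivation (bounded variation of the indefinite integral, convergence of the Riemann--Stieltjes sums via the modulus of continuity of $f$ and $\int_a^b|\varphi|\,\del x<\infty$, and the Lebesgue differentiation theorem for the final equality) is the standard proof and handles the merely-$L^1$ hypothesis on $\varphi$ correctly.
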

\begin{theorem}[{\cite[Theorem I.6b]{Widder1941}}]
\label{the:int_fphi_da}
If $f(x)$ and $\vt(x)$ are continuous and $\alpha(x)$ is of bounded variation in $a \le x \le b$, and if $\beta(x) = \int_c^x \vt(t) \del \alpha(t), a \le x \le b, a \le c \le b$, then $\int_a^b f(x) \del \beta(x) = \int_a^b f(x) \vt(x) \del \alpha(x)$.
\end{theorem}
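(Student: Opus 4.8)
The plan is to compare the Riemann--Stieltjes sums of the two integrals directly. First I would record the auxiliary estimate that for a continuous function $g$ and a function $\gamma$ of bounded variation on a closed interval $[c,d]$ one has $\left|\int_c^d g(x)\,\del\gamma(x)\right| \le \left(\max_{c\le x\le d}|g(x)|\right)\,\TV[\gamma(x)]_c^d$, together with the additivity of total variation over adjacent subintervals. Applying the estimate with $g=\vt$ gives $|\beta(y)-\beta(x)| \le \left(\max|\vt|\right)\,\TV[\alpha(t)]_{\min(x,y)}^{\max(x,y)}$ (with the usual convention $\int_c^x = -\int_x^c$ when $x<c$), so $\beta(x)$ is of bounded variation on $[a,b]$. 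Since $f(x)$ is continuous and $f(x)\vt(x)$ is continuous, both $\int_a^b f(x)\,\del\beta(x)$ and $\int_a^b f(x)\vt(x)\,\del\alpha(x)$ exist as Riemann--Stieltjes integrals.

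For the main step, fix a tagged partition $a=x_0<x_1<\cdots<x_n=b$ with tags $\xi_i\in[x_{i-1},x_i]$. Using $\beta(x_i)-\beta(x_{i-1})=\int_{x_{i-1}}^{x_i}\vt(t)\,\del\alpha(t)$ and $\vt(\xi_i)\bigl(\alpha(x_i)-\alpha(x_{i-1})\bigr)=\int_{x_{i-1}}^{x_i}\vt(\xi_i)\,\del\alpha(t)$, I would write
\[
\sum_{i=1}^n f(\xi_i)\bigl(\beta(x_i)-\beta(x_{i-1})\bigr)-\sum_{i=1}^n f(\xi_i)\vt(\xi_i)\bigl(\alpha(x_i)-\alpha(x_{i-1})\bigr)=\sum_{i=1}^n f(\xi_i)\int_{x_{i-1}}^{x_i}\bigl(\vt(t)-\vt(\xi_i)\bigr)\,\del\alpha(t).
\]
By the auxiliary estimate, the right-hand side is bounded in absolute value by $\left(\max|f|\right)\sum_{i=1}^n \omega_i\,\TV[\alpha(t)]_{x_{i-1}}^{x_i}$, where $\omega_i$ denotes the oscillation of $\vt$ on $[x_{i-1},x_i]$. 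Given $\epsilon>0$, uniform continuity of $\vt$ on $[a,b]$ forces every $\omega_i<\epsilon$ once the mesh is small enough, whereupon the bound is at most $\epsilon\left(\max|f|\right)\TV[\alpha(t)]_a^b$ by additivity of the total variation.

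Letting the mesh tend to $0$, the first sum converges to $\int_a^b f(x)\,\del\beta(x)$ and the second to $\int_a^b f(x)\vt(x)\,\del\alpha(x)$ (both limits existing by the first paragraph), while their difference tends to $0$; hence the two integrals are equal. I expect the only real care to be in the bookkeeping of the bounded-variation estimate and in checking that the limiting values are independent of the choice of tags --- which is exactly where the existence of the two Riemann--Stieltjes integrals is invoked --- but no genuinely hard step arises. Alternatively, one could first prove the identity for $\alpha(x)$ nondecreasing and $\vt(x)\ge 0$ (so that $\beta(x)$ is nondecreasing) and then reduce the general case via Jordan-type decompositions of $\alpha$ and of $\vt$, though the direct argument above seems cleaner.
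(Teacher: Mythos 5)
Your argument is correct and complete: the bounded-variation estimate for $\beta$, the existence of both integrals, and the comparison of Riemann--Stieltjes sums via the oscillation of $\vt$ and additivity of the total variation are exactly the ingredients needed. The paper does not prove this statement but simply cites Widder's Theorem I.6b, and your proof is essentially the standard textbook argument (Widder's own), so there is nothing to add.
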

\begin{theorem}[{\cite[Theorem I.5b]{Widder1941}}]
\label{the:stieltjes_abs_ineq}
If $f(x)$ is continuous and $\alpha(x)$ is of bounded variation in $a \le x \le b$, then $\lvert \int_a^b f(x) \del \alpha(x) \rvert \le \max\limits_{a \le x \le b} \lvert f(x) \rvert \TV[\alpha(x)]_a^b$.
\end{theorem}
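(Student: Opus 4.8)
The plan is to argue directly from the definition of the Riemann--Stieltjes integral as a limit of Riemann--Stieltjes sums, using the fact that the desired bound already holds at the level of each such sum, uniformly over partitions and tags, and therefore survives the passage to the limit. Since $f(x)$ is continuous and $\alpha(x)$ is of bounded variation on $[a, b]$, the integral $\int_a^b f(x) \del \alpha(x)$ exists (this is the standard existence theorem for a continuous integrand against a bounded-variation integrator), so it suffices to control the approximating sums.

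First I would fix an arbitrary tagged partition $P$ of $[a, b]$, say $a = x_0 < x_1 < \cdots < x_n = b$ with tags $\xi_i \in [x_{i-1}, x_i]$, and write the associated Riemann--Stieltjes sum $S(P) = \sum_{i=1}^n f(\xi_i)\,(\alpha(x_i) - \alpha(x_{i-1}))$. Applying the triangle inequality gives
\begin{equation}
|S(P)| \le \sum_{i=1}^n |f(\xi_i)|\,|\alpha(x_i) - \alpha(x_{i-1})| \le \max_{a \le x \le b} |f(x)| \sum_{i=1}^n |\alpha(x_i) - \alpha(x_{i-1})|.
\end{equation}
The next step is to observe that, by the very definition of the total variation as the supremum of $\sum_{i=1}^n |\alpha(x_i) - \alpha(x_{i-1})|$ over all partitions, the final sum is bounded by $\TV[\alpha(x)]_a^b$. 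Hence $|S(P)| \le \max_{a \le x \le b} |f(x)|\,\TV[\alpha(x)]_a^b$ for every tagged partition $P$.

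Finally, since this uniform bound is independent of $P$ and of the chosen tags, I would let the mesh of $P$ tend to zero: the sums $S(P)$ converge to $\int_a^b f(x) \del \alpha(x)$, and because the real-valued map $|\cdot|$ is continuous, $|S(P)| \to |\int_a^b f(x) \del \alpha(x)|$. A uniform upper bound on a convergent sequence is inherited by its limit, which yields $|\int_a^b f(x) \del \alpha(x)| \le \max_{a \le x \le b} |f(x)|\,\TV[\alpha(x)]_a^b$, as claimed. I do not expect any genuine obstacle here; the only points requiring care are invoking the existence theorem for the integral (so that the limit of $S(P)$ is well defined) and noting that the partial-sum estimate $\sum_i |\alpha(x_i) - \alpha(x_{i-1})| \le \TV[\alpha(x)]_a^b$ holds for each individual partition rather than only in the supremum, which is immediate from the definition of total variation.
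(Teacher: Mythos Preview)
Your argument is correct and is the standard proof of this classical inequality. Note that the paper does not actually supply its own proof of this statement: it is listed in the appendix as a cited result from Widder's book, so there is nothing further to compare against.
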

\begin{theorem}[{\cite[Theorem I.4b]{Widder1941}}]
\label{the:stieltjes_parts}
If $f(x)$ is of bounded variation and $\alpha(x)$ is continuous in $a \le x \le b$, then the Stieltjes integral of $f(x)$ with respect to $\alpha(x)$ from $a$ to $b$ exists and $\int_a^b f(x) \del \alpha(x) = f(b) \alpha(b) - f(a) \alpha(a) - \int_a^b \alpha(x) \del f(x)$.
\end{theorem}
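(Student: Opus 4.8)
The plan is to deduce the statement from the symmetry of Riemann--Stieltjes integration by parts, rather than constructing $\int_a^b f \del \alpha$ directly; the key point is that the roles of $f$ and $\alpha$ are interchangeable at the level of approximating sums. First I would establish that the companion integral $\int_a^b \alpha \del f$ exists, which is the classical existence criterion for a continuous integrand against an integrator of bounded variation. Concretely, by the Jordan decomposition write $f = f_1 - f_2$ with $f_1, f_2$ nondecreasing on $[a, b]$; for a nondecreasing integrator, the uniform continuity of $\alpha$ on the compact interval $[a, b]$ makes the oscillation of $\alpha$ on each subinterval uniformly small, so that the gap between the upper and lower Darboux--Stieltjes sums can be driven below any prescribed $\varepsilon > 0$ by taking a sufficiently fine partition. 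Riemann's criterion then yields the existence of $\int_a^b \alpha \del f_1$ and $\int_a^b \alpha \del f_2$, hence of $\int_a^b \alpha \del f$.

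The core of the argument is a summation-by-parts identity relating the two approximating sums. For a tagged partition $Q = \{ a = y_0 < \cdots < y_m = b \}$ with tags $\sigma_j \in [y_{j-1}, y_j]$, set $S(Q, f, \alpha) \coloneqq \sum_{j=1}^m f(\sigma_j) (\alpha(y_j) - \alpha(y_{j-1}))$ and define $S(Q, \alpha, f)$ analogously. Telescoping $f(b)\alpha(b) - f(a)\alpha(a) = \sum_{j=1}^m (f(y_j)\alpha(y_j) - f(y_{j-1})\alpha(y_{j-1}))$ and subtracting $S(Q, f, \alpha)$ gives
\begin{align}
&f(b)\alpha(b) - f(a)\alpha(a) - S(Q, f, \alpha) \\ \nonumber
&= \sum_{j=1}^m \left( \alpha(y_j)(f(y_j) - f(\sigma_j)) + \alpha(y_{j-1})(f(\sigma_j) - f(y_{j-1})) \right).
\end{align}
I would then recognize the right-hand side as a genuine Riemann--Stieltjes sum $S(Q', \alpha, f)$ for $\int_a^b \alpha \del f$, taken over the refined partition $Q'$ obtained by adjoining the tags $\sigma_1, \ldots, \sigma_m$ to $Q$ and assigning, on the subinterval $[y_{j-1}, \sigma_j]$ the tag $y_{j-1}$ and on $[\sigma_j, y_j]$ the tag $y_j$.

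Since $Q'$ refines $Q$, it is at least as fine as $Q$, so as $Q$ ranges over sufficiently refined partitions the sum $S(Q', \alpha, f)$ converges to $\int_a^b \alpha \del f$ by the existence established in the first step. The displayed identity then forces $S(Q, f, \alpha) \to f(b)\alpha(b) - f(a)\alpha(a) - \int_a^b \alpha \del f$, which simultaneously proves that $\int_a^b f \del \alpha$ exists and equals the asserted value.

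The main obstacle is the bookkeeping in identifying the right-hand side of the identity with a legitimate tagged sum over $Q'$: one must verify that the adjoined points $\sigma_j$ lie in the intended subintervals and that the chosen tags $y_{j-1}, y_j$ are admissible, including the degenerate cases $\sigma_j \in \{ y_{j-1}, y_j \}$ in which one of the two halves collapses to a point and its contribution vanishes. A secondary point of care is fixing, once and for all, which limiting convention defines the Riemann--Stieltjes integral (the net directed by refinement versus the mesh-to-zero limit); the refinement argument is cleanest for the former, and since adjoining points only refines a partition, it transfers without change to the latter.
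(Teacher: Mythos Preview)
Your argument is correct and is the classical proof of Riemann--Stieltjes integration by parts: existence of $\int_a^b \alpha \,\del f$ via Jordan decomposition plus uniform continuity, followed by the Abel summation identity that rewrites $f(b)\alpha(b)-f(a)\alpha(a)-S(Q,f,\alpha)$ as a legitimate Riemann--Stieltjes sum $S(Q',\alpha,f)$ over the refined partition $Q'$. The bookkeeping caveats you flag (degenerate tags, refinement versus mesh conventions) are genuine but routine.

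There is nothing to compare against in the paper itself: the theorem is stated in the appendix as a citation to \cite[Theorem I.4b]{Widder1941} and is not proved in the paper. Widder's own proof is essentially the one you give, so your proposal matches the intended source.
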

\begin{theorem}[{\cite[Theorem I.5c]{Widder1941}}]
If $f(x)$ is continuous and $\alpha(x)$ is of bounded variation in $a \le x \le b$, then $F(x) = \int_a^x f(t) \del \alpha(t)$ is also of bounded variation in $a \le x \le b$ and satisfies $F(x+) - F(x) = f(x) (\alpha(x+) - \alpha(x))$ in $a \le x < b$ and $F(x) - F(x-) = f(x) (\alpha(x) - \alpha(x-))$ in $a < x  \le b$.
\end{theorem}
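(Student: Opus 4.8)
The plan is to prove the two assertions in turn — first the bounded variation of $F$, then the two jump formulas — both reduced to the basic estimate of Theorem~\ref{the:stieltjes_abs_ineq} together with additivity of the Stieltjes integral and of total variation over consecutive subintervals.

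For the bounded variation claim I would fix an arbitrary partition $a = x_0 < x_1 < \cdots < x_n = b$. Since $f$ is continuous and $\alpha$ is of bounded variation on $[a,b]$, the Stieltjes integral exists over every subinterval and is additive, so $F(x_i) - F(x_{i-1}) = \int_{x_{i-1}}^{x_i} f(t)\,\del\alpha(t)$. Applying Theorem~\ref{the:stieltjes_abs_ineq} on each subinterval and summing gives
\[
\sum_{i=1}^n |F(x_i) - F(x_{i-1})| \le \Bigl(\max_{a \le t \le b}|f(t)|\Bigr) \sum_{i=1}^n \TV[\alpha(t)]_{x_{i-1}}^{x_i} = \Bigl(\max_{a \le t \le b}|f(t)|\Bigr) \TV[\alpha(t)]_a^b,
\]
a bound independent of the partition, finite because $f$ is continuous on the compact interval $[a,b]$ and $\alpha$ has finite total variation. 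Taking the supremum over partitions shows $F$ is of bounded variation on $[a,b]$.

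For the right-hand jump at $x$ with $a \le x < b$, I would write, for $x < y \le b$,
\[
F(y) - F(x) = \int_x^y f(t)\,\del\alpha(t) = \int_x^y \bigl(f(t) - f(x)\bigr)\,\del\alpha(t) + f(x)\bigl(\alpha(y) - \alpha(x)\bigr),
\]
using linearity and $\int_x^y \del\alpha(t) = \alpha(y) - \alpha(x)$. By Theorem~\ref{the:stieltjes_abs_ineq} the first term is at most $\bigl(\max_{x \le t \le y}|f(t) - f(x)|\bigr)\TV[\alpha(t)]_x^y$ in absolute value; as $y \downarrow x$ the factor $\max_{x \le t \le y}|f(t) - f(x)|$ tends to $0$ by continuity of $f$ at $x$, while $\TV[\alpha(t)]_x^y$ stays bounded, being a difference of values of the nondecreasing variation function of $\alpha$. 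Hence the first term vanishes in the limit. Since $\alpha$ is of bounded variation its right-hand limit $\alpha(x+)$ exists, so the second term tends to $f(x)\bigl(\alpha(x+) - \alpha(x)\bigr)$, giving $F(x+) - F(x) = f(x)\bigl(\alpha(x+) - \alpha(x)\bigr)$. The left-hand formula for $a < x \le b$ is obtained identically by letting $y \uparrow x$ in $F(x) - F(y) = \int_y^x f(t)\,\del\alpha(t)$ and using $\alpha(x-)$.

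I do not expect a genuine obstacle here: the only points that need care are the existence and additivity of the Stieltjes integral over subintervals and the fact that bounded variation of $\alpha$ forces the one-sided limits $\alpha(x\pm)$ to exist, both standard; the remainder is routine bookkeeping with the total variation function and the estimate of Theorem~\ref{the:stieltjes_abs_ineq}.
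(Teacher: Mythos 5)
Your proof is correct. The paper itself gives no proof of this statement (it is quoted directly from Widder \cite[Theorem I.5c]{Widder1941}), and your argument --- additivity of the Stieltjes integral over subintervals plus the estimate of Theorem \ref{the:stieltjes_abs_ineq} for bounded variation, and the decomposition $\int_x^y f\,\del\alpha = \int_x^y (f(t)-f(x))\,\del\alpha(t) + f(x)(\alpha(y)-\alpha(x))$ for the jump formulas --- is essentially the standard proof found in that source.
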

\begin{corollary}
\label{cor:int_f_da}
Let $n \in \intge{0}$. Let $f(x)$ be a positive continuous function in $a \le x \le b$. Let $\alpha(x)$ be a bounded nondecreasing function with at least $n$ points of increase in $a \le x \le b$. Then $F(x) = \int_a^x f(t) \del \alpha(t)$ is a bounded nondecreasing function with at least $n$ points of increase in $a \le x \le b$.
\end{corollary}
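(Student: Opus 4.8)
The plan is to show that $F$ inherits all three properties from $\alpha$, the point being that $f$ is continuous and strictly positive on the compact interval $[a,b]$, hence bounded above and bounded below by a positive constant. If $n = 0$ only boundedness and monotonicity need proof, so the interesting case is $n \ge 1$, and the work is concentrated in the "points of increase" claim.

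First I would establish \emph{boundedness}. Since $\alpha$ is nondecreasing and bounded on $[a,b]$, it is of bounded variation there with $\TV[\alpha(t)]_a^b = \alpha(b) - \alpha(a) < \infty$, and $f$ is continuous on the compact interval $[a,b]$, hence bounded. Thus $F(x) = \int_a^x f(t)\,\del\alpha(t)$ exists for every $x \in [a,b]$, and Theorem \ref{the:stieltjes_abs_ineq} gives $|F(x)| \le \max_{a \le t \le b}|f(t)|\,\TV[\alpha(t)]_a^x \le \max_{a \le t \le b}|f(t)|\,(\alpha(b) - \alpha(a)) < \infty$. Next, \emph{monotonicity}: for $a \le x_1 \le x_2 \le b$, additivity of the Stieltjes integral over adjacent intervals yields $F(x_2) - F(x_1) = \int_{x_1}^{x_2} f(t)\,\del\alpha(t)$, and since $f \ge 0$ and $\alpha$ is nondecreasing on $[x_1, x_2]$, this quantity is $\ge 0$; hence $F$ is nondecreasing.

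For the \emph{points of increase}, the key observation is that every point of increase of $\alpha$ is automatically a point of increase of $F$. Let $c \in [a,b]$ be a point of increase of $\alpha$. For each $\delta > 0$ set $J_\delta \coloneqq [a,b] \cap [c - \delta, c + \delta]$, a nondegenerate subinterval on which $c$ is still a point of increase of the restriction $\alpha|_{J_\delta}$ (when $c = a$ or $c = b$ this is the usual one-sided statement). In particular $\alpha|_{J_\delta}$ has at least one point of increase. Since $f$ is continuous and positive on $J_\delta$, it has at most $0$ zeros there, so Theorem \ref{the:stieltjes_pos} with $n = 0$ gives $\int_{J_\delta} f(t)\,\del\alpha(t) > 0$, that is, writing $J_\delta = [u,v]$, $F(v) - F(u) > 0$ by the identity of the previous paragraph. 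As $\delta > 0$ was arbitrary, $c$ is a point of increase of $F$. Applying this to $n$ distinct points of increase $t_1 < \cdots < t_n$ of $\alpha$ produces $n$ distinct points of increase of $F$, which finishes the proof.

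The only delicate step is Step 3, and within it the bookkeeping around the definition of "point of increase" at the endpoints $a$ and $b$: one must confirm that when $c$ equals an endpoint of $[a,b]$ it remains a point of increase of $\alpha$ restricted to the short interval $J_\delta$ (so that Theorem \ref{the:stieltjes_pos} genuinely applies on $J_\delta$, where the guaranteed point of increase of $\alpha$ may sit at an endpoint of $J_\delta$). Everything else is the routine monotonicity and absolute-value estimates recorded above.
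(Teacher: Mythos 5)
Your argument is correct and complete. The paper itself gives no proof of Corollary \ref{cor:int_f_da}: it is stated in the appendix immediately after Widder's Theorem I.5c, which supplies the bounded-variation and jump facts but not, by itself, the ``points of increase'' claim, so some argument along your lines is genuinely needed. Your key step --- that each point of increase $c$ of $\alpha$ is a point of increase of $F$, obtained by applying Theorem \ref{the:stieltjes_pos} with $n=0$ on $J_\delta=[u,v]$ --- is valid; the only sub-claim worth spelling out is that $\alpha(v)>\alpha(u)$ forces $\alpha|_{[u,v]}$ to have at least one point of increase (if it had none, a compactness/chaining argument would make $\alpha$ constant on $[u,v]$). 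You could also bypass Theorem \ref{the:stieltjes_pos} entirely: since $f$ attains a positive minimum $m$ on the compact interval $[u,v]$, monotonicity of the Stieltjes integral gives $F(v)-F(u)=\int_u^v f\,\del\alpha \ge m\,(\alpha(v)-\alpha(u))>0$ directly, which is marginally more elementary. Either way the endpoint bookkeeping you flag is handled by the one-sided form of the definition, exactly as you describe.
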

\begin{theorem}[{\cite[Theorem I.5d]{Widder1941}}]
\label{the:stieltjes_series}
If the functions $f_n(x)$, $n \in \intge{0}$, are continuous, $\alpha(x)$ is of bounded variation in $a \le x \le b$, and if the series $\sum_{n = 0}^\infty f_n(x)$ converges uniformly to $f(x)$ on that interval, then $\int_a^b f(x) \del \alpha(x) = \sum_{n = 0}^\infty \int_a^b f_n(x) \del \alpha(x)$.
\end{theorem}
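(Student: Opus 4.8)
The plan is to deduce the statement from the elementary bound recorded in Theorem \ref{the:stieltjes_abs_ineq}, together with the fact that a uniform limit of continuous functions is continuous. Before anything else I would note that all the Stieltjes integrals appearing in the claim exist: each $f_n(x)$ is continuous and $\alpha(x)$ is of bounded variation on $[a,b]$, so $\int_a^b f_n(x) \del \alpha(x)$ exists (this is the standard existence theorem for a continuous integrand against a bounded-variation integrator, and is in any case presupposed by Theorem \ref{the:stieltjes_abs_ineq}); moreover, since $\sum_{n=0}^\infty f_n(x)$ converges to $f(x)$ uniformly on $[a,b]$ and each partial sum is continuous, the limit $f(x)$ is continuous on $[a,b]$, so $\int_a^b f(x) \del \alpha(x)$ exists as well.

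Next I would set $S_N(x) \coloneqq \sum_{n=0}^N f_n(x)$, which is continuous on $[a,b]$, and use the additivity of the Stieltjes integral in the integrand to write $\int_a^b S_N(x) \del \alpha(x) = \sum_{n=0}^N \int_a^b f_n(x) \del \alpha(x)$. Applying Theorem \ref{the:stieltjes_abs_ineq} to the continuous function $f - S_N$ against $\alpha$ then gives
\begin{equation*}
\left| \int_a^b f(x) \del \alpha(x) - \sum_{n=0}^N \int_a^b f_n(x) \del \alpha(x) \right| = \left| \int_a^b (f(x) - S_N(x)) \del \alpha(x) \right| \le \left( \max_{a \le x \le b} |f(x) - S_N(x)| \right) \TV[\alpha(x)]_a^b .
\end{equation*}

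Uniform convergence of the series is precisely the statement that $\max_{a \le x \le b} |f(x) - S_N(x)| \to 0$ as $N \to \infty$; since $\TV[\alpha(x)]_a^b$ is a fixed finite constant, the right-hand side above tends to $0$, so the partial sums $\sum_{n=0}^N \int_a^b f_n(x) \del \alpha(x)$ converge to $\int_a^b f(x) \del \alpha(x)$, which is the assertion. The only place where a little care is needed is the existence of the integrals against a merely bounded-variation $\alpha$; after splitting $\alpha$ into the difference of two bounded nondecreasing functions this reduces to the classical fact that a continuous function is Riemann--Stieltjes integrable against a monotone one, so I do not anticipate a genuine obstacle --- uniform (rather than pointwise) convergence is exactly what makes the interchange of summation and integration legitimate, and the displayed estimate makes that explicit.
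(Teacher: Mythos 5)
Your argument is correct and complete: the key estimate $\bigl| \int_a^b (f(x) - S_N(x)) \del \alpha(x) \bigr| \le \max_{a \le x \le b} |f(x) - S_N(x)| \cdot \TV[\alpha(x)]_a^b$ from Theorem \ref{the:stieltjes_abs_ineq}, combined with linearity in the integrand and the continuity of the uniform limit, is exactly what is needed, and you correctly flag the existence of the integrals as the only point requiring care. Note that the paper itself gives no proof of this statement --- it is quoted directly from Widder --- and your proof is essentially the classical one found there, so there is nothing to compare beyond saying the approach is the standard one.
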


\section{Properties and nomenclature of Gaussian quadrature}
\label{app:gauss}
We summarize properties and nomenclature of Gaussian quadrature used in the current study. We represent the set of real polynomials of degree at most $n \in \intge{0}$ by $\pset_n$. The following theorem is the basis of the Gaussian quadrature.
\begin{theorem}[{\cite[Sec.~2.2, 3.3, and 3.4]{Szego1975}}]
\label{the:gauss}
Let $M \in \intge{1}$ and $-\infty < a < b < \infty$. Let $\alpha(u)$ be a bounded nondecreasing function with at least $M + 1$ points of increase for $a \le u \le b$. Then there exists orthonormal polynomials $p_i(u)$, $i = 0, \ldots, M$, such that (i) each degree of $p_i(u)$ is $i$, (ii) $\int_a^b p_i(u) p_j(u) \del \alpha(u) = \delta_{i, j}$, $i = 0, \ldots, M$, $j = 0, \ldots, M$, and (iii) each polynomial $p_i(u)$ has $i$ simple zeros in $(a, b)$. Let $u_\nu$, $\nu = 1, \ldots, M$, be zeros of $p_M(u)$. Let us introduce Christoffel numbers $c_\nu \coloneqq \left( \sum_{i = 0}^{M-1} p_i(u_\nu)^2 \right)^{-1} > 0$, $\nu = 1, \ldots, M$. Then one has $\int_a^b p(u) \del \alpha(u) = \sum_{\nu = 1}^M c_\nu p(u_\nu)$, $p \in \pset_{2M-1}$.
\end{theorem}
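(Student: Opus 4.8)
The plan is to follow the classical route for Gaussian quadrature (as in Szeg\H{o}, Chapters~2--3), in three stages. First I would produce the orthonormal polynomials by Gram--Schmidt. The only point needing care is that the bilinear form $\langle f, g \rangle \coloneqq \int_a^b f(u) g(u) \del \alpha(u)$ is positive definite on $\pset_M$: if $p \in \pset_M$ is not identically zero, then $p^2$ is continuous, nonnegative, and has at most $M$ zeros in $[a,b]$, so since $\alpha$ has at least $M+1$ points of increase, Theorem \ref{the:stieltjes_pos} gives $\langle p, p \rangle > 0$. Applying Gram--Schmidt to $1, u, \ldots, u^M$ then yields $p_0, \ldots, p_M$ with $\deg p_i = i$ and $\langle p_i, p_j\rangle = \delta_{i,j}$, which is (i) and (ii).

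Next I would establish (iii) by the standard sign-change argument, carried out for each $i \le M$. Fix $i \ge 1$ and let $x_1 < \cdots < x_\nu$ be the points of $(a,b)$ at which $p_i$ changes sign, so $0 \le \nu \le i$; since $\langle p_i, p_0 \rangle = 0$ and $p_0$ is a nonzero constant, $\int_a^b p_i \, \del\alpha = 0$, so $p_i$ takes both signs on the support of $\del\alpha$, forcing $\nu \ge 1$. Suppose $\nu < i$ and put $q(u) \coloneqq \prod_{j=1}^\nu (u - x_j) \in \pset_{\nu} \subseteq \pset_{i-1}$. Then $p_i q$ changes sign nowhere on $(a,b)$, so after replacing $q$ by $-q$ if necessary we may assume $p_i q \ge 0$ on $[a,b]$, while orthogonality gives $\int_a^b p_i q \, \del\alpha = 0$; being continuous and nonnegative, $p_i q$ must then vanish at every point of increase of $\alpha$. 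The key bookkeeping point is that $p_i q$ is a nonzero polynomial of degree $i + \nu$ with a zero of multiplicity at least two at each $x_j$ (odd order from $p_i$, order one from $q$), so apart from $x_1, \ldots, x_\nu$ it has at most $i - \nu$ further zeros; hence at most $\nu + (i - \nu) = i$ points can be zeros of $p_i q$, contradicting the existence of at least $M + 1 > i$ points of increase. Therefore $\nu = i$, which forces $p_i$ to have $i$ distinct simple zeros in $(a,b)$. In particular $u_1 < \cdots < u_M$ are the $M$ simple zeros of $p_M$, and $c_\nu = \big(\sum_{i=0}^{M-1} p_i(u_\nu)^2\big)^{-1} > 0$ since the sum contains $p_0(u_\nu)^2 = 1/\int_a^b \del\alpha > 0$.

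Finally I would prove the quadrature identity. Given $p \in \pset_{2M-1}$, polynomial division by $p_M$ gives $p = p_M s + r$ with $s, r \in \pset_{M-1}$; then $\int_a^b p \, \del\alpha = \int_a^b r \, \del\alpha$ (as $s \perp p_M$) and $p(u_\nu) = r(u_\nu)$ (as $p_M(u_\nu) = 0$), so it suffices to treat $r \in \pset_{M-1}$. Lagrange interpolation at the nodes gives $r = \sum_{\nu=1}^M r(u_\nu) \ell_\nu$ with $\ell_\nu(u) = \prod_{\mu \ne \nu} \frac{u - u_\mu}{u_\nu - u_\mu} \in \pset_{M-1}$, reducing the claim to $\int_a^b \ell_\nu \, \del\alpha = c_\nu$. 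To see this I would use the Christoffel--Darboux kernel $K(u,v) \coloneqq \sum_{i=0}^{M-1} p_i(u) p_i(v)$: by orthonormality it reproduces $\pset_{M-1}$, i.e. $\int_a^b K(u,v) g(u) \, \del\alpha(u) = g(v)$ for $g \in \pset_{M-1}$ (with $g \equiv 1$ giving $\int_a^b K(u,v)\, \del\alpha(u) = 1$), and the Christoffel--Darboux formula writes $K(u,v)$ with numerator $p_M(u) p_{M-1}(v) - p_{M-1}(u) p_M(v)$, which vanishes when $u$ and $v$ are two distinct zeros of $p_M$; hence $K(u_\mu, u_\nu) = 0$ for $\mu \ne \nu$. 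Thus $v \mapsto K(u_\nu, v)/K(u_\nu, u_\nu)$ lies in $\pset_{M-1}$, equals $1$ at $u_\nu$ and $0$ at the other nodes, so it coincides with $\ell_\nu$; integrating and using the reproducing property gives $\int_a^b \ell_\nu \, \del\alpha = 1/K(u_\nu, u_\nu) = c_\nu$. Combining, $\int_a^b p \, \del\alpha = \sum_{\nu} r(u_\nu) c_\nu = \sum_\nu c_\nu p(u_\nu)$.

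The main obstacle is the zero count in the second stage: this is where the hypothesis on the number of points of increase does essential geometric work, and the argument is valid only if one records that $p_i q$ vanishes to order at least two at each sign-change point $x_j$; without that multiplicity bookkeeping a naive comparison of the degree of $p_i q$ with the number of points of increase is inconclusive for $i$ close to $M$. Everything else (Gram--Schmidt, polynomial division, Lagrange interpolation, Christoffel--Darboux) is routine.
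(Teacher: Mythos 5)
Your proof is correct and is essentially the classical argument from Szeg\H{o} \cite{Szego1975}, which is exactly the source the paper cites for this theorem without reproducing a proof: positive definiteness of $\langle\cdot,\cdot\rangle$ on $\pset_M$ via Theorem \ref{the:stieltjes_pos}, the sign-change argument for the zeros, and exactness on $\pset_{2M-1}$ via division by $p_M$ plus the Christoffel--Darboux identification $\int_a^b \ell_\nu \,\del\alpha = 1/K(u_\nu,u_\nu) = c_\nu$. You also correctly isolate the one delicate step, namely that each sign-change point of $p_i$ is a zero of $p_i q$ of multiplicity at least two, which is what makes the count of at most $i \le M$ distinct zeros close against the hypothesis of $M+1$ points of increase.
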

The quadrature method to approximate an integral $\int_a^b F(u) \del \alpha(u)$ by $\sum_{\nu = 1}^M c_\nu F(u_\nu)$ with $u_\nu$ (called node) and $c_\nu$ (called weight), $\nu = 1, \ldots, M$, defined in Theorem \ref{the:gauss} is called the $M$-point Gaussian quadrature, which is characterized by the exactness of the polynomial integrand with at most $2M-1$ degree.

Next, we summarize error bounds of Gaussian quadrature used in the current study. We assume the conditions in Theorem \ref{the:gauss}. The bound
\begin{equation}
\label{eq:gauss_error_dist}
\left\lvert \int_a^b F(u) \del \alpha(u) - \sum_{\nu = 1}^M c_\nu F(u_\nu) \right\rvert \le 2 \dist(F, \pset_{2M-1}, [a, b]) \int_a^b \del \alpha(u)
\end{equation}
enables to estimate the error bound of the Gaussian quadrature based on the best uniform polynomial approximation error of the integrand $F(u)$ \cite{Bernstein1918}\cite[Eq.~4.8]{Gautschi1981}.
Several bounds of $\dist(F, \pset_n, [-1, 1])$ are listed in \cite[p.~102]{Brass2011}. Chebyshev obtained the following equality \cite[Sec.~37]{Achieser1992}
\begin{equation}
\label{eq:dist_inverse}
\dist\left( \frac{1}{ u - v }, \pset_n, [-1, 1] \right) = \frac{ (v + \sqrt{v^2 - 1})^{-n} }{ v^2 - 1 }, 1 < v < \infty, n \in \intge{0}.
\end{equation}
Based on the bound by Achieser \cite{Achyeser1938}{\cite[Sec.~94 and 95]{Achieser1992}} and \eqref{eq:gauss_error_dist}, Stenger obtained the following error bound of the Gaussian quadrature: 
\begin{theorem}[Achieser \cite{Achyeser1938}{\cite[Sec.~94 and 95]{Achieser1992}} and Stenger \cite{Stenger1966}]
\label{the:achieser_stenger}
Let $F(u)$ satisfy
\begin{subequations}
\label{eq:achieser_condition}
\begin{align}
\label{eq:achieser_analytic}
&\text{$F(u)$ is analytic on $\ellipse_\rho$ for some $1 < \rho < \infty$,} \\
\label{eq:achieser_real}
&\text{$F(u)$ is real on $[-1, 1]$,} \\
&0 < \sup_{u \in \ellipse_\rho} \lvert \RE F(u) \rvert < \infty.
\end{align}
\end{subequations}
Let us use the conditions in Theorem \ref{the:gauss} with $a = -1$ and $b = 1$. Then one has
\begin{equation}
\label{eq:achieser_stenger}
\left\lvert \int_{-1}^1 F(u) \del \alpha(u) - \sum_{\nu = 1}^M c_\nu F(u_\nu) \right\rvert < \frac{16}{\pi} \rho^{-2M} \sup_{u \in \ellipse_\rho} \lvert \RE F(u) \rvert \int_{-1}^1 \del \alpha(u).
\end{equation}
\end{theorem}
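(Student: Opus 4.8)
The plan is to split the bound into an elementary quadrature part and a classical polynomial-approximation part. First I would apply \eqref{eq:gauss_error_dist} with $a=-1$ and $b=1$ to the present Gaussian quadrature, which gives
\[
\left| \int_{-1}^1 F(u) \del \alpha(u) - \sum_{\nu = 1}^M c_\nu F(u_\nu) \right| \le 2 \dist(F, \pset_{2M-1}, [-1, 1]) \int_{-1}^1 \del \alpha(u).
\]
Hence it suffices to establish the purely analytic inequality
\[
\dist(F, \pset_{2M-1}, [-1, 1]) < \frac{8}{\pi}\, \rho^{-2M} \sup_{u \in \ellipse_\rho} |\re F(u)|,
\]
since multiplying by $2 \int_{-1}^1 \del \alpha(u) > 0$ then yields \eqref{eq:achieser_stenger}; and if $F$ happens to lie in $\pset_{2M-1}$ the quadrature is exact and \eqref{eq:achieser_stenger} holds trivially because its right-hand side is positive.

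The analytic inequality is Achieser's estimate for the best uniform polynomial approximation of a function analytic in a Bernstein ellipse whose real part is bounded there (Achieser \cite{Achyeser1938}\cite[Sec.~94 and 95]{Achieser1992}). To prove it I would pass to the annulus/strip picture furnished by Theorem \ref{the:laurent}: under $u = (s + s^{-1})/2$ the function $F$ becomes one analytic on $\aset_\rho$ (equivalently, $F(\cos\theta)$ is analytic on $\imset_{\log\rho}$), and it is real on $|s|=1$ because $F$ is real on $[-1,1]$. Mapping $\ellipse_\rho$ conformally onto the unit disk (symmetrically, by Theorem \ref{the:riemann}, since $F$ is real on $(-1,1)$) and invoking the Borel--Carath\'eodory inequality converts the hypothesis on $\re F$ into a bound for $|F|$ itself on every smaller ellipse $\ellipse_r$ with $1 < r < \rho$, up to a factor depending only on $r/\rho$. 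The best polynomial of degree $2M-1$ then corresponds, through the ellipse--annulus correspondence, to a Chebyshev-type extremal problem whose extremal value is of exact order $\rho^{-2M}$; carrying out that extremal computation with the Borel--Carath\'eodory control in place produces the constant $8/\pi$.

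The main obstacle is obtaining this sharp constant, not merely a geometric rate. The crude approach — truncating the Chebyshev series and using $\dist(F, \pset_{2M-1}, [-1,1]) \le 2 \sum_{n \ge 2M} |a_n|$ together with the coefficient bound $|a_n| \le \rho^{-n} \sup_{\ellipse_\rho} |F|$ from Theorem \ref{the:laurent} — gives only $\frac{4 \rho^{-2M}}{1 - \rho^{-1}} \sup_{\ellipse_\rho} |F|$, which both carries $\sup |F|$ rather than $\sup |\re F|$ and blows up as $\rho \downarrow 1$. Replacing $\sup |F|$ by $\sup |\re F|$ is precisely what Borel--Carath\'eodory achieves, while removing the $(1 - \rho^{-1})^{-1}$ factor forces one to use the genuine extremal polynomial in place of the Chebyshev partial sum, i.e.\ to solve the associated extremal problem on the ellipse exactly. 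Since that extremal analysis is classical, in the write-up I would spell out the reduction via \eqref{eq:gauss_error_dist} in detail and cite \cite[Sec.~94 and 95]{Achieser1992} (equivalently \cite{Stenger1966} for the quadrature form) for the polynomial estimate itself.
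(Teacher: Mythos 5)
Your reduction-plus-citation is essentially the paper's own treatment: the paper does not prove this theorem either, but states in the surrounding text that Stenger obtained \eqref{eq:achieser_stenger} precisely by combining \eqref{eq:gauss_error_dist} with Achieser's bound $\dist(F, \pset_{2M-1}, [-1,1]) < \frac{8}{\pi}\rho^{-2M}\sup_{u \in \ellipse_\rho}|\re F(u)|$, which is exactly your decomposition. One caveat on your speculative sketch: a Borel--Carath\'eodory passage from $\re F$ to $|F|$ on a smaller ellipse would not recover the sharp constant $8/\pi$ (Achieser's argument instead solves an extremal problem for the real part directly, via elliptic functions, as the paper itself notes in Section \ref{sec:discussion}), but since you ultimately defer to the citation for that polynomial estimate, this does not affect the validity of your proof.
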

We refer the bound \eqref{eq:achieser_stenger} as the Achieser--Stenger bound. We used the condition \eqref{eq:achieser_real} instead of ``$F(u)$ is real on the real axis of $\ellipse_\rho$'' based on the following result:
\begin{lemma}
\label{lem:ellipse_real}
Let $1 < \rho \le \infty$. If $F(u)$ is analytic on $\ellipse_\rho$ and real on $[-1, 1]$, then $F(u)$ is real on $(-(\rho + \rho^{-1}) / 2, (\rho + \rho^{-1}) / 2)$.
\end{lemma}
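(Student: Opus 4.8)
The plan is to deduce the statement from the Schwarz reflection principle together with the identity theorem. First I would record two elementary facts about the open Bernstein ellipse $\ellipse_\rho$: it is a connected open set, and it is symmetric with respect to the real axis, with $\ellipse_\rho \cap \mathbb{R} = \left( -\frac{\rho + \rho^{-1}}{2}, \frac{\rho + \rho^{-1}}{2} \right)$. Indeed, $\bd{\ellipse_\rho}$ has foci $(\pm 1, 0)$ and semimajor axis $(\rho + \rho^{-1}) / 2$ along the real axis, so the real points of $\ellipse_\rho$ are exactly that open interval; in the degenerate case $\rho = \infty$ one has $\ellipse_\infty = \mathbb{C}$ and the interval is all of $\mathbb{R}$, which is consistent with $(\rho + \rho^{-1})/2 \to \infty$.

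Next I would introduce the auxiliary function $G(u) \coloneqq \overline{F(\bar{u})}$. Because $\ellipse_\rho$ is symmetric with respect to the real axis, $\bar{u} \in \ellipse_\rho$ whenever $u \in \ellipse_\rho$, so $G$ is well defined on $\ellipse_\rho$, and it is analytic there (composition of the analytic $F$ with the anti-holomorphic conjugation, conjugated again, is holomorphic). By hypothesis $F$ is real on $[-1, 1]$, hence $G(u) = \overline{F(\bar{u})} = \overline{F(u)} = F(u)$ for every $u \in [-1, 1]$. Since $[-1, 1]$ has accumulation points inside the connected open set $\ellipse_\rho$, the identity theorem yields $G \equiv F$ on all of $\ellipse_\rho$.

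Finally, for any real $u \in \ellipse_\rho$, that is, $u \in \left( -\frac{\rho + \rho^{-1}}{2}, \frac{\rho + \rho^{-1}}{2} \right)$, we have $\bar{u} = u$, and therefore $F(u) = G(u) = \overline{F(\bar{u})} = \overline{F(u)}$, so $F(u) \in \mathbb{R}$, which is exactly the assertion. I do not expect any genuine obstacle here; the only points deserving a word of care are the explicit description of $\ellipse_\rho \cap \mathbb{R}$ and the handling of the limiting case $\rho = \infty$, both of which follow immediately from the definition of $\ellipse_\rho$ and the convention $\ellipse_\infty \coloneqq \mathbb{C}$.
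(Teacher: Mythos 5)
Your proof is correct, but it follows a different route from the paper's. The paper proves this lemma as a quick corollary of its Theorem \ref{the:laurent}: it expands $F((s+s^{-1})/2) = a_0 + \sum_{n=1}^\infty a_n (s^n + s^{-n})$ on the annulus $\aset_\rho$ with $a_n = \frac{1}{\pi}\int_0^\pi F(\cos\theta)\cos(n\theta)\,\del\theta$, observes that each $a_n$ is real because $F(\cos\theta)$ is real for $0 \le \theta \le \pi$, and then evaluates the series at real $s$ with $\rho^{-1} < s < \rho$ to conclude that $F(\pm(x+x^{-1})/2)$ is real. You instead argue via the Schwarz reflection principle and the identity theorem: $G(u) \coloneqq \overline{F(\bar u)}$ is analytic on the conjugation-symmetric connected set $\ellipse_\rho$, agrees with $F$ on $[-1,1]$, hence agrees with $F$ everywhere, forcing $F$ to be real at every real point of $\ellipse_\rho$. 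Your argument is self-contained and more elementary in the sense that it uses nothing beyond the symmetry and connectedness of the domain plus the identity theorem, and it would apply verbatim to any conjugation-symmetric domain containing a nondegenerate real interval; the paper's argument is shorter in context because Theorem \ref{the:laurent} has already been set up and is used repeatedly elsewhere, and it additionally exhibits the real Chebyshev coefficients explicitly, which the paper exploits in later constructions. Both proofs handle the case $\rho = \infty$ without difficulty, and your explicit identification of $\ellipse_\rho \cap \mathbb{R}$ with the stated interval is accurate.
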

\begin{proof}
By Theorem \ref{the:laurent}, we have $F((s + s^{-1})/2) = a_0 +  \sum_{n =1}^\infty a_n (s^n + s^{-n})$ for $s \in \aset_\rho$, where $a_n \coloneqq \frac{1}{\pi} \int_0^\pi F(\cos\theta) \cos(n \theta) \del \theta$ for $n \in \intge{0}$. Since $F(\cos\theta)$ is real for $0 \le \theta \le \pi$, $a_n$ is real. Then $F(\pm (x+x^{-1})/2)$ is real for $\rho^{-1} < x < \rho$.
\end{proof}

\section{Expressions of $\hat{\rho}[\vt_r]$ for some variable transformations}
\label{app:hatrho}

\subsection{Linear polynomial}
We consider the linear polynomial $\vt_r(u) = a_1 u + a_0$. The condition \eqref{eq:vt_eq} determines the parameters as
\begin{equation}
\label{eq:vtP1}
\vt_r(u) = \frac{1 - r}{2} u + \frac{1 + r}{2} \eqqcolon \vt_{\pset_1, r}(u), u \in \mathbb{C},
\end{equation}
which satisfies \eqref{eq:vt_increase}, $\rhoana[\vt_r] = \infty$, and $\inf\limits_{u \in \ellipse_{\rhoana[\vt_r]}} \RE \vt_r(u) = -\infty$. We have
\begin{equation}
\label{eq:vtP1_infre}
\inf_{u \in \ellipse_\rho} \RE \vt_{\pset_1, r}(u) = - \frac{1 - r}{2} \frac{\rho + \rho^{-1}}{2} + \frac{1 + r}{2}, 1 < \rho < \infty,
\end{equation}
and by solving $\inf\limits_{u \in \ellipse_{\rho_0[\vt_r]}} \RE \vt_r(u) = 0$ for $1 < \rho_0[\vt_r] < \infty$, we have
\begin{equation}
\label{eq:hatrho_P1}
\hat{\rho}[\vt_{\pset_1, r}] = \rho_0[\vt_{\pset_1, r}] = \frac{1 + \sqrt{r}}{1 - \sqrt{r}}.
\end{equation}
Let $\minfre{\vt}_{\pset_1, r}(X)$ be a function $\minfre{\vt}_r(X)$ defined in \eqref{eq:minfre} with $\vt_r = \vt_{\pset_1, r}$. By using \eqref{eq:vtP1_infre}, we have
\begin{equation}
\label{eq:minfre_P1}
\minfre{\vt}_{\pset_1, r}(X) = \frac{1 - r}{2} \frac{\enum^X + \enum^{-X}}{2} - \frac{1 + r}{2}, 0 < X < \infty.
\end{equation}

Let $n \in \intge{0}$ and $z \in \mathbb{C}$. By using the modified Bessel function $I_n(z) = \frac{1}{\pi} \int_0^\pi \enum^{ z \cos\theta } \cos(n \theta) \del \theta$ \cite[Eq.~10.32.3]{DLMF}, \eqref{eq:vtP1}, and $I_n(-z) = (-1)^n I_n(z)$, the basis functions associated with $\vt_{\pset_1, r}(u)$ can be expressed as
\begin{equation}
\label{eq:basis_P1}
\basis{\vt}_{\pset_1, r, n}(z) \coloneqq \frac{1}{\pi} \int_0^\pi \enum^{ -z \vt_{\pset_1, r}(\cos\theta)  } \cos(n \theta) \del \theta = (-1)^n \enum^{ - \frac{1 + r}{2} z } I_n\left( \frac{1 - r}{2} z \right).
\end{equation}
Since the inequality $I_n(x) > 0$ holds for $0 < x < \infty$ \cite[Sec.~10.37]{DLMF}, we have
\begin{equation}
\label{eq:basis_P1_ineq}
(-1)^n \basis{\vt}_{\pset_1, r, n}(x) > 0, n \in \intge{0}, 0 < x < \infty.
\end{equation}
We can show the following equalities:
\begin{align}
\label{eq:W0_P1}
&W_{\vt_{\pset_1, r}, 0}(\tau) = \frac{ \arccos\left( \frac{ - 2 \tau + 1 + r }{ 1 - r } \right) }{\pi} = \int_r^\tau \frac{\del t}{ \pi \sqrt{ (1 - t) (t - r) } }, r \le \tau \le 1, \\
&\basis{\vt}_{\pset_1, r, n}(z)= \int_r^1 \enum^{ -z \tau } \frac{ T_n\left( \frac{ 2 \tau }{ 1 - r } - \frac{ 1 + r }{ 1 - r } \right) }{ \pi \sqrt{ (1 - \tau) (\tau - r) } } \del \tau, n \in \intge{0}, z \in \mathbb{C}, \\
&\basis{\vt}_{\pset_1, r, 0}'(0) = - (1 + r) / 2, \basis{\vt}_{\pset_1, r, 1}'(0) = - (1 - r) / 4, \basis{\vt}_{\pset_1, r, n}'(0) = 0, n \in \intge{2}, \\
&\basis{\vt}_{\pset_1, r, 0}''(0) = (3 + 2 r + 3 r^2) / 8, \basis{\vt}_{\pset_1, r, 1}''(0) = (1 - r^2) / 4, \\ \nonumber
&\basis{\vt}_{\pset_1, r, 2}''(0) = (1 - r)^2 / 16, \basis{\vt}_{\pset_1, r, n}''(0) = 0, n \in \intge{3}, \\
\label{eq:V_P1}
&V_{\vt_{\pset_1, r}} = \basis{\vt}_{\pset_1, r, 0}''(0) - \basis{\vt}_{\pset_1, r, 0}'(0)^2 = (1 - r)^2 / 8.
\end{align}

\subsection{Quadratic polynomial maximizing $\hat{\rho}[\vt_r]$}
We consider the quadratic polynomial $\vt_r(u) = a_2 u^2 + a_1 u + a_0$. The condition \eqref{eq:vt_eq} leads to
\begin{equation}
\label{eq:vtquadratic}
\vt_r(u) = a_2 u^2 + \frac{1 - r}{2} u + \frac{1 + r}{2} - a_2, u \in \mathbb{C}, a_2 \in \mathbb{R}.
\end{equation}
The condition \eqref{eq:vt_increase} is satisfied for
\begin{equation}
\label{eq:quadraticineq}
- (1 - r) / 4 \le a_2 \le (1 - r) / 4.
\end{equation}
Although $\vt_r(u)$ is a linear polynomial for $a_2 = 0$, we do not exclude it in the following analysis. Under these conditions, we have $\rhoana[\vt_r] = \infty$ and $\inf\limits_{u \in \ellipse_{\rhoana[\vt_r]}} \RE \vt_r(u) = -\infty$. Then we find the maximum value of $\rho_0[\vt_r]$ satisfying \eqref{eq:vtquadratic} and \eqref{eq:quadraticineq}. We note that $\rho_0[\vt_r]$ in Lemma \ref{lem:rho0} can be found by following two steps: (i) To find the level set of $\RE \vt_r(x + \inum y) = 0$. (ii) To find $\rho$ such that $\bd{\ellipse_\rho}$ contacts with the level set for the first time by increasing $\rho > 1$. For \eqref{eq:vtquadratic}, we have
\begin{equation}
\label{eq:Revtquadratic}
\RE \vt_r(x + \inum y) = a_2 (x^2 - y^2) + \frac{1 - r}{2} x + \frac{1 + r}{2} - a_2.
\end{equation}
Equalities $\RE \vt_r(x_r \pm \inum y_r) = 0$ hold for $x_r \coloneqq - \frac{1 + r}{1 - r} < -1$, $y_r \coloneqq \frac{2 \sqrt{r}}{1 - r} > 0$, which are independent of $a_2$. We introduce $\rho_r > 1$ such that $\bd{\ellipse_{\rho_r}}$ passes through two points $(x_r, \pm y_r)$. The condition leads to the equation $x_r^2 / A_r^2 + y_r^2 / B_r^2 = 1$, where we introduced $A_r \coloneqq (\rho_r + \rho_r^{-1}) / 2 > 1$ and $B_r \coloneqq (\rho_r - \rho_r^{-1}) / 2 > 0$. By solving the equation with respect to $A_r > 1$ or $B_r > 0$, we have $A_r = \sqrt{1 + r} / (1 - \sqrt{r})$ and $B_r = \sqrt{2 \sqrt{r}} / (1 - \sqrt{r})$. By solving
$A_r = (\rho_r + \rho_r^{-1}) / 2$ with respect to $\rho_r > 1$, we have $\rho_r = (\sqrt{1 + r} + \sqrt{2 \sqrt{r}}) / (1 - \sqrt{r})$. Since $\RE \vt_r(x_r \pm \inum y_r) = 0$ and $x_r \pm \inum y_r \in \bd{\ellipse_{\rho_r}}$ hold regardless of the value of $a_2$, $\rho_r$ gives an upper bound as $\rho_0[\vt_r] \le \rho_r$.

We can show that the equality $\rho_0[\vt_r] = \rho_r$ holds for some $a_2$ as follows. The condition, the level set of $\RE \vt_r(x + \inum y) = 0$ and $\bd{\ellipse_{\rho_r}}$ contact at $(x_r, y_r)$, leads to the equation
\begin{equation}
\label{eq:contact}
\nabla \RE \vt_r(x + \inum y) \vert_{x = x_r, y = y_r} = - \alpha \nabla (x^2 / A_r^2 + y^2 / B_r^2 - 1) \vert_{x = x_r, y = y_r}, \alpha > 0.
\end{equation}
The equation \eqref{eq:contact} has a unique solution $\alpha = \sqrt{r} / 2$ and $a_2 = ((1 - \sqrt{r}) / 2)^2$, which satisfies the condition \eqref{eq:quadraticineq}. In this case, \eqref{eq:vtquadratic} is expressed as
\begin{equation}
\label{eq:vtP2}
\vt_r(u) = \left( \frac{1 - \sqrt{r}}{2} u + \frac{1 + \sqrt{r}}{2} \right)^2 \eqqcolon \vt_{\pset_2, r}(u), u \in \mathbb{C}.
\end{equation}
The level set of $\RE \vt_{\pset_2, r}(x + \inum y) = 0$ is two lines $y = \pm (x + (1 + \sqrt{r}) / (1 - \sqrt{r}))$. We can confirm these two lines contact with $\bd{\ellipse_{\rho_r}}$ at $(x_r, \pm y_r)$. In summary, \eqref{eq:vtP2} gives the maximum value of $\rho_0[\vt_r]$  satisfying \eqref{eq:vtquadratic} and \eqref{eq:quadraticineq} and we have
\begin{equation}
\label{eq:hatrho_P2}
\hat{\rho}[\vt_{\pset_2, r}] = \rho_0[\vt_{\pset_2, r}] = \rho_r = \frac{\sqrt{1 + r} + \sqrt{2 \sqrt{r}}}{1 - \sqrt{r}}.
\end{equation}

\subsection{Exponential function}
We consider the exponential function $\vt_r(u) = \enum^{a_1 u + a_0}$. The condition \eqref{eq:vt_eq} determines the parameters as
\begin{equation}
\label{eq:vtexp}
\vt_r(u) = \sqrt{r} \enum^{\frac{1}{2} \log(1 / r) u} = r^\frac{ 1 - u }{ 2 } \eqqcolon \vt_{\exp, r}(u), u \in \mathbb{C},
\end{equation}
which satisfies the condition \eqref{eq:vt_increase}, $\rhoana[\vt_r] = \infty$, $\inf\limits_{u \in \ellipse_{\rhoana[\vt_r]}} \RE \vt_r(u) = -\infty$, and
\begin{subequations}
\label{eq:vtexp_property}
\begin{align}
&\vt_{\exp, r}^{(n)}(u) > 0, n \in \intge{0}, u \in \mathbb{R}, \\
&\vt_{\exp, r}(-u) = r / \vt_{\exp, r}(u), u \in \mathbb{C}.
\end{align}
\end{subequations}
By using $\RE \vt_r(x + \inum y) = \sqrt{r} \exp\left(\frac{1}{2} \log(1 / r) x \right) \cos\left( \frac{1}{2} \log(1 / r) y \right)$, the level set of $\RE \vt_r(x + \inum y) = 0$ is horizontal lines $y = (2 \pi n - \pi) / \log(1 / r), n \in \mathbb{Z}$. Thus, by increasing $\rho > 1$, $\bd{\ellipse_\rho}$ contacts with the level set for the first time at $x = 0$ and $y = \pm \pi / \log(1 / r)$. Since $y = \pm (\rho_0[\vt_r] - \rho_0[\vt_r]^{-1}) / 2$ for $x = 0$ on $\bd{\ellipse_{\rho_0[\vt_r]}}$, by solving $(\rho_0[\vt_r] - \rho_0[\vt_r]^{-1}) / 2 = \pi / \log(1 / r)$ with respect to $\rho_0[\vt_r] > 1$, we have
\begin{equation}
\label{eq:hatrho_exp}
\hat{\rho}[\vt_{\exp, r}] = \rho_0[\vt_{\exp, r}] = \frac{\pi}{\log(1 / r)} + \sqrt{\left( \frac{\pi}{\log(1 / r)} \right)^2 + 1}.
\end{equation}

\subsection{A rational function}
We consider a rational function $\vt_r(u) = A / (u + B)$. The condition \eqref{eq:vt_eq} determines the parameters as
\begin{equation}
\label{eq:vtR01}
\vt_r(u) = \frac{ 2 r }{ 1 - r } \frac{ 1 }{ \frac{ 1 + r }{ 1 - r } - u } \eqqcolon \vt_{\rset_{0, 1}, r}(u), u \in \mathbb{C},
\end{equation}
which satisfies the condition \eqref{eq:vt_increase} and
\begin{equation}
\vt_{\rset_{0, 1}, r}^{(n)}(u) > 0, n \in \intge{0}, - \infty < u < \frac{ 1 + r }{ 1 - r }.
\end{equation}
Since $\vt_r(u)$ has a simple pole at $u = \frac{ 1 + r }{ 1 - r } > 1$, by solving $\frac{ \rhoana[\vt_r] + \rhoana[\vt_r]^{-1} }{ 2 } = \frac{ 1 + r }{ 1 - r }$ for $1 < \rhoana[\vt_r] < \infty$, we have $\rhoana[\vt_r] = \frac{ 1 + \sqrt{r} }{ 1 - \sqrt{r} }$ and
\begin{align}
\label{eq:infRe_R01}
&\inf_{ u \in \ellipse_{\rhoana[\vt_r]} } \RE \vt_r(u) = \inf_{ u \in \ellipse_{\rhoana[\vt_r]} } \RE \frac{ 2 r }{ 1 - r } \frac{ 1 }{ \frac{ 1 + r }{ 1 - r } - u } \\ \nonumber
&= \frac{ 2 r }{ 1 - r } \min_{ 0 < \theta \le \pi } \frac{ \frac{ 1 + r }{ 1 - r } - \frac{ \rhoana[\vt_r] + \rhoana[\vt_r]^{-1} }{ 2 } \cos\theta }{ \left( \frac{ 1 + r }{ 1 - r } - \frac{ \rhoana[\vt_r] + \rhoana[\vt_r]^{-1} }{ 2 } \cos\theta \right)^2 + \left( \frac{ \rhoana[\vt_r] - \rhoana[\vt_r]^{-1} }{ 2 } \sin\theta \right)^2 } = \frac{ 2 r }{ 1 - r } \min_{ 0 < \theta \le \pi } \frac{ \frac{ 1 + r }{ 1 - r } }{ \left( \frac{ 1 + r }{ 1 - r } \right)^2 + \left( \frac{ 2 \sqrt{r} }{ 1 - r } \right)^2 - \cos\theta } = \frac{ r }{ 1 + r } > 0.
\end{align}
Then we have
\begin{equation}
\label{eq:hatrho_R01}
\hat{\rho}[\vt_{\rset_{0, 1}, r}] = \rhoana[\vt_{\rset_{0, 1}, r}] = \frac{ 1 + \sqrt{r} }{ 1 - \sqrt{r} }.
\end{equation}

\section{Some equalities and inequalities related to Jacobi's elliptic functions}
\begin{lemma}
Let $0 < k < 1$ and $k' \coloneqq \sqrt{1 - k^2}$. Then one has
\begin{equation}
\label{eq:dn_K_2_iKrl}
\dn(\EK(k)/2 + \inum \EK(k') \lambda, k) = \sqrt{k'} \enum^{ - \inum \left( \am\left( \EK\left( \frac{ 2 \sqrt{k'} }{ 1 + k' } \right) (\lambda + 1), \frac{ 2 \sqrt{k'} }{ 1 + k' } \right) - \frac{\pi}{2} \right) }, \lambda \in \mathbb{R}.
\end{equation}
\end{lemma}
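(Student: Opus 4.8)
The plan is to reduce \eqref{eq:dn_K_2_iKrl} to an algebraic identity among Jacobi elliptic functions of modulus $k'$, and then to establish that identity using the addition theorem for $\dn$ together with the Landen transformation. First I would rewrite the right-hand side in a workable form: since $\sin(\am(x,\mu)) = \sn(x,\mu)$ and $\cos(\am(x,\mu)) = \cn(x,\mu)$, we have $\enum^{-\inum(\am(x,\mu) - \pi/2)} = \inum\,\enum^{-\inum\am(x,\mu)} = \sn(x,\mu) + \inum\,\cn(x,\mu)$, so, writing $m \coloneqq \frac{2\sqrt{k'}}{1+k'}$ and $w \coloneqq \EK(k')\lambda$, the claim becomes
\begin{equation*}
\dn\!\left( \frac{\EK(k)}{2} + \inum w,\, k \right) = \sqrt{k'}\,\bigl( \sn((\lambda+1)\EK(m), m) + \inum\, \cn((\lambda+1)\EK(m), m) \bigr).
\end{equation*}

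Next I would expand the left-hand side. Applying the addition theorem $\dn(u+v,k) = \frac{\dn(u,k)\dn(v,k) - k^2\sn(u,k)\cn(u,k)\sn(v,k)\cn(v,k)}{1 - k^2\sn^2(u,k)\sn^2(v,k)}$ \cite[Eq.~22.8.3]{DLMF} with $u = \EK(k)/2$ and $v = \inum w$, inserting the half-period values $\sn(\EK(k)/2,k) = (1+k')^{-1/2}$, $\cn(\EK(k)/2,k) = (k'/(1+k'))^{1/2}$, $\dn(\EK(k)/2,k) = \sqrt{k'}$ and Jacobi's imaginary transformation $\sn(\inum w,k) = \inum\,\sn(w,k')/\cn(w,k')$, $\cn(\inum w,k) = 1/\cn(w,k')$, $\dn(\inum w,k) = \dn(w,k')/\cn(w,k')$ \cite[Table 22.6.1]{DLMF}, and finally simplifying via $\frac{k^2}{1+k'} = 1-k'$ and the elementary identity $(1+k')\cn^2(w,k') + k^2\sn^2(w,k') = (1+k')(1 - k'\sn^2(w,k'))$, I expect to obtain
\begin{equation*}
\dn\!\left( \frac{\EK(k)}{2} + \inum w,\, k \right) = \sqrt{k'}\;\frac{ \cn(w,k')\dn(w,k') - \inum(1-k')\sn(w,k') }{ 1 - k'\sn^2(w,k') }.
\end{equation*}

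It then remains to match the bracket on the right with the Jacobi functions of modulus $m$. Here I would use that $k'$ is the descending Landen transform of $m$ — indeed $m' = \sqrt{1-m^2} = \frac{1-k'}{1+k'}$ — so that $\EK(m) = (1+k')\EK(k')$ and the Landen formulas \cite[Sec.~22.7]{DLMF} give, for $\zeta\in\mathbb{C}$, $\sn((1+k')\zeta, m) = \frac{(1+k')\sn(\zeta,k')}{1 + k'\sn^2(\zeta,k')}$ and $\cn((1+k')\zeta, m) = \frac{\cn(\zeta,k')\dn(\zeta,k')}{1 + k'\sn^2(\zeta,k')}$. Taking $\zeta = (\lambda+1)\EK(k') = w + \EK(k')$ and reducing with the quarter-period shifts $\sn(w+\EK(k'),k') = \cn(w,k')/\dn(w,k')$, $\cn(w+\EK(k'),k') = -k\,\sn(w,k')/\dn(w,k')$, $\dn(w+\EK(k'),k') = k/\dn(w,k')$ \cite[Table 22.4.3]{DLMF} (recall the complementary modulus of $k'$ is $k$), then simplifying once more via $(1+k')(1-k'\sn^2) = \dn^2 + k'\cn^2$ and $\frac{k^2}{1+k'} = 1-k'$, one gets $\sn((\lambda+1)\EK(m),m) = \frac{\cn(w,k')\dn(w,k')}{1 - k'\sn^2(w,k')}$ and $\cn((\lambda+1)\EK(m),m) = \frac{-(1-k')\sn(w,k')}{1 - k'\sn^2(w,k')}$, whose combination $\sn + \inum\,\cn$ is exactly the bracket from the previous step; a sanity check at $\lambda = 0$ gives $\sqrt{k'}$ on both sides. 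Since the segment $\EK(k)/2 + \inum\mathbb{R}$ avoids the poles of $\dn(\cdot,k)$ and $1 - k'\sn^2(w,k') \ge 1-k' > 0$ for all real $w$, both sides are real-analytic in $\lambda\in\mathbb{R}$ and the computation is valid throughout. The main obstacle I anticipate is bookkeeping: pinning down the correct branch of the Landen transformation (descending versus ascending) and the exact DLMF normalizations, together with the signs in the quarter-period shift formulas and the consistent use of $k$ as the complementary modulus of $k'$; the elliptic-function algebra itself is routine once these conventions are fixed.
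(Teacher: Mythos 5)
Your proposal is correct and follows essentially the same route as the paper: both derive the intermediate identity $\dn(\EK(k)/2+\inum y,k)=\sqrt{k'}\,\frac{\cn(y,k')\dn(y,k')-\inum(1-k')\sn(y,k')}{1-k'\sn(y,k')^2}$ from the addition theorem and half-period values, then pass to modulus $\frac{2\sqrt{k'}}{1+k'}$ via the same Landen transformation and a quarter-period shift before identifying $\sn+\inum\cn$ with $\enum^{-\inum(\am-\pi/2)}$. The only cosmetic difference is that you apply the quarter-period shift at modulus $k'$ before the Landen step, whereas the paper applies it at modulus $k_1'$ afterwards via $\cd(z,k_1')=\sn(z+\EK(k_1'),k_1')$ and $-k_1\sd(z,k_1')=\cn(z+\EK(k_1'),k_1')$.
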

\begin{proof}
Let $x \in \mathbb{R}$, $y \in \mathbb{R}$, and $z \in \mathbb{C}$.
By using equalities \cite[Eq.~16.21.4]{MilneThomson1972}\cite[Table 22.5.2]{DLMF}
\begin{align}
\label{eq:dn_x_iy}
&\dn(x + \inum y, k) = \frac{ \dn(x, k) \cn(y, k') \dn(y, k') - \inum k^2 \sn(x, k) \cn(x, k) \sn(y, k') }{ \cn(y, k')^2 + k^2 \sn(x, k)^2 \sn(y, k')^2 }, \\
&\sn(\EK(k)/2, k) = \frac{1}{\sqrt{1+k'}}, \cn(\EK(k)/2, k) = \sqrt{ \frac{ k' }{ 1 + k' } }, \dn(\EK(k)/2, k) = \sqrt{k'},
\end{align}
we have
\begin{equation}
\label{eq:dn_K_2_iy}
\dn(\EK(k)/2 + \inum y, k) = \sqrt{k'} \frac{ \cn(y, k') \dn(y, k') - \inum (1 - k') \sn(y, k') }{ 1 - k' \sn(y, k')^2 }.
\end{equation}
By using descending Landen transformations from $k'_1$ to $k'$ \cite[Sec.~19.8(ii), 22.7(i)]{DLMF}
\begin{subequations}
\begin{align}
\label{eq:k1}
&k'_1 \coloneqq \frac{ 2 \sqrt{k'} }{ 1 + k' }, k_1 \coloneqq \sqrt{1 - (k'_1)^2} = \frac{ 1 - k' }{ 1 + k' }, \EK(k'_1) = (1 + k') \EK(k'), \\
&\sn(z, k'_1) = \frac{ (1 + k') \sn(z/ (1 + k'), k') }{ 1 + k' \sn(z / (1 + k'), k')^2 }, \\
&\cn(z, k'_1) = \frac{ \cn(z / (1 + k'), k') \dn(z / (1 + k'), k') }{ 1 + k' \sn(z / (1 + k'), k')^2 }, \\
&\dn(z, k'_1) = \frac{ \dn(z / (1 + k'), k')^2 - (1 - k') }{ 1 + k' - \dn(z / (1 + k'), k')^2 }
 = \frac{ 1 - k' \sn(z / (1 + k'), k')^2 }{ 1 + k' \sn(z / (1 + k'), k')^2 },
\end{align}
\end{subequations}
\eqref{eq:dn_K_2_iy} is expressed as
\begin{equation}
\label{eq:dn_K_2_iy_cd}
\dn(\EK(k)/2 + \inum y, k) = \sqrt{k'} ( \cd((1 + k') y, k'_1) - \inum k_1 \sd((1 + k') y, k'_1) ).
\end{equation}
By using equalities \cite[Table 22.4.3]{DLMF}
\begin{subequations}
\begin{align}
&\cd(z, k'_1) = \sn(z + \EK(k'_1), k'_1) = \sin(\am(z + \EK(k'_1), k'_1)), \\
&- k_1 \sd(z, k'_1) = \cn(z + \EK(k'_1), k'_1) = \cos(\am(z + \EK(k'_1), k'_1)),
\end{align}
\end{subequations}
\eqref{eq:dn_K_2_iy_cd} is expressed as
\begin{equation}
\label{eq:dn_K_2_iy_am}
\dn(\EK(k)/2 + \inum y, k) = \sqrt{k'} \enum^{ - \inum \left( \am( (1 + k') y + \EK(k'_1), k'_1 ) - \pi/2 \right) }.
\end{equation}
By substituting $y = \EK(k') \lambda, \lambda \in \mathbb{R}$, for \eqref{eq:dn_K_2_iy_am} and using \eqref{eq:k1}, we have \eqref{eq:dn_K_2_iKrl}.
\end{proof}

\begin{lemma}
Let $0 < k < 1$ and $k' \coloneqq \sqrt{1 - k^2}$. Let $\sqrt{z}$ be the principal value of the square root function on $\mathbb{C} \setminus (-\infty, 0]$. Then one has
\begin{equation}
\label{eq:dnxmiKr_2}
\dn( \EK(k) \theta / \pi - \inum \EK(k')/2, k) = \sqrt{ 1 + k \enum^{ \inum 2 \am\left( \EK\left( \frac{ 2 \sqrt{k} }{ 1 + k } \right) \frac{ \theta }{ \pi }, \frac{ 2 \sqrt{k} }{ 1 + k } \right) } }, \theta \in \mathbb{R}.
\end{equation}
\end{lemma}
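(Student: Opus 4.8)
The plan is to mirror the proof of \eqref{eq:dn_K_2_iKrl}, but with the roles of the real and imaginary parts interchanged: there the real part was pinned at $\EK(k)/2$ and the imaginary part was free, whereas now the imaginary part is pinned at $-\EK(k')/2$ and the real part $x\coloneqq\EK(k)\theta/\pi$ is free. First I would apply the addition formula \eqref{eq:dn_x_iy} with this $x$ and with $y=-\EK(k')/2$. The required values $\sn(\EK(k')/2,k')=(1+k)^{-1/2}$, $\cn(\EK(k')/2,k')=\sqrt{k/(1+k)}$, $\dn(\EK(k')/2,k')=\sqrt{k}$ come from the half-period values displayed in the proof of \eqref{eq:dn_K_2_iKrl} by replacing the modulus $k$ there with $k'$ (whose complementary modulus is $k$); combining these with the parities (odd $\sn$, even $\cn$, $\dn$) and the identities $\dn(x,k)^2=1-k^2\sn(x,k)^2$, $\cn(x,k)^2=1-\sn(x,k)^2$, a short simplification should give
\begin{equation*}
\dn(\EK(k)\theta/\pi-\inum\EK(k')/2,k)=\sqrt{1+k}\,\frac{\dn(x,k)+\inum k\,\sn(x,k)\cn(x,k)}{1+k\,\sn(x,k)^2},\qquad x=\EK(k)\theta/\pi.
\end{equation*}

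Next I would recognize the right-hand side via an ascending Landen transformation. Setting $k_1\coloneqq\frac{2\sqrt{k}}{1+k}$, so that $\sqrt{1-k_1^2}=\frac{1-k}{1+k}$ and $\EK(k_1)=(1+k)\EK(k)$, the Landen formulas of \cite[Sec.~19.8(ii), 22.7(i)]{DLMF} (the same ones used in the proof of \eqref{eq:dn_K_2_iKrl}, with $k',k_1'$ replaced by $k,k_1$) yield $\sn((1+k)x,k_1)=\frac{(1+k)\sn(x,k)}{1+k\sn(x,k)^2}$ and $\cn((1+k)x,k_1)=\frac{\cn(x,k)\dn(x,k)}{1+k\sn(x,k)^2}$. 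I would then square the displayed right-hand side: using the two elliptic identities above, its real part equals $1+k\bigl(1-2\sn((1+k)x,k_1)^2\bigr)$ and its imaginary part equals $2k\,\sn((1+k)x,k_1)\cn((1+k)x,k_1)$, so, writing $\vt\coloneqq\am((1+k)x,k_1)$ and using $\sn=\sin\am$, $\cn=\cos\am$, the square is exactly $1+k\cos(2\vt)+\inum k\sin(2\vt)=1+k\,\enum^{2\inum\vt}$.

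Finally I would take square roots. The middle expression $\sqrt{1+k}\,\frac{\dn(x,k)+\inum k\sn(x,k)\cn(x,k)}{1+k\sn(x,k)^2}$ has real part $\sqrt{1+k}\,\frac{\dn(x,k)}{1+k\sn(x,k)^2}>0$ for every real $x$ (since $\dn(x,k)^2\ge 1-k^2>0$), hence it is the \emph{principal} square root of $1+k\,\enum^{2\inum\vt}$; note that $1+k\,\enum^{2\inum\vt}$ itself lies in $\rhp$, being on the circle $|w-1|=k$ with $k<1$, so the principal branch is unambiguous. Substituting $x=\EK(k)\theta/\pi$ and using $(1+k)\EK(k)=\EK(k_1)=\EK\!\bigl(\frac{2\sqrt{k}}{1+k}\bigr)$ to rewrite $\vt=\am\!\bigl(\EK(\frac{2\sqrt{k}}{1+k})\frac{\theta}{\pi},\frac{2\sqrt{k}}{1+k}\bigr)$ then gives \eqref{eq:dnxmiKr_2}; the formula is manifestly valid for all real $\theta$, since $\dn(x,k),\sn(x,k),\cn(x,k)$ and $\am((1+k)x,k_1)$ are all finite and real on the whole real line.

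The routine but error-prone part will be the algebraic simplification in the second step — expanding $(\dn+\inum k\,\sn\cn)^2$, reducing it with $\dn^2=1-k^2\sn^2$ and $\cn^2=1-\sn^2$ together with the Landen formulas, and carrying the normalizing factor $\sqrt{1+k}$ so that no spurious sign or factor appears. I do not expect a conceptual obstacle: the branch of the square root is pinned down cleanly by the positivity of the real part of the middle expression, and periodicity/analyticity considerations are not even needed because the whole argument proceeds through explicit real-analytic identities on $\mathbb{R}$.
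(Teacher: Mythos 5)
Your proof is correct, and it takes a genuinely different route from the paper's. The paper obtains \eqref{eq:dnxmiKr_2} as a corollary of the already-proved \eqref{eq:dn_K_2_iKrl}: it interchanges $k$ and $k'$ there, applies Jacobi's imaginary transformation $\dn(\inum z,k')=\dc(z,k)$ to get a formula for $\dc(\EK(k)\lambda-\inum\EK(k')/2,k)$, shifts by a quarter period via $\dc(z-\EK(k),k)=1/\sn(z,k)$ to obtain $\sqrt{k}\,\sn(\EK(k)\theta/\pi-\inum\EK(k')/2,k)=-\inum\,\enum^{\inum\am(\cdots)}$, and then squares and uses $\dn^2=1-k^2\sn^2$; the sign of the square root is finally fixed by invoking \eqref{eq:dneven}, \eqref{eq:dnK} and the conformal-mapping Theorem \ref{the:dn_conformal} to see that $\re\dn>0$ on the relevant line. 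You instead rerun the machinery of the proof of \eqref{eq:dn_K_2_iKrl} from scratch: addition formula \eqref{eq:dn_x_iy} at $y=-\EK(k')/2$ with the swapped half-period values, followed by the ascending Landen transformation to modulus $k_1=\frac{2\sqrt{k}}{1+k}$. I checked your intermediate identity $\dn(x-\inum\EK(k')/2,k)=\sqrt{1+k}\,\frac{\dn(x,k)+\inum k\sn(x,k)\cn(x,k)}{1+k\sn(x,k)^2}$ and the real/imaginary-part computations after squaring; they are right. Your route is longer but self-contained (it does not lean on \eqref{eq:dn_K_2_iKrl} or on Jacobi's imaginary transformation), and it settles the branch of the square root more transparently: the real part $\sqrt{1+k}\,\dn(x,k)/(1+k\sn(x,k)^2)>0$ is read off the explicit formula, so no appeal to the mapping theorem is needed. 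The paper's route buys brevity by reusing the previous lemma; yours buys an independent derivation and an elementary sign determination.
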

\begin{proof}
Let $\lambda \in \mathbb{R}$, $\theta \in \mathbb{R}$, and $z \in \mathbb{C}$. By interchanging $k$ and $k'$ in \eqref{eq:dn_K_2_iKrl} and using $\dn(\inum z, k') = \dc(z, k)$ \cite[Table 22.6.1]{DLMF}, we have
\begin{equation}
\label{eq:dcKlmiK_2}
\dc(\EK(k) \lambda - \inum \EK(k')/2, k) = \inum \sqrt{k} \enum^{ - \inum \am\left( \EK\left( \frac{ 2 \sqrt{k} }{ 1 + k } \right) (\lambda + 1), \frac{ 2 \sqrt{k} }{ 1 + k } \right) }.
\end{equation}
By substituting $\lambda = \theta / \pi - 1$ for \eqref{eq:dcKlmiK_2} and using $\dc(z - \EK(k), k) = 1 / \sn(z, k)$ \cite[Table 22.4.3]{DLMF}, we have
\begin{equation}
\label{eq:sksn}
\sqrt{k} \sn( \EK(k) \theta / \pi - \inum \EK(k')/2, k) = - \inum \enum^{ \inum \am\left( \EK\left( \frac{ 2 \sqrt{k} }{ 1 + k } \right) \frac{ \theta }{ \pi }, \frac{ 2 \sqrt{k} }{ 1 + k } \right) }.
\end{equation}
By squaring \eqref{eq:sksn} and using \eqref{eq:dn2}, we have
\begin{equation}
\label{eq:dn2_1ke}
\dn( \EK(k) \theta / \pi - \inum \EK(k')/2, k)^2 = 1 + k \enum^{ \inum 2 \am\left( \EK\left( \frac{ 2 \sqrt{k} }{ 1 + k } \right) \frac{ \theta }{ \pi }, \frac{ 2 \sqrt{k} }{ 1 + k } \right) }.
\end{equation}
By using \eqref{eq:dneven}, \eqref{eq:dnK}, and Theorem \ref{the:dn_conformal}, we have $\RE( \dn( \EK(k) \theta / \pi - \inum \EK(k')/2, k) ) > 0$. With this inequality and \eqref{eq:dn2_1ke}, we have \eqref{eq:dnxmiKr_2}.
\end{proof}

\begin{theorem}[Jacobi {\cite[Sec.~36]{Jacobi1829}}]
Let $0 < k < 1$, $k' \coloneqq \sqrt{1 - k^2}$, and $q \coloneqq \enum^{- \pi \frac{\EK(k')}{\EK(k)} }$. For $z \in \mathbb{C}$, one has
\begin{align}
\label{eq:sn_ip}
\sn(2 \EK(k) z / \pi, k) &= \frac{1}{\sqrt{k}} \frac{ 2 q^\frac{1}{4} \sin z \prod_{n = 1}^\infty (1 - 2 q^{2n} \cos(2 z) + q^{4n}) }{ \prod_{n = 1}^\infty (1 - 2 q^{2n - 1} \cos(2 z) + q^{4n - 2}) }, \\
\label{eq:cn_ip}
\cn(2 \EK(k) z / \pi, k) &= \sqrt{ \frac{k'}{k} } \frac{ 2 q^\frac{1}{4} \cos z \prod_{n = 1}^\infty (1 + 2 q^{2n} \cos(2 z) + q^{4n}) }{ \prod_{n = 1}^\infty (1 - 2 q^{2n - 1} \cos(2 z) + q^{4n - 2}) }, \\
\label{eq:dn_ip}
\dn(2 \EK(k) z / \pi, k) &= \sqrt{k'} \frac{\prod_{n = 1}^\infty (1 + 2 q^{2n-1} \cos(2z) + q^{4n-2})} {\prod_{n = 1}^\infty (1 - 2 q^{2n-1} \cos(2z) + q^{4n-2})}.
\end{align}
\end{theorem}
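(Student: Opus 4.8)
The plan is to derive \eqref{eq:sn_ip}, \eqref{eq:cn_ip}, \eqref{eq:dn_ip} simultaneously from the Jacobi triple product applied to the four theta functions, combined with the classical representation of $\sn,\cn,\dn$ as quotients of theta functions. First I would introduce, for $0<q<1$ and $z\in\mathbb{C}$, the theta series $\theta_1(z,q)=2\sum_{n=0}^\infty(-1)^nq^{(n+1/2)^2}\sin((2n+1)z)$, $\theta_2(z,q)=2\sum_{n=0}^\infty q^{(n+1/2)^2}\cos((2n+1)z)$, $\theta_3(z,q)=1+2\sum_{n=1}^\infty q^{n^2}\cos(2nz)$, $\theta_4(z,q)=1+2\sum_{n=1}^\infty(-1)^nq^{n^2}\cos(2nz)$, which converge absolutely, and record the Jacobi triple product factorizations
\begin{align*}
\theta_1(z,q)&=2q^{1/4}\sin z\prod_{n=1}^\infty(1-q^{2n})(1-2q^{2n}\cos 2z+q^{4n}),\\
\theta_2(z,q)&=2q^{1/4}\cos z\prod_{n=1}^\infty(1-q^{2n})(1+2q^{2n}\cos 2z+q^{4n}),\\
\theta_3(z,q)&=\prod_{n=1}^\infty(1-q^{2n})(1+2q^{2n-1}\cos 2z+q^{4n-2}),\\
\theta_4(z,q)&=\prod_{n=1}^\infty(1-q^{2n})(1-2q^{2n-1}\cos 2z+q^{4n-2}).
\end{align*}
These can be quoted from \cite[Sec.~20.5]{DLMF}, or proved by the standard argument: for fixed $q$ the right-hand product is, as a function of $\enum^{\inum 2z}$, entire with exactly the zeros of $\theta_j$ and with the same quasi-periodic behaviour under $z\mapsto z+\pi$ and $z\mapsto z-\inum\log q$, hence equals $\theta_j$ up to a multiplicative constant fixed by evaluating at one point.

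The second ingredient is the theta-quotient form of the Jacobi functions: with the nome $q=\enum^{-\pi\EK(k')/\EK(k)}$ and $\zeta=\pi u/(2\EK(k))$ one has
\[
\sn(u,k)=\frac{1}{\sqrt{k}}\,\frac{\theta_1(\zeta,q)}{\theta_4(\zeta,q)},\qquad
\cn(u,k)=\sqrt{\frac{k'}{k}}\,\frac{\theta_2(\zeta,q)}{\theta_4(\zeta,q)},\qquad
\dn(u,k)=\sqrt{k'}\,\frac{\theta_3(\zeta,q)}{\theta_4(\zeta,q)},
\]
together with the theta-constant identities $k=(\theta_2(0,q)/\theta_3(0,q))^2$ and $k'=(\theta_4(0,q)/\theta_3(0,q))^2$ (see \cite[Sec.~22.2]{DLMF}). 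I would include the short justification: for each quotient the right-hand side is an elliptic function of $u$ with the period lattice, the simple zeros, and the simple poles of the corresponding Jacobi function, so it agrees with it up to a constant factor, and that factor is read off at $u=0$ from $\sn(0,k)=0$ with derivative $1$ and $\cn(0,k)=\dn(0,k)=1$, using the known values of the theta constants.

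With both ingredients in place the conclusion is a one-line computation: substituting $u=2\EK(k)z/\pi$, so that $\zeta=z$, into the three quotient identities and inserting the triple-product factorizations, the factor $\prod_{n=1}^\infty(1-q^{2n})$ occurs identically in the numerator and denominator of each quotient and cancels, leaving precisely \eqref{eq:sn_ip}, \eqref{eq:cn_ip}, and \eqref{eq:dn_ip}. I expect the real work to lie entirely in the two classical inputs — in particular in fixing the normalizing constants of the theta-quotient representations, equivalently in proving the theta-constant formulas for $k$ and $k'$ and in checking that the zero, pole, and period data of the quotients match those of $\sn,\cn,\dn$; the final substitution and cancellation are purely formal. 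A self-contained alternative that bypasses theta functions would build the candidate product for $\dn(2\EK(k)z/\pi,k)$ directly from its simple zeros (where $\enum^{2\inum z}=-q^{2n-1}$ or $\enum^{-2\inum z}=-q^{2n-1}$ for some $n\in\intge{1}$) and simple poles (where $\enum^{2\inum z}=q^{2n-1}$ or $\enum^{-2\inum z}=q^{2n-1}$), verify $\pi$-periodicity and the correct quasi-period multiplier, conclude that the ratio to $\dn$ is constant, and evaluate at $z=0$, treating $\sn$ and $\cn$ the same way; but this amounts to re-deriving the triple product in disguise.
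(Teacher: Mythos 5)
Your proposal is correct: the theta-quotient representations of $\sn,\cn,\dn$ combined with the Jacobi triple product factorizations of $\theta_1,\dots,\theta_4$, with the common factor $\prod_{n=1}^\infty(1-q^{2n})$ cancelling, is the standard (and essentially Jacobi's original) derivation of these identities. The paper itself supplies no proof — it quotes the result directly from \cite[Sec.~36]{Jacobi1829} — so your argument simply fills in the classical derivation that the citation points to, and you have correctly located where the substantive work lies (the theta-constant formulas for $k$ and $k'$ and the matching of zeros, poles, and periods that fix the normalizing constants).
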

\begin{corollary}
Let $0 < k < 1$, $k' \coloneqq \sqrt{1 - k^2}$, and $q \coloneqq \enum^{- \pi \frac{\EK(k')}{\EK(k)} }$. For $z \in \mathbb{C}$, one has
\begin{equation}
\label{eq:sncn_prod}
\sn(\EK(k) z / \pi, k) \cn(\EK(k) z / \pi, k) = \frac{ 2 \sqrt{k' q} }{ k } \frac{ \sin z \prod_{n = 1}^\infty (1 - 2 q^{2n} \cos z + q^{4n}) (1 + 2 q^{2n} \cos z + q^{4n}) } { \prod_{n = 1}^\infty (1 - 2 q^{2n-1} \cos z + q^{4n-2})^2 }.
\end{equation}
\end{corollary}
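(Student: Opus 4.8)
The plan is to derive the identity directly from Jacobi's infinite product representations \eqref{eq:sn_ip} and \eqref{eq:cn_ip} by multiplying them together after the substitution $z \mapsto z/2$. First I would replace $z$ by $z/2$ in both \eqref{eq:sn_ip} and \eqref{eq:cn_ip}: the argument $2\EK(k)(z/2)/\pi$ becomes $\EK(k)z/\pi$, every occurrence of $\cos(2z)$ becomes $\cos z$, the factor $\sin z$ in \eqref{eq:sn_ip} becomes $\sin(z/2)$, and the factor $\cos z$ in \eqref{eq:cn_ip} becomes $\cos(z/2)$.

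Next I would multiply the two resulting equalities termwise. The key observation is that the denominators of \eqref{eq:sn_ip} and \eqref{eq:cn_ip} (after the substitution) are the \emph{same} product $\prod_{n=1}^\infty (1 - 2 q^{2n-1}\cos z + q^{4n-2})$, so their product is that quantity squared; the numerators $\prod_{n=1}^\infty (1 - 2 q^{2n}\cos z + q^{4n})$ and $\prod_{n=1}^\infty (1 + 2 q^{2n}\cos z + q^{4n})$ combine into the single product appearing in the statement. For the scalar prefactor one computes
\begin{equation*}
\frac{1}{\sqrt k}\cdot\sqrt{\frac{k'}{k}}\cdot(2 q^{1/4})^2\cdot\sin(z/2)\cos(z/2) = \frac{\sqrt{k'}}{k}\cdot 4\sqrt q\cdot\frac{1}{2}\sin z = \frac{2\sqrt{k' q}}{k}\,\sin z,
\end{equation*}
using $(2q^{1/4})^2 = 4\sqrt q$ and the double-angle identity $\sin(z/2)\cos(z/2) = \tfrac12\sin z$. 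This is exactly the constant and $\sin z$ factor in the asserted formula, and the product factors already match, so the identity follows.

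The only point needing a word of care is the legitimacy of multiplying the two representations and regrouping factors: since $0 < q < 1$, we have $\sum_{n\ge1} q^{2n-1} < \infty$ and $\sum_{n\ge1} q^{2n} < \infty$, so all the infinite products converge absolutely and locally uniformly on $\mathbb{C}$, whence the termwise product of the two series converges to the pointwise product of the two meromorphic functions and the factors may be rearranged freely. There is no substantial obstacle here; the corollary is an immediate consequence of the quoted theorem of Jacobi via the rescaling $z \mapsto z/2$, and the ``hard part'' is merely bookkeeping of the prefactor.
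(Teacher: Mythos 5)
Your proof is correct and is exactly the derivation the paper intends (the corollary is stated without proof, as an immediate consequence of the preceding theorem): substitute $z \mapsto z/2$ into \eqref{eq:sn_ip} and \eqref{eq:cn_ip}, multiply, and simplify the prefactor via $\sin(z/2)\cos(z/2) = \tfrac12\sin z$. The constant works out as you computed, $\frac{1}{\sqrt{k}}\sqrt{k'/k}\cdot 4\sqrt{q}\cdot\tfrac12 = \frac{2\sqrt{k'q}}{k}$, and the convergence remark about the absolutely convergent products is a sensible (if routine) addition.
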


\begin{theorem}[Jacobi {\cite[Sec.~36]{Jacobi1829}}]
Let $0 < k < 1$, $k' \coloneqq \sqrt{1 - k^2}$, and $q \coloneqq \enum^{- \pi \frac{\EK(k')}{\EK(k)} }$. Then one has
\begin{align}
\label{eq:prod_q2nm1}
&\prod_{n = 1}^\infty (1 - q^{2n - 1})^6 = \frac{ 2 q^\frac{1}{4} k' }{ \sqrt{k} }, \prod_{n = 1}^\infty (1 + q^{2n - 1})^6 = \frac{ 2 q^\frac{1}{4} }{ \sqrt{k k'} }, \\
\label{eq:prod_q2n}
&\prod_{n = 1}^\infty (1 - q^{2n})^6 = \frac{ 2 k k' \EK(k)^3 }{ \pi^3 \sqrt{q} }, \prod_{n = 1}^\infty (1 + q^{2n})^6 = \frac{ k }{ 4 \sqrt{k' q} }.
\end{align}
\end{theorem}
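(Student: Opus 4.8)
The plan is to read off four equations from the Jacobi product formulas \eqref{eq:sn_ip}, \eqref{eq:cn_ip}, \eqref{eq:dn_ip} by specializing or differentiating once at $z = 0$, combine them with one elementary rearrangement of products, and then solve the resulting algebraic system. Write $A \coloneqq \prod_{n=1}^\infty(1 - q^{2n-1})$, $B \coloneqq \prod_{n=1}^\infty(1 + q^{2n-1})$, $C \coloneqq \prod_{n=1}^\infty(1 - q^{2n})$, $D \coloneqq \prod_{n=1}^\infty(1 + q^{2n})$. Since $0 < q < 1$, these products converge absolutely to positive reals, so the sixth roots taken at the end are unambiguous, and the goal is exactly to compute $A^6, B^6, C^6, D^6$.

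First I would set $z = 0$ in \eqref{eq:dn_ip} and \eqref{eq:cn_ip}. Because $\cos(2\cdot 0) = 1$, every Landen factor of the form $1 \pm 2q^{2n-1} + q^{4n-2}$ or $1 \pm 2q^{2n} + q^{4n}$ becomes a perfect square, and with $\dn(0,k) = \cn(0,k) = 1$ this gives $A^2 = \sqrt{k'}\,B^2$ and $A^2 = 2 q^{1/4}\sqrt{k'/k}\,D^2$. Next I would differentiate \eqref{eq:sn_ip} at $z = 0$: the left side $\sn(2\EK(k)z/\pi, k)$ has $z$-derivative $\tfrac{2\EK(k)}{\pi}\sn'(0,k) = \tfrac{2\EK(k)}{\pi}$ there (using $\sn'(v,k) = \cn(v,k)\dn(v,k)$ and $\cn(0,k) = \dn(0,k) = 1$), while on the right side only the factor $\sin z$ vanishes at $z = 0$, so differentiating through the locally uniformly convergent product yields $\tfrac{2\EK(k)}{\pi} = \tfrac{2 q^{1/4}}{\sqrt{k}}\,C^2/A^2$. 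The fourth relation is Euler's identity $\prod_{n=1}^\infty(1 + q^n) = 1/\prod_{n=1}^\infty(1 - q^{2n-1})$, obtained by writing $1 + q^n = (1 - q^{2n})/(1 - q^n)$, splitting the convergent products, and using the even/odd split $\prod_{n=1}^\infty(1 - q^n) = C A$; since $\prod_{n=1}^\infty(1 + q^n) = B D$, this reads $A B D = 1$.

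The remainder is elimination. From the first two relations, $B^2 = \tfrac{2 q^{1/4}}{\sqrt{k}}\,D^2$; feeding in $D = 1/(AB)$ gives $A^2 B^4 = \tfrac{2 q^{1/4}}{\sqrt{k}}$, and then $A^2 = \sqrt{k'}\,B^2$ forces $B^6 = \tfrac{2 q^{1/4}}{\sqrt{k k'}}$, hence $A^6 = (k')^{3/2} B^6 = \tfrac{2 q^{1/4} k'}{\sqrt{k}}$ and $D^6 = (A^6 B^6)^{-1} = \tfrac{k}{4\sqrt{k' q}}$; these are the two identities in \eqref{eq:prod_q2nm1} and the second identity in \eqref{eq:prod_q2n}. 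Finally $C^2 = \tfrac{\sqrt{k}}{q^{1/4}}\tfrac{\EK(k)}{\pi}\,A^2$ gives $C^6 = \tfrac{\EK(k)^3}{\pi^3}\tfrac{k^{3/2}}{q^{3/4}}\,A^6 = \tfrac{2 k k' \EK(k)^3}{\pi^3\sqrt{q}}$, the first identity in \eqref{eq:prod_q2n}.

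I expect the only step that is not purely formal to be the production of the transcendental factor $\EK(k)$: the value identities at $z = 0$ involve just $A, B, D$ and $q$, so $C = \prod_{n=1}^\infty(1 - q^{2n})$ cannot be determined without an input relating the $q$-series to $\EK(k)$, and the natural such input is the normalization $\tfrac{\del}{\del z}\sn(2\EK(k)z/\pi,k)\big|_{z=0} = \tfrac{2\EK(k)}{\pi}$ used above. The care therefore goes into justifying term-by-term differentiation of the infinite product and invoking $\sn'(0,k) = 1$; everything else is bookkeeping with convergent products.
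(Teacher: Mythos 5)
Your derivation is correct. Note, however, that the paper states this result only as a citation to Jacobi [Sec.~36] and supplies no proof of its own, so there is no argument of the paper to compare against; what you have written is a self-contained deduction of \eqref{eq:prod_q2nm1} and \eqref{eq:prod_q2n} from the product expansions \eqref{eq:sn_ip}--\eqref{eq:dn_ip} (which the paper likewise only cites). Your four input relations check out: putting $z=0$ in \eqref{eq:dn_ip} and \eqref{eq:cn_ip} gives $A^2=\sqrt{k'}\,B^2$ and $A^2=2q^{1/4}\sqrt{k'/k}\,D^2$; the normalization of \eqref{eq:sn_ip} at $z=0$ gives $2\EK(k)/\pi=(2q^{1/4}/\sqrt{k})\,C^2/A^2$; and Euler's identity gives $ABD=1$. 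These determine $A^6,B^6,C^6,D^6$ uniquely, and your elimination reproduces all four stated values (I verified the algebra, including $D^6=k/(4\sqrt{k'q})$ and $C^6=2kk'\EK(k)^3/(\pi^3\sqrt{q})$). The one analytic point you flag is unproblematic and can even be made simpler than term-by-term differentiation: since $\sum_n q^{2n}<\infty$, the products in \eqref{eq:sn_ip} converge locally uniformly to a nonvanishing analytic function of $z$ near $0$, so one may divide both sides by $\sin z$ and let $z\to 0$, using $\lim_{z\to 0}\sn(2\EK(k)z/\pi,k)/z=2\EK(k)/\pi$ from $\cn(0,k)=\dn(0,k)=1$. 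Your observation that the transcendental factor $\EK(k)$ can only enter through this normalization (the $z=0$ value identities alone cannot see $C$) is accurate, and the whole argument is in substance Jacobi's classical route to these evaluations.
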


\begin{theorem}[Jacobi {\cite[Sec.~64]{Jacobi1829}}]
For $0 < q < 1$ and $z \in \mathbb{C}$, one has
\begin{align}
\label{eq:prod_q2nm1_cos}
&\prod_{n = 1}^\infty (1 - 2 q^{2n - 1} \cos z + q^{4n - 2}) = \frac{ 1 + 2 \sum_{n = 1}^\infty (-1)^n q^{n^2} \cos(nz)  }{ \prod_{n = 1}^\infty (1 - q^{2n}) }, \\
\label{eq:prod_q2n_cos}
&\sin(z / 2) \prod_{n = 1}^\infty (1 - 2 q^{2n} \cos z + q^{4n}) = \frac{ \sum_{n = 0}^\infty (-1)^n q^{ n^2 + n } \sin( (n + 1 / 2) z ) }{ \prod_{n = 1}^\infty (1 - q^{2n}) }.
\end{align}
\end{theorem}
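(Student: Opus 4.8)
The plan is to deduce both formulas from the Jacobi triple product identity
\begin{equation}
\prod_{n=1}^{\infty} (1 - q^{2n})(1 + q^{2n-1} w)(1 + q^{2n-1} w^{-1}) = \sum_{n=-\infty}^{\infty} q^{n^2} w^n,
\end{equation}
valid for $0 < q < 1$ and $w \in \mathbb{C} \setminus \{0\}$, and then to specialize $w = -\enum^{\inum z}$. I would establish the triple product by the functional-equation method. For fixed $0 < q < 1$ the products and the series converge absolutely and locally uniformly on $\mathbb{C} \setminus \{0\}$, so $P(w) \coloneqq \prod_{n=1}^{\infty} (1 + q^{2n-1} w)(1 + q^{2n-1} w^{-1})$ is analytic there and has a Laurent expansion $P(w) = \sum_{n=-\infty}^{\infty} c_n w^n$. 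Shifting the product index gives $P(q^2 w) = (q w)^{-1} P(w)$, which forces the recursion $c_{n+1} = q^{2n+1} c_n$ and hence $c_n = c_0(q) q^{n^2}$ for all $n \in \mathbb{Z}$; therefore $P(w) = c_0(q) \sum_{n} q^{n^2} w^n$ with $c_0(q)$ independent of $w$.

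The crux is to evaluate the constant $c_0(q)$. I would specialize $w = q$: the product side becomes $P(q) = 2 \prod_{n \ge 1} (1 + q^{2n})^2$ and the series side becomes $2 c_0(q) \sum_{n \ge 0} q^{n^2 + n}$, so $c_0(q) = \prod_{n \ge 1}(1 + q^{2n})^2 / \sum_{n \ge 0} q^{n^2 + n}$. Invoking Gauss's identity $\sum_{n \ge 0} q^{n^2 + n} = \prod_{n \ge 1} (1 - q^{4n})/(1 - q^{4n-2})$, the relation $1 + q^{2n} = (1 - q^{4n})/(1 - q^{2n})$, and the factorization $\prod_{n \ge 1}(1 - q^{4n})(1 - q^{4n-2}) = \prod_{n \ge 1}(1 - q^{2n})$ collapses this to $c_0(q) = \prod_{n \ge 1}(1 - q^{2n})^{-1}$. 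Multiplying $P(w) = c_0(q)\sum_n q^{n^2}w^n$ through by $\prod_{n \ge 1}(1 - q^{2n})$ yields the triple product identity.

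Given the triple product, \eqref{eq:prod_q2nm1_cos} is immediate: put $w = -\enum^{\inum z}$, so $(1 + q^{2n-1} w)(1 + q^{2n-1} w^{-1}) = 1 - 2 q^{2n-1}\cos z + q^{4n-2}$ and $\sum_n q^{n^2} w^n = 1 + 2 \sum_{n \ge 1} (-1)^n q^{n^2} \cos(nz)$, and dividing by $\prod_{n \ge 1}(1 - q^{2n})$ gives the claim. For \eqref{eq:prod_q2n_cos} I would first replace $w$ by $w q$ in the triple product, obtaining the shifted form $(1 + w^{-1}) \prod_{n \ge 1}(1 - q^{2n})(1 + w q^{2n})(1 + w^{-1} q^{2n}) = \sum_n w^n q^{n^2 + n}$ (the $n = 1$ term of the third factor is $1 + w^{-1}$, which splits off), and then set $w = -\enum^{\inum z}$. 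Then $(1 + w q^{2n})(1 + w^{-1} q^{2n}) = 1 - 2 q^{2n}\cos z + q^{4n}$ and $1 + w^{-1} = 1 - \enum^{-\inum z} = 2 \inum\, \enum^{-\inum z/2} \sin(z/2)$, while pairing the summation index $n \ge 0$ with $-(n+1)$ (the exponent $n^2 + n$ is invariant, the sign flips, and $\enum^{\inum n z} - \enum^{-\inum(n+1)z} = 2 \inum\, \enum^{-\inum z/2} \sin((n + 1/2)z)$) turns the right side into $2 \inum\, \enum^{-\inum z/2} \sum_{n \ge 0} (-1)^n q^{n^2 + n} \sin((n + 1/2)z)$. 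Cancelling the common factor $2 \inum\, \enum^{-\inum z/2}$ and dividing by $\prod_{n \ge 1}(1 - q^{2n})$ produces \eqref{eq:prod_q2n_cos}. The only genuinely nontrivial step is the evaluation of $c_0(q)$ in the second paragraph; everything else is index bookkeeping and elementary trigonometric rewriting. If one prefers to avoid appealing to Gauss's identity, an alternative is to derive a $q \mapsto q^4$ functional equation for $c_0$ from splitting the theta series by parity and iterate to $q = 0$, where $c_0 = 1$.
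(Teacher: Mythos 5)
Your argument is correct. The paper does not prove this theorem at all --- it is stated as a classical result and cited to Jacobi's \emph{Fundamenta nova}, Sec.~64 --- so there is no in-paper proof to compare against; your proposal supplies a genuine derivation where the paper relies on a reference. The route you take is the standard modern one: establish the triple product $\prod_{n\ge1}(1-q^{2n})(1+q^{2n-1}w)(1+q^{2n-1}w^{-1})=\sum_{n\in\mathbb{Z}}q^{n^2}w^n$ by the functional-equation argument ($P(q^2w)=(qw)^{-1}P(w)$ forcing $c_n=c_0q^{n^2}$), pin down $c_0$, and then read off both identities at $w=-\enum^{\inum z}$ and $w=-q\enum^{\inum z}$. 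I checked the index bookkeeping: the shift $P(q^2w)=(qw)^{-1}P(w)$ is right, $P(q)=2\prod(1+q^{2n})^2$ and the pairing $n\leftrightarrow-(n+1)$ for the series $\sum q^{n^2+n}w^n$ are right, and the factor $1+w^{-1}=2\inum\,\enum^{-\inum z/2}\sin(z/2)$ is nonvanishing only as a prefactor to be cancelled, which is legitimate since $2\inum\,\enum^{-\inum z/2}\ne0$. The one place where your proof leans on an external input of comparable classical depth is the evaluation of $c_0(q)$ via Gauss's identity $\sum_{n\ge0}q^{n(n+1)}=\prod_{n\ge1}(1-q^{4n})/(1-q^{4n-2})$; the subsequent product manipulations collapsing $c_0$ to $\prod(1-q^{2n})^{-1}$ are correct. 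Your fallback --- the $q\mapsto q^4$ self-similarity of the normalizing constant, iterated to $q=0$ --- is the standard way to avoid that dependence, though as stated it needs the small care that the functional equation holds for the normalized constant $\prod(1-q^{2n})\,c_0(q)$ rather than for $c_0$ itself (otherwise iteration would wrongly give $c_0\equiv1$). Either way the proof is sound and, unlike the paper, self-contained.
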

\begin{corollary}
Let $0 < k < 1$, $k' \coloneqq \sqrt{1 - k^2}$, and $q \coloneqq \enum^{- \pi \frac{\EK(k')}{\EK(k)} }$. Then one has
\begin{align}
\label{eq:prod_Tn}
&\prod_{n = 1}^\infty (1 + 2 q^{2n - 1} u + q^{4n - 2}) = \frac{ 1 + 2 \sum_{n = 1}^\infty q^{n^2} T_n(u)  }{ \prod_{n = 1}^\infty (1 - q^{2n}) }, u \in \mathbb{C}, \\
\label{eq:prod_Vn}
&\prod_{n = 1}^\infty (1 + 2 q^{2n} u + q^{4n}) = \frac{ \sum_{n = 0}^\infty q^{ n^2 + n } V_n(u) }{ \prod_{n = 1}^\infty (1 - q^{2n}) }, u \in \mathbb{C}, \\
\label{eq:prod_Vn2}
&\prod_{n = 1}^\infty (1 - 2 q^{2n} u + q^{4n}) (1 + 2 q^{2n} u + q^{4n}) = \frac{ \sum_{n = 0}^\infty q^{ 2 n^2 + 2 n } V_n(1 - 2 u^2) }{ \prod_{n = 1}^\infty (1 - q^{4n}) } \\ \nonumber
&= (k')^\frac{1}{4} \sqrt{\frac{ 2 \EK(k) }{ \pi }} \frac{ \sum_{n = 0}^\infty q^{ 2 n^2 + 2 n } V_n(1 - 2 u^2) }{ \prod_{n = 1}^\infty (1 - q^{2n})^2 }, u \in \mathbb{C}, \\
\label{eq:qn2Tn_ineq}
&1 + 2 \sum_{n = 1}^\infty q^{n^2} T_n(u) \ge \sqrt{2 k' \EK(k) / \pi}, -1 \le u \le 1, \\
\label{eq:q2n22nVn_ineq}
&\sum_{n = 0}^\infty q^{ 2 n^2 + 2 n } V_n(1 - 2 u^2) \ge \frac{ k (k')^\frac{1}{4} (2 \EK(k) / \pi)^\frac{3}{2} }{ 4 \sqrt{q} }, -1 \le u \le 1.
\end{align}
\end{corollary}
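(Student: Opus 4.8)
The plan is to extract the three product identities from Jacobi's expansions \eqref{eq:prod_q2nm1_cos} and \eqref{eq:prod_q2n_cos} by the substitution $z=\pi-w$ (which flips $\cos z$ to $-\cos w$ and matches the series coefficients to Chebyshev polynomials), and then to obtain the two lower bounds \eqref{eq:qn2Tn_ineq}, \eqref{eq:q2n22nVn_ineq} by bounding each factor of the relevant product below on $[-1,1]$ and evaluating the leftover constants with the classical product formulas \eqref{eq:prod_q2nm1}, \eqref{eq:prod_q2n}.

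For \eqref{eq:prod_Tn}, I would put $z=\pi-w$ in \eqref{eq:prod_q2nm1_cos}: since $\cos(\pi-w)=-\cos w$ the left product becomes $\prod_{n\ge1}(1+2q^{2n-1}\cos w+q^{4n-2})$, and since $\cos\!\big(n(\pi-w)\big)=(-1)^n\cos(nw)$ the sign $(-1)^n$ on the right is cancelled, leaving $\big(1+2\sum_{n\ge1}q^{n^2}\cos(nw)\big)\big/\prod_{n\ge1}(1-q^{2n})$. As $T_n(\cos w)=\cos(nw)$, this is exactly \eqref{eq:prod_Tn} for $u=\cos w\in[-1,1]$; both sides are entire functions of $u$ (the product converges because $\sum q^{2n-1}<\infty$, the series because $|T_n(u)|$ grows at most geometrically on compacta while $q^{n^2}\to0$ superexponentially), so the identity theorem extends \eqref{eq:prod_Tn} to all $u\in\mathbb{C}$. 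For \eqref{eq:prod_Vn} the same substitution in \eqref{eq:prod_q2n_cos} uses $\sin\!\big((\pi-w)/2\big)=\cos(w/2)$ and $\sin\!\big((n+\tfrac12)(\pi-w)\big)=(-1)^n\cos\!\big((n+\tfrac12)w\big)$, and with the defining relation $\cos\!\big((n+\tfrac12)w\big)=\cos(w/2)\,V_n(\cos w)$ \cite{Mason2003} the identity becomes $\cos(w/2)\,\prod_{n\ge1}(1+2q^{2n}\cos w+q^{4n})=\cos(w/2)\sum_{n\ge0}q^{n^2+n}V_n(\cos w)\big/\prod_{n\ge1}(1-q^{2n})$. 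Both sides are entire in $w$, so I may cancel $\cos(w/2)$ (which vanishes only at isolated points), getting \eqref{eq:prod_Vn} for $u=\cos w$; as $w$ ranges over $\mathbb{C}$, $\cos w$ covers $\mathbb{C}$, giving the full statement.

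For the first equality of \eqref{eq:prod_Vn2}, the identity $(1-2q^{2n}u+q^{4n})(1+2q^{2n}u+q^{4n})=1+2q^{4n}(1-2u^2)+q^{8n}$ reduces the left side of \eqref{eq:prod_Vn2} to $\prod_{n\ge1}\big(1+2(q^2)^{2n}(1-2u^2)+(q^2)^{4n}\big)$, to which I apply \eqref{eq:prod_Vn} (equivalently \eqref{eq:prod_q2n_cos}) with $q$ replaced by $q^2$ and $u$ replaced by $1-2u^2$; this yields $\sum_{n\ge0}q^{2n^2+2n}V_n(1-2u^2)\big/\prod_{n\ge1}(1-q^{4n})$. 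The second equality follows from $1-q^{4n}=(1-q^{2n})(1+q^{2n})$, so $\prod(1-q^{2n})^2/\prod(1-q^{4n})=\prod(1-q^{2n})/\prod(1+q^{2n})$; raising to the sixth power and using \eqref{eq:prod_q2n} gives $\big(\prod(1-q^{2n})/\prod(1+q^{2n})\big)^6=\tfrac{2kk'\EK(k)^3}{\pi^3\sqrt q}\cdot\tfrac{4\sqrt{k'q}}{k}=\tfrac{8(k')^{3/2}\EK(k)^3}{\pi^3}$, whose positive sixth root is $(k')^{1/4}\sqrt{2\EK(k)/\pi}$ (all factors $1\pm q^{2n}$ being positive for $0<q<1$, so the root is unambiguous). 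Finally, the inequalities: on $[-1,1]$, $1+2q^{2n-1}u+q^{4n-2}\ge(1-q^{2n-1})^2\ge0$, so \eqref{eq:prod_Tn} gives $1+2\sum_{n\ge1}q^{n^2}T_n(u)\ge\prod_{n\ge1}(1-q^{2n})(1-q^{2n-1})^2$; the sixth power of the right side equals, by \eqref{eq:prod_q2nm1} and \eqref{eq:prod_q2n}, $\tfrac{2kk'\EK(k)^3}{\pi^3\sqrt q}\big(\tfrac{2q^{1/4}k'}{\sqrt k}\big)^2=\tfrac{8(k')^3\EK(k)^3}{\pi^3}$, i.e.\ the bound is $\sqrt{2k'\EK(k)/\pi}$, which is \eqref{eq:qn2Tn_ineq}. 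Likewise $(1-2q^{2n}u+q^{4n})(1+2q^{2n}u+q^{4n})=(1+q^{4n})^2-4q^{4n}u^2\ge(1-q^{4n})^2$ on $[-1,1]$, so by the first equality of \eqref{eq:prod_Vn2} one gets $\sum_{n\ge0}q^{2n^2+2n}V_n(1-2u^2)\ge\prod_{n\ge1}(1-q^{4n})^3$; using $1-q^{4n}=(1-q^{2n})(1+q^{2n})$ and \eqref{eq:prod_q2n}, $\big(\prod(1-q^{4n})^3\big)^2=\prod(1-q^{2n})^6\prod(1+q^{2n})^6=\tfrac{2kk'\EK(k)^3}{\pi^3\sqrt q}\cdot\tfrac{k}{4\sqrt{k'q}}=\tfrac{k^2\sqrt{k'}\,\EK(k)^3}{2\pi^3q}$, whose positive square root is $\tfrac{k(k')^{1/4}(2\EK(k)/\pi)^{3/2}}{4\sqrt q}$, giving \eqref{eq:q2n22nVn_ineq}.

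The only delicate points are bookkeeping ones: keeping the signs straight under $z\mapsto\pi-w$, correctly identifying the Fourier-type coefficients with $T_n$ and $V_n$, and — the place where I expect to have to be most careful — extracting the right sixth roots from \eqref{eq:prod_q2nm1}--\eqref{eq:prod_q2n}, which is legitimate precisely because every factor $1\pm q^m$ is positive for $0<q<1$. Beyond this algebra and the short analyticity argument needed to cancel $\cos(w/2)$ in the proof of \eqref{eq:prod_Vn}, I do not foresee any genuine obstacle.
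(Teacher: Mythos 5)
Your proposal is correct and follows essentially the same route as the paper: both derive \eqref{eq:prod_Tn} and \eqref{eq:prod_Vn} from Jacobi's expansions \eqref{eq:prod_q2nm1_cos}--\eqref{eq:prod_q2n_cos} via the sign flip $u=-\cos z$ (your $z\mapsto\pi-w$ is the same device), cancel the $\sin(z/2)$ (resp.\ $\cos(w/2)$) factor by an analyticity/continuity argument, obtain \eqref{eq:prod_Vn2} by the substitution $q\to q^2$, $u\to 1-2u^2$, and prove the two inequalities by minimizing each product factor at the endpoints of $[-1,1]$ and evaluating the resulting constants with \eqref{eq:prod_q2nm1}--\eqref{eq:prod_q2n}. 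All your constant computations (the sixth and square roots) check out against the stated bounds, so nothing further is needed.
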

\begin{proof}
Let $z \in \mathbb{C}$. \eqref{eq:prod_Tn} follows from \eqref{eq:prod_q2nm1_cos}, $(-1)^n \cos(n z) = T_n(-\cos z)$, and $u = - \cos z$. By using \eqref{eq:prod_q2n_cos} and $(-1)^n \sin((n + 1/2) z) = \sin(z / 2) V_n(- \cos z)$, we have
\begin{align}
\label{eq:q2n_cos}
&\sin(z / 2) \prod_{n = 1}^\infty (1 - 2 q^{2n} \cos z + q^{4n}) = \sin(z / 2) \frac{ \sum_{n = 0}^\infty q^{ n^2 + n } V_n(- \cos z) }{ \prod_{n = 1}^\infty (1 - q^{2n}) }.
\end{align}
If $\sin(z / 2) \ne 0$, i.e., $z \in \mathbb{C} \setminus 2 \pi \mathbb{Z}$, by dividing both sides of \eqref{eq:q2n_cos} by $\sin(z / 2)$ and substituting $u = - \cos(z) \in \mathbb{C} \setminus \{ -1 \}$, we have \eqref{eq:prod_Vn} for $u \in \mathbb{C} \setminus \{ -1 \}$. Since both sides in \eqref{eq:prod_Vn} are entire functions and continuous at $u = -1$, the equality also holds at $u = -1$. The first equality in \eqref{eq:prod_Vn2} follows from $(1 - 2 q^{2n} u + q^{4n}) (1 + 2 q^{2n} u + q^{4n}) = 1 + 2 q^{4n} (1 - 2 u ^2) + q^{8n}$ and \eqref{eq:prod_Vn}. The second equality in \eqref{eq:prod_Vn2} follows from \eqref{eq:prod_q2n}.

Let $-1 \le u \le 1$. Let $F(u) \coloneqq 1 + 2 \sum_{n = 1}^\infty q^{n^2} T_n(u)$. Since $1 + 2 q^{2n - 1} u + q^{4n - 2}$ is positive and minimized at $u = -1$, by \eqref{eq:prod_Tn}, we have $F(u) \ge F(-1) = \prod_{n = 1}^\infty (1 - q^{2n - 1})^2 \prod_{n = 1}^\infty (1 - q^{2n})$. Then by using \eqref{eq:prod_q2nm1} and \eqref{eq:prod_q2n}, we have \eqref{eq:qn2Tn_ineq}. Let $G(u) \coloneqq \sum_{n = 0}^\infty q^{ 2 n^2 + 2 n } V_n(1 - 2 u^2)$. Since $(1 - 2 q^{2n} u + q^{4n}) (1 + 2 q^{2n} u + q^{4n}) = (1 + q^{4n})^2 - 4 q^{4n} u^2$ is positive and  minimized at $u = \pm 1$, by \eqref{eq:prod_Vn2}, we have $G(u) \ge G(\pm 1) = (k')^{- \frac{1}{4}} \sqrt{\frac{ \pi }{ 2 \EK(k) }} \prod_{n = 1}^\infty (1 - q^{2n})^4 (1 + q^{2n})^2$. Then by using \eqref{eq:prod_q2n}, we have \eqref{eq:q2n22nVn_ineq}.
\end{proof}

\begin{theorem}[Jacobi {\cite[THEOREMA 37.II]{Jacobi1829}}]
\label{the:q2}
Let $k(x)$ be the modulus corresponding to the nome $0 < x < 1$. Let $k'(x) \coloneqq \sqrt{1 - k(x)^2}$. Then one has
\begin{equation}
k(x^2) = \frac{ 1 - k'(x) }{ 1 + k'(x) } = \frac{ 1 - \sqrt{1 - k(x)^2} }{ 1 + \sqrt{1 - k(x)^2} }, k'(x^2) = \frac{ 2 \sqrt{k'(x)} }{ 1 + k'(x) }.
\end{equation}
\end{theorem}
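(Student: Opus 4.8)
The plan is to reduce everything to the single product identity \eqref{eq:prod_q2nm1}, used once at the nome $x$ and once at the nome $x^2$. Write $q := x$, $k := k(x)$, $k' := k'(x)$, and recall that the correspondence $q \mapsto k(q)$, defined by $q = \enum^{-\pi \EK(k')/\EK(k)}$ with $k' = \sqrt{1-k^2}$, is a strictly increasing bijection of $(0,1)$ onto itself; hence $k(x^2)$ and $k'(x^2) = \sqrt{1 - k(x^2)^2}$ are the unique numbers in $(0,1)$ attached to the nome $x^2$, and it suffices to exhibit one admissible value.

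First I would multiply the two halves of \eqref{eq:prod_q2nm1} and use $\prod_{n\ge1}(1-q^{2n-1})(1+q^{2n-1}) = \prod_{n\ge1}(1-q^{4n-2})$ to get
\[
\prod_{n=1}^\infty (1 - q^{4n-2})^6 = \frac{2q^{1/4}k'}{\sqrt k}\cdot\frac{2q^{1/4}}{\sqrt{kk'}} = \frac{4\sqrt q\,\sqrt{k'}}{k}.
\]
The left-hand side equals $\prod_{n\ge1}\bigl(1 - (x^2)^{2n-1}\bigr)^6$, so applying the \emph{first} identity of \eqref{eq:prod_q2nm1} at the nome $x^2$ (legitimate since $x^2\in(0,1)$) gives $\prod_{n\ge1}(1-q^{4n-2})^6 = 2(x^2)^{1/4}\,k'(x^2)/\sqrt{k(x^2)} = 2\sqrt q\,k'(x^2)/\sqrt{k(x^2)}$. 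Comparing the two expressions yields
\[
\frac{k'(x^2)}{\sqrt{k(x^2)}} = \frac{2\sqrt{k'}}{k}.
\]

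Squaring this and substituting $k'(x^2)^2 = 1 - k(x^2)^2$ turns it into the quadratic $k(x^2)^2 + (4k'/k^2)\,k(x^2) - 1 = 0$. I would then solve it, using $4k'^2 + k^4 = 4k'^2 + (1-k'^2)^2 = (1+k'^2)^2$ to simplify the discriminant and the constraint $0 < k(x^2) < 1$ to select the root, obtaining
\[
k(x^2) = \frac{(1+k'^2) - 2k'}{k^2} = \frac{(1-k')^2}{k^2} = \frac{1-k'}{1+k'},
\]
where the last step uses $k^2 = (1-k')(1+k')$; this is the first claimed formula. The second is then immediate: $k'(x^2) = \sqrt{1 - k(x^2)^2}$ and $1 - \bigl((1-k')/(1+k')\bigr)^2 = 4k'/(1+k')^2$, so $k'(x^2) = 2\sqrt{k'}/(1+k')$ (positive root, since $0<x^2<1$).

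The only genuine subtlety I anticipate is bookkeeping rather than depth: one must invoke \eqref{eq:prod_q2nm1} at the nome $x^2$ as well as at $x$, and one must justify the branch of each square root by the positivity of the quantities involved. An alternative, equally short route avoids the quadratic: set $\ell := (1-k')/(1+k')$, note $\ell' = \sqrt{1-\ell^2} = 2\sqrt{k'}/(1+k')$, and use the Landen transformation relations $\EK(\ell) = \tfrac{1+k'}{2}\EK(k)$ and $\EK(\ell') = (1+k')\EK(k')$ (the latter being exactly the relation $\EK(k'_1) = (1+k')\EK(k')$ already recorded in the proof of \eqref{eq:dn_K_2_iKrl}, with $k'_1 = \ell'$); these give $\EK(\ell')/\EK(\ell) = 2\EK(k')/\EK(k)$, so the nome of $\ell$ is $\enum^{-\pi\EK(\ell')/\EK(\ell)} = x^2$, and bijectivity of the nome--modulus correspondence forces $k(x^2) = \ell$ and $k'(x^2) = \ell'$.
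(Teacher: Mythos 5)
The paper does not actually prove this statement: it is quoted verbatim as a classical theorem of Jacobi (THEOREMA 37.II of the \emph{Fundamenta Nova}) with a citation and no proof environment, so there is no in-paper argument to compare against. Your proposal is correct, and both of your routes are sound. The computations in the first route check out: multiplying the two halves of \eqref{eq:prod_q2nm1} gives $\prod_{n\ge1}(1-(x^2)^{2n-1})^6 = 4\sqrt{x}\,\sqrt{k'}/k$, re-reading the left side via \eqref{eq:prod_q2nm1} at the nome $x^2$ gives $2\sqrt{x}\,k'(x^2)/\sqrt{k(x^2)}$, and the resulting quadratic $k(x^2)^2+(4k'/k^2)k(x^2)-1=0$ has product of roots $-1$, so the unique positive root $\frac{(1-k')^2}{k^2}=\frac{1-k'}{1+k'}$ is forced; the only external input is that every $x^2\in(0,1)$ is the nome of some modulus, which is already presupposed by the theorem's phrasing. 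The second route is arguably the more natural one relative to this paper: the identities $\EK(k_1)=\tfrac{1+k'}{2}\EK(k)$ and $\EK(k_1')=(1+k')\EK(k')$ with $k_1=\frac{1-k'}{1+k'}$, $k_1'=\frac{2\sqrt{k'}}{1+k'}$ are exactly the Landen relations the paper already invokes (from DLMF and Jacobi Sec.~37) in the proofs of \eqref{eq:dn_K_2_iKrl} and \eqref{eq:inv_1_dn_fourier}, and dividing them gives $\EK(k_1')/\EK(k_1)=2\EK(k')/\EK(k)$, i.e.\ nome $x^2$, after which injectivity of the nome--modulus correspondence finishes the argument; note this is not circular, since the relation $q_1=q^2$ recorded in the proof of \eqref{eq:inv_1_dn_fourier} is being derived rather than assumed. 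Either route would serve as a self-contained proof where the paper currently relies on a citation.
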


\begin{lemma}
Let $0 < k < 1$, $k' \coloneqq \sqrt{1 - k^2}$, and $q \coloneqq \enum^{- \pi \frac{ \EK(k') }{ \EK(k) }  }$. Then one has
\begin{equation}
\label{eq:inv_1_dn_fourier}
\frac{ 1 }{ 1 + \dn(\EK(k) \theta / \pi, k) } = \frac{ \EK(k) - \EE(k) }{ k^2 \EK(k) } - \frac{  4 \pi^2   }{ k^2 \EK(k)^2 } \sum_{n = 1}^\infty \frac{ n \cos(n \theta) }{ q^{-2n} - q^{2n} }, \theta \in \imset_{\log(q^{-2})}.
\end{equation}
\end{lemma}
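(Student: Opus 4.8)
The plan is to reduce the claimed cosine expansion to the already-available Fourier series \eqref{eq:dn2_fourier} for $\dn^2$ by means of a Landen transformation that turns the nome $q$ into $q^2$. Introduce the descending-Landen modulus $\ell \coloneqq \frac{1-k'}{1+k'}$, whose complementary modulus is $\ell' = \frac{2\sqrt{k'}}{1+k'}$; by Theorem \ref{the:q2} (taken with nome $q$) the modulus belonging to the nome $q^2$ is exactly $\ell$, so the Fourier series \eqref{eq:dn2_fourier}, applied at modulus $\ell$, has its summand built from $q^{-2n}-q^{2n}$ and is valid precisely on $\imset_{\log(q^{-2})}$. The Landen relations for the complete integrals — the ones already recorded in the proof of \eqref{eq:dn_K_2_iKrl}, used with $\ell$ in place of $k'$ (legitimate since $\frac{2\sqrt{\ell}}{1+\ell}=k$), together with the analogous transformation for $\EE$ \cite{DLMF} — give
\begin{equation*}
(1+\ell)\EK(\ell) = \EK(k), \qquad \EE(\ell) = \frac{\EE(k)+k'\EK(k)}{1+k'}, \qquad \ell\,\EK(\ell)^2 = \tfrac14\,k^2\EK(k)^2 .
\end{equation*}

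Next I would invoke the descending Landen transformation for $\dn$ in the form $\dn(z,k) = \dfrac{1-\ell\,\sn^2(z/(1+\ell),\ell)}{1+\ell\,\sn^2(z/(1+\ell),\ell)}$ (the same formula from the appendix, again with $\ell$ substituted for $k'$), which rearranges to the pointwise identity of meromorphic functions
\begin{equation*}
\frac{1}{1+\dn(z,k)} = \frac{1+\ell\,\sn^2(z/(1+\ell),\ell)}{2} = \frac{1+\ell-\dn^2(z/(1+\ell),\ell)}{2\ell},
\end{equation*}
the last equality using $\ell^2\sn^2+\dn^2=1$ from \eqref{eq:dn2}. Setting $z=\EK(k)\theta/\pi$ and using $(1+\ell)\EK(\ell)=\EK(k)$ collapses the argument to $z/(1+\ell)=\EK(\ell)\theta/\pi$, so the right-hand side is $\frac{1+\ell}{2\ell}-\frac{1}{2\ell}\dn(\EK(\ell)\theta/\pi,\ell)^2$.

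Then I would substitute \eqref{eq:dn2_fourier} at modulus $\ell$ (nome $q^2$) into this expression, obtaining
\begin{equation*}
\frac{1}{1+\dn(\EK(k)\theta/\pi,k)} = \frac{1+\ell}{2\ell} - \frac{\EE(\ell)}{2\ell\EK(\ell)} - \frac{\pi^2}{\ell\EK(\ell)^2}\sum_{n=1}^\infty \frac{n\cos(n\theta)}{q^{-2n}-q^{2n}}, \qquad \theta\in\imset_{\log(q^{-2})},
\end{equation*}
and finish by simplifying the constants. The coefficient of the sum equals $\frac{4\pi^2}{k^2\EK(k)^2}$ immediately from $\ell\,\EK(\ell)^2=\tfrac14 k^2\EK(k)^2$, and inserting the Landen expressions for $\EK(\ell),\EE(\ell)$ and using $k^2+k'^2=1$ reduces $\frac{1+\ell}{2\ell}-\frac{\EE(\ell)}{2\ell\EK(\ell)}$ to $\frac{1}{k^2}-\frac{\EE(k)}{k^2\EK(k)}=\frac{\EK(k)-\EE(k)}{k^2\EK(k)}$, which is \eqref{eq:inv_1_dn_fourier}.

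The main obstacle is the Landen bookkeeping: one must keep straight which modulus is the ascending/descending image of which (here $k=\frac{2\sqrt\ell}{1+\ell}$), verify the argument-rescaling factor $1+\ell$ so that $z/(1+\ell)$ aligns exactly with the standard argument $\EK(\ell)\theta/\pi$, and confirm that the transformed nome is exactly $q^2$ (this is what makes \eqref{eq:dn2_fourier} produce the $q^{\pm2n}$ and the strip $\imset_{\log(q^{-2})}$, the latter also being the locus where $1+\dn$ first vanishes, at $\EK(k)\theta/\pi=\inum\,2\EK(k')$ via \eqref{eq:dni2Kp}). Once these identifications are pinned down, the remaining work is routine algebra with complete elliptic integrals.
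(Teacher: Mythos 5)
Your proposal is correct and follows essentially the same route as the paper: the paper's proof introduces the descending Landen modulus $k_1=\frac{1-k'}{1+k'}$ (your $\ell$), records the identities $\EK(k_1)=\frac{1+k'}{2}\EK(k)$, $\EE(k_1)=\frac{k'\EK(k)+\EE(k)}{1+k'}$, $q_1=q^2$, and $\frac{1}{1+\dn(\EK(k)\theta/\pi,k)}=\frac{1+k_1-\dn(\EK(k_1)\theta/\pi,k_1)^2}{2k_1}$, and then substitutes \eqref{eq:dn2_fourier} at modulus $k_1$, exactly as you do. Your constant simplification $\frac{1+\ell}{2\ell}-\frac{\EE(\ell)}{2\ell\EK(\ell)}=\frac{\EK(k)-\EE(k)}{k^2\EK(k)}$ and the coefficient identity $\ell\,\EK(\ell)^2=\tfrac14 k^2\EK(k)^2$ both check out.
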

\begin{proof}
By using descending Landen transformations from $k$ to $k_1$ \cite[Sec.~19.8(ii), 22.7(i)]{DLMF}\cite[Sec.~37]{Jacobi1829}
\begin{align}
&k_1 \coloneqq \frac{ 1 - k' }{ 1 + k' }, k'_1 \coloneqq \sqrt{1 - k_1^2} = \frac{ 2 \sqrt{k'} }{ 1 + k' }, \EK(k_1) = \frac{ \EK(k) }{ 1 + k_1 } = \frac{ 1 + k' }{ 2 } \EK(k), \EE(k_1) = \frac{ k' \EK(k) + \EE(k) }{ 1 + k' }, q_1 \coloneqq \enum^{ - \pi \frac{ \EK(k'_1) }{ \EK(k_1) } } = q^2, \\
&\frac{ 1 }{ 1 + \dn(\EK(k) \theta / \pi, k) } = \frac{ 1 + k_1 - \dn( \EK(k_1) \theta / \pi, k_1 )^2 }{ 2 k_1 }, \theta \in \mathbb{C},
\end{align}
and \eqref{eq:dn2_fourier}, we have \eqref{eq:inv_1_dn_fourier}.
\end{proof}



\bibliographystyle{elsarticle-num}
\bibliography{ymkoyama.bbl}







\end{document}